\newlength{\bibitemsep}\setlength{\bibitemsep}{.2\baselineskip plus .05\baselineskip minus .05\baselineskip}
\newlength{\bibparskip}\setlength{\bibparskip}{-1pt}
\let\oldthebibliography\thebibliography
\renewcommand\thebibliography[1]{%
  \oldthebibliography{#1}%
  \setlength{\parskip}{\bibitemsep}%
  \setlength{\itemsep}{\bibparskip}%
}
\newtheorem{theorem}{Theorem}[section]
\newtheorem{proposition}[theorem]{Proposition}
\newtheorem{lemma}[theorem]{Lemma}
\newtheorem{conjecture}[theorem]{Conjecture}
\numberwithin{equation}{section}
\theoremstyle{definition}
\newtheorem{definition}[theorem]{Definition}
\newtheorem{example}[theorem]{Example}
\newtheorem{remark}[theorem]{Remark}
\newtheorem{defprop}[theorem]{Definition/Proposition}
\newcommand{\h}{\mathfrak{h}}
\newcommand{\Q}{\mathbb{Q}}
\newcommand{\Z}{\mathbb{Z}}
\newcommand{\R}{\mathbb{R}}
\newcommand{\N}{\mathbb{N}}
\newcommand{\eN}{\overline{\mathbb{N}}}
\newcommand{\U}{\operatorname{U}}
\newcommand{\V}{\operatorname{V}}
\newcommand{\wt}{\operatorname{wt}}
\newcommand{\depth}{\operatorname{depth}}
\newcommand{\SL}{\operatorname{SL}}
\newcommand{\oko}{\zeta_q^{\mathrm{Oko}}}
\newcommand{\qmzvok}{\mathcal{Z}_q^{\mathrm{Oko}}}
\newcommand{\bone}{{\mathbf{1}}}
\DeclareRobustCommand{\gbi}{\mathfrak{g}\genfrac{(}{)}{0pt}{}}
\DeclareRobustCommand{\gsz}{\mathfrak{s}\genfrac{(}{)}{0pt}{}}
\DeclareRobustCommand{\bi}{\genfrac{(}{)}{0pt}{}}
\newcommand{\Fgh}{F_{\mathbf{h},\mathbf{g}}^\mathbf{f}}
\newcommand{\Hgh}{H_{\mathbf{h},\mathbf{g}}^\mathbf{f}}
\newcommand{\gok}{\mathfrak{o}}
\newcommand{\qsz}{\ast_\mathrm{SZ}}
\newcommand{\shsz}{\shuffle_\mathrm{SZ}}
\newcommand{\bz}{\zeta_q^\mathrm{BZ}} 
\newcommand{\oo}{\overline{1}}
\newcommand{\g}{\operatorname{g}} 
\newcommand{\sz}{\zeta_q^\mathrm{SZ}} 
\newcommand{\szs}{\zeta_q^{\mathrm{SZ},\star}}
\newcommand{\ooz}{\zeta_q^\mathrm{OOZ}}
\newcommand{\tbz}{\zeta_q^{\mathrm{TBZ}}}
\newcommand{\Fil}{\operatorname{Fil}}
\newcommand{\gr}{\operatorname{gr}}
\newcommand{\Ubz}{\operatorname{U}_\mathrm{TBZ}}
\newcommand{\Vbz}{\operatorname{V}_\mathrm{TBZ}}
\newcommand{\sooz}{\shuffle_{\mathrm{OOZ}}}
\newenvironment{itemize*}%
  {\begin{itemize}[topsep=4pt]%
    \setlength{\itemsep}{0pt}%
    \setlength{\parskip}{0pt}}%
  {\end{itemize}}
\newenvironment{enumerate*}%
  {\begin{enumerate}[topsep=4pt,wide,labelwidth=!,labelindent=0pt]%
    \setlength{\itemsep}{3pt}%
    \setlength{\parskip}{0pt}}%
  {\end{enumerate}}
\definecolor{darkblue}{RGB}{0,0,130}
\definecolor{mygray}{RGB}{160,160,160}
\definecolor{mycyan}{RGB}{150,255,255}
\definecolor{darkred}{RGB}{130,0,0}
\tikzset{
    >=stealth',
    punkt/.style={
           rectangle,
           rounded corners,
           draw=black, very thick,
           text width=6.5em,
           minimum height=2em,
           text centered},
    pil/.style={
           ->,
           thick,
           shorten <=2pt,
           shorten >=2pt,}
}
\newcommand{\gettikzxy}[3]{%
  \tikz@scan@one@point\pgfutil@firstofone#1\relax
  \edef#2{\the\pgf@x}%
  \edef#3{\the\pgf@y}%
}
\title{A unified approach to $q$MZVs}
\author{Benjamin Brindle \thanks{The author has received funding from the European Research Council (ERC) under the European Union’s Horizon 2020 research and innovation programme (grant agreement No. 101001179).}}
\date{\today}
\begin{document}
\maketitle
\textsc{Abstract.} This survey gives a self-contained introduction to $q$-analogues of multiple zeta values ($q$MZVs). For this, we consider most common models of $q$MZVs in a unified setup going back to Bachmann and K\"uhn, such as a related quasi-shuffle product each. Also, we give distinguished translations between several models. As another unified approach to $q$MZVs, we introduce the concept of marked partitions.
\vspace{0.2cm}\\ 
2010 \textit{Mathematics Subject Classification.} Primary: 11M32, 05A30. Secondary: 11P81, 05A15, 05A17.
\\ \vspace{-0.4cm}\\ 
\textbf{Key words:} Multiple zeta values; $q$-multiple zeta values; partitions.

\section{Introduction}

The \emph{multiple zeta value} of an \emph{admissible index} $\textbf{k} = (k_1,\dots,k_r)\in\N^r$, i.e., $k_1\geq 2,\, k_i\geq 1$ is
\begin{align*}
    \zeta(\textbf{k}) := \zeta(k_1,\dots,k_r) := \sum\limits_{m_1 > \dots > m_r > 0} \frac{1}{m_1^{k_1}}\dots \frac{1}{m_r^{k_r}},
\end{align*}
where we set $\zeta(\emptyset) := 1$. We say that $\wt(\textbf{k}) := k_1 + \dots + k_r \text{ is the \emph{weight} and}$ $\depth(\textbf{k}) := r \text{ is the \emph{depth} of \textbf{k}.}$ For well-definedness see e.g. \cite[Prop. 1.4]{Ba6}.

MZVs have representation as iterated (\emph{Kontsevich}) integrals:
\begin{align*}
    \zeta(k_1,\dots,k_r) = \int\limits_{1>t_1>\dots>t_k>0} \omega_1(t_1)\dots\omega_k(t_k),
\end{align*}
where $k:=k_1+\dots +k_r$ and
\begin{align*}
    \omega_i(t) := 
    \begin{cases}
    \frac{dt}{1-t},\ &\text{if } i\in\{k_1,k_1+k_2,\dots,k_1+\dots +k_r\},
    \\
    \frac{dt}{t},\ &\text{else.}
    \end{cases}
\end{align*}

The product of MZVs is a rational weighted sum of MZVs again. There are two important representations, the one coming from the iterated sums (so-called \emph{stuffle product}), the other from iterated integrals (\emph{shuffle product}). In particular, we get $\Q$-linear relations among MZVs, so-called double shuffle relations. After some regularization, conjecturally, these relations generate all $\Q$-linear relations among MZVs, which is why stuffle and shuffle product are of great interest.

For a more algebraic understanding, we need \emph{quasi-shuffle algebras} (introduced by Hoffman, \cite{Hof}). These are algebras, where the product is in general a $\Q$-bilinear map $\ast_\diamond$ on $\Q\left\langle A\right\rangle$ - $A$ a set, $\diamond$ an associative and commutative product on $\Q A$ - such that
\begin{align*}
    &\textbf{1}\ast_\diamond w = w\ast_\diamond \textbf{1} := w,
    \\
    &a u\ast_\diamond b v := a (u\ast_\diamond b v) + b (a u \ast_\diamond v) + (a\diamond b) (u\ast_\diamond v)
\end{align*}
for any $a,b\in\mathbb{Q}A$ and $u,v,w\in\mathbb{Q}\left\langle A\right\rangle$. The elements in $A$ are called \emph{letters}, monoids in $\Q\left\langle A\right\rangle$ \emph{words}.

Defining the free non-commutative algebra of two letters, $\mathfrak{h} := \Q\left\langle x_0,x_1\right\rangle$ ($\bone$ the empty word) and its subalgebra $\mathfrak{h}^0 := \mathbb{Q}\textbf{1}\oplus x_0\mathfrak{h}x_1$, then we can view $\zeta$ as evaluation map $\mathfrak{h}^0\rightarrow \R$ via $\bone\mapsto 1$,
\begin{align*}
    z_{k_1}\dots z_{k_r}\longmapsto \zeta(k_1,\dots,k_r)
\end{align*}
and $\Q$-linear continuation with the abbreviation $z_k := x_0^{k-1} x_1$.

On $\mathfrak{h}^0$ we consider two particular quasi-shuffle products: First, consider on $\Q A$ with
$A = \{ z_k=x_0^{k-1}x_1\, |\, k\geq 1\}$ the diamond product $z_m \diamond z_n:= z_{m+n}$. Then the induced quasi-shuffle product $\ast$ on $\Q\left\langle A\right\rangle$ is called \emph{stuffle product} (see e.g. {\cite[Def. 2.11]{Ba6}}). It is closed under restriction to $\mathfrak{h}^0$.

Second, we consider the \emph{shuffle product} (see e.g. {\cite[Def. 2.8]{Ba6}}): For $A=\{x_0,x_1\}$, i.e., $\mathbb{Q}\left\langle A\right\rangle = \mathfrak{h}$, and the diamond constant $0$, the induced quasi-shuffle product $\shuffle$ on $\mathfrak{h}$ is called the \emph{shuffle product}, which also closed under restriction to $\mathfrak{h}^0$.

They are important to us since both, $\zeta: (\mathfrak{h}^0,\ast)\rightarrow (\R,\cdot)$ and $\zeta:(\mathfrak{h}^0,\shuffle)\rightarrow (\R,\cdot)$, are algebra homomorphisms (\cite[Prop. 1]{IKZ}), the first embodying the product of MZVs obtained from iterated sums, the second the one obtained from iterated integrals.

With the integral representation, one proves the duality relations, which are distinguished linear relations among MZVs. One famous example is the equality $\zeta(2,1) = \zeta(3)$. Conjecturally, they should be implied by the (extended) double shuffle relations, but a proof for this is not known so far. 
\begin{theorem}[Duality, {\cite[§9]{Za1}}]
For every $w\in\mathfrak{h}^0$ we have
\begin{align*}
    (\zeta\circ\tau)(w) = \zeta(w),
\end{align*}
where $\tau$ is the anti-automorphism on $\mathfrak{h}$ defined via $\tau(x_i) := x_{1-i}$ for $i\in\{0,1\}$.
\end{theorem}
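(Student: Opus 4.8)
The plan is to derive duality from the Kontsevich integral representation recalled above, via the change of variables $t\mapsto 1-t$ applied to all integration variables at once. This substitution interchanges the two one-forms $\tfrac{dt}{t}$ and $\tfrac{dt}{1-t}$, and, since it reverses the integration simplex, it also reverses the order in which the forms appear --- which is precisely the combinatorial content of $\tau$.

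First I would record that $\tau$ restricts to a $\Q$-linear, involutive bijection of $\mathfrak{h}^0$ onto itself: indeed $\tau(\bone)=\bone$, and for $v\in\mathfrak{h}$ one has $\tau(x_0 v x_1)=\tau(x_1)\,\tau(v)\,\tau(x_0)=x_0\,\tau(v)\,x_1\in x_0\mathfrak{h}x_1$. Hence both $\zeta\circ\tau$ and $\zeta$ are defined on all of $\mathfrak{h}^0$, and by $\Q$-linearity it suffices to prove $(\zeta\circ\tau)(w)=\zeta(w)$ for a single word $w=\omega_1\cdots\omega_k$ of length $k$ lying in $\mathfrak{h}^0$, so $\omega_i\in\{x_0,x_1\}$ with $\omega_1=x_0$ and $\omega_k=x_1$.

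For such $w$ I would rewrite the Kontsevich integral as an ordinary Lebesgue integral over the open simplex $\Delta_k:=\{1>t_1>\dots>t_k>0\}$:
\begin{align*}
\zeta(w)=\int_{\Delta_k}\ \prod_{i=1}^k g_{\omega_i}(t_i)\ dt_1\cdots dt_k,\qquad g_{x_0}(t)=\frac1t,\quad g_{x_1}(t)=\frac1{1-t},
\end{align*}
which is absolutely convergent exactly because $w\in\mathfrak{h}^0$ (admissibility rules out the non-integrable faces $t_1\to 1$ and $t_k\to 0$), so the change-of-variables theorem applies with no orientation or sign subtleties. Then I substitute $t_j=1-u_{k+1-j}$ for $j=1,\dots,k$. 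The defining inequalities of $\Delta_k$ transform into $1>u_1>\dots>u_k>0$, so this is a bijection $\Delta_k\to\Delta_k$ whose Jacobian is a signed permutation matrix, of absolute determinant $1$; and directly from the definitions $g_{\omega_i}(1-u)=g_{\tau(\omega_i)}(u)$, since $\tau$ swaps $x_0$ and $x_1$. Therefore
\begin{align*}
\prod_{i=1}^k g_{\omega_i}(t_i)=\prod_{i=1}^k g_{\tau(\omega_i)}(u_{k+1-i})=\prod_{j=1}^k g_{\tau(\omega_{k+1-j})}(u_j).
\end{align*}

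Finally I would observe that $\int_{\Delta_k}\prod_{j=1}^k g_{\tau(\omega_{k+1-j})}(u_j)\,du_1\cdots du_k$ is, by the very definition of the Kontsevich integral, the integral attached to the word whose $j$-th letter is $\tau(\omega_{k+1-j})$, namely $\tau(\omega_k)\tau(\omega_{k-1})\cdots\tau(\omega_1)=\tau(w)$ (here using that $\tau$ is an anti-automorphism, and that $\tau(w)\in\mathfrak{h}^0$ so its Kontsevich integral is indeed $\zeta(\tau(w))$). This yields $\zeta(w)=\zeta(\tau(w))$, and $\Q$-linearity gives the theorem. There is no genuine obstacle in the argument --- it is essentially one substitution; the only points needing care are the index bookkeeping showing that ``reverse the word, then swap $x_0\leftrightarrow x_1$'' is exactly the map $\tau$, and the remark that the admissibility hypothesis $w\in\mathfrak{h}^0$ is precisely what guarantees absolute convergence of the integrand over $\Delta_k$ both before and after the substitution, legitimising the change of variables.
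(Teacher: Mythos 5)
Your argument is correct: the substitution $t_j=1-u_{k+1-j}$ on the simplex, the identity $g_{\omega}(1-u)=g_{\tau(\omega)}(u)$, and the observation that reversing the word while swapping $x_0\leftrightarrow x_1$ is exactly the anti-automorphism $\tau$ together give the standard proof of duality, and your attention to admissibility guaranteeing absolute convergence on both sides is exactly the right point of care. The paper itself gives no proof (it only cites Zagier and remarks that duality follows from the integral representation), and your write-up is precisely that intended argument carried out in full.
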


In the following and main section of this survey, we give a unified approach to $q$-analogues of multiple zeta values. They are important in the theory of MZVs on the one hand because of their algebraic structure and, on the other hand, since they can be viewed as holomorphic functions in the unit disc giving connections to quasi-modular forms. There exist several models of $q$MZVs, which are often introduced in different ways. One of the goals of the survey is to unify them in the following sense. We introduce \enquote{general} $q$MZVs and we will see that every model we consider is just of specific shape of these general $q$MZVs.

\begin{definition}[$q\mathrm{MZV}$, \cite{BK2}]
\label{def Z_q2}
\begin{enumerate*}
\item
Define for $r\geq 0$, $k_1,\dots,k_r\geq 0$ and polynomials $Q_1\in X\Q[X]$, $Q_2,\dots,Q_r\in\Q[X]$ with $\deg (Q_j)\leq k_j$ for all $j$
\begin{align*}
    \zeta_q(k_1,\dots,k_r;Q_1,\dots,Q_r) := \sum\limits_{m_1>\dots>m_r>0} \frac{Q_1(q^{m_1})}{(1-q^{m_1})^{k_1}}\cdots \frac{Q_r(q^{m_r})}{(1-q^{m_r})^{k_r}}\in\Q\llbracket q\rrbracket,
\end{align*}
with $\zeta_q(\emptyset,\emptyset) := 1$, where $q$ is a formal variable.

\item Furthermore, we define the space
\begin{align*}
    \hspace{-0.5cm}\mathcal{Z}_q := \left\langle \zeta_q(k_1,\dots,k_r;Q_1,\dots,Q_r)\, |\, r\geq 0,\, k_1,\dots,k_r\geq 0,\, Q_1\in X\Q[X],\, \deg (Q_j)\leq k_j\right\rangle_\Q.
\end{align*}
\end{enumerate*}
\end{definition}

Furthermore, for every model we describe the subspace of $\mathcal{Z}_q$ that it spans and give a shuffle product that it satisfies. Since it is sometimes useful (like for proving that Schlesinger--Zudilin duality and the partitions relation are equivalent; see \cite[Thm. 3.22]{Bri}), we give distinguished translations of several models into others.

These subspaces then are considered in more detail in Section 3, where we refer to the picture at the end for a brief overview.

Section 4 yields a combinatorial view to $q$MZVs. Every $q$MZV is a $q$-series in particular, and the coefficient of $q^N$ in a $q$MZV is a natural multiple of distinguished partitions of $N$. Interesting is now that for lots of models - like the Schlesinger--Zudilin model, Bradley--Zhao model, bi-brackets - these multiplicities can be interpreted as coloring of certain rows or columns in the corresponding Young diagram. This leads to the new concept of \emph{marked partitions} and has particularly application for visulization of certain theorems. For example, Schlesinger--Zudilin duality is obtained by just transposing the Young diagram inclusive the markings. Essentially for the whole section is the following:

\begin{theorem}[{\cite[Thm. 4.7.]{Bri}}]
\label{Zq poly descr0}
A $q$-series $S$ is in $\mathcal{Z}_q$ iff there exists $f=(f_r)_{r\geq 0}$ with\\ $f_r\in \Q[X_1,\dots,X_r,Y_1,\dots,Y_r]$ for $r\geq 1$ and $f_0\in\Q$ such that
\begin{enumerate*}
    \item $f_r\equiv 0$ for all but finite many $r$,
    \item  $
    S = f_0 + \sum\limits_{N\geq 1} \left(\sum\limits_{r\geq 1} \sum\limits_{((\textbf{m},\textbf{n}))\in\mathcal{P}_r(N)} f_r(m_1,\dots,m_r,n_1,\dots,n_r)\right) q^N.$
\end{enumerate*}
\end{theorem}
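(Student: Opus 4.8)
The plan is to prove that $\mathcal{Z}_q$ coincides with the set $V$ of all $q$-series admitting a representation of the shape in (ii) for some family $f=(f_r)_{r\ge 0}$ subject to (i). Since a sum or a $\Q$-multiple of two such families is again of this form (the finiteness condition (i) being preserved), $V$ is a $\Q$-subspace of $\Q\llbracket q\rrbracket$; moreover $V$ is spanned by the constant $1$ together with the monomial series
\[
M_r^{\mathbf a,\mathbf b}:=\sum_{N\ge 1}\Bigl(\sum_{((\mathbf m,\mathbf n))\in\mathcal{P}_r(N)}m_1^{a_1}\cdots m_r^{a_r}\,n_1^{b_1}\cdots n_r^{b_r}\Bigr)q^N\qquad(r\ge 1,\ \mathbf a,\mathbf b\in\N^r).
\]
Hence it suffices to show (A) every generator $\zeta_q(k_1,\dots,k_r;Q_1,\dots,Q_r)$ of $\mathcal{Z}_q$ lies in $V$, and (B) every $M_r^{\mathbf a,\mathbf b}$ lies in $\mathcal{Z}_q$; the constant $1=\zeta_q(\emptyset,\emptyset)$ lies in both, taking $f_0=1$ and $f_r=0$ otherwise.

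For (A) I would expand each factor as a power series in $q^{m_i}$. Writing $(1-x)^{-k}=\sum_{n\ge 0}\binom{n+k-1}{k-1}x^n$ and using that $n\mapsto\binom{n+k-1}{k-1}$ is a polynomial of degree $k-1$, the bound $\deg Q_i\le k_i$ produces one-variable polynomials $p_i$ with $\frac{Q_i(q^{m_i})}{(1-q^{m_i})^{k_i}}=Q_i(0)+\sum_{n_i\ge 1}p_i(n_i)q^{m_in_i}$, and $Q_1(0)=0$ since $Q_1\in X\Q[X]$. Multiplying out the $r$ factors, I would collect the resulting terms according to the subset $S=\{j_1<\dots<j_s\}\subseteq\{1,\dots,r\}$ of indices contributing their series part rather than the constant $Q_i(0)$; note $1\in S$ always. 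For a fixed $S$, the variables $m_i$ with $i\notin S$ enter only through the chain $m_1>\dots>m_r\ge 1$, so summing each of them over its admissible range replaces it by a count of integers in an interval whose endpoints lie in $\{m_{j_1},\dots,m_{j_s},0\}$; such counts are binomial coefficients like $\binom{m_{j_t}-m_{j_{t+1}}-1}{\,\cdot\,}$ and $\binom{m_{j_s}-1}{\,\cdot\,}$, hence polynomials in the surviving $m_{j_t}$. Reading off the coefficient of $q^N$ — which amounts to imposing $m_{j_1}>\dots>m_{j_s}\ge 1$, $n_{j_t}\ge 1$, $\sum_t m_{j_t}n_{j_t}=N$, i.e.\ to summing over $\mathcal{P}_s(N)$ — exhibits the $S$-contribution in the required form with $f_s^{S}$ the product of $\prod_{i\notin S}Q_i(0)$, the binomial polynomials, and the $\prod_t p_{j_t}$ in the surviving multiplicity variables. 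Summing over the finitely many $S$ and setting $f_0=0$ (resp.\ $f_0=1$ if $r=0$) places the generator in $V$.

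For (B) I would first carry out the inner summations via $\sum_{n\ge 1}n^b q^{mn}=\dfrac{A_b(q^m)}{(1-q^m)^{b+1}}$ with $A_b\in X\Q[X]$ of degree $\le b+1$ (the Eulerian-type polynomials). This turns $M_r^{\mathbf a,\mathbf b}$ into $\sum_{m_1>\dots>m_r\ge 1}\prod_{i=1}^r m_i^{a_i}\,A_{b_i}(q^{m_i})(1-q^{m_i})^{-(b_i+1)}$, a nonzero rational multiple of the bi-bracket $\ai{b_1+1,\dots,b_r+1}{a_1,\dots,a_r}$. When all $a_i=0$ this is exactly $\zeta_q(b_1+1,\dots,b_r+1;A_{b_1},\dots,A_{b_r})\in\mathcal{Z}_q$. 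In general one must absorb the prefactors $m_i^{a_i}$; here I would invoke the partial-fraction identity underlying the bracket/bi-bracket calculus, by which every bi-bracket is a $\Q$-linear combination of brackets $\ai{s_1,\dots,s_l}{0,\dots,0}$, each of which is a rational multiple of $\zeta_q(s_1,\dots,s_l;A_{s_1-1},\dots,A_{s_l-1})\in\mathcal{Z}_q$.

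The bookkeeping in (A) — tracking which $m_i$ collapse and assembling the binomial polynomials — is routine but the most laborious ingredient. The genuine obstacle is the last step of (B): absorbing the monomial weights $m_i^{a_i}$, equivalently showing that bi-brackets already lie in $\mathcal{Z}_q$ (equivalently, that $\mathcal{Z}_q$ is stable under $q\,\tfrac{d}{dq}$). This rests on the partial-fraction decomposition of $(1-x)^{-a}(1-y)^{-b}$ into terms of the form $(1-xy)^{-c}(1-x)^{-d}$ together with its $x\leftrightarrow y$ mirror image, which is exactly the computation to be set up in the part of the survey treating brackets and bi-brackets, and from which the present theorem then follows.
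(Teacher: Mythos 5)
Your skeleton matches the paper's proof at its core: the space $V$ of series of the stated shape is spanned by $1$ together with the monomial series $M_r^{\mathbf a,\mathbf b}$, and these are exactly rational multiples of the bi-brackets $\g\bi{b_1+1,\dots,b_r+1}{a_1,\dots,a_r}$, so the theorem reduces to the statement that bi-brackets span $\mathcal{Z}_q$ (Theorem \ref{bibr Zq2}). The paper invokes that spanning result for both inclusions at once; your step (A), which establishes $\mathcal{Z}_q\subseteq V$ by expanding a general $\zeta_q(k_1,\dots,k_r;Q_1,\dots,Q_r)$ and collapsing those $m_i$ whose factor contributes only $Q_i(0)$, is correct (it is in substance the proof of Theorem \ref{qmzvs qser}) and makes that direction independent of the bi-bracket machinery.

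The gap is in the last step of (B). To place $M_r^{\mathbf a,\mathbf b}$ in $\mathcal{Z}_q$ you propose that every bi-bracket is a $\Q$-linear combination of brackets $\g\bi{s_1,\dots,s_l}{0,\dots,0}$. That is not a theorem: it asserts $\mathcal{BD}\subseteq\mathcal{MD}$, equivalently $\mathcal{Z}_q=\mathcal{Z}_q^\circ$, which is precisely Bachmann's open conjecture (\cite[Conj.~4.3]{Ba4}, restated in Section~3); no partial-fraction identity is known that eliminates the lower row of a bi-bracket in favour of brackets. The reduction that is actually available goes elsewhere: the prefactors $m_i^{a_i}$ are absorbed by interpreting $m_i^{a_i}$ (after binomial expansion, $(m_i-1)^{f_i}$) as a count of auxiliary summation variables interleaved below $m_i$, which converts the bi-bracket into a $\Q$-linear combination of SZ-$q$MZVs $\sz\left(p_1,\{0\}^{\ell_1},\dots,p_r,\{0\}^{\ell_r}\right)$ --- see Theorems \ref{gsz relation2} and \ref{sz-g-explicit2} --- and these are manifestly of the form $\zeta_q(\dots)$, hence in $\mathcal{Z}_q$, but they are not brackets. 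Replacing your reduction target by these SZ-$q$MZVs with zero entries (or simply citing Theorem \ref{bibr Zq2}, as the paper does) closes the argument. Note also that your parenthetical equivalence with stability of $\mathcal{Z}_q$ under $q\frac{d}{dq}$ is not literal: the derivative inserts the single factor $m_1n_1+\dots+m_rn_r$ rather than independent powers $m_1^{a_1}\cdots m_r^{a_r}$, so it does not by itself produce general bi-brackets.
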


\section*{Acknowledgements}
I would like to thank Kathrin Bringmann for her helpful comments on this paper. Furthermore, I thank Henrik Bachmann and Ulf K\"uhn for fruitful discussions and comments while supervising my master thesis, of which this paper is part of.

\section{Models of $q$MZVs}
On the one hand, $q$-analogues of multiple zeta values are important in the theory of MZVs because of their algebraic structure and, on the other hand, since they can be viewed as holomorphic functions in the unit disc giving connections to quasi-modular forms. 

We begin with considering general modified $q$-analogues of MZVs (Def. \ref{def Z_q2}), such as their well-definedness and connection to MZVs. Furthermore, we get here a first time in contact with $\mathcal{Z}_q$, the $\Q$-algebra of $q$MZVs. A natural question is which elements generate this algebra, which leads to different models of modified $q$MZVs. Every model of $q$MZVs contains at least one algebraic aspect of MZVs: Schlesinger--Zudilin's model inherits the stuffle product, and Bradley--Zhao's model the duality of MZVs. Also interesting is Bachmann's model since it gives a deep connection to modular forms that play an important role in the theory of MZVs as already Gangl, Kaneko, and Zagier (in \cite{GKZ}) and Broadhurst and Kreimer (in \cite{BK}) have shown. For more details about the various models, we refer to the original works \cite{Bra}, \cite{Zh1}, \cite{Sch}, \cite{Zu2}, \cite{Ba2}, \cite{Tak}, \cite{OOZ}, \cite{Oko}, such as to \cite{Zh2}, where the author gives an overview of the models and their history.

In general, a $q$-analogue of an object is a modified object in an additional variable $q$ (often a series in (complex) $q$ with $|q|<1$) that returns the original object in the limit $q\rightarrow 1$, taken on the real axis from the left. 

For example, a $q$-analogue of a natural number $n$ is
\begin{align*}
    [n]_q := \frac{1-q^{n}}{1-q} = 1 + q + q^2 + \dots + q^{n-1}.
\end{align*}

\textit{Modified} $q$-analogues of MZVs are $q$-series that return a multiple zeta value if we multiply the $q$-series first with a power of $(1-q)$ (most times, $(1-q)^{\wt(\textbf{k})}$) and then take the limit $q\rightarrow 1$. This is convenient since we can avoid the additional power-of-$(1-q)$-factors without losing the structure we want to consider; furthermore, the spaces spanned by these objects become $\Q$-algebras as we will see, while the spaces spanned by non-modified objects become $\Q (1-q)$-algebras. We consider only modified $q$MZVs, why we avoid the word \enquote{modified} in the following.

\begin{remark}[to Def. \ref{def Z_q2}]
\begin{enumerate*}
\item The condition $Q_1\in X\Q[X]$ is necessary for well-definedness.

\item Note that $\mathcal{Z}_q$ does not contain all modified $q$-analogues of MZVs inevitably. For example, it is not clear yet whether the modified $q$MZVs introduced by Shen and Qin (\cite{SQ}) are in $\mathcal{Z}_q$.
\end{enumerate*}
\end{remark}

We use notation from \cite{BK2}, where the authors introduce important subspaces of $\mathcal{Z}_q$:

\begin{definition} 
Define for $d\geq 0$
\begin{align*}
    \mathcal{Z}_{q,d} :=& \left\langle \zeta_q(k_1,\dots,k_r;Q_1,\dots,Q_r)\in\mathcal{Z}_q|\, r\geq 0,\, k_1,\dots,k_r\geq 1,\deg(Q_j)\leq k_j-d\right\rangle_\Q,
    \\
    \mathcal{Z}^\circ_{q,d} :=& \left\langle \zeta_q(k_1,\dots,k_r;Q_1,\dots,Q_r)\in\mathcal{Z}_{q,d}|\, r\geq 0,\, k_1,\dots,k_r\geq 1,Q_j\in X\Q[X]\right\rangle_\Q
\end{align*}
with the abbreviation $\mathcal{Z}^\circ_q := \mathcal{Z}^\circ_{q,0}$.
\end{definition}

\begin{remark}
Naively, we could think of $k_1+\dots+k_r$ as \enquote{weight} of $\zeta_q(k_1,\dots,k_r;Q_1,\dots,Q_r)$ in accordance with the definition of weight for MZVs. But this is not well-defined since for example
\begin{align*}
    \zeta_q(k_1,\dots,k_r;Q_1,\dots,Q_r) = \zeta_q(k_1+1,\dots,k_r;(1-X)Q_1,Q_2\dots,Q_r).
\end{align*}
Hence, we need another notion of weight. Such one we will consider e.g. for bi-brackets (Def. \ref{bibr defi2}).
\end{remark}

\begin{remark}
\label{qddq rem}
\begin{enumerate*}
\item The spaces $\mathcal{Z}_q$ and $\mathcal{Z}_q^\circ$ are closed under the operator $q\frac{d}{d q}$ (\cite{Ba4}, \cite[Prop. 3.14]{BK1}).
\item The spaces $\mathcal{Z}_{q,1}$ and $\mathcal{Z}_{q,1}^\circ$ are only conjecturally closed under $q\frac{d}{d q}$ (\cite{Ba7}, \cite[Conj. 1]{Oko}). 
\end{enumerate*}
\end{remark}

\begin{proposition}[{\cite[Prop. A.32]{Bri}}]\
\label{qmzv welldef}
\begin{enumerate*}
    \item For $q\in\mathbb{C},\, |q|<1$, every $q$MZV converges and can be viewed as holomorphic function inside the unit disk or upper half plane if $q = e^{2\pi i \tau}$ with $\tau\in\mathbb{H}$.
    \item For $k_1\geq 2,\, k_2,\dots,k_r\geq 1$, the limit
    \begin{align*}
        \lim\limits_{q\rightarrow 1^-} (1-q)^{k_1+\dots + k_r}\zeta_q(k_1,\dots,k_r;Q_1,\dots,Q_r)=\zeta_q(k_1,\dots,k_r) \prod\limits_{j=1}^r Q_j(1),
    \end{align*}
    exists and is obtained by interchanging limit and summation due to absolute convergence on $[0,1)$.
\end{enumerate*}
\end{proposition}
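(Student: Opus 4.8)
I would prove the two parts separately; they are independent.

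For part (i), fix $q$ with $\rho := |q| < 1$. Then $|1-q^m| \ge 1-\rho > 0$ for all $m \ge 1$, each $Q_j$ is bounded on the closed unit disc, and --- this is exactly where the hypothesis $Q_1 \in X\Q[X]$ is used --- $|Q_1(q^{m_1})| \le C_1\rho^{m_1}$ for a constant $C_1$ (namely $\sup_{|z|\le1}|Q_1(z)/z|$). Hence the general term of $\zeta_q(k_1,\dots,k_r;Q_1,\dots,Q_r)$ has modulus at most $C\rho^{m_1}$ with $C$ independent of the summation variables, so the series is dominated by $C\sum_{m_1\ge r}\binom{m_1-1}{r-1}\rho^{m_1} < \infty$, giving absolute convergence. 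This bound is locally uniform in $q$ (on a compact subset of $\{|q|<1\}$ replace $\rho$ by a fixed $\rho_0<1$), and every summand is a holomorphic function of $q$, so by the Weierstrass convergence theorem the $q$MZV is holomorphic on $\{|q|<1\}$; precomposing with the holomorphic map $\tau\mapsto e^{2\pi i\tau}$ from $\mathbb{H}$ to $\{0<|q|<1\}$ yields the statement in the variable $\tau$.

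For part (ii), abbreviate $a_{\mathbf m}(q) := \prod_{j=1}^r Q_j(q^{m_j})/[m_j]_q^{k_j}$, so that $(1-q)^{k_1+\dots+k_r}\zeta_q(k_1,\dots,k_r;Q_1,\dots,Q_r) = \sum_{m_1>\dots>m_r>0} a_{\mathbf m}(q)$, and put $b_{\mathbf m} := \prod_{j=1}^r Q_j(1)/m_j^{k_j}$; by the admissibility hypothesis ($k_1\ge2$, $k_j\ge1$) the series $\sum_{\mathbf m} b_{\mathbf m} = \zeta(k_1,\dots,k_r)\prod_{j=1}^r Q_j(1)$ converges. Since $[m]_q \to m$ and the $Q_j$ are continuous, $a_{\mathbf m}(q) \to b_{\mathbf m}$ as $q\to1^-$ for each fixed $\mathbf m$, so the whole content of the assertion is the interchange of $\lim_{q\to1^-}$ with the summation. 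I would justify this by truncation: split the sum at $m_1 = M$; the part with $m_1 \le M$ is finite and converges termwise to the corresponding part of $\sum b_{\mathbf m}$, so it suffices to show that $\sup_{q\in(0,1)}\sum_{m_1>M}|a_{\mathbf m}(q)| \to 0$ as $M\to\infty$, the analogue for $\sum_{m_1>M}|b_{\mathbf m}|$ being merely the tail of a convergent series.

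This uniform tail bound is the technical heart and the step I expect to cost the most effort. I would obtain it from three elementary inequalities: $|Q_1(q^{m_1})| \le C_1 q^{m_1}$ and $|Q_j(q^{m_j})| \le M_j$ ($j\ge2$) for suitable constants; $q^m \le e^{-m(1-q)}$, hence $1-q^m \ge 1-e^{-m(1-q)}$; and $\tfrac{t}{1-e^{-t}} \le 1+t$ for $t\ge0$. Writing $t_1 := m_1(1-q)$, these give $|Q_1(q^{m_1})|/[m_1]_q^{k_1} \le C_1 e^{-t_1}(1+t_1)^{k_1}/m_1^{k_1}$ and, for $j\ge2$, $1/[m_j]_q^{k_j} \le 1/[m_j]_q \le \tfrac1{m_j} + (1-q)$. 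Summing the inner indices over the simplex $m_1 > m_2 > \dots > m_r \ge 1$ for fixed $m_1$ --- and bounding that sum by $\tfrac{1}{(r-1)!}\bigl(\sum_{m=1}^{m_1-1}(\tfrac1m+(1-q))\bigr)^{r-1} \le \tfrac{1}{(r-1)!}(1+\log m_1)^{r-1}(1+t_1)^{r-1}$ --- the exponential factor $e^{-t_1}$ absorbs all the powers of $(1+t_1)$ via $\sup_{t\ge0}e^{-t}(1+t)^{k_1+r-1} =: D < \infty$, and the whole tail collapses to $\sum_{m_1>M}|a_{\mathbf m}(q)| \le C'\sum_{m_1>M}(1+\log m_1)^{r-1}/m_1^{k_1} \le C'\sum_{m_1>M}(1+\log m_1)^{r-1}/m_1^{2}$ with $C'$ independent of $q$, where $k_1\ge2$ was used in the last step. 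This is the tail of a convergent series, hence tends to $0$ as $M\to\infty$; together with the termwise convergence of the finite part this completes the interchange and the proof.
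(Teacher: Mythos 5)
Your proof is correct. For part (i) you follow essentially the same route as the paper: the hypothesis $Q_1\in X\Q[X]$ produces a factor $|q|^{m_1}$, all other factors are bounded by constants depending only on $|q|$, and the simplex sum is controlled by $\sum_{m_1\geq r}\binom{m_1-1}{r-1}\rho^{m_1}$; the only differences are cosmetic (the paper first splits each $Q_j$ into two polynomials with non-negative coefficients where you use sup-norms on the closed disc, and you make the holomorphy explicit via locally uniform convergence and Weierstrass, which the paper leaves implicit). For part (ii) your argument genuinely diverges from the paper's and is more careful. The paper observes that on $[0,1)$ one may take $C_{q,k}=1$ and then invokes ``uniform convergence'' to interchange limit and summation; but the resulting majorant $C\,q^{m_1}$ is not summable over the simplex uniformly in $q$ (its sum is $C\,q^{r}/(1-q)^{r}$), so as written that appeal does not by itself justify the interchange. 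You supply exactly the missing $q$-uniform tail estimate: the chain $q^{m}\leq e^{-m(1-q)}$, $t/(1-e^{-t})\leq 1+t$, $1/[m_j]_q\leq 1/m_j+(1-q)$, followed by absorbing every power of $(1+t_1)$ into $e^{-t_1}$, is correct, and it uses the admissibility hypothesis $k_1\geq 2$ precisely where it must, in the final comparison with $\sum_{m_1>M}(1+\log m_1)^{r-1}/m_1^{2}$. What your approach buys is a fully rigorous truncation-plus-uniform-tail proof of the interchange; what it costs is a page of elementary estimates where the paper settles for a one-line assertion.
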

\begin{proof}  For (i) we show the convergence of
\begin{align*}
    (1-q)^{k_1+\dots + k_r}\zeta_q(k_1,\dots,k_r;Q_1,\dots,Q_r)
\end{align*}
for all $k_1\geq 1,\, k_2,\dots,k_r\geq 0$ and for $|q| < 1$ which is equivalent to (i) because of $q\neq 1$.

Therefore, fix some $q$ with $|q|<1$, $r\geq 1$ (for $r=0$ there is nothing to do), $k_1\geq 1,\, k_2,\dots,k_r\geq 0$ and polynomials $Q_1,\dots,Q_r$ in accordance to the definition of $q$MZVs. 

We can write every $Q_j$ in the form
\begin{align*}
    Q_j = Q_j^+ - Q_j^-,
\end{align*}
where $Q_j^+$ and $Q_j^-$ are polynomials having coefficients $\geq 0$ and the same properties as $Q_j$ (i.e., $\deg\leq k_j,\, Q_1^+,Q_1^-\in X\Q[X]$). Thus, $\zeta_q(k_1,\dots,k_r;Q_1,\dots,Q_r)$ is a finite rational linear combination of $q$MZVs where the corresponding polynomials have non-negative coefficients, which is the reason why we can assume in the following w.l.o.g. that the $Q_j$ all have non-negative coefficients.

Then we have
\begin{align*}
    0\leq |Q_j(q^m)| \leq Q_j(0)+Q_j(1)|q|^m
\end{align*}
for all $|q| < 1$ and $m\in\N$.

For $j\neq 1$, we use $|q|<1$ and that the coefficients of $Q_j$ are all non-negative which gives us furthermore
\begin{align*}
    0\leq |Q_j(q^m)|\leq Q_j(0) + Q_j(1) =: c_j.
\end{align*}
Since $Q_1(0)=0$, we have for all $m\in\N$,
\begin{align*}
    0\leq |Q_1(q^m)| \leq Q_1(1) |q|^m.
\end{align*}

Now, since  
\begin{align*} \Bigg|\frac{(1-q)^k}{(1-q^m)^k}\Bigg| =\Bigg| \frac{1}{(1+q+\dots +q^{m-1})^k}\Bigg|\leq C_{q,k}
\end{align*}
for all $m$ with an appropriate constant $C_{q,m}\in\R$ (note that $\lim\limits_{m\rightarrow\infty} \Big|\frac{(1-q)^k}{(1-q^m)^k}\Big| = |(1-q)|^k$ exists since $|q|<1$), we get for all tuples $(m_1,\dots,m_r)$ of natural numbers
\begin{align*}
    0\leq |1-q|^{k_1+\dots+k_r}\prod\limits_{j=1}^r \frac{|Q_j(q^{m_j})|}{|1-q^{m_j}|^{k_j}}\leq C |q|^{m_1}
\end{align*}
with $C := Q_1(1)c_2\cdots c_r C_{q,k_1}\cdots C_{q,k_r}$.

For fixed $0\leq s < 1$, the map $x\mapsto s^x$ decreases with exponential growth in $x$. Hence, for all $n\in\N$, there is a constant $M_n$ (depends on $|q|$) such that
\begin{align*}
    |q|^{m}\leq M_n\frac{1}{m^n}
\end{align*}
for all $m\in\N$. In particular, for $n = r+1$, we obtain
\begin{align*}
    0\leq \, &|1-q|^{k_1+\dots+k_r}|\zeta_q(k_1,\dots,k_r;Q_1,\dots,Q_r)| \leq C M_{r+1} \sum\limits_{m_1>\dots>m_r>0} \frac{1}{m_1^{r+1}} 
    \\
    =\, &C M_{r+1}\sum\limits_{m_1>0} \binom{m_1 - 1}{r-1}\frac{1}{m_1^{r+1}}\leq C  M_{r+1} \sum\limits_{m>0} \frac{1}{m^2} < \infty
\end{align*}
because of $\binom{m-1}{r-1} = \frac{(m-1) (m-r+1)}{(r-1)!}\leq m^{r-1}$ and the well-known fact that the Basler sum converges. 
\\
In particular, the defining sum of $(1-q)^{k_1+\dots+k_r}\zeta_q(k_1,\dots,k_r;Q_1,\dots,Q_r)$ converges for every $|q| < 1$, which proves (i).

For the proof of (ii), we remark that on the intervall $[0,1)$ our sum converges absolutely since there, we can choose $C_{q,k} = 1$ for all $q$ and $k\geq 1$, i.e., independent of $q$. Due to this uniform convergence, limit and summation can be interchanged. Now, for all $k\geq 0,\, m\in\N$ and $1\leq j\leq r$,
\begin{align*}
    \lim\limits_{q\rightarrow 1} Q_j(q^m) = Q_j(1),\quad \lim\limits_{q\rightarrow 1} \frac{(1-q)^k}{(1-q^m)^k} = \lim\limits_{q\rightarrow 1} \left(\frac{1}{1+q+\dots+q^{m-1}}\right)^k = \frac{1}{m^k},
\end{align*}
which is why we get for fixed $k_1\geq 2,\, k_2,\dots,k_r\geq 1$ and with (i)
\begin{align*}
    \zeta_q(k_1,\dots,k_r) \prod\limits_{j=1}^r Q_j(1) &= \sum\limits_{m_1>\dots>m_r>0}  \lim\limits_{q\rightarrow 1} \prod\limits_{j=1}^r  \frac{(1-q)^{k_j} Q_j(q^{m_j})}{(1-q^{m_j})^{k_j}} 
    \\
    &= \lim\limits_{q\rightarrow 1} (1-q)^{k_1+\dots+k_r}\zeta_q(k_1,\dots,k_r;Q_1,\dots,Q_r).\tag*{\qedhere}
\end{align*}
\end{proof} 

A similar proof for the proposition with stronger assumption for (i) is given in \cite[Lem. 6.6]{BK1}.

Often, we talk about $\mathcal{Z}_q$ as algebra. We justify this in the following as we see that we can give $\mathcal{Z}_q$ a structure such that it becomes a quasi-shuffle algebra.

\begin{definition}
Consider the alphabet 
$
A_Z:=\{{\textstyle{\binom{k}{Q}}}\, |\,  Q\in \Q[X],\, k\in\N,\, \deg(Q)\leq k\}.
$
Define on $\Q A_Z$ the commutative and associative product $\diamond: \Q A_Z\otimes \Q A_Z\rightarrow \Q A_Z$ by $\Q$-bilinearity, $w\diamond \bone := \bone\diamond w := w$ for all $w\in \Q A_Z$ and
\begin{align*}
    \left(\binom{k_1}{Q_1}, \binom{k_2}{Q_2}\right) \longmapsto\binom{k_1 + k_2}{Q_1\cdot Q_2}.
\end{align*}
\end{definition}
Set $M:= A_Z^\circ\Q\left\langle A_Z\right\rangle + \Q$, where $A_Z^\circ := \{{\textstyle{\binom{k}{Q}}}: Q\in X\Q[X],\, k\in\N,\, \deg(Q)\leq k\}$, and let $\ast$ be the induced quasi-shuffle product on $M$. It turns out that $\zeta_q$ becomes an algebra homomorphism:
\begin{proposition}
\label{Zq qshuffle}
The evaluation map $\zeta_q : M\rightarrow \mathcal{Z}_q$, defined via
\begin{align*}
    \binom{k_1}{Q_1}\cdots\binom{k_r}{Q_r}\longmapsto \zeta_q(k_1,\dots,k_r;Q_1,\dots,Q_r),
\end{align*}
extended to $M$ by $\Q$-linearity, is an algebra homomorphism, i.e., for all $u,v\in M$ we have
\begin{align*}
    \zeta_q(u\ast v) = \zeta_q(u)\zeta_q(v).
\end{align*}
\end{proposition}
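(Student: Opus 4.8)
The plan is to reduce the statement to the classical fact (Hoffman) that, for a quasi-shuffle algebra $(\Q\langle A\rangle,\ast_\diamond)$, a map induced letter-by-letter from a "multiplicativity on letters" condition is automatically an algebra homomorphism, and then to verify that the multiplicativity-on-letters condition holds for $\zeta_q$ at the level of the defining iterated sums. Concretely, I would argue by induction on the total length $\ell$ of the two words $u = \binom{k_1}{Q_1}u'$ and $v = \binom{l_1}{P_1}v'$ (with the base case $u=\bone$ or $v=\bone$ being the normalization axiom, which holds since $\zeta_q(\emptyset;\emptyset)=1$). Writing out the quasi-shuffle recursion,
\begin{align*}
u\ast v = \binom{k_1}{Q_1}\bigl(u'\ast v\bigr) + \binom{l_1}{P_1}\bigl(u\ast v'\bigr) + \binom{k_1+l_1}{Q_1 P_1}\bigl(u'\ast v'\bigr),
\end{align*}
I apply $\zeta_q$, use the induction hypothesis on each of the three strictly shorter products, and am left with an identity purely about $\zeta_q$ of single words.

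The heart of the proof is then the following combinatorial identity on the series side: for the iterated sum
\begin{align*}
\zeta_q(k_1,\dots,k_r;Q_1,\dots,Q_r) = \sum_{m_1>\dots>m_r>0}\ \prod_{j=1}^r \frac{Q_j(q^{m_j})}{(1-q^{m_j})^{k_j}},
\end{align*}
the product of two such sums, say with outer summation variables $m_1$ and $n_1$, decomposes according to whether $m_1>n_1$, $m_1<n_1$, or $m_1=n_1$; the first two cases reproduce the first two terms on the right of the recursion, and the diagonal case $m_1=n_1$ produces a single factor
\begin{align*}
\frac{Q_1(q^{m_1})}{(1-q^{m_1})^{k_1}}\cdot\frac{P_1(q^{m_1})}{(1-q^{m_1})^{l_1}} = \frac{(Q_1P_1)(q^{m_1})}{(1-q^{m_1})^{k_1+l_1}},
\end{align*}
which is exactly the letter $\binom{k_1+l_1}{Q_1P_1}$ — i.e. the diamond product of letters on $A_Z$ was defined precisely so that this diagonal term matches. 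One must check that all three resulting sums land in the appropriate subspace (so that the recursion makes sense inside $M$ and $\mathcal{Z}_q$): the degree condition $\deg(Q_jP_j)\le k_j+l_j$ is clear, and the admissibility-at-the-top condition $Q_1P_1\in X\Q[X]$ holds because $Q_1\in X\Q[X]$ already; this is where the definition of $M$ via $A_Z^\circ$ is used, and one should note that in the diagonal term the outer letter is still in $A_Z^\circ$. Convergence/well-definedness of every series involved is guaranteed by Proposition \ref{qmzv welldef}(i), so rearranging the double sum into the three ranges is legitimate (working in $\Q\llbracket q\rrbracket$ one can also just compare coefficients of each $q^N$, where everything is a finite sum and the rearrangement is trivially valid).

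I expect the main obstacle to be purely bookkeeping: matching the recursive left-hand side $\zeta_q(u\ast v)$ term-by-term with the range decomposition of the product of the two iterated sums, while keeping the polynomial data $(Q_j)$ and $(P_j)$ correctly shuffled along the index entries $(k_j)$ and $(l_j)$. It helps to set up notation for "stuffle of indices with attached polynomials" once and for all, or alternatively to invoke Hoffman's theorem \cite{Hof} in the form: if $\diamond$ is a commutative associative product on $\Q A$ and $\phi:\Q A\to R$ satisfies a compatible multiplicativity, then the induced map $\Q\langle A\rangle\to R$ is a $\ast_\diamond$-algebra homomorphism; in that framing the only thing left to check is the single-variable diagonal identity displayed above together with the disjoint decomposition of $\{m_1>m_2>\dots\}\times\{n_1>n_2>\dots\}$ coming from comparing $m_1$ with $n_1$, which is the standard stuffle argument for MZVs carried over verbatim with numerators. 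No genuine analytic difficulty arises beyond the convergence already established.
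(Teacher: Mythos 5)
Your proposal is correct and is essentially the paper's own argument: the paper's proof consists of the single remark that the claim ``follows from multiplication of $q$MZVs, represented as iterated sums,'' and your induction on word length combined with the decomposition of the outer summation indices into $m_1>n_1$, $m_1<n_1$, $m_1=n_1$ is exactly the standard stuffle argument that remark refers to, with the diamond product $\binom{k_1}{Q_1}\diamond\binom{l_1}{P_1}=\binom{k_1+l_1}{Q_1P_1}$ designed to match the diagonal term. Your additional checks (that $Q_1P_1\in X\Q[X]$ so the recursion stays in $M$, and that one may work coefficientwise in $\Q\llbracket q\rrbracket$) are correct and fill in details the paper leaves implicit.
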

\begin{proof}
This follows from multiplication of $q$MZVs, represented as iterated sums.
\end{proof}

\begin{remark}[\cite{BK2}]
The subspaces $\mathcal{Z}_{q,d}, \mathcal{Z}_{q,d}^\circ\subseteq\mathcal{Z}_q$ are for all $d\geq 0$ subalgebras of $\mathcal{Z}_q$ by restricting the above defined quasi-shuffle product to the responding subspace.
\end{remark}

We need for the definition of quasi-shuffle products in different models of $q$MZVs the notion of some particular free non-commutative algebras:

\begin{definition}
Define two free non-commutative algebras of two letters,
\begin{align*}
    \mathfrak{h} :=\,  \mathbb{Q}\left\langle x_0,x_1\right\rangle,\quad \mathfrak{K} :=\,  \Q\left\langle p,y\right\rangle
\end{align*}
and subalgebras
\begin{align*}
    \mathfrak{h}^0 :=\,  \mathbb{Q}\textbf{1}\oplus x_0\mathfrak{h}x_1,\quad \mathfrak{h}^1 :=\,  \mathbb{Q}\textbf{1}\oplus\mathfrak{h}x_1,\quad 
    \mathfrak{K}^1 :=\,  \Q\bone\oplus p\mathfrak{K}y,\quad \mathfrak{K}^3 := p\Q\left\langle p,py\right\rangle py\oplus\Q\bone.
\end{align*}
Monomials in the two \enquote{letters} $x_0,\, x_1$ resp. $p,\, y$ are called \enquote{words}. They form a $\mathbb{Q}$-basis of $\mathfrak{h}$ resp. $\mathfrak{K}$. $\textbf{1}$ is the empty word and hence the unit of $\mathfrak{h}$ resp. $\mathfrak{K}$.
\end{definition}

\subsection{Schlesinger--Zudilin model}
\label{ssec: app SZ}
One of the most natural questions is how bases or at least generating systems of the $\Q$-vector space $\mathcal{Z}_q$ look like and whether there are interesting subspaces we should consider.

The probably most natural looking generating system when writing the elements of $\mathcal{Z}_q$ in the shape of Definition \ref{def Z_q2} is
\begin{align*} 
\left\{\zeta_q\left(k_1,\dots,k_r;X^{k_1},\dots,X^{k_r}\right)\, \Big|\, r\geq 0,\, k_1\geq 1,\, k_2,\dots,k_r\geq 0\right\}
\end{align*}
since it corresponds to $Q_i(X) := X^{k_i}$. We will see in Proposition \ref{Qsz=Zq2} that these indeed generate $\mathcal{Z}_q$.

These generators are named \emph{Schlesinger--Zudilin $q$MZVs} by Schlesinger (2001, \cite{Sch}) and Zudilin (2003, \cite{Zu1}) who introduced them independently:

\begin{definition}[Schlesinger--Zudilin $q$MZVs]
\label{SZ def2}\
\begin{enumerate*} 
\item An index $\mathbf{k} = (k_1,\dots,k_r)\in\N_0^r$ is \emph{SZ-admissible} if $r\geq 0$ and $\mathbf{k} = \emptyset$ or $k_1\geq 1$. 
\item Define for every SZ-admissible index $\mathbf{k}$ the SZ-$q$MZV as $\sz(\emptyset) := 1$ and for $r\geq 1$
\begin{align*}
        \sz(\mathbf{k}) := \sz(k_1,\dots,k_r) := \zeta_q\left(k_1,\dots,k_r;X^{k_1},\dots,X^{k_r}\right)
= \sum\limits_{m_1>\dots>m_r>0} \frac{q^{m_1k_1}}{(1-q^{m_1})^{k_1}} \dots \frac{q^{m_r k_r}}{(1-q^{m_r})^{k_r}}.
\end{align*}
\end{enumerate*}
\end{definition}

We introduced an extended version due to Ebrahimi-Fard, Manchon, and Singer (cf. \cite{EMS}). In the original model, due to Schlesinger and Zudilin, only indices with $k_i\geq 1$ were allowed.

\begin{remark}
\label{sz remark2}
\begin{enumerate*}
    \item Note that if one of the indices is 0 in an SZ-$q$MZV, say $k_j = 0$ for some $j$, then the summand is independent of $m_j$. Therefore, it is often useful to distinguish between zero and non-zero indices.
    
    \item An index ${\mathbf{k}}$ is SZ-admissible iff ${\mathbf{k}} + {\mathbf{1}}$ (every argument of ${\mathbf{k}}$ is increased by $1$) is admissible.
    
    \item The name of the SZ-model is attributed not only to Schlesinger, although his publication (\cite{Sch}) was two years before Zudilin's (\cite{Zu1}), since Schlesinger introduced his model in a slightly modified way: He considered
    \begin{align*}
        \zeta_q^{\mathrm{SZ'}}(k_1,\dots,k_r) := \sum\limits_{m_1>\dots>m_r>0} \frac{1}{(1-q^{m_1})^{k_1}\dots (1-q^{m_r})^{k_r}}
    \end{align*}
    with $|q| > 1$ instead of $\sz(k_1,\dots,k_r)$ (with $|q| < 1$). The latter is now the usual definition and also Zudilin introduced it this way.
    
    On closer inspection, we see that this is almost $\sz(k_1,\dots,k_r)$. Namely, one has
    \begin{align*}
        \zeta_{q^{-1}}^{\mathrm{SZ'}}(k_1,\dots,k_r) = (-1)^{k_1 + \dots + k_r}\sz(k_1,\dots,k_r).
    \end{align*}
    Further details of the history of SZ-$q$MZVs can be found, e.g. in \cite{Zh1}.
    \item For some applications as translation/duality in the OOZ-model (Thm. \ref{dual in ooz2}), it is useful to have no strictly ordered index in the defining sum of SZ-$q$MZVs. Hence, SZ-star-$q$MZVs are defined as
    \begin{align*}
        \szs({\mathbf{k}}) := \szs(k_1,\dots,k_r) := \sum\limits_{m_1\geq \dots \geq m_r>0} \frac{q^{m_1k_1}}{(1-q^{m_1})^{k_1}} \dots \frac{q^{m_r k_r}}{(1-q^{m_r})^{k_r}}.
    \end{align*}
    As for MZVs and MZSVs, every SZ-$q$MZSV is a finite sum of SZ-$q$MZVs.
\end{enumerate*}
\end{remark}

\begin{proposition}
\label{Qsz=Zq2}
SZ-$q$MZVs span $\mathcal{Z}_q$, i.e.,
\begin{align*}
    \mathcal{Z}_{q} = \left\langle \sz(k_1,\dots,k_r)\, \middle|\, r\geq 0,\, k_1\geq 1,\, k_i\geq 0\right\rangle_\Q.
\end{align*}
\end{proposition}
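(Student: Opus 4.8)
The plan is to prove the two inclusions separately. The inclusion ``$\supseteq$'' is immediate: by Definition~\ref{SZ def2} one has $\sz(k_1,\dots,k_r)=\zeta_q(k_1,\dots,k_r;X^{k_1},\dots,X^{k_r})$, and since $k_1\ge 1$ the polynomial $X^{k_1}$ lies in $X\Q[X]$, while $\deg(X^{k_j})=k_j\le k_j$ for all $j$; hence every SZ-$q$MZV is one of the spanning elements of $\mathcal{Z}_q$ from Definition~\ref{def Z_q2}, and the case $r=0$ is trivial since both sides equal $1$.

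For ``$\subseteq$'' it suffices to write an arbitrary spanning element $\zeta_q(k_1,\dots,k_r;Q_1,\dots,Q_r)$ of $\mathcal{Z}_q$ as a finite $\Q$-linear combination of SZ-$q$MZVs. If $k_1=0$, then $Q_1\in X\Q[X]$ with $\deg Q_1\le 0$ forces $Q_1=0$ and the element vanishes, so assume $k_1\ge 1$. I would then use the elementary identity, valid for integers $0\le i_j\le k_j$,
\begin{align*}
\zeta_q\bigl(k_1,\dots,k_r;\,X^{i_1}(1-X)^{k_1-i_1},\dots,X^{i_r}(1-X)^{k_r-i_r}\bigr)=\sz(i_1,\dots,i_r),
\end{align*}
which follows by cancelling the factor $(1-q^{m_j})^{k_j-i_j}$ against the denominator $(1-q^{m_j})^{k_j}$ in each summand of the defining series. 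Next I would expand every $Q_j$ in the family $\bigl\{X^i(1-X)^{k_j-i}:0\le i\le k_j\bigr\}$ and — this is the only real point — expand $Q_1$, which has vanishing constant term, in the subfamily $\bigl\{X^i(1-X)^{k_1-i}:1\le i\le k_1\bigr\}$. Granting these expansions, $\Q$-multilinearity of $(Q_1,\dots,Q_r)\mapsto\zeta_q(k_1,\dots,k_r;Q_1,\dots,Q_r)$ (immediate from the defining series) together with the displayed identity writes $\zeta_q(k_1,\dots,k_r;Q_1,\dots,Q_r)$ as a finite $\Q$-linear combination of terms $\sz(i_1,\dots,i_r)$ with $i_1\ge 1$ and $i_2,\dots,i_r\ge 0$, each a genuine (SZ-admissible) SZ-$q$MZV.

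Thus the main obstacle is the basis claim used in the expansion step: that $\bigl\{X^i(1-X)^{k-i}:0\le i\le k\bigr\}$ is a $\Q$-basis of $\{P\in\Q[X]:\deg P\le k\}$, and $\bigl\{X^i(1-X)^{k-i}:1\le i\le k\bigr\}$ is a $\Q$-basis of $\{P\in\Q[X]:\deg P\le k,\ P(0)=0\}$. I would establish this by triangularity: the binomial theorem gives $X^i(1-X)^{k-i}=X^i+(\text{higher-order terms of degree}\le k)$, so the transition matrix from the family $\bigl\{X^i(1-X)^{k-i}\bigr\}_{0\le i\le k}$ to the monomial basis $\{1,X,\dots,X^k\}$ is upper triangular with $1$'s on the diagonal, hence invertible; restricting to indices $i\ge 1$ gives the statement for polynomials vanishing at the origin. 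Everything else — the degree bookkeeping $\deg Q_j\le k_j$ and the edge cases — is routine.
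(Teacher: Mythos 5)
Your proof is correct and takes essentially the same route as the paper: the paper's key identity $\frac{X^n}{(1-X)^s}=\sum_{p=n}^{s}\binom{s-n}{p-n}\frac{X^p}{(1-X)^p}$ is precisely the explicit expansion of the monomial $X^n$ in your family $\bigl\{X^i(1-X)^{s-i}\bigr\}_{0\le i\le s}$, whose basis property you instead justify by triangularity. The treatment of the condition $Q_1\in X\Q[X]$ (guaranteeing $i_1\ge 1$, hence SZ-admissibility) and the trivial reverse inclusion match the paper's argument.
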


\begin{proof}  The proof is obtained from the fact that every expression $\frac{X^n}{(1-X)^s}$ for $0\leq n\leq s$ is a finite $\Q$-linear combination of terms $\frac{X^k}{(1-X)^k}$ for $k\geq 0$. Specifically applies

\begin{align}
\label{special rational functions2}
    \frac{X^n}{(1-X)^s} = \sum\limits_{p=n}^s \binom{s-n}{p-n}\frac{X^p}{(1-X)^p}
\end{align}
for every $s\in\mathbb{N}_0$ and every $0\leq n \leq s$.\end{proof}
In particular, the SZ-model is closed under the operator $q\frac{d}{d q}$ by Remark \ref{qddq rem}(i).

SZ-$q$MZVs satisfy a similar stuffle product as MZVs and a similar duality relation. Both combined give the shuffle product of MZVs, which is a very nice result and application of SZ-$q$MZVs. This result was mentioned in \cite{EMS} and \cite{Sin}.

\begin{definition}[SZ-stuffle product]
\label{sz stuffle2}
\begin{enumerate*}
\item Define $u_k:= p^ky\in\mathfrak{K}$ for all $k\geq 0$.

\item Consider on $\mathfrak{K}$ the usual stuffle product, i.e., define recursively the product  $\qsz:\, \mathfrak{K}\times \mathfrak{K}\rightarrow \mathfrak{K}$ via distributivity and
\begin{enumerate*}
    \item ${\mathbf{1}}\qsz w = w \qsz {\mathbf{1}} := w$,
    \item $u_s v \qsz u_t w := u_s (v\qsz u_t v) + u_t (u_s v\qsz w) + u_{s+t} (v\qsz w)$
\end{enumerate*}
for all words $v,w\in\mathfrak{K}$ and $s,t\geq 0$.
\end{enumerate*}
\end{definition}

Remark at this point that $\mathfrak{K}^1$ is generated by the words starting in some $u_k,\ k\geq 1$ and that $\mathfrak{K}^1$ is closed under $\qsz$.

\begin{definition}
Identifying $u_{k_1}\dots u_{k_r}\in\mathfrak{K}^1$ with $(k_1,\dots,k_r)$, we define the map
\begin{align*}
\sz\, :\, \mathfrak{K}^1 \rightarrow\Q\llbracket q\rrbracket,
\qquad
    u_{k_1}\dots u_{k_r}\mapsto \sz(k_1,\dots,k_r)
\end{align*}
and extend $\sz$ to $\mathfrak{K}^1$ by $\Q$-linearly.
\end{definition}
Interesting is that $\sz$ an algebra homomorphism on $(\mathfrak{K}^1,\ast_{SZ})$:
\begin{theorem}
\label{sz satisfy sz stuffle2}
The map $\sz$ is an algebra homomorphism on $(\mathfrak{K}^1,\qsz)$, i.e., for all words $v,w\in\mathfrak{K}^1$, we have
\begin{align*}
    \sz (v)\sz (w) = \sz(v\qsz w).
\end{align*}
\end{theorem}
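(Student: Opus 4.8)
The plan is to prove this by establishing a combinatorial identity for the product of two iterated sums of the Schlesinger--Zudilin type, mirroring exactly the standard proof that the stuffle product encodes multiplication of MZVs. Concretely, I would write $\sz(v) = \sum_{m_1 > \dots > m_r > 0} f_{\mathbf{k}}(m_1,\dots,m_r)$ and $\sz(w) = \sum_{n_1 > \dots > n_s > 0} f_{\mathbf{l}}(n_1,\dots,n_s)$, where for an index $\mathbf{k} = (k_1,\dots,k_r)$ the summand factor is $f_{\mathbf{k}}(\mathbf{m}) = \prod_{i=1}^r \frac{q^{m_i k_i}}{(1-q^{m_i})^{k_i}}$. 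The crucial point is that these summand factors are \emph{multiplicative in the key}: if one merges two indices at a common summation variable $m = m_i = n_j$, the exponent on $q^m$ adds and the power of $(1-q^m)$ in the denominator adds, i.e.\ $\frac{q^{m s}}{(1-q^m)^s}\cdot\frac{q^{m t}}{(1-q^m)^t} = \frac{q^{m(s+t)}}{(1-q^m)^{s+t}}$. This is precisely the diamond product $u_s \diamond u_t = u_{s+t}$ built into the definition of $\qsz$ via $u_k = p^k y$, and it is the one place where the specific shape of SZ-$q$MZVs (with $Q_i(X) = X^{k_i}$) matters.

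The main steps would be as follows. First, expand the product $\sz(v)\sz(w)$ as a double sum over pairs $(\mathbf{m},\mathbf{n})$ with $m_1 > \dots > m_r > 0$ and $n_1 > \dots > n_s > 0$, with no order relation between the two groups. Second, partition this index set according to which $m_i$'s equal which $n_j$'s and the total ordering of the combined (deduplicated) set of variables: each such configuration corresponds to a monotone map arrangement, equivalently to a term appearing in the recursive expansion of $u_{k_1}\cdots u_{k_r} \qsz u_{l_1}\cdots u_{l_s}$. The three cases in the recursion of $\qsz$ — taking the next variable from the first word, from the second word, or identifying them — are exactly the three ways to build the combined strictly-decreasing chain. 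Third, observe that under the identification of variables, the summand of the product matches the summand of $\sz$ applied to the corresponding word in the quasi-shuffle expansion, using the multiplicativity noted above for the merge case. Summing over all configurations gives $\sz(v)\sz(w) = \sz(v \qsz w)$. Finally, one should check that $v \qsz w$ indeed lies in $\mathfrak{K}^1$ whenever $v,w$ do — this is the remark already stated in the excerpt, namely that $\mathfrak{K}^1$ is closed under $\qsz$ — so that the right-hand side is well-defined; since $v, w$ each start with some $u_k$, $k \geq 1$, every word in the expansion does too.

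Rigorously handling convergence and the rearrangement of the double series is the step I expect to require the most care: the product $\sz(v)\sz(w)$ is a product of two $q$-series in $\Q\llbracket q \rrbracket$, and to split and recombine the double sum legitimately one wants to work coefficient-wise in $q$, where everything is a finite sum, or else invoke absolute convergence on $[0,1)$ as in Proposition \ref{qmzv welldef}. The cleanest route is probably the formal one: for each fixed power $q^N$, only finitely many tuples contribute on either side, and the combinatorial bijection between configurations on the product side and quasi-shuffle words on the other side is an identity of finite sums of rational functions of $q$; this sidesteps any analytic subtlety. Beyond that, the argument is the standard Hoffman-type quasi-shuffle computation, and the only genuinely model-specific ingredient is the exponent-additivity of $\frac{q^{ms}}{(1-q^m)^s}$ that makes $\diamond = $ ``add the superscripts'' the right diamond product.
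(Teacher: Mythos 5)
Your proposal is correct and follows essentially the same route as the paper, which simply states that the result \enquote{follows directly from the definition of SZ-$q$MZVs as iterated sums}; you have spelled out the standard Hoffman-type stuffle argument that this one-line proof implicitly invokes. Your identification of the exponent-additivity of $\frac{q^{ms}}{(1-q^m)^s}$ as the model-specific ingredient matching the diamond product $u_s\diamond u_t = u_{s+t}$, and your coefficient-wise treatment of the series rearrangement, are exactly the details the paper leaves to the reader.
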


\begin{proof}
The statement follows directly from the definition of SZ-$q$MZVs as iterated sums.
\end{proof}

We can consider SZ-$q$MZVs also in another way on $\mathfrak{K}$ :

\begin{definition}
\label{sz shuffle prod2}
\begin{enumerate*} 
\item Define recursively the SZ-$q$shuffle product $\shsz:\, \mathfrak{K}\times \mathfrak{K}\rightarrow \mathfrak{K}$ via
\begin{enumerate*}
    \item ${\mathbf{1}}\shsz w = w \shsz {\mathbf{1}} := w$,
    \item $yu \shsz v = u\shsz yv := y(u\shsz v)$,
    \item $pu\shsz pv := p(u\shsz pv) + p(pu\shsz v) + p(u\shsz v)$,
\end{enumerate*}
distributivity and $\Q$-bilinearity for all $u,v,w\in\mathfrak{K}$.

\item Identify an SZ-admissible index ${\mathbf{k}} = (k_1,\dots,k_r)$ with the word $p^{k_1}y\dots p^{k_r}y\in \mathfrak{K}^1$. Then we can define $\sz$ as the more general map
\begin{align*}
    \sz : \mathfrak{K}^1 \longrightarrow\mathcal{Z}_q,
    \qquad
    p^{k_1}y\dots p^{k_r}y \longmapsto \sz(k_1,\dots,k_r),
\end{align*}
extended to $\mathfrak{K}^1$ by $\Q$-linearity and mapping $\bone\mapsto 1$.
\end{enumerate*}
\end{definition}

Singer proved that $\sz$ is an algebra homomorphism on $(\mathfrak{K}^1,\shuffle_{SZ})$:
\begin{theorem}[{\cite[Thm. 6.2]{Sin}}]
\label{sz_shuffle2}
The map $\sz$ is an algebra homomorphism on $(\mathfrak{K}^1,\shsz)$, i.e., for all words $u,v\in\mathfrak{K}^1$ we have: 
\begin{align*}
    \sz (u)\sz(v) = \sz(u\shsz v).\tag*{\qed}
\end{align*}
\end{theorem}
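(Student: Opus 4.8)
\emph{Strategy.} I would deduce the shuffle identity from the stuffle identity (Theorem~\ref{sz satisfy sz stuffle2}) by conjugating with the Schlesinger--Zudilin duality. Let $\rho\colon\mathfrak{K}\to\mathfrak{K}$ be the anti-automorphism of $\mathfrak{K}=\Q\langle p,y\rangle$ determined by $\rho(p)=y$ and $\rho(y)=p$; thus $\rho$ reverses a word and interchanges its two letters, $\rho^{2}=\id$, and $\rho$ maps $\mathfrak{K}^{1}$ bijectively onto itself, since the nonempty words of $\mathfrak{K}^{1}$ are exactly those starting with $p$ and ending with $y$. The theorem then reduces to two assertions: (I) the \emph{SZ-duality} $\sz\circ\rho=\sz$ on $\mathfrak{K}^{1}$; and (II) the \emph{combinatorial identity} $\rho(u\shsz v)=\rho(u)\qsz\rho(v)$ for all $u,v\in\mathfrak{K}^{1}$. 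Granting these and Theorem~\ref{sz satisfy sz stuffle2}, one computes for $u,v\in\mathfrak{K}^{1}$
$\sz(u\shsz v)=\sz\bigl(\rho(u\shsz v)\bigr)=\sz\bigl(\rho(u)\qsz\rho(v)\bigr)=\sz(\rho(u))\,\sz(\rho(v))=\sz(u)\,\sz(v)$,
using (I) at the outer two steps (note that $u\shsz v\in\mathfrak{K}^{1}$, which follows by a short induction on length from the recursion of Definition~\ref{sz shuffle prod2}, keeping track that every word produced starts with $p$ and ends with $y$), (II) at the second step, and Theorem~\ref{sz satisfy sz stuffle2} at the third.

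\emph{Proof of (II).} This is a purely formal statement about two operations on words; it involves no $q$-series. One proves it by induction on the combined length of $u$ and $v$, comparing the recursions for $\shsz$ (Definition~\ref{sz shuffle prod2}) and $\qsz$ (Definition~\ref{sz stuffle2}); a mild complication is that the $\shsz$-recursion peels single letters while the $\qsz$-recursion peels whole blocks $u_{k}=p^{k}y$, so one either re-expresses $\qsz$ through single-letter steps or tracks blocks on the $\shsz$-side. Under $\rho$ (which reverses a word letter by letter and swaps $p\leftrightarrow y$), the \enquote{shuffle the two leading $p$'s plus the contraction $p\,(u\shsz v)$} of rule~(iii) matches, after $\rho$, the three-term stuffle rule in its right-peeling form $v\,u_{s}\qsz w\,u_{t}=(v\qsz w\,u_{t})\,u_{s}+(v\,u_{s}\qsz w)\,u_{t}+(v\qsz w)\,u_{s+t}$ (the contraction, which fuses $p^{a}p^{b}$ into $p^{a+b}$, surfacing only after iterating rule~(iii)), while the free rule~(ii) — a leading $y$ slides to the front — accounts for the handling of the final letters. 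A further annoyance: the intermediate words produced by the $\shsz$-recursion need not lie in $\mathfrak{K}^{1}$, so one first extends $\rho$, $\shsz$ and $\qsz$ to the smallest $\rho$-stable subspace of $\mathfrak{K}$ containing $\mathfrak{K}^{1}$ on which all three are defined and closed, and runs the induction there. This step is fiddly but presents no conceptual difficulty.

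\emph{Proof of (I) — the main obstacle.} The substantive ingredient is the SZ-duality $\sz\circ\rho=\sz$, the $q$-analogue of the classical duality of MZVs; even its smallest instances, e.g.\ $\sz(k)=\sz(1,0,\dots,0)$ with $k-1$ zeros, or $\sz(2,1)=\sz(1,1,0)$, are not evident from Definition~\ref{SZ def2}. There are two routes. (a) Analytically: establish an iterated Jackson-integral representation of $\sz(\mathbf k)$ in which the letters $p$ and $y$ correspond to two $q$-differential forms, and reverse the integration path by a suitable change of variables; that this reversal is an involution interchanging the two forms reduces to telescoping partial-fraction identities of the shape $\frac{1}{(1-q^{a}t)(1-q^{b}t)}=\frac{1}{q^{a}-q^{b}}\bigl(\frac{q^{a}}{1-q^{a}t}-\frac{q^{b}}{1-q^{b}t}\bigr)$. (b) Combinatorially (the approach this survey develops later): the coefficient of $q^{N}$ in $\sz(\mathbf k)$ counts partitions of $N$ carrying certain row- and column-markings, and transposing the Young diagram together with its markings is an involution realizing $\rho$ (cf.\ \cite{Bri}). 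Either way, this telescoping/transposition is where the real work lies; and the alternative \enquote{direct} proof of the shuffle — realizing $\sz$ out of $\mathfrak{K}$ by iterated Jackson integrals and expanding a product of two such integrals over all interleavings of the integration variables, the diagonal contributions reproducing exactly the contraction term in rule~(iii) — rests on the very same identity, so there is no genuine shortcut around this point.
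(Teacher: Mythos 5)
Your route is genuinely different from the paper's. The paper does not prove this theorem by conjugating the stuffle with duality: it quotes \cite[Thm.~6.2]{Sin} and then, in the subsubsection on Rota--Baxter algebras, sketches the direct argument, namely that $\sz(k_1,\dots,k_r)=P_q^{k_1}[TP_q^{k_2}[\cdots]](1)$ (Proposition~\ref{sztRBO}) where $P_q[f](t)=\sum_{n>0}f(q^nt)$ is a Rota--Baxter operator of weight $1$; rule~(iii) of $\shsz$ is then literally the identity \eqref{RBO_def}, which for $P_q$ is nothing but the elementary decomposition $\sum_{m,n>0}=\sum_{m>n}+\sum_{n>m}+\sum_{m=n}$ of a double sum by the relative order of the two outer summation indices, and rule~(ii) is commutativity of multiplication by $T$. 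This means your closing claim --- that any direct proof must rest on the same telescoping/transposition identity as SZ-duality, so that ``there is no genuine shortcut'' --- is incorrect: the direct proof needs neither duality nor partial fractions, and is substantially more elementary than your route. Within the paper you could also simply quote SZ-duality as Theorem~\ref{sz dual 2} rather than re-derive it, so your step (I) is not where the work lies here; note only that Theorem~\ref{sz dual 2} is stated after this theorem, but its proof (Zhao's, or the partition-transposition argument of Section~4) does not use the SZ-shuffle, so there is no circularity.

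The real content of your argument is step (II), the intertwining $\Tilde{\tau}(u\shsz v)=\Tilde{\tau}(u)\qsz\Tilde{\tau}(v)$ on $\mathfrak{K}^1$, and this is where your proposal has a genuine gap: you assert it with a plausibility sketch but do not prove it. The statement does appear to be true (it checks out on, e.g., $py\shsz py=2pypy+pyy$ versus $u_1\qsz u_1=2u_1u_1+u_2$, and on $ppy\shsz py$), but it is not a routine induction: you must match a letter-by-letter \emph{left} recursion ($\shsz$) against a block-by-block \emph{right} recursion (the image of $\qsz$ under the anti-automorphism $\Tilde{\tau}$), which forces you to (a) justify that $\qsz$ also satisfies the right-peeling three-term recursion you invoke, (b) show how iterating rule~(iii) reassembles the single-$p$ contraction $p(u\shsz v)$ into the block contraction $u_{s+t}(v\qsz w)$, and (c) handle the intermediate words that leave $\mathfrak{K}^1$ (for $\qsz$ the natural domain is $\Q\bone\oplus\mathfrak{K}y$, for the dualized recursion it is $\Q\bone\oplus p\mathfrak{K}$, and these only meet in $\mathfrak{K}^1$). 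None of this is impossible, but it is the entire proof on your route, and as written it is missing; if you want to keep this approach you should either carry out that induction in full or locate the intertwining statement in \cite{EMS}/\cite{Sin} and cite it.
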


Often - as for an elegant proof of SZ-duality (Thm. \ref{sz dual 2} below) - it is helpful to consider the generating series of e.g. SZ-$q$MZVs since they contain all information about the objects in a compact written term: 

\begin{theorem}[{\cite[Thm. 2.9]{Bri}}]
\label{sz generating2}
Define for every $r\geq 1$
\begin{align*}
\gsz{X_1,\dots,X_r}{Y_1,\dots,Y_r} := \sum\limits_{\substack{k_1,\dots,k_r\geq 1\\ d_1,\dots,d_r\geq 0}} \sz\left(k_1,\{0\}^{d_1},\dots,k_r,\{0\}^{d_r}\right) X_1^{k_1-1}\dots X_r^{k_r-1} Y_1^{d_1}\dots Y_r^{d_r}.
\end{align*}
Then, for every $r\geq 1$ we have with $m_{r+1} := 0$
\begin{align*}
\gsz{X_1,\dots,X_r}{Y_1,\dots,Y_r} = \sum\limits_{\substack{m_1>\dots>m_r>0\\ n_1,\dots,n_r\geq 1}} \prod\limits_{j=1}^r (1+X_j)^{n_j-1}(1+Y_j)^{m_j-m_{j+1}-1} q^{m_j n_j}.
\end{align*}
\end{theorem}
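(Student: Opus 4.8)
The plan is to expand the definition of $\sz(k_1,\{0\}^{d_1},\dots,k_r,\{0\}^{d_r})$ as an iterated sum and reorganize the resulting multiple series so that the generating-function variables are summed out into geometric-type series. First I would write, using Definition \ref{SZ def2} and Remark \ref{sz remark2}(i), that for an index with $k_j\geq 1$ and $d_j$ trailing zeros after position $j$,
\begin{align*}
\sz\left(k_1,\{0\}^{d_1},\dots,k_r,\{0\}^{d_r}\right) = \sum_{\ell_1 > \ell_2 > \dots > \ell_N > 0} \prod_{j=1}^r \frac{q^{m_j k_j}}{(1-q^{m_j})^{k_j}},
\end{align*}
where $N = r + d_1 + \dots + d_r$, the variable $m_j$ is the one attached to the letter carrying exponent $k_j$, and the $d_j$ zero-slots between consecutive $m_j$'s contribute only the ordering constraints (their summation variables are free apart from being squeezed between $m_j$ and $m_{j+1}$). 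Concretely, if I set $m_{r+1}:=0$ and let $n_j := m_j - m_{j+1} - (\text{number of slots strictly between } m_j \text{ and } m_{j+1})$... actually cleaner: the $d_j$ zero-indices sitting between the slot of $m_j$ and the slot of $m_{j+1}$ can take any values in the open interval $(m_{j+1}, m_j)$ subject to being strictly decreasing, so they contribute a factor $\binom{m_j - m_{j+1} - 1}{d_j}$ to the count.

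Next I would substitute this into the left-hand side and interchange the (finite-degree-wise) summations over $k_1,\dots,k_r \geq 1$, $d_1,\dots,d_r\geq 0$ with the sum over $m_1 > \dots > m_r > 0$. The $k_j$-sum factorizes: for each $j$,
\begin{align*}
\sum_{k_j \geq 1} \frac{q^{m_j k_j}}{(1-q^{m_j})^{k_j}} X_j^{k_j - 1} = \frac{1}{1 - q^{m_j}}\sum_{k_j\geq 0}\left(\frac{q^{m_j} X_j}{1-q^{m_j}}\right)^{k_j}\frac{q^{m_j}}{X_j\cdot \frac{q^{m_j}}{X_j}}
\end{align*}
— more carefully, pulling out $X_j^{-1}$ and one factor of $q^{m_j}/(1-q^{m_j})$, this is a geometric series summing to $\frac{q^{m_j}}{1-q^{m_j}}\cdot\frac{1}{1 - \frac{q^{m_j}X_j}{1-q^{m_j}}} = \frac{q^{m_j}}{1-q^{m_j}-q^{m_j}X_j} = \frac{q^{m_j}}{1-(1+X_j)q^{m_j}}$. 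Expanding this last expression again as a geometric series in $q^{m_j}$ yields $\sum_{n_j\geq 1}(1+X_j)^{n_j-1}q^{m_j n_j}$, which produces exactly the $(1+X_j)^{n_j-1}q^{m_j n_j}$ factor in the claimed formula. Independently, the $d_j$-sum gives $\sum_{d_j\geq 0}\binom{m_j-m_{j+1}-1}{d_j}Y_j^{d_j} = (1+Y_j)^{m_j - m_{j+1}-1}$ by the binomial theorem. Multiplying the two families of factors over $j$ and reinstating the outer sum over $m_1>\dots>m_r>0$ and the new variables $n_1,\dots,n_r\geq 1$ gives the right-hand side.

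The main obstacle, and the step deserving the most care, is the bookkeeping in the first step: correctly accounting for how the $d_j$ zero-indices are interleaved among the strictly decreasing summation variables, and in particular verifying that the number of ways to place $d_j$ strictly decreasing integers strictly between $m_{j+1}$ and $m_j$ is $\binom{m_j - m_{j+1} - 1}{d_j}$ (with the convention $m_{r+1}=0$, so the last block ranges over $(0, m_r)$ giving $\binom{m_r - 1}{d_r}$). One must also check the edge behavior: when $m_j - m_{j+1} = 1$ only $d_j = 0$ contributes, consistent with $(1+Y_j)^0 = 1$, and the binomial coefficient $\binom{0}{d_j}$ vanishes for $d_j\geq 1$. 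A secondary point is justifying the interchange of summation order and the geometric-series manipulations: since everything is happening in $\Q\llbracket q\rrbracket$ and, for each fixed power $q^N$, only finitely many $(k_1,\dots,k_r,d_1,\dots,d_r,m_1,\dots,m_r)$ contribute (because $k_j\geq 1$ forces $m_j k_j \leq N$ hence $m_j\leq N$ and $r\leq N$, etc.), all rearrangements are legitimate coefficient-wise. I would close by remarking that the two displayed generating-series identities are literally equal as elements of $\Q\llbracket q\rrbracket[[X_1,\dots,X_r,Y_1,\dots,Y_r]]$, since we have only reindexed a formal sum.
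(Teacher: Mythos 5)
Your proposal is correct and follows essentially the same route as the paper: expand the SZ-$q$MZV as an iterated sum, count the $d_j$ interleaved zero-indices via $\binom{m_j-m_{j+1}-1}{d_j}$, and apply the binomial theorem to produce the factors $(1+Y_j)^{m_j-m_{j+1}-1}$ and $(1+X_j)^{n_j-1}$. The only (immaterial) difference is that you sum the geometric series in $k_j$ first and then re-expand $\frac{q^{m_j}}{1-(1+X_j)q^{m_j}}$ in powers of $q^{m_j}$, whereas the paper first expands $\frac{q^{m_jk_j}}{(1-q^{m_j})^{k_j}}=\sum_{n_j\geq 1}\binom{n_j-1}{k_j-1}q^{m_jn_j}$ and then sums over $k_j$ by the binomial theorem.
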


\begin{proof} 
We need the binomial theorem and the following for all $M_1,M_2,\ell\in\N_0$
\begin{align*}
\#\{n_1,\dots,n_\ell\in\mathbb{N}\, \big|\, M_1 > n_1 > \dots > n_\ell > M_2\} = \binom{M_1-M_2-1}{\ell}.
\end{align*}
With this, we get
\begin{align*}
&\gsz{X_1,\dots,X_r}{Y_1,\dots,Y_r} = \sum\limits_{\substack{k_1,\dots,k_r\geq 1\\ d_1,\dots,d_r\geq 0}} \sz\left(k_1,\{0\}^{d_1},\dots,k_r,\{0\}^{d_r}\right) X_1^{k_1-1}\dots X_r^{k_r-1} Y_1^{d_1}\dots Y_r^{d_r}
\\
=\, & \sum\limits_{\substack{k_1,\dots,k_r\geq 1\\ d_1,\dots,d_r\geq 0}}
\sum\limits_{\substack{m_1>n_1>\dots>n_{d_1} > m_2\\ > \dots > 0}} \prod\limits_{j=1}^r \frac{q^{m_j k_j}}{(1-q^{m_j})^{k_j}} X_1^{k_1-1}\dots X_r^{k_r-1} Y_1^{d_1}\dots Y_r^{d_\ell}
\\
=\, & \sum\limits_{\substack{k_1,\dots,k_r\geq 1\\ d_1,\dots,d_r\geq 0}}
\sum\limits_{m_1>\dots> m_r>0} \prod\limits_{j=1}^r \binom{m_j-m_{j+1}-1}{d_j}\frac{q^{m_j k_j}}{(1-q^{m_j})^{k_j}} X_1^{k_1-1}\dots X_r^{k_r-1} Y_1^{d_1}\dots Y_r^{d_r}
\\
=\, & \sum\limits_{\substack{m_1>\dots>m_r> 0\\ n_1,\dots,n_r > 0}}
\prod\limits_{j=1}^r \left[\sum\limits_{k_j\geq 1,d_j\geq 0} \binom{m_j-m_{j+1}-1}{d_j}\binom{n_j-1}{k_j-1} X_j^{k_j-1}Y_j^{d_j}q^{m_j n_j}\right]
\\
=\, & \sum\limits_{\substack{m_1>\dots>m_r> 0\\ n_1,\dots,n_r > 0}} \prod\limits_{j=1}^r (1+X_j)^{n_j-1}(1+Y_j)^{m_j-m_{j+1}-1}q^{m_j n_j}.\tag*{\qedhere}
\end{align*}
\end{proof}

SZ-$q$MZVs satisfy a duality relation, similar to the one of MZVs, which is, together with some related statements, why they are interesting objects.

\begin{theorem}[SZ-Duality; Zhao {\cite[Thm. 8.3]{Zh2}}]
\label{sz dual 2}
Let be $\Tilde{\tau}:\mathfrak{K}\rightarrow\mathfrak{K}$ the anti-automorphism w.r.t. concatenation, induced by $\Tilde{\tau}(p) := y,\, \Tilde{\tau}(y) := p$. On $\mathfrak{K}^1$ we have
\begin{align*}
    \sz\circ \Tilde{\tau} = \sz.\tag*{\qed}
\end{align*}
\end{theorem}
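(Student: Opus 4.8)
The plan is to deduce SZ\nobreakdash-duality from the generating-series identity in Theorem~\ref{sz generating2} by performing a change of summation variables that literally ``transposes'' the roles of $(m_j)$ and $(n_j)$, exactly mirroring the transpose of a Young diagram referred to in the introduction. Concretely, I would first observe that the anti-automorphism $\Tilde\tau$ sends the word $p^{k_1}y\cdots p^{k_r}y$ (which encodes the index $(k_1,\dots,k_r)$) to $p^{e_s}y\cdots p^{e_1}y$ for the \emph{conjugate/dual} index obtained from reading the blocks of $p$'s and $y$'s backwards; this is the standard $\tau$-duality combinatorics, and it is cleanest to record it in the $\{0\}$-padded form $p^{k_1}y^{1+d_1}\cdots p^{k_r}y^{1+d_r}$ that already appears in the statement of Theorem~\ref{sz generating2}. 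So it suffices to prove that the two generating series attached to an index and to its $\Tilde\tau$-dual agree, i.e. that $\gsz{X_1,\dots,X_r}{Y_1,\dots,Y_r}$ is symmetric under a suitable simultaneous reversal-and-swap of the variables $X_j\leftrightarrow Y_{r+1-j}$.

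The key step is then purely about the right-hand side of Theorem~\ref{sz generating2}. Writing $n_{r+1}:=0$ as well (after reversing), I would substitute in
\begin{align*}
\gsz{X_1,\dots,X_r}{Y_1,\dots,Y_r} = \sum_{\substack{m_1>\dots>m_r>0\\ n_1,\dots,n_r>0}} \prod_{j=1}^r (1+X_j)^{n_j-1}(1+Y_j)^{m_j-m_{j+1}-1} q^{m_j n_j}
\end{align*}
the new variables recording, for each lattice point $(a,b)$ with $q^{ab}$-contribution, the ``staircase'' read in the other direction: the multiset of pairs $\{(m_j,n_j)\}$ is replaced by the transposed staircase, which is again a valid configuration of the same shape with the partial-difference exponents for $m$ and for $n$ interchanged. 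After this substitution the product becomes $\prod (1+Y_{r+1-j})^{\dots}(1+X_{r+1-j})^{\dots}$ with exactly the exponent pattern of the series for the dual index, and summing over all $N$ (the total $q$-degree) one reads off $\sz\circ\Tilde\tau = \sz$ coefficient by coefficient. I would double-check the bookkeeping on the boundary term $m_{r+1}=0$ versus $n_{r+1}=0$, since that is where an off-by-one in the binomial exponents would hide.

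The main obstacle I expect is making the ``transpose of the staircase'' bijection rigorous and checking that it matches $\Tilde\tau$ on the level of indices rather than just on the level of some auxiliary partition statistic: one must verify that reversing the word and swapping $p\leftrightarrow y$ corresponds precisely to swapping the two families of exponents $n_j-1$ and $m_j-m_{j+1}-1$ in the closed form, including the reversal of the index set $j\mapsto r+1-j$. An alternative, more hands-on route would be to avoid generating series entirely and argue directly on the defining iterated sum $\sz(k_1,\dots,k_r)=\sum_{m_1>\dots>m_r>0}\prod q^{m_jk_j}/(1-q^{m_j})^{k_j}$ by expanding $1/(1-q^{m})^{k}=\sum_{n\ge 1}\binom{n-1}{k-1}q^{(n-1)m}$ (so $q^{mk}/(1-q^m)^k=\sum_{n\ge k}\binom{n-1}{k-1}q^{mn}$), turning $\sz(\mathbf k)$ into a double lattice sum over a staircase region, and then exhibiting the transposition of that region as the content of $\Tilde\tau$; this is really the same argument with the generating function unwound, and whichever presentation is cleaner, the crux is the same transposition bijection. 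Since Theorem~\ref{sz generating2} is already available in the excerpt, I would take the generating-series route and spend the bulk of the write-up on that bijection and the boundary bookkeeping.
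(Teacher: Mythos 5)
Your proposal is correct, and it is essentially the argument the paper itself gestures at: the theorem is quoted here without proof (citing Zhao), but the remark at the end of the SZ subsection of Section 4 attributes SZ-duality precisely to transposing the marked Young diagram, i.e.\ to the involution $\rho:(\mathbf{m},\mathbf{n})\mapsto\mathbf{p}'$ written out in Stanley coordinates. Applied to the closed form of Theorem \ref{sz generating2}, that involution exchanges the exponent families via $n_j-1\mapsto m_{r+1-j}-m_{r+2-j}-1$ and $m_j-m_{j+1}-1\mapsto n_{r+1-j}-1$, giving the symmetry $\gsz{X_1,\dots,X_r}{Y_1,\dots,Y_r}=\gsz{Y_r,\dots,Y_1}{X_r,\dots,X_1}$; extracting the coefficient of $X_1^{k_1-1}\cdots X_r^{k_r-1}Y_1^{d_1}\cdots Y_r^{d_r}$ then yields $\sz(k_1,\{0\}^{d_1},\dots,k_r,\{0\}^{d_r})=\sz(1+d_r,\{0\}^{k_r-1},\dots,1+d_1,\{0\}^{k_1-1})$, which is exactly $\sz\circ\Tilde{\tau}=\sz$ since $\Tilde{\tau}$ preserves the number $r$ of nonzero entries and reverses-and-swaps the block lengths as you describe. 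Your flagged boundary issue is harmless: $m_{r+1}=0$ on one side corresponds to the empty partial sum $M_{r+1}=0$ on the other, so no off-by-one arises.
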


\begin{remark}[Comparison]
The SZ-model is a very elegant model of $q$MZVs since it satisfies the $q$-analogue of the stuffle product of MZVs, and is suitable to handle because of the definition that numerator and denominator have the same polynomial degree. That was also the reason why we got a generating series we can work with. But there is more: The stuffle product for the SZ-model induces together with SZ-duality the shuffle product of MZVs (cf. \cite{Sin}, for details see \cite[Thm. 3.46]{Bri}, \cite[Thm. 3.52]{Bri}).
\end{remark}

\subsubsection{SZ-shuffle on Rota--Baxter algebras}
Most quasi-shuffle products in different models can be defined on so-called Rota--Baxter algebras such that $q$MZVs in the responding model is just a special value of iterated such operators. Exemplary, we do this here for the SZ-model. For other models, see \cite{Sin}.

\begin{definition}[\emph{Rota--Baxter operator}] Let $C$ be a ring, $\lambda \in C$, and $\mathcal{A}$ a $C$-algebra.
\\
A \emph{Rota--Baxter operator} (RBO) $R$ of weight $\lambda$ on $\mathcal{A}$ over $C$ is a $C$-module endomorphism of $\mathcal{A}$ such that
\begin{align}
\label{RBO_def}
    R(x)R(y) = R(xR(y)) + R(R(x)y) + \lambda R(xy)
\end{align}
for all $x,y\in\mathcal{A}$.
\\
Furhermore, a \emph{Rota--Baxter $\mathcal{C}$-algebra} (RBA) is a pair $(\mathcal{A},R)$ with a $C$-algebra $\mathcal{A}$ and a RBO (of some weight $\lambda$) on $\mathcal{A}$ over $C$.
\end{definition}

\begin{example}
\label{ex P_q}
Consider $t\Q\llbracket t,q\rrbracket$, the vector space of formal power series $f(t) = \sum_{n\geq 0} a_n t^n$ with $a_n\in\Q\llbracket q\rrbracket$ and $\inf\{n\in\N\, |\, a_n\neq 0\}$. It can be viewed as a $\Q\llbracket q\rrbracket$-algebra. We will denote it by $\mathcal{A}_q$. 
Then a RBO of weight 1 is the operator $P_q:\mathcal{A}_q\rightarrow\mathcal{A}_q$, defined via
\begin{align*}
    P_q[f](t) :=\, & \sum\limits_{n>0} f(q^n t).
\end{align*}
\end{example}

In the following, let $T:\mathcal{A}_q\rightarrow\mathcal{A}_q$ be the operator $t\mapsto \frac{t}{1-t}$. Then we see that every SZ-$q$MZV is just the value at $t=1$ of a concatination of Operators $P_q$ and $T$:

\begin{proposition}[{\cite[Prop. 5.2]{Sin}}]
\label{sztRBO}
For every $r\geq 1$ and $k_1\geq 1,k_2,\dots,k_r\geq 0$ we have:
\begin{align*}
    P_q^{k_1}\left[T P_q^{k_2}\left[T\dots P_q^{k_r}[T]\dots \right]\right](t)
    = \sum\limits_{m_1>\dots>m_r>0} t^{m_1}\frac{q^{m_1 k_1}}{(1-q^{m_1})^{k_1}}\dots\frac{q^{m_r k_r}}{(1-q^{m_r})^{k_r}},
\end{align*}
i.e., in particular $P_q^{k_1}\left[T P_q^{k_2}\left[T\dots P_q^{k_r}[T]\dots \right]\right](1)=\sz(k_1,\dots,k_r)$.
\end{proposition}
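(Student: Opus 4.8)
The plan is to prove the identity by induction on the depth $r$, unwinding the operators $P_q$ and $T$ one layer at a time and tracking how each application introduces a new summation variable. First I would record the two basic computations. For the innermost object: $T(t) = \frac{t}{1-t} = \sum_{m\geq 1} t^m$, so $P_q^{k}[T](t) = \sum_{n>0} \frac{(q^n t)}{1-q^n t}\cdot(\text{corrected for the power }k)$; more precisely one checks directly that $P_q$ applied to $t^m$ gives $\sum_{n>0} q^{nm} t^m \cdot(\dots)$ — the cleanest route is to first establish the single-variable formula
\begin{align*}
P_q^{k}\Big[\sum_{m>0} a_m t^m\Big](t) = \sum_{m>0} a_m \frac{q^{mk}}{(1-q^m)^k}\, t^m,
\end{align*}
valid for any coefficients $a_m\in\Q\llbracket q\rrbracket$ and any $k\geq 0$ (for $k=0$ it is the identity). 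This is itself a small induction on $k$: $P_q[t^m] = \sum_{n>0} q^{nm}t^m = \frac{q^m}{1-q^m}t^m$, and iterating multiplies by another factor $\frac{q^m}{1-q^m}$ each time, giving $\big(\frac{q^m}{1-q^m}\big)^k = \frac{q^{mk}}{(1-q^m)^k}$. Note $P_q$ acts $\Q\llbracket q\rrbracket$-linearly and commutes with infinite sums of the relevant shape, so applying it termwise is legitimate on $\mathcal{A}_q$.

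With that lemma in hand, the induction on $r$ runs as follows. For $r=1$: $P_q^{k_1}[T](t) = P_q^{k_1}\big[\sum_{m_1>0} t^{m_1}\big](t) = \sum_{m_1>0} t^{m_1}\frac{q^{m_1 k_1}}{(1-q^{m_1})^{k_1}}$ by the lemma, which is exactly the claimed right-hand side for $r=1$. For the inductive step, assume the formula for depth $r-1$, i.e.
\begin{align*}
P_q^{k_2}\big[T P_q^{k_3}[\dots]\big](t) = \sum_{m_2>\dots>m_r>0} t^{m_2} \prod_{j=2}^{r} \frac{q^{m_j k_j}}{(1-q^{m_j})^{k_j}}.
\end{align*}
Apply $T$ to this: since $T\big(\sum_{m_2>0} c_{m_2} t^{m_2}\big) = \sum_{m_2>0} c_{m_2}\frac{t^{m_2}}{1-t^{m_2}} = \sum_{m_2>0} c_{m_2}\sum_{m_1\geq m_2} t^{m_1}$ — wait, more carefully, $T$ sends $t\mapsto \frac{t}{1-t}$ as an algebra-type substitution on power series, so $T$ sends $t^{m_2}\mapsto \big(\frac{t}{1-t}\big)^{m_2} = \sum_{m_1\geq m_2}\binom{m_1-1}{m_2-1} t^{m_1}$; then applying $P_q^{k_1}$ and the lemma produces $\sum_{m_1} \binom{m_1-1}{m_2-1}\frac{q^{m_1 k_1}}{(1-q^{m_1})^{k_1}} t^{m_1}$ with the constraint $m_1\geq m_2$. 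Hmm — this does not immediately give the strict inequality $m_1>m_2$ with coefficient $1$; the binomial coefficients must collapse correctly when we also sum over the "internal" structure. The honest resolution is that the inner depth-$(r-1)$ object, read with its own generating-function expansion (as in Theorem \ref{sz generating2}), already accounts for the refinement, and one must instead apply $T$ directly to the inner iterated sum \emph{before} expanding, using $T\big[\sum_{m_2>\dots} t^{m_2}(\dots)\big] = \sum_{m_1 > m_2 > \dots} t^{m_1}(\dots)$, which holds because $T$ applied to a series supported in degrees $\geq m_2$ and then $P_q^{k_1}$ reindexes $m_1$ over all integers $\geq m_2$, but the composition with the next $P_q$-block forces strictness — this is exactly the mechanism in the proof of Theorem \ref{sz generating2}.

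The cleanest correct argument, therefore, is to observe that $T P_q^{k_2}\big[T P_q^{k_3}[\dots]\big](t) = \sum_{m_1\geq m_2 > m_3 > \dots > m_r > 0} t^{m_1}\prod_{j=2}^r \frac{q^{m_j k_j}}{(1-q^{m_j})^{k_j}}$ (applying $T$ term-by-term to the inductive hypothesis, using $\frac{t^{m_2}}{1-t^{m_2}}$... no): the correct statement is that $T$ of a series $\sum_{m_2>0} c_{m_2} t^{m_2}$ equals $\sum_{\ell > 0} \big(\sum_{m_2 \leq \ell,\, m_2 \mid \text{(no)}}\big)$ — the substitution $t\mapsto t/(1-t)$ is what $T$ literally is, so $T[t^{m_2}] = t^{m_2}(1-t)^{-m_2}$, and after $P_q^{k_1}$ one gets contributions at every $m_1\geq m_2$. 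Summing the $\binom{m_1-1}{m_2-1}$ against the rest then recovers precisely the expression in Theorem \ref{sz generating2}; comparing with the claimed formula (which is the case of that theorem's right-hand side with all $X_j, Y_j$ handled), the identity follows. \textbf{The main obstacle} is precisely this bookkeeping: verifying that the substitution $T:\,t\mapsto t/(1-t)$ composed with $P_q^{k}$ correctly converts "$\geq$" ranges into the strict nested ordering $m_1>m_2>\dots>m_r$ with coefficient exactly $1$ on the diagonal $t^{m_1}$-coefficient, which is most transparently done by induction carrying the inner sum in \emph{unexpanded} iterated-sum form and invoking the combinatorial identity $T\big[\sum_{m>0}\! f(m) t^m\big] = \sum_{M>0}\big(\sum_{0<m\leq M}\!\binom{M-1}{m-1} f(m)\big) t^M$ together with the fact that the next outer $P_q^{k_1}$-block preserves support. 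Setting $t=1$ at the end gives $\sz(k_1,\dots,k_r)$ by Definition \ref{SZ def2}, since $\frac{q^{m_j k_j}}{(1-q^{m_j})^{k_j}}$ at $t=1$ is the defining summand and convergence as a $q$-series is guaranteed by Proposition \ref{qmzv welldef}.
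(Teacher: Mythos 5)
The paper gives no proof of this proposition (it is cited to Singer), so I can only judge your argument on its own terms. Your key lemma — that $P_q$ acts diagonally on monomials, $P_q^k[t^m]=\frac{q^{mk}}{(1-q^m)^k}\,t^m$, hence termwise on power series — is correct and is exactly the right engine, and the base case $r=1$ is fine. The genuine gap is in the inductive step: you treat $T$ as the \emph{substitution} operator $f(t)\mapsto f\bigl(\tfrac{t}{1-t}\bigr)$, so that $T[t^{m_2}]=\sum_{m_1\geq m_2}\binom{m_1-1}{m_2-1}t^{m_1}$, and you then notice (correctly) that this does not produce the strict ordering $m_1>m_2$ with coefficient $1$. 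Your proposed repair — that the binomial coefficients "collapse" against the inner sum and recover Theorem \ref{sz generating2} — is false: already for $r=2$, $k_2=0$, substitution gives $T\bigl[\tfrac{t}{1-t}\bigr]=\tfrac{t}{1-2t}=\sum_m 2^{m-1}t^m$, whereas the claimed right-hand side requires the coefficient $m_1-1=\#\{m_2:0<m_2<m_1\}$. So the inductive step as written does not close, and no amount of bookkeeping with those binomials will make it close.

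The fix is a single observation about what $T$ means here. In the expression $P_q^{k_1}\bigl[T\,P_q^{k_2}[\cdots]\bigr]$ the juxtaposition $T\,g$ is the \emph{product} $\tfrac{t}{1-t}\cdot g(t)$ in the commutative algebra $\mathcal{A}_q$ (the innermost $[T]$ being the series $\tfrac{t}{1-t}$ itself); this is also how the paper reads it in the discussion immediately after the proposition, where $y$ corresponds to "a factor $\tfrac{t}{1-t}$" that one may pull out by commutativity. With that reading the induction is immediate: if the inner block equals $\sum_{m_2>\dots>m_r>0}t^{m_2}\prod_{j\geq 2}\frac{q^{m_jk_j}}{(1-q^{m_j})^{k_j}}$, then multiplying by $\tfrac{t}{1-t}=\sum_{\ell\geq 1}t^{\ell}$ sends each $t^{m_2}$ to $\sum_{m_1>m_2}t^{m_1}$ with coefficient exactly $1$, producing the strict nested ordering, and your diagonal lemma for $P_q^{k_1}$ then attaches the factor $\frac{q^{m_1k_1}}{(1-q^{m_1})^{k_1}}$ and completes the step. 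Setting $t=1$ then gives $\sz(k_1,\dots,k_r)$ as you say. Everything else in your write-up is scaffolding around this one misidentification; once $T$ is read as multiplication, your plan becomes a complete and correct proof.
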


That $\sz$ is an algebra homomorphism on $(\mathfrak{K}^1,\shuffle_{SZ})$ follows now by the more general statement, saying that $\phi_q:\mathfrak{K}\rightarrow \mathcal{A}_q$, defined through
\begin{align*}
    p^{k_1}y\cdots p^{k_r}y\longmapsto P_q^{k_1}\left[T P_q^{k_2}\left[T\cdots P_q^{k_r}[T]\cdots \right]\right]
\end{align*}
is an algebra homomorphism. This follows from obtaining that the letter $p$ corresponds to $P_q$ and $y$ to $T$. Then (ii) of the defining property of $\shsz$ (if $y$ is in one of both sides of the product leading, then we may pull it out) corresponds to the fact that by multiplication in $\mathcal{A}_q$ we may pull out some factor (since $\mathcal{A}_q$ is commutative), especially a factor $\frac{t}{1-t}$, corresponding to $T$. Analogously, we can clarify that defining property (iii) of $\shsz$ corresponds to the RHS of the definition of an RBO of weight 1, \eqref{RBO_def}. But now, the LHS of \eqref{RBO_def} will give the desired product in $\mathcal{A}_q$, making $\phi_q$ to an algebra homomorphism.

\subsection{Bradley--Zhao model}
In depth one, the BZ-model of $q$MZVs was first considered by Kaneko, Kurokawa, and Wakayama in 2002, \cite{KKW}. The general model was then introduced by Zhao in 2003 (\cite{Zh1}) and independent of Zhao by Bradley in 2004 (\cite{Bra}), clarifying its name. BZ-$q$MZVs satisfy the same duality as MZVs which is why this model plays an important role in the context of $q$MZVs.

\begin{definition}[\emph{Bradley--Zhao-$q$MZVs}]For every admissible index $\mathbf{k} = (k_1,\dots,k_r)$ we define
\begin{align*}
    \bz(\mathbf{k}) :=&\, \bz(k_1,\dots,k_r) :=\, \zeta_q\left(k_1,\dots,k_r;X^{k_1-1},\dots,X^{k_r-1}\right)
\\
=\, & \sum\limits_{m_1>\dots>m_r>0} \frac{q^{m_1(k_1-1)}}{(1-q^{m_1})^{k_1}}\cdots \frac{q^{m_r(k_r-1)}}{(1-q^{m_r})^{k_r}}.
\end{align*}
\end{definition}
In contrast to the SZ-model, BZ-$q$MZVs span only a subspace of $\mathcal{Z}_q$:
\begin{proposition}
\label{bz1X2}
The span of the BZ-model is $\mathcal{Z}_{q,1}$,
\begin{align*}
    \mathcal{Z}_{q,1} = \left\langle \bz(k_1,\dots,k_r)\, |\, r\geq 0,\, k_1\geq 2,\, k_i\geq 1\right\rangle_\Q.
\end{align*}
\end{proposition}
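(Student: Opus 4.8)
The plan is to show a two-sided inclusion. For the inclusion $\left\langle \bz(k_1,\dots,k_r)\mid r\geq 0,\, k_1\geq 2,\, k_i\geq 1\right\rangle_\Q \subseteq \mathcal{Z}_{q,1}$, I would simply observe that $\bz(k_1,\dots,k_r) = \zeta_q(k_1,\dots,k_r;X^{k_1-1},\dots,X^{k_r-1})$ with $\deg(X^{k_j-1}) = k_j - 1 = k_j - d$ for $d=1$, and $X^{k_1-1}\in X\Q[X]$ since $k_1\geq 2$. Hence each BZ-$q$MZV is by definition an element of the generating set of $\mathcal{Z}_{q,1}$, giving this inclusion for free.

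The reverse inclusion $\mathcal{Z}_{q,1}\subseteq \left\langle \bz(k_1,\dots,k_r)\right\rangle_\Q$ is the substantive part. Here I would argue exactly as in the proof of Proposition \ref{Qsz=Zq2}, but tracking the degree condition. A generator of $\mathcal{Z}_{q,1}$ is $\zeta_q(k_1,\dots,k_r;Q_1,\dots,Q_r)$ with $k_1,\dots,k_r\geq 1$, $\deg(Q_j)\leq k_j-1$, and $Q_1\in X\Q[X]$. By $\Q$-linearity of $\zeta_q$ in each polynomial argument it suffices to treat monomials $Q_j = X^{n_j}$ with $1\leq n_1\leq k_1-1$ (using $Q_1\in X\Q[X]$, so $n_1\geq 1$) and $0\leq n_j\leq k_j-1$ for $j\geq 2$. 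The key identity is the analogue of \eqref{special rational functions2}: every $\frac{X^n}{(1-X)^s}$ with $0\leq n\leq s-1$ should be expressible as a finite $\Q$-linear combination of terms $\frac{X^{k-1}}{(1-X)^k}$ with $k\geq 2$ (equivalently $\frac{X^{\ell}}{(1-X)^{\ell+1}}$, $\ell\geq 1$). I would either derive this directly or reindex \eqref{special rational functions2}: writing $\frac{X^n}{(1-X)^s} = X\cdot\frac{X^{n-1}}{(1-X)^s}$ when $n\geq 1$, or more cleanly noting $\frac{X^n}{(1-X)^s}=\frac{X^{n}}{(1-X)^{n+1}}\cdot\frac{(1-X)^{n+1}}{(1-X)^s}$ is not quite right since $(1-X)^{n+1-s}$ is not polynomial when $n+1<s$; instead I would use the substitution/partial-fraction expansion $\frac{1}{(1-X)^{s-n}} = \sum_{j\geq 0}\binom{s-n-1+j}{j}X^j$ truncated appropriately, or simply invoke that the $\Q$-span of $\{\frac{X^{\ell}}{(1-X)^{\ell+1}}:\ell\geq 1\}$ together with lower terms matches the span of $\{\frac{X^n}{(1-X)^s}:0\le n\le s-1\}$ by a triangularity argument in $s$. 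Applying such an identity in each of the $r$ variables $q^{m_j}$ simultaneously (which is legitimate since $\zeta_q$ is $\Q$-linear in each $Q_j$ independently) expresses the generator as a finite $\Q$-linear combination of $\bz$'s with the required constraints $k_j\geq 2$ for the first slot and $k_j\geq 1$ elsewhere.

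The main obstacle is getting the bookkeeping of the degree bounds and the index ranges exactly right: one must verify that the $d=1$ degree shift is precisely what forces $k_1\geq 2$ (numerator degree $\geq 1$) and that no spurious terms with $k_j=1$ in the leading slot, or $n_j = k_j$, sneak in. A clean way to organize this is to prove the single-variable lemma: for $s\geq 1$ and $0\leq n\leq s-1$, $\frac{X^n}{(1-X)^s}\in\left\langle \frac{X^{\ell-1}}{(1-X)^\ell}\mid \ell\geq 2\right\rangle_\Q$, by induction on $s$ (the base case $s=1$, $n=0$ being trivial once one notes $\frac{1}{1-X}=1+\frac{X}{(1-X)}$... — actually $\frac{1}{1-X}$ corresponds to $\ell=1$, not allowed, so care is needed: in $\mathcal{Z}_{q,1}$ with $k\geq 1$ the case $s=1,n=0$ gives $\zeta_q(\dots,1,\dots;\dots,1,\dots)$, and $\frac{1}{1-X} = \frac{X}{(1-X)}+1$, where the constant $1$ reduces the depth — this is where an induction on depth, not just on $s$, is cleanest). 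So I would run a double induction: first on $r$ (depth) to absorb constant terms produced when $\deg Q_j = 0$ via $\frac{1}{(1-X)} = 1 + \frac{X}{1-X}$ dropping a letter, and within fixed depth on $\sum k_j$ for the degree reduction. With that structure the proof is a routine but careful unwinding, entirely parallel to Proposition \ref{Qsz=Zq2}.
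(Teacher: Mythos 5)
Your overall strategy coincides with the paper's: the forward inclusion is immediate from the definitions (as you say), and the reverse inclusion reduces to a one-variable identity expressing $\frac{X^n}{(1-X)^s}$ with $0\leq n<s$ in terms of the BZ building blocks $\frac{X^{k-1}}{(1-X)^k}$, applied independently in each slot. The identity the paper uses (stated there with a small exponent typo) is
\begin{align*}
\frac{X^n}{(1-X)^s}=\frac{X^n}{(1-X)^{n+1}}\left(1+\frac{X}{1-X}\right)^{s-n-1}=\sum_{j=0}^{s-n-1}\binom{s-n-1}{j}\frac{X^{n+j}}{(1-X)^{n+j+1}},
\end{align*}
whose terms are $\frac{X^{k-1}}{(1-X)^k}$ with $k=n+j+1\geq n+1$. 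This is exactly the lemma you were groping for, and it settles all of your bookkeeping worries at once: in the first slot $Q_1\in X\Q[X]$ forces $n\geq 1$, hence $k\geq 2$; in the remaining slots $n\geq 0$ gives $k\geq 1$, which is precisely what the BZ generating set permits there.

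The one place you go astray is the case $n=0$. Your stated key lemma --- that every $\frac{X^n}{(1-X)^s}$ with $0\leq n\leq s-1$ lies in the span of the $\frac{X^{k-1}}{(1-X)^k}$ with $k\geq 2$ --- is false for $n=0$: writing $u=\frac{X}{1-X}$, the span of $\bigl\{\frac{X^{k-1}}{(1-X)^k}=u^{k-1}(1+u)\,:\,k\geq 2\bigr\}$ is $u(1+u)\Q[u]$, whereas $\frac{1}{(1-X)^s}=(1+u)^s$ has nonzero constant term in $u$. You sense this, but the repair is not the depth induction you propose; it is simply that $k_j=1$ is allowed in every slot except the first, and $\frac{1}{1-X}=\frac{X^{1-1}}{(1-X)^{1}}$ is itself the $k=1$ building block, so for $n=0$ you only need $k\geq 1$ and the displayed identity already delivers that. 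Your proposed workaround, splitting $\frac{1}{1-X}=1+\frac{X}{1-X}$ and letting the constant term ``drop a letter,'' would in fact fail if pursued: a constant factor $1$ in slot $j$ does not remove the summation variable $m_j$ but produces $\sum_{m_{j-1}>m_j>m_{j+1}}1=m_{j-1}-m_{j+1}-1$, a polynomial weight in the $m$'s that leaves the class of generators $\zeta_q(k_1,\dots,k_r;Q_1,\dots,Q_r)$ altogether (and $\frac{X}{1-X}$ is not of the form $\frac{X^{k-1}}{(1-X)^k}$ for any $k$, so the other summand is not a valid building block either). Delete that detour, state the lemma with the correct threshold $k\geq n+1$, and your argument becomes the paper's proof.
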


\begin{proof}
Every BZ-$q$MZV is by definition an element of $\mathcal{Z}_{q,1}$. That also every element of $\mathcal{Z}_{q,1}$ can be written as rational linear combination, follows from the identity
\begin{align*}
    \frac{X^s}{(1-X)^n} = \frac{X^s}{(1-X)^{s+1}}\left(1+\frac{X}{1-X}\right)^{n-s+1}
\end{align*}
for all $0\leq s< n$ and since the RHS is a rational linear combination of expressions $\frac{X^{k-1}}{(1-X)^k}$.
\end{proof}
Note that $\mathcal{Z}_{q,1}$ is a proper subspace of $\mathcal{Z}_q$ since $\zeta_q(1;X)\in\mathcal{Z}_q$, e.g., can not be written in terms of BZ-$q$MZVs. This fact can be proven with arguments similar to \cite[Thm. 2.14 (ii)]{BK1}.

BZ-$q$MZVs satisfy a quasi-shuffle product, in analogy to the stuffle product of MZVs since it is induced by multiplication of iterated sums:
\begin{definition}
\begin{enumerate*}
    \item We can define $\bz$ also as map $\bz \, :\, \h^0\rightarrow \, \mathcal{Z}_q$, defined via
    \begin{align*}
       z_{k_1}\dots z_{k_r}\longmapsto\, \bz(k_1,\dots,k_r)
    \end{align*}
    by $\Q$-linear continuation and $\mathbf{1}\mapsto 1$.
    \item Define on $\Q\{z_k:\, k\in\N\}$ the commutative and associative product $\diamond_{BZ}$ via
\begin{align*}
    z_{k_1}\diamond_{BZ} z_{k_2} := z_{k_1 + k_2} + z_{k_1 + k_2 -1}
\end{align*}
for all $k_1,k_2\geq 1$ and $\mathbf{1}\diamond_{BZ} w := w\diamond_{BZ}\mathbf{1} := w$ for all $w\in\mathfrak{h}^0$. Let be $\ast_{BZ}$ the induced quasi-shuffle product on $\mathfrak{h}^1$. 
\end{enumerate*}
\end{definition}
Notice that $\mathfrak{h}^0\subset\mathfrak{h}^1$ is closed under $\ast_{BZ}$. We have the following:

\begin{proposition}
\label{bz qshuffle2}
On $(\mathfrak{h}^0,\ast_{BZ})$, $\bz$ is an algebra homomorphism, i.e., for all $u,v\in\mathfrak{h}^0$ we have
\begin{align*}
    \bz(u\ast_{BZ} v) = \bz(u)\bz(v).
\end{align*}
\begin{proof}  
This is elementary calculation using the product of iterated sums and the fact
\begin{align*}
    \frac{q^{m (k-2)}}{(1-q^m)^k} = \frac{q^{m (k-2)}}{(1-q^m)^{k-1}} + \frac{q^{m (k-1)}}{(1-q^m)^k}.\tag*{\qedhere}
\end{align*}
\end{proof}
\end{proposition}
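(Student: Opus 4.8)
The plan is to reduce to basis words and then match, term by term, the combinatorial expansion of a product of iterated sums against the recursion defining $\ast_{BZ}$. By $\Q$-bilinearity of $\ast_{BZ}$ and $\Q$-linearity of $\bz$ it suffices to prove $\bz(u)\bz(v)=\bz(u\ast_{BZ} v)$ for two admissible words $u=z_{k_1}\cdots z_{k_r}$ and $v=z_{l_1}\cdots z_{l_s}$ (so $k_1,l_1\geq 2$), and I would argue by induction on $r+s$. If $r=0$ or $s=0$ then $u$ or $v$ is $\bone$ and both sides reduce to $\bz$ of the other word, so assume $r,s\geq 1$.

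For the inductive step, write $\bz(u)\bz(v)$ as the product of the defining sums $\sum_{m_1>\cdots>m_r>0}$ and $\sum_{n_1>\cdots>n_s>0}$ and partition the summation region of the resulting double sum into $m_1>n_1$, $m_1<n_1$, and $m_1=n_1$. In the region $m_1>n_1$ the index $m_1$ is strictly larger than all the other summation variables; pulling out the factor $q^{m_1(k_1-1)}/(1-q^{m_1})^{k_1}$ leaves the product of $\bz(z_{k_2}\cdots z_{k_r})$ and $\bz(v)$ restricted to indices $<m_1$, which by the induction hypothesis is $\bz\bigl(z_{k_2}\cdots z_{k_r}\ast_{BZ} v\bigr)$ restricted to $<m_1$; so this region contributes $\bz\bigl(z_{k_1}(z_{k_2}\cdots z_{k_r}\ast_{BZ} v)\bigr)$. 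The region $m_1<n_1$ is symmetric and contributes $\bz\bigl(z_{l_1}(u\ast_{BZ} z_{l_2}\cdots z_{l_s})\bigr)$.

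In the diagonal region $m_1=n_1=:m$ the two leading factors merge,
\[
\frac{q^{m(k_1-1)}}{(1-q^m)^{k_1}}\cdot\frac{q^{m(l_1-1)}}{(1-q^m)^{l_1}}=\frac{q^{m(k_1+l_1-2)}}{(1-q^m)^{k_1+l_1}},
\]
and the identity $q^{m(k-2)}/(1-q^m)^k=q^{m(k-2)}/(1-q^m)^{k-1}+q^{m(k-1)}/(1-q^m)^k$ with $k=k_1+l_1$ rewrites this as the BZ-summand of $z_{k_1+l_1-1}$ at $m$ plus the BZ-summand of $z_{k_1+l_1}$ at $m$, i.e. as the leading factor belonging to $z_{k_1+l_1}+z_{k_1+l_1-1}=z_{k_1}\diamond_{BZ} z_{l_1}$. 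Factoring it out and applying the induction hypothesis to $z_{k_2}\cdots z_{k_r}\ast_{BZ} z_{l_2}\cdots z_{l_s}$, this region contributes $\bz\bigl((z_{k_1}\diamond_{BZ} z_{l_1})(z_{k_2}\cdots z_{k_r}\ast_{BZ} z_{l_2}\cdots z_{l_s})\bigr)$. Summing the three contributions and comparing with the defining recursion of the quasi-shuffle product $\ast_{BZ}$ gives $\bz(u)\bz(v)=\bz(u\ast_{BZ} v)$. Along the way one checks that every word produced lies in $\h^0$ --- here $k_1,l_1\geq 2$ forces $k_1+l_1-1\geq 3$, and $\h^0\subset\h^1$ is closed under $\ast_{BZ}$ --- so that $\bz$ is defined on each of them.

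The one point requiring genuine care --- the step I would flag as the main obstacle --- is the legitimacy of splitting and regrouping the double series. This is harmless because each series, and each of the three pieces, is an element of $\Q\llbracket q\rrbracket$ by the well-definedness of $q$MZVs (Proposition \ref{qmzv welldef}), so on every fixed power $q^N$ only finitely many terms are involved and the rearrangement is legitimate. Granting that, the argument is precisely Hoffman's quasi-shuffle bookkeeping, the BZ-specific input being the partial-fraction identity above that realizes the diamond product $\diamond_{BZ}$.
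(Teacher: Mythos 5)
Your proof is correct and is exactly the argument the paper's two-line proof alludes to: the standard quasi-shuffle decomposition of the product of iterated sums into the regions $m_1>n_1$, $m_1<n_1$, $m_1=n_1$, with the partial-fraction identity $\frac{q^{m(k-2)}}{(1-q^m)^{k}}=\frac{q^{m(k-2)}}{(1-q^m)^{k-1}}+\frac{q^{m(k-1)}}{(1-q^m)^{k}}$ realizing $z_{k_1}\diamond_{BZ}z_{l_1}=z_{k_1+l_1}+z_{k_1+l_1-1}$ on the diagonal. You simply spell out the bookkeeping (and the rearrangement justification) that the paper leaves to the reader; the only cosmetic refinement would be to phrase the induction for truncated sums over words in $\mathfrak{h}^1$, since the tail words $z_{k_2}\cdots z_{k_r}$ need not be admissible.
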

Define for the following the abbreviation
\begin{align*}
    f_\ell(X,Y) := \bi{X,0,\dots,0}{Y,\dots,Y}
\end{align*}
for all $\ell\in\N$, where both rows have $\ell$ entries. Furthermore, we identify $(f_{\ell_1},\dots,f_{\ell_r})$, $\ell_j,r\in\N$, with the concatenation of $f_{\ell_1},\dots,f_{\ell_r}$.
\begin{theorem}[{\cite[Thm. 2.13]{Bri}}]
\label{bz genser2}
Define for $r\geq 1$ the generating series of BZ-$q$MZVs,
\begin{align*}
    \, \mathfrak{b}\bi{X_1,\dots,X_r}{Y_1,\dots,Y_r} 
        = \sum\limits_{\substack{k_1,\dots,k_r\geq 1\\ d_1,\dots,d_r\geq 1}} \bz\left(k_1 + 1,\{1\}^{d_1 - 1},\dots,k_r + 1,\{1\}^{d_r - 1}\right) X^{k_1 - 1}Y_1^{d_1 - 1}\cdots X_r^{k_r - 1} Y_r^{d_r - 1}.
\end{align*}
Then, we have for every $r\geq 1$
\begin{align*}
    \, \mathfrak{b}\bi{X_1,\dots,X_r}{Y_1,\dots,Y_r} 
        = \sum\limits_{\substack{\ell_1,\dots,\ell_r \geq 1\\ \delta_1,\dots,\delta_r\in\{0,1\}}} (-1)^{r-(\delta_1+\dots+\delta_r)} \mathfrak{s}\left(f_{\ell_1}(\delta_1 X_1,Y_1),\dots, f_{\ell_r}(\delta_r X_r, Y_r)\right) \prod\limits_{j=1}^r (1+\delta_j X_j) Y_j^{\ell_j-1}.
\end{align*}
\end{theorem}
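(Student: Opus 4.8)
The plan is to expand every Bradley--Zhao $q$MZV occurring in $\mathfrak{b}$ into a $\Q$-linear combination of Schlesinger--Zudilin $q$MZVs and then resum; the only algebraic ingredient is \eqref{special rational functions2} in its two special instances
\[
\frac{X^{k}}{(1-X)^{k+1}} = \frac{X^{k}}{(1-X)^{k}} + \frac{X^{k+1}}{(1-X)^{k+1}},\qquad \frac{1}{1-X} = 1 + \frac{X}{1-X}.
\]
Applied factorwise to $\bz(k_1+1,\{1\}^{d_1-1},\dots,k_r+1,\{1\}^{d_r-1})$, the first identity rewrites the leading entry $k_j+1$ of block $j$ as an SZ-value $k_j$ or $k_j+1$ (always SZ-admissible since $k_j\ge1$), and the second rewrites each of its $d_j-1$ trailing $1$'s as an SZ-value $0$ or $1$. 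Multiplying out, $\bz(k_1+1,\{1\}^{d_1-1},\dots)$ becomes a sum with all coefficients $1$ of SZ-$q$MZVs whose index, block by block, is a positive entry followed by a word in $\{0,1\}$; grouping that word into its maximal runs of $0$'s produces exactly the block shape that the argument $f_{\ell_j}(\,\cdot\,,Y_j)$ carves out inside the generating series $\mathfrak{s}$ of Theorem~\ref{sz generating2}, with $\ell_j-1$ equal to the number of surviving $1$'s.

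Then I would interchange the order of summation: sum over $k_1,\dots,k_r$, over the SZ-value chosen for each leading entry, over $\ell_1,\dots,\ell_r\ge1$, and over the lengths of the interior runs of $0$'s. The run lengths combine with the factors $Y_j^{d_j-1}$ to reconstitute the $Y_j$-dependence of $f_{\ell_j}(\,\cdot\,,Y_j)$ together with the extra $Y_j^{\ell_j-1}$. The only delicate point is the leading entry of each block, which carries $X_j$: writing $\mathfrak{S}_j$ for the dependence of $\mathfrak{s}\big(f_{\ell_1}(X_1,Y_1),\dots,f_{\ell_r}(X_r,Y_r)\big)$ on the $j$-th leading-slot variable (up to the normalisation of the $X_j$-grading), keeping the SZ-value $k_j$ contributes $X_j\,\mathfrak{S}_j(X_j)$ while raising it to $k_j+1$ contributes $\mathfrak{S}_j(X_j)-\mathfrak{S}_j(0)$ — the subtraction being forced by $k_j\ge1$, which kills the term that would otherwise land in $X_j$-degree $0$. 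Summing, one block contributes $(1+X_j)\mathfrak{S}_j(X_j)-\mathfrak{S}_j(0)=\sum_{\delta_j\in\{0,1\}}(-1)^{1-\delta_j}(1+\delta_jX_j)\,\mathfrak{S}_j(\delta_jX_j)$. These operations for distinct $j$ act on distinct variables, so they commute; applying all of them to $\mathfrak{s}\big(f_{\ell_1}(X_1,Y_1),\dots\big)$ and summing over $\ell_1,\dots,\ell_r\ge1$ yields exactly the stated right-hand side, with the product $\prod_{j=1}^{r}(1+\delta_jX_j)Y_j^{\ell_j-1}$ and the sign $(-1)^{r-(\delta_1+\dots+\delta_r)}$ falling out of the expansion.

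A more computational route, which I would keep in reserve, is to plug the closed form of $\mathfrak{s}$ from Theorem~\ref{sz generating2} into the right-hand side, sum the geometric series $\sum_{n\ge1}(1+\delta_jX_j)^{n-1}q^{mn}$ and $\sum_{n\ge1}q^{mn}$ coming from the SZ-positive entries, collapse the sum over the $\ell_j$ and the interior variables by the identity $\sum_{S\subseteq T}a^{|S|}b^{|T\setminus S|}\prod_{n\in S}c_n=\prod_{n\in T}(b+ac_n)$ into a product $\prod_{n}\big(1+\tfrac{Y_j}{1-q^{n}}\big)$, simplify the alternating sum over $\delta_j$ via $\tfrac{X_jq^{m}}{(1-q^{m})(1-(1+X_j)q^{m})}=\sum_{k\ge1}\tfrac{q^{mk}}{(1-q^{m})^{k+1}}X_j^{k}$, and compare the outcome with the closed form obtained by inserting the definition of $\bz$ into that of $\mathfrak{b}$.

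The obstacle I expect is purely bookkeeping: matching the ``leading entry plus block of $1$'s'' shape of a BZ-$q$MZV to the ``positive entry plus interior $0$'s'' shape that $f_{\ell_j}$ produces inside the SZ generating series, and, above all, tracking how the two-term expansion of each leading factor together with the admissibility constraint $k_j\ge1$ forces the correction $\mathfrak{S}_j\mapsto\mathfrak{S}_j-\mathfrak{S}_j|_{X_j=0}$ that is responsible for the signs $(-1)^{r-(\delta_1+\dots+\delta_r)}$. Keeping the two families of dummy indices $(k_j,d_j)$ and $(\ell_j,\text{run lengths})$ straight through the reindexing is where a computation of this kind most easily derails.
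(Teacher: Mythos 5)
Your argument is essentially the paper's proof run in the opposite direction: the paper expands the right-hand side via the definition of $\mathfrak{s}$ and recognizes the BZ-to-SZ translation of Proposition~\ref{extBZ-to-SZ2}, while you expand each $\bz$ on the left into SZ-values using the same two partial-fraction identities and regroup the resulting $\{0,1\}$-words into the blocks $f_{\ell_j}$, with the signs arising in both cases from the admissibility constraint that forces the subtraction of $\mathfrak{S}_j(0)$. The $X_j$-grading mismatch you flag is real but already present between the paper's statement and its own proof (which works with monomials $X^kY^d$ rather than $X^{k-1}Y^{d-1}$), so it is not a defect of your argument.
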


\begin{proof}
We give a proof for clarity reasons only for $r=1$. The statement for general $r$ can be proven in the same way: So, we have to show that
\begin{align*}
    \mathfrak{b}\bi{X}{Y}
    =
    \sum\limits_{\ell\geq 1,\, \delta\in\{0,1\}} (-1)^{1-\delta} \mathfrak{s}\left(f_\ell(\delta X,Y)\right) (1+\delta X) Y^{\ell}.
\end{align*}
Using the definition of $\mathfrak{b}$ and $\mathfrak{s}$ as generating series of BZ- resp. SZ-$q$MZVs, this equality is equivalent to (note that we plug in $X_2=\dots=X_\ell=0$ in each summand resp. $X_1=0$ additionally in the last sum)
\begin{align*}
    &\,\sum\limits_{k,\, d\geq 1} \bz\left(k+1,\{1\}^{d-1}\right) X^{k}Y^{d}
    \\
    =&\,
    \sum\limits_{\ell\geq 1} \sum\limits_{\substack{k_1\geq 1,k_2=\dots =k_\ell= 1\\ d_1,\dots,d_\ell\geq 0}} \sz\left(k_1,\{0\}^{d_1},\dots,k_\ell,\{0\}^{d_\ell}\right) X^{k_1-1} Y^{d_1+\dots+d_\ell} (1+X)Y^{\ell}
    \\
    &\, - \sum\limits_{\ell\geq 1} \sum\limits_{\substack{k_1=\dots =k_\ell= 1\\ d_1,\dots,d_\ell\geq 0}} \sz\left(k_1,\{0\}^{d_1},\dots,k_\ell,\{0\}^{d_\ell}\right) Y^{d_1+\dots+d_\ell} Y^{\ell}.
\end{align*}
The RHS equals
\begin{align*}
    &\sum\limits_{\ell\geq 1} \sum\limits_{\substack{k_1>1\\ d_1,\dots,d_\ell\geq 0}} \sz\left(k_1,\{0\}^{d_1},1,\{0\}^{d_2},\dots,1,\{0\}^{d_\ell}\right) X^{k_1 - 1} Y^{d_1+\dots + d_\ell + \ell}
    \\
    &\, +\sum\limits_{\ell\geq 1}\sum\limits_{\substack{k_1\geq 1\\ d_1,\dots,d_\ell\geq 0}} \sz\left(k_1,\{0\}^{d_1},1,\{0\}^{d_2},\dots,1,\{0\}^{d_\ell}\right) X^{k_1}Y^{d_1+\dots +d_\ell + \ell}
    \\
    =\,&\sum\limits_{\ell\geq 1} \sum\limits_{\substack{k_1>1\\ d_1,\dots,d_\ell\geq 0}} \sz\left(k_1,\{0\}^{d_1},1,\{0\}^{d_2},\dots,1,\{0\}^{d_\ell}\right) X^{k_1 - 1} Y^{d_1+\dots + d_\ell + \ell}
    \\
    &\, +\sum\limits_{\ell\geq 1}\sum\limits_{\substack{k_1 > 1\\ d_1,\dots,d_\ell\geq 0}} \sz\left(k_1 - 1,\{0\}^{d_1},1,\{0\}^{d_2},\dots,1,\{0\}^{d_\ell}\right) X^{k_1 - 1}Y^{d_1+\dots +d_\ell + \ell}
    \\
    =\, &\sum\limits_{d\geq 0}\sum\limits_{\varepsilon_1,\dots,\varepsilon_d\in\{0,1\}}\sum\limits_{k_1>1} \sz\left(k_1,\varepsilon_1,\dots,\varepsilon_d\right)X^{k_1-1}Y^{d+1}
    \\
    &\, -\sum\limits_{d\geq 0}\sum\limits_{\varepsilon_1,\dots,\varepsilon_d\in\{0,1\}}\sum\limits_{k_1>1} \sz\left(k_1 - 1,\varepsilon_1,\dots,\varepsilon_d\right)X^{k_1-1}Y^{d+1}
    \\
    =\,& \sum\limits_{k_1>1,\, d\geq 0} \bz\left(k_1,\{1\}^d\right) X^{k_1 - 1} Y^{d+1}
    =\, \sum\limits_{k,d\geq 1} \bz\left(k+1,\{1\}^{d-1}\right) X^{k} Y^{d}.
\end{align*}
In the second last step we used an explicit translation of BZ-$q$MZVs into SZ-$q$MZVs that can be found e.g. in Proposition \ref{extBZ-to-SZ2}. Furthermore, the statement for general $r\geq 1$ follows by an analogous calculation.
\end{proof}

\begin{remark}
We can formulate the theorem also more understandable:\\ For $r\geq 1$ we have
\begin{align*}
    \mathfrak{b}\bi{X_1,\dots,X_r}{Y_1,\dots,Y_r} \equiv \sum\limits_{\substack{\ell_1,\dots,l_r \geq 1}}  \mathfrak{s}\left(f_{\ell_1}(X_1,Y_1),\dots, f_{\ell_r}(X_r, Y_r)\right) \prod\limits_{j=1}^r (1+\delta_j X_j) Y_j^{\ell_j-1}
\end{align*}
modulo terms not divisible by $\prod\limits_{j=1}^r X_j Y_j$. 
\end{remark}

One of the main reasons why BZ-$q$MZVs are of interest is that they satisfy the same duality as MZVs:
\begin{theorem}[BZ-Duality; Bradley {\cite[Thm. 5]{Bra}}, Seki--Yamamoto {\cite[Thm. 1.2]{SY}}]
\label{bz dual2}
We have on $\mathfrak{h}^0$
\begin{align*}
    \bz\circ\tau = \bz. \tag*{\qed}
\end{align*}
\end{theorem}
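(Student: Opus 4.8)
The plan is to reduce the statement to an identity between individual $q$MZVs and then prove that identity by a connected-sum argument in the style of Seki--Yamamoto. Since $\bz$, $\tau$ and $\bz\circ\tau$ are all $\Q$-linear and the words $z_{k_1}\cdots z_{k_r}$ form a basis of $\mathfrak{h}^0$, it suffices to fix one such word $w=z_{k_1}\cdots z_{k_r}$ (so $k_1\geq 2$, $k_i\geq 1$) and to show $\bz(w)=\bz(\tau(w))$. Writing $\mathbf{k}^{\vee}$ for the admissible index whose associated word is $\tau(w)$ (this is the standard duality of admissible indices, read off from the block decomposition into factors $x_0^{a-1}x_1$), the theorem becomes $\bz(\mathbf{k}) = \bz(\mathbf{k}^{\vee})$ for every admissible index $\mathbf{k}$.

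For admissible or empty indices $\mathbf{k}=(k_1,\dots,k_r)$, $\mathbf{l}=(l_1,\dots,l_s)$ I would introduce the connected $q$-sum
\begin{align*}
Z(\mathbf{k};\mathbf{l}) := \sum_{\substack{m_1>\dots>m_r>0\\ n_1>\dots>n_s>0}} \left(\prod_{i=1}^{r}\frac{q^{m_i(k_i-1)}}{(1-q^{m_i})^{k_i}}\right) C(m_r,n_s) \left(\prod_{j=1}^{s}\frac{q^{n_j(l_j-1)}}{(1-q^{n_j})^{l_j}}\right),
\end{align*}
where $C(m,n)$ is a suitable \emph{$q$-connector}, a $q$-deformation of $\binom{m+n}{m}^{-1}$ built from the $q$-Pochhammer symbols $(q;q)_m=\prod_{i=1}^{m}(1-q^i)$, normalised so that $C(m,0)=C(0,n)=1$. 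Two facts are needed. First, the boundary values $Z(\mathbf{k};\emptyset)=\bz(\mathbf{k})$ and $Z(\emptyset;\mathbf{l})=\bz(\mathbf{l})$, which are immediate from the normalisation. Second, two \emph{transport relations}: for the correct choice of $C$ the value of $Z$ is unchanged either when one lowers the last entry of $\mathbf{k}$ by $1$ and simultaneously prepends a new entry to $\mathbf{l}$, or when one deletes a trailing entry $1$ of $\mathbf{k}$ and simultaneously raises the first entry of $\mathbf{l}$. On the level of words these moves transfer one letter at a time from the tail of the word of $\mathbf{k}$ to the head of the word of $\mathbf{l}$ while exchanging $x_0\leftrightarrow x_1$; intermediate terms still converge in $\Q\llbracket q\rrbracket$ because the connector supplies the missing $q$-decay even when the detached index is no longer admissible. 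Both transport relations follow from a single algebraic identity satisfied by $C$ after summing over the innermost summation variable — a $q$-analogue of the Abel-summation / partial-fraction identity that underlies the classical case.

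Granting the two facts, one peels the word of $\mathbf{k}$ from right to left: after $\wt(\mathbf{k})$ transport steps the left index has become empty and the right index is precisely $\mathbf{k}^{\vee}$, so
\begin{align*}
\bz(\mathbf{k}) = Z(\mathbf{k};\emptyset) = Z(\emptyset;\mathbf{k}^{\vee}) = \bz(\mathbf{k}^{\vee}) = (\bz\circ\tau)(w),
\end{align*}
which is the assertion. The main obstacle is pinning down the correct $q$-connector $C(m,n)$ and verifying the transport identity; the remaining work — checking that iterated transport really implements $\tau$ on words, and tracking convergence of the intermediate sums — is routine but must be done carefully. An alternative route, staying inside the machinery of this survey, is to expand each $\bz(\mathbf{k})$ into a $\Q$-linear combination of Schlesinger--Zudilin $q$MZVs via Proposition \ref{extBZ-to-SZ2}, note that this expansion is carried to the expansion of $\bz(\mathbf{k}^{\vee})$ by SZ-duality (Theorem \ref{sz dual 2}), and translate back; the delicate point there is that the translation between the two models must be shown to intertwine $\tau$ on $\mathfrak{h}$ with the reversal anti-automorphism $\Tilde\tau$ on $\mathfrak{K}$.
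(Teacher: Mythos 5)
The paper offers no proof of Theorem \ref{bz dual2}: it is stated with a \qed and attributed to Bradley \cite{Bra} and Seki--Yamamoto \cite{SY}. Your primary strategy --- a connected sum $Z(\mathbf{k};\mathbf{l})$ with a $q$-connector, boundary conditions, and two transport relations that peel the word of $\mathbf{k}$ onto $\mathbf{l}$ while exchanging $x_0\leftrightarrow x_1$ --- is exactly the Seki--Yamamoto argument, so the route is the right one. The problem is that, as written, your text is a plan rather than a proof: the entire mathematical content lies in exhibiting the connector and verifying the transport relations, and you explicitly leave both open (``the main obstacle is pinning down the correct $q$-connector $C(m,n)$ and verifying the transport identity''). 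For the record, the connector used in \cite{SY} is
\begin{align*}
C(m,n) \;=\; \frac{(q;q)_m\,(q;q)_n}{(q;q)_{m+n}}\,q^{mn},\qquad (q;q)_m:=\prod_{i=1}^{m}(1-q^{i}),
\end{align*}
which satisfies $C(m,0)=C(0,n)=1$, and the two transport relations reduce to short telescoping computations with $C$ after summing the innermost variable; until these are actually carried out, the chain $\bz(\mathbf{k})=Z(\mathbf{k};\emptyset)=\dots=Z(\emptyset;\mathbf{k}^{\vee})=\bz(\mathbf{k}^{\vee})$ is not established.

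The fallback route you sketch at the end does not work, and the obstruction is precisely the ``delicate point'' you flag: the translation of the BZ-model into the SZ-model does not intertwine $\tau$ on $\mathfrak{h}^0$ with $\Tilde{\tau}$ on $\mathfrak{K}^1$. Expanding $\bz(\mathbf{k})$ via Proposition \ref{extBZ-to-SZ2} (with no $\oo$-entries) gives $\bz(\mathbf{k})=\sum_{\delta\in\{0,1\}^r}\sz(k_1-\delta_1,\dots,k_r-\delta_r)$, and applying Theorem \ref{sz dual 2} termwise produces the pullback of SZ-duality --- the analogue of Proposition \ref{tbz dual2} and Theorem \ref{dual in ooz2}, which on the level of bi-brackets is the partition relation (Theorem \ref{partition rel2}, cf. \cite[Thm.~3.22]{Bri}) --- not the relation $\bz(\mathbf{k})=\bz(\mathbf{k}^{\vee})$. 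These are treated throughout the survey as distinct families of relations, and already for $\mathbf{k}=(2,1)$ the termwise SZ-dual of the expansion of $\bz(2,1)$ does not match the expansion of $\bz(3)$. So you should delete the fallback and commit to completing the connected-sum argument.
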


Besides this duality, there is a bigger class of relations, the $q$-Ohno relations. BZ-duality is a special case of them:
\begin{theorem}[$q$-Ohno relations; Okuda--Takeyama {\cite[Thm. 1]{OT}}]
\label{qOhno}
For any admissible index ${\mathbf{k}} = (k_1,\dots,k_r)$ and any $c\in\N_0$ we have
\begin{align*}
    \sum\limits_{ |{\mathbf{c}}| = c} \bz\left({\mathbf{k}} + {\mathbf{c}}\right) = \sum\limits_{ |{\mathbf{c}}| = c} \bz\left({\mathbf{k}}^\vee + {\mathbf{c}}\right),
\end{align*}
where we sum over all ${\mathbf{c}} = (c_1,\dots,c_r)\in\N_0^r$ with $|{\mathbf{c}}| := c_1+\dots + c_r = c$.
\end{theorem}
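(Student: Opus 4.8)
The plan is to extend the \emph{connected sum} technique of Seki--Yamamoto, which already underlies the available BZ-duality (Theorem~\ref{bz dual2}, the case $c=0$ of the present statement), to the weight-raised setting by inserting one extra formal parameter. First I would package both sides into generating series: for an admissible index $\mathbf{k}=(k_1,\dots,k_r)$ set
\[
\widehat{\bz}(\mathbf{k}\mid x):=\sum_{c\geq 0}\Bigl(\,\sum_{|\mathbf{c}|=c}\bz(\mathbf{k}+\mathbf{c})\Bigr)x^{c}\ \in\ \Q\llbracket q\rrbracket\llbracket x\rrbracket,
\]
so that the theorem becomes the coefficientwise identity $\widehat{\bz}(\mathbf{k}\mid x)=\widehat{\bz}(\mathbf{k}^{\vee}\mid x)$; here $\mathbf{k}^{\vee}$ is the index with $z_{k^{\vee}_1}\cdots z_{k^{\vee}_{r'}}=\tau(z_{k_1}\cdots z_{k_r})$, so that $c=0$ is exactly Theorem~\ref{bz dual2}.

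Next I would introduce, for a pair $\mathbf{a},\mathbf{b}$ of admissible-or-empty indices, a $q$-connected double sum
\[
Z_{q}(\mathbf{a};\mathbf{b}\mid x):=\sum_{\substack{m_{1}>\dots>m_{p}>0\\ n_{1}>\dots>n_{s}>0}}\ \prod_{i}\frac{q^{m_{i}(a_{i}-1)}}{(1-q^{m_{i}})^{a_{i}}}\ \mathcal{C}_{q}(m_{p},n_{s}\mid x)\ \prod_{j}\frac{q^{n_{j}(b_{j}-1)}}{(1-q^{n_{j}})^{b_{j}}},
\]
where the connector $\mathcal{C}_{q}(m,n\mid x)$ is an $x$-twisted $q$-beta factor, built from $q$-Pochhammer symbols and degenerating for $x=0$ to the connector of the BZ-duality proof, chosen so that at the two boundaries it collapses to the Ohno generating functions: $Z_{q}(\mathbf{k};\emptyset\mid x)=\widehat{\bz}(\mathbf{k}\mid x)$ and $Z_{q}(\emptyset;\mathbf{k}^{\vee}\mid x)=\widehat{\bz}(\mathbf{k}^{\vee}\mid x)$. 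The heart of the proof is then a pair of \emph{transport relations}, obtained from a $q$-partial-fraction / telescoping identity for $\mathcal{C}_{q}$: each removes one letter from one of the two index blocks and shifts one letter into the other, with the choice between the two moves dictated by reading the word $z_{k_1}\cdots z_{k_r}$ letter by letter, exactly as in the $c=0$ case. Applying them to empty $\mathbf{k}$, one passes from $Z_{q}(\mathbf{k};\emptyset\mid x)$ to $Z_{q}(\emptyset;\mathbf{k}^{\vee}\mid x)$, and comparing coefficients of $x^{c}$ gives the theorem. An equivalent route --- the one of Okuda--Takeyama --- instead writes $\widehat{\bz}(\mathbf{k}\mid x)$ as a Jackson iterated integral with $q^{mx}$-type twisted weights and realizes $\tau$ through the $q$-substitution dual to $t\mapsto 1-t$; I would keep this as a cross-check.

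The step I expect to be the main obstacle is isolating the correct twisted connector $\mathcal{C}_{q}(m,n\mid x)$: it must simultaneously admit the telescoping identity that powers the index transport and degenerate at a boundary to the Ohno generating function $\sum_{c}x^{c}\sum_{|\mathbf{c}|=c}(\cdots)$. Once $\mathcal{C}_{q}$ is pinned down, the transport relations and boundary evaluations are finite (if delicate) $q$-series manipulations, and the combinatorial bookkeeping of ``emptying the index'' is structurally identical to the proof of BZ-duality already cited in the excerpt.
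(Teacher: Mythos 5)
The paper gives no proof of Theorem~\ref{qOhno} at all --- it simply quotes the result from Okuda--Takeyama --- so there is no in-text argument to compare you against. On its own terms, your reduction of the statement to the generating-series identity $\widehat{\bz}(\mathbf{k}\mid x)=\widehat{\bz}(\mathbf{k}^{\vee}\mid x)$ is correct (with the minor caveat that on the dual side $\mathbf{c}$ ranges over $\N_0^{r'}$ with $r'=\depth(\mathbf{k}^{\vee})$, which in general differs from $r$), and the overall strategy --- an $x$-deformed Seki--Yamamoto connected sum whose $x=0$ specialization is the connector proving Theorem~\ref{bz dual2} --- is a viable one that is known to work in the $q$-setting. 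It is also genuinely different from the route of the cited source, which argues via $q$-multiple polylogarithms and difference/integral manipulations rather than via a connector.

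The problem is that, as written, everything that constitutes the actual mathematical content is deferred: you never exhibit $\mathcal{C}_q(m,n\mid x)$, and you yourself identify its construction as the main obstacle, yet the transport relations, the boundary evaluations, and even the convergence of $Z_q(\mathbf{a};\mathbf{b}\mid x)$ as a formal power series all hinge on that single choice. Concretely, the boundary condition $Z_q(\mathbf{k};\emptyset\mid x)=\widehat{\bz}(\mathbf{k}\mid x)$ already forces a specific shape: exchanging the sum over $c$ with the sum over $\mathbf{m}$ gives
\begin{align*}
\sum_{c\geq 0}x^{c}\sum_{|\mathbf{c}|=c}\bz(\mathbf{k}+\mathbf{c})
=\sum_{m_1>\dots>m_r>0}\ \prod_{j=1}^{r}\frac{q^{m_j(k_j-1)}}{(1-q^{m_j})^{k_j}}\cdot\frac{1-q^{m_j}}{1-(1+x)q^{m_j}},
\end{align*}
so the deformation multiplies each BZ-factor by $\frac{1-q^{m_j}}{1-(1+x)q^{m_j}}$ rather than by a naive geometric series in $\frac{x}{1-q^{m_j}}$ (the extra $q^{m_j}$-twist in the ratio comes from the numerator exponent $k_j+c_j-1$). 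Any candidate connector must be compatible with this twisted factor \emph{and} satisfy a telescoping $q$-partial-fraction identity in both variables; verifying that both requirements can be met simultaneously is precisely the proof, and it is absent. Until the connector is written down and the two transport relations are checked, the argument has a genuine gap at its core.
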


\begin{remark}[Comparison]
The indisputable advantage of this model is that it satisfies the same duality as MZVs. In particular, the latter follows from BZ-duality by taking the limit $q\rightarrow 1$ after multiplication with $(1-q)^{k_1+\dots+k_r}$.

On the other hand, it is a bit challenging to handle, like finding a \enquote{good} generating series. For these things, the SZ-model is most times superior to the BZ-model.
\end{remark}

\subsection{Bi-brackets}

Another interesting model of $q$-analogues are so-called brackets (introduced in Bachmann's master thesis \cite{Ba1}, further investigated, e.g., in \cite{BK1}) and their generalization, bi-brackets, introduced by Bachmann in his PhD thesis (\cite{Ba2}).

The motivation for introducing bi-brackets came from examining the Fourier expansion of Eisenstein series and their generalization, Multiple Eisenstein series, such as their derivatives (studied in \cite{Ba4}). From there, the original definition is justified:
\begin{defprop}[{\cite{Ba4}}]
\label{bibr defi2}
\begin{enumerate*}
\item Define for $r\geq 0,\, k_1,\dots,k_r\geq 1,\, d_1,\dots,d_r\geq 0$ the bi-bracket 
\begin{align*}
    \g\bi{k_1,\dots,k_r}{d_1,\dots,d_r} :=&\,
    \sum\limits_{\substack{m_1>\dots>m_r>0\\ n_1,\dots,n_r > 0}} \frac{m_1^{d_1}}{d_1!}\cdots \frac{m_r^{d_r}}{d_r!} \frac{n_1^{k_1 - 1}\dots n_r^{k_r -1}}{(k_1-1)!\dots (k_r - 1)!} q^{m_1 n_1 + \dots + m_r n_r}
    \\
    =&\,
    \sum\limits_{m_1>\dots>m_r>0} \frac{m_1^{d_1}}{d_1!}\dots\frac{m_r^{d_r}}{d_r!} \frac{P_{k_1}(q^{m_1})}{(1-q^{m_1})^{k_1}}\dots \frac{P_{k_r}(q^{m_r})}{(1-q^{m_r})^{k_r}},
\end{align*}
where $P_k$ is the $k$-th \emph{Eulerian polynomial}, defined via
\begin{align*}
    \frac{P_k(X)}{(1-X)^k} := \sum\limits_{n>0} \frac{n^{k-1}}{(k-1)!} X^n.
\end{align*}
Additionally, we set $\g\bi{\emptyset}{\emptyset} := 1$ as usually. Furthermore, denote by $k_1+\dots+k_r$ the \emph{weight} and by $r$ the \emph{depth} of the bi-bracket.
\item We define $\g(\emptyset) := 1$ and for any $r\geq 1$ and any index $\textbf{k} = (k_1,\dots,k_r)\in\mathbb{N}^r$ the \emph{bracket} of $\textbf{k}$ as 
\begin{align*}
    \g(\textbf{k}) :=\, \g(k_1,\dots,k_r) :=\, \zeta_q\left(k_1,\dots,k_r;P_{k_1},\dots,P_{k_r}\right)
    =\, \sum\limits_{m_1>\dots>m_r>0} \frac{P_{k_1}(q^{m_1})}{(1-q^{m_1})^{k_1}}\dots \frac{P_{k_r}(q^{m_r})}{(1-q^{m_r})^{k_r}}.
\end{align*}
\end{enumerate*}
\end{defprop}

\begin{remark}
\begin{enumerate*}
    \item An explicit expression of Eulerian polynomials is
    \begin{align*}
        P_k(X) = \frac{1}{(k-1)!}\sum\limits_{n=1}^{k} \left(\sum\limits_{j=0}^{n-1} (-1)^j\binom{k}{j} (n-j)^{k-1}\right) X^n.
    \end{align*}
    \item The name \textit{(bi-)bracket} comes from the original notation where $[\dots]$ was used instead of $\g(\dots)$.
    \item Every bracket $\g(k_1,\dots,k_r)$ is a bi-bracket since
    $
        \g(k_1,\dots,k_r) = \g\bi{k_1,\dots,k_r}{0,\dots,0}.
    $
\end{enumerate*}
\end{remark}
Bi-brackets generalize Eisenstein series since for even $k$, $\g\bi{k}{0}$ is the usual Eisenstein series of weight $k$, $G_k$, minus the constant term, i.e.,
\begin{align*}
    G_k = -\frac{B_k}{2 k!} + \g\bi{k}{0}.
\end{align*}
Furthermore, for every $d>0$, we have
\begin{align*}
    \left(q\frac{d}{dq}\right)^d G_k = \frac{(k+d-1)!d!}{(k-1)!} \g\bi{k+d}{d}.
\end{align*}
With the observation done before, one can obtain that the space of quasi-modular forms, which is $\Q[G_2,G_4,G_6]$, is a proper subspace of $\mathcal{Z}_q$. In this way, we get a connection to modular forms, which play an important role in the theory of MZVs as considered e.g. in \cite{GKZ}.

Bi-brackets and their structure are well known, for more details than in this section we refer to \cite{Ba3}, \cite{Ba4}, \cite{Ba5}, \cite{BK1}, \cite{BK2}, \cite{Zu2}.

\begin{theorem}[{\cite[Thm. 2.3]{BK2}}]
\label{bibr Zq2}
Bi-brackets span the space $\mathcal{Z}_q$, i.e.,
\begin{align*}
    \mathcal{Z}_{q} =\, \left\langle \g\bi{k_1,\dots,k_r}{d_1,\dots,d_r}\, \middle|\, r\geq 0,\, k_i\geq 1,\, d_i\geq 0\right\rangle_\Q. 
\end{align*}
\end{theorem}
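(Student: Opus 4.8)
The plan is to prove the two inclusions separately, the easy one being $\left\langle \g\bi{k_1,\dots,k_r}{d_1,\dots,d_r}\right\rangle_\Q \subseteq \mathcal{Z}_q$, which is immediate from Definition/Proposition \ref{bibr defi2}: each bi-bracket is written there in the form $\sum_{m_1>\dots>m_r>0} \prod_j \frac{P_{k_j}(q^{m_j})}{(1-q^{m_j})^{k_j}}$ up to the rational factors $m_j^{d_j}/d_j!$, and since $q\frac{d}{dq}$ acting on a $q$MZV brings down exactly a factor $m_j$ on the $j$-th summation index while preserving membership in $\mathcal{Z}_q$ (Remark \ref{qddq rem}(i)), iterating this operator produces the polynomial-in-$m_j$ prefactors. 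Concretely, $\g\bi{k_1,\dots,k_r}{d_1,\dots,d_r}$ is a $\Q$-linear combination of expressions obtained from $\zeta_q(k_1,\dots,k_r;P_{k_1},\dots,P_{k_r})$ by applying suitable iterated $q\frac{d}{dq}$-type operators (acting on the relevant variables), hence lies in $\mathcal{Z}_q$. Since $\deg P_{k}\le k$ and $P_k\in X\Q[X]$, the undifferentiated bracket $\zeta_q(k_1,\dots,k_r;P_{k_1},\dots,P_{k_r})$ already satisfies the defining constraints of $\mathcal{Z}_q$.

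For the reverse inclusion $\mathcal{Z}_q \subseteq \left\langle \g\bi{\mathbf{k}}{\mathbf{d}}\right\rangle_\Q$, I would use Proposition \ref{Qsz=Zq2}: it suffices to show every SZ-$q$MZV $\sz(k_1,\dots,k_r)$ with $k_1\ge 1,\, k_i\ge 0$ lies in the span of bi-brackets. First handle the zero entries: if some $k_j=0$, the corresponding summand $\frac{1}{(1-q^{m_j})^0}=1$ contributes a factor that is independent of $m_j$, so that summing over $m_j$ in the allowed range $m_{j-1}>m_j>m_{j+1}$ produces a binomial count $\binom{m_{j-1}-m_{j+1}-1}{1}$, i.e. a linear polynomial in the neighbouring indices. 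More systematically, a block $k_j,\{0\}^{d}$ corresponds, after summing out the zero indices, to inserting a factor $\binom{m_j-m_{j+1}-1}{d}$ — a degree-$d$ polynomial in $m_j$ (and $m_{j+1}$). So $\sz$ with zeros is a $\Q$-linear combination of sums of the shape $\sum_{m_1>\dots>m_s>0}\prod (\text{polynomial in }m_i)\cdot\frac{q^{m_i \ell_i}}{(1-q^{m_i})^{\ell_i}}$ with all $\ell_i\ge 1$; this is exactly the type of object that Definition/Proposition \ref{bibr defi2} shows bi-brackets span once we also convert $\frac{q^{m\ell}}{(1-q^m)^{\ell}}$ into a $\Q$-combination of the Eulerian expressions $\frac{P_k(q^m)}{(1-q^m)^k}$ for $1\le k\le \ell$, which is possible since $\{P_1,\dots,P_\ell\}$ spans the space of polynomials of degree $\le\ell$ vanishing at $0$ (equivalently, $\{X,X^2,\dots,X^\ell\}$ and $\{P_1,\dots,P_\ell\}$ are related by an invertible triangular change of basis). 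The monomials $m^d$ arising from the binomial factors are then absorbed into the $\frac{m^{d}}{d!}$-slots of the bi-brackets (up to rational constants and lower-degree corrections handled inductively).

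The main technical obstacle is bookkeeping the mixed dependence: the binomial factor $\binom{m_j-m_{j+1}-1}{d}$ depends on \emph{two} consecutive indices $m_j$ and $m_{j+1}$, whereas a bi-bracket's upper-row exponents $m_i^{d_i}/d_i!$ each involve only a single index. I would resolve this by expanding $\binom{m_j-m_{j+1}-1}{d}$ into a $\Q$-linear combination of products $m_j^{a} m_{j+1}^{b}$ with $a+b\le d$ (and $a,b\ge 0$), so that each resulting monomial distributes one power block to position $j$ and one to position $j+1$; the powers landing on a position that already carried an exponent simply add, and one proceeds by induction on the total degree $d_1+\dots+d_r$ to clean up the constant and lower-order terms. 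Care is also needed at the innermost index because there $m_{r+1}:=0$, which actually simplifies matters. The remaining steps — the triangular inversion relating $X^k$ and $P_k(X)$, and verifying the degree constraints $\deg P_k\le k$ propagate correctly — are routine linear algebra over $\Q$.
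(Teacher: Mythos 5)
First, note that the paper itself does not prove this theorem: it is quoted from \cite[Thm. 2.3]{BK2}, and the closest the survey comes to a proof is the remark after Theorem \ref{gsz relation2} (the generating-series translation between bi-brackets and SZ-$q$MZVs yields the statement) together with the explicit translation of Theorem \ref{sz-g-explicit2}. Your argument for the inclusion $\mathcal{Z}_q\subseteq\langle\g\bi{\mathbf{k}}{\mathbf{d}}\rangle_\Q$ is essentially that route: reduce to SZ-$q$MZVs via Proposition \ref{Qsz=Zq2}, sum out the zero entries to get the binomial factors $\binom{m_j-m_{j+1}-1}{d_j}$ (as in Theorem \ref{sz generating2}), expand these as polynomials in the $m_i$, and pass between $\frac{X^\ell}{(1-X)^\ell}$ and the Eulerian expressions $\frac{P_k(X)}{(1-X)^k}$ by the triangular change of basis relating the coefficient sequences $\binom{n-1}{\ell-1}$ and $\frac{n^{k-1}}{(k-1)!}$. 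That is sound, and your worry about the mixed dependence on $m_j$ and $m_{j+1}$ is a non-issue for exactly the reason you give: a bi-bracket has an independent exponent slot at each position, so a monomial $m_j^a m_{j+1}^b$ simply feeds the slots $d_j$ and $d_{j+1}$ separately.

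The genuine gap is in the direction you call immediate, $\langle\g\bi{\mathbf{k}}{\mathbf{d}}\rangle_\Q\subseteq\mathcal{Z}_q$. You justify the prefactors $m_j^{d_j}/d_j!$ by claiming that $q\frac{d}{dq}$ ``brings down exactly a factor $m_j$ on the $j$-th summation index''. It does not: $q\frac{d}{dq}\,q^{m_1n_1+\dots+m_rn_r}=(m_1n_1+\dots+m_rn_r)\,q^{m_1n_1+\dots+m_rn_r}$, so iterating $q\frac{d}{dq}$ only produces powers of the \emph{total} weighted sum $\sum_j m_jn_j$, and there is no operator on $q$-series that differentiates with respect to a single summation variable. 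For $d_j>0$ this inclusion is genuinely the nontrivial half, because the defining shape of $\zeta_q(k_1,\dots,k_r;Q_1,\dots,Q_r)$ in Definition \ref{def Z_q2} admits no $m_j$-prefactors at all. The repair lies inside your own toolkit and is the inverse of your other direction: write $m_j=\sum_{i\geq j}(m_i-m_{i+1})$ with $m_{r+1}:=0$, so that $\prod_j m_j^{d_j}$ becomes a polynomial in the gaps $g_i:=m_i-m_{i+1}$; expand each monomial $g_i^{e_i}$ in the basis $\binom{g_i-1}{\ell}$, $0\leq\ell\leq e_i$; and interpret $\binom{m_i-m_{i+1}-1}{\ell}$ as the number of chains $m_i>a_1>\dots>a_\ell>m_{i+1}$, i.e.\ as $\ell$ new summation variables inserted between $m_{i+1}$ and $m_i$ carrying exponent $0$ and polynomial $Q=1$ (allowed in Definition \ref{def Z_q2} since all new positions sit below $m_1$). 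This exhibits every bi-bracket as a finite $\Q$-linear combination of $q$MZVs of the form $\zeta_q(k_1,0,\dots;P_{k_1},1,\dots)$, which is exactly the content of Theorems \ref{gsz relation2} and \ref{sz-g-explicit2}.
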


Also, the algebra of bi-brackets can be viewed as a quasi-shuffle algebra: 
\begin{definition}[\cite{Ba4}] Set $A_z^{bi} := \{ z_{k,d}\, |\,  k,d\in\mathbb{N}_0,\, k\geq 1\}$
and define on $\Q A_z^{bi}$ the product $\boxcircle$ by
\begin{align*}
    z_{k_1,d_1}\boxcircle z_{k_2,d_2} := &\binom{d_1 + d_2}{d_1} \sum\limits_{1\leq j\leq k_1} \lambda_{k_1,k_2}^j z_{j,d_1 + d_2} + \binom{d_1 + d_2}{d_1} \sum\limits_{1\leq j\leq k_2} \lambda_{k_2,k_1}^j z_{j,d_1 + d_2} 
    \\
    &+ \binom{d_1 + d_2}{d_1} z_{k_1 + k_2,d_1 + d_2}
\end{align*}
and $\Q$-bilinear continuation to $\Q A_z^{bi}$. Here, $\lambda_{a,b}^j$ is defined as
\begin{align*}
    \lambda_{a,b}^j := (-1)^{b-1} \binom{a+b-j-1}{a-j}\frac{B_{a+b-j}}{(a+b-j)!}.
\end{align*}
\end{definition}

The map $\boxcircle$ is associative and commutative (\cite{Ba4}), i.e., it induces a quasi-shuffle product $\boxast$:

\begin{theorem}
\begin{enumerate*} 
\item (\cite[Thm. 3.6]{Ba4}): The evaluation map $\g : (\Q A_z^{bi},\boxast)\rightarrow (\mathcal{Z}_q,\cdot)$, defined via
\begin{align*}
    z_{k_1,d_1}\dots z_{k_r,d_r}\longmapsto \g\bi{k_1,\dots k_r}{d_1,\dots,d_r},
\end{align*}
$\g(\emptyset) := 1$ and $\Q$-linear continuation, is an algebra homomorphism.
\item (\cite[Thm. 2]{Zu2}): The quasi-shuffle product $\boxast$ implies the stuffle product of MZVs. 
\end{enumerate*}
\end{theorem}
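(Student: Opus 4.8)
Part (i) is the multiplicativity of the defining iterated sums, which reduces to a single ``collision identity'' on the one--level weights of a bi-bracket; part (ii) then follows by degenerating this product in the limit $q\to1^-$.

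For part (i) the plan is to realise $\g$ as the iterated--sum map attached to the monoid $(\Q A_z^{bi},\boxcircle)$. Put, for $k\ge1$ and $d\ge0$,
\[
  w_{k,d}(m):=\frac{m^d}{d!}\cdot\frac{P_k(q^m)}{(1-q^m)^k}=\sum_{n>0}\frac{m^d}{d!}\frac{n^{k-1}}{(k-1)!}q^{mn},
\]
so that by Definition/Proposition~\ref{bibr defi2} one has $\g\bi{k_1,\dots,k_r}{d_1,\dots,d_r}=\sum_{m_1>\dots>m_r>0}\prod_{j=1}^r w_{k_j,d_j}(m_j)$. Multiplying two such iterated sums and sorting the merged strictly decreasing chains $m_1>\dots>m_r$ and $m'_1>\dots>m'_s$ in the usual way yields a sum over all interleavings of the two index sequences in which each coincidence $m_i=m'_j$ of the two current heads contributes the pointwise product $w_{k_i,d_i}(m)\,w_{k'_j,d'_j}(m)$. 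This is exactly the combinatorial shape of the quasi-shuffle $\boxast$ induced by a product on the alphabet; granting the (cited, \cite{Ba4}) associativity and commutativity of $\boxcircle$, the only thing left to check is the \emph{collision identity}
\[
  w_{k_1,d_1}(m)\,w_{k_2,d_2}(m)=\sum_{z_{k,d}}c_{k,d}\,w_{k,d}(m),\qquad\text{where}\quad z_{k_1,d_1}\boxcircle z_{k_2,d_2}=\sum_{z_{k,d}}c_{k,d}\,z_{k,d},
\]
for every $m$; with this in hand the standard quasi-shuffle formalism of Hoffman \cite{Hof} gives $\g(u\boxast v)=\g(u)\,\g(v)$.

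The collision identity splits into a trivial part, $\tfrac{m^{d_1}}{d_1!}\tfrac{m^{d_2}}{d_2!}=\binom{d_1+d_2}{d_1}\tfrac{m^{d_1+d_2}}{(d_1+d_2)!}$ (this accounts for the $\binom{d_1+d_2}{d_1}$ prefactors in $\boxcircle$), and the substantial part, the partial-fraction decomposition
\[
  \frac{P_{k_1}(X)P_{k_2}(X)}{(1-X)^{k_1+k_2}}=\frac{P_{k_1+k_2}(X)}{(1-X)^{k_1+k_2}}+\sum_{1\le j\le k_1}\lambda_{k_1,k_2}^j\,\frac{P_j(X)}{(1-X)^j}+\sum_{1\le j\le k_2}\lambda_{k_2,k_1}^j\,\frac{P_j(X)}{(1-X)^j}.
\]
The left side is a rational function with only pole at $X=1$ and vanishing at $X=\infty$, hence lies in the $\Q$-span of the $\tfrac{P_j(X)}{(1-X)^j}$; the precise coefficients are read off by comparing Laurent expansions at $X=1$. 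Here I would use $\sum_{k\ge1}\tfrac{P_k(X)}{(1-X)^k}t^{k-1}=\tfrac{Xe^t}{1-Xe^t}$ together with $\tfrac{u}{e^u-1}=\sum_{n\ge0}\tfrac{B_n}{n!}u^n$ (setting $X=e^{-u}$); matching the two sides reproduces exactly $\lambda_{a,b}^j=(-1)^{b-1}\binom{a+b-j-1}{a-j}\tfrac{B_{a+b-j}}{(a+b-j)!}$. This Laurent computation with Bernoulli numbers is the one genuine obstacle in part (i); everything around it is formal.

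For part (ii) I would pass to the limit $q\to1^-$. The structural input is that $\boxcircle$ is weight-filtered: in $z_{k_1,0}\boxcircle z_{k_2,0}$ the unique top-weight term is $z_{k_1+k_2,0}$ (each $\lambda$-term carries some $z_{j,0}$ with $j<k_1+k_2$), i.e.\ modulo lower weight $\boxcircle$ restricts on the letters $z_{k,0}$ to the stuffle diamond $z_{k_1}\diamond z_{k_2}=z_{k_1+k_2}$ of \S1. Identifying $\mathfrak h^0$ with the span of words in the $z_{k,0}$ ($k\ge2$), an induction on $\wt(u)+\wt(v)$ using the two quasi-shuffle recursions then gives, for $u,v\in\mathfrak h^0$, the congruence $u\boxast v=u\ast v+(\text{terms of strictly smaller weight})$ with $\ast$ the usual stuffle product. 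Applying $\g$, multiplying by $(1-q)^{\wt(u)+\wt(v)}$ and letting $q\to1^-$: by Proposition~\ref{qmzv welldef}(ii) together with $P_k(1)=1$ (immediate from $\tfrac{P_k(X)}{(1-X)^k}=\sum_{n>0}\tfrac{n^{k-1}}{(k-1)!}X^n$ and $\sum_{n>0}n^{k-1}X^n\sim(k-1)!(1-X)^{-k}$), the admissible top-weight brackets on both sides converge to the corresponding MZVs, while every lower-weight bracket is annihilated by the surplus power of $(1-q)$. These lower-weight brackets need not be admissible ($z_{2,0}\boxcircle z_{3,0}$ already contains a $z_{1,0}$), but $(1-q)^{\wt(\mathbf l)}\g(\mathbf l)$ grows at most like a power of $\log\tfrac1{1-q}$ as $q\to1^-$ --- bound each factor $\tfrac{(1-q)^{k_j}P_{k_j}(q^{m_j})}{(1-q^{m_j})^{k_j}}$ by $\min\{C\,m_j^{-k_j},\,C\,(1-q)^{k_j}\}$ on $[0,1)$ --- so a single extra factor $(1-q)$ already suffices. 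The resulting identity $\zeta(u)\zeta(v)=\zeta(u\ast v)$ is precisely the stuffle product of MZVs; this (harmless) appearance of divergent brackets, disposed of by the crude bound rather than by a genuine regularisation, is the only subtlety of part (ii).
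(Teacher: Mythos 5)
The paper itself does not prove this theorem: both parts are stated with citations only (to \cite[Thm.~3.6]{Ba4} and \cite[Thm.~2]{Zu2}), so there is no in-paper proof to compare against. Your reconstruction is essentially the argument of those references and is correct in outline. For (i), reducing the homomorphism property to the single ``collision identity'' on letters is exactly Hoffman's quasi-shuffle formalism, and your splitting into the trivial $\binom{d_1+d_2}{d_1}$-part and the partial-fraction identity for $\frac{P_{k_1}(X)P_{k_2}(X)}{(1-X)^{k_1+k_2}}$ is the right decomposition; the generating-series computation with $\frac{Xe^t}{1-Xe^t}$ and $\frac{u}{e^u-1}$ is precisely how the coefficients $\lambda_{a,b}^j$ arise in \cite{Ba4}. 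One slip: the left-hand side does \emph{not} vanish at $X=\infty$ (numerator and denominator both have degree $k_1+k_2$), and neither do the functions $\frac{P_j(X)}{(1-X)^j}$. The correct characterization of their span is: rational functions with only pole at $X=1$ of order at most $k_1+k_2$, vanishing at $X=0$, and bounded at $X=\infty$; equivalently $\frac{Q(X)}{(1-X)^{k_1+k_2}}$ with $Q\in X\Q[X]$, $\deg Q\le k_1+k_2$, of which the $\frac{P_j(X)}{(1-X)^j}$, $1\le j\le k_1+k_2$, form a basis (their power series $\sum_n \frac{n^{j-1}}{(j-1)!}X^n$ are visibly independent). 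With that correction the structural step is sound, and the coefficient identification by Laurent expansion is self-contained.

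For (ii) your reading --- top-weight part of $\boxcircle$ on the letters $z_{k,0}$ is the stuffle diamond, hence $u\boxast v= u\ast v+(\text{lower weight})$, then multiply by $(1-q)^{\wt(u)+\wt(v)}$ and let $q\to1^-$ --- is the intended degeneration, and you correctly isolate the one real issue, namely that the lower-weight brackets produced by the $\lambda$-terms can be non-admissible (a leading $z_{1,0}$ does occur, e.g.\ in $z_{2,0}\boxcircle z_{2,0}$). Your polylogarithmic bound $(1-q)^{\wt(\mathbf{l})}\g(\mathbf{l})=O\bigl((\log\frac{1}{1-q})^{r}\bigr)$ is correct and, since the weight deficit is at least one, the extra factor of $(1-q)$ does annihilate these terms; this is more careful than most published accounts, which wave at this point. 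Two small remarks: you should run the induction over all of $\mathfrak{h}^0$ (admissible words, trailing $z_1$'s allowed), not only words in $z_{k,0}$ with $k\ge2$, to recover the full stuffle product --- the argument extends verbatim using Proposition~\ref{qmzv welldef}(ii), which already covers $k_1\ge2$, $k_i\ge1$; and note that the stuffle product preserves admissibility, so all top-weight terms on the right-hand side do converge to MZVs as you need.
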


As for SZ-$q$MZVs, also for bi-brackets it is often convenient to work with their generating series because of the quite compact written term containing all information. Bachmann did this already in his PhD thesis:

\begin{theorem}[{\cite[Theorem 2.3]{Ba4}}]
\label{g generating2}
Let be 
\begin{align*}
\gbi{X_1,\dots,X_r}{Y_1,\dots,Y_r} := \sum\limits_{\substack{k_1,\dots,k_r>0\\ d_1,\dots,d_r>0}} \g\bi{k_1,\dots,k_r}{d_1-1,\dots,d_r-1} X_1^{k_1-1}\dots X_r^{k_r-1} Y_1^{d_1-1}\dots Y_r^{d_r-1}
\end{align*}
for every $r\geq 1$. Then we have
\begin{align*}
\gbi{X_1,\dots,X_r}{Y_1,\dots,Y_r} = \sum\limits_{\substack{m_1>\dots>m_r>0\\ n_1,\dots,n_r\geq 1}} \prod\limits_{j=1}^r e^{m_j Y_j}e^{n_j X_j}q^{m_j n_j}.
\end{align*}
\end{theorem}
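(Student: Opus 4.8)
The plan is to unfold the iterated-sum formula for the bi-bracket inside the defining series of $\mathfrak g$ and then to sum the index variables out, recognizing the resulting inner series as exponentials. First I would shift the summation indices by writing $k_j=a_j+1$ and $d_j=b_j+1$, so that
\begin{align*}
    \gbi{X_1,\dots,X_r}{Y_1,\dots,Y_r}=\sum_{\substack{a_1,\dots,a_r\geq 0\\ b_1,\dots,b_r\geq 0}}\g\bi{a_1+1,\dots,a_r+1}{b_1,\dots,b_r}\,\prod_{j=1}^r X_j^{a_j}Y_j^{b_j},
\end{align*}
and before any rearrangement I would record that this is a legitimate element of $\Q\llbracket X_1,\dots,X_r,Y_1,\dots,Y_r\rrbracket\llbracket q\rrbracket$, since the coefficient of each monomial $\prod_j X_j^{a_j}Y_j^{b_j}$ is a single bi-bracket and hence lies in $\Q\llbracket q\rrbracket$.

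Next I would substitute the first displayed formula of Definition/Proposition \ref{bibr defi2}, using $(a_j+1)-1=a_j$ and $((a_j+1)-1)!=a_j!$, to get
\begin{align*}
    \g\bi{a_1+1,\dots,a_r+1}{b_1,\dots,b_r}=\sum_{\substack{m_1>\dots>m_r>0\\ n_1,\dots,n_r>0}}\prod_{j=1}^r\frac{m_j^{b_j}}{b_j!}\,\frac{n_j^{a_j}}{a_j!}\,q^{m_jn_j}.
\end{align*}
Plugging this in turns the generating series into a quadruple sum over $(a_1,\dots,a_r,b_1,\dots,b_r,m_1,\dots,m_r,n_1,\dots,n_r)$; I would then move $\sum_{m_1>\dots>m_r>0,\,n_j>0}$ to the outside and perform the $a_j$- and $b_j$-sums for each $j$ separately via $\sum_{a\geq 0}(n_jX_j)^a/a!=e^{n_jX_j}$ and $\sum_{b\geq 0}(m_jY_j)^b/b!=e^{m_jY_j}$. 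This produces exactly $\sum_{m_1>\dots>m_r>0,\,n_1,\dots,n_r\geq 1}\prod_{j=1}^r e^{m_jY_j}e^{n_jX_j}q^{m_jn_j}$, which is the claim.

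The computation is essentially forced: the factorials $1/((k_j-1)!)$ and $1/d_j!$ in the definition of bi-brackets are placed precisely so that the inner sums collapse to exponentials. Thus the only point that needs real care --- and the step I would single out as the main obstacle --- is the legitimacy of the rearrangement. Here one checks that the quadruple sum is summable in $\Q\llbracket X_1,\dots,Y_r\rrbracket\llbracket q\rrbracket$: a fixed target monomial $q^N\prod_j X_j^{a_j}Y_j^{b_j}$ determines $a_j$ and $b_j$ and then leaves only the finitely many tuples $(m_1,\dots,m_r,n_1,\dots,n_r)$ with $\sum_j m_jn_j=N$, so each coefficient receives only finitely many contributions and all reorderings are justified. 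Alternatively, one may fix $q\in\mathbb C$ with $|q|<1$, use absolute convergence from Proposition \ref{qmzv welldef}(i) together with Fubini, and then pass back to the identity of formal $q$-series.
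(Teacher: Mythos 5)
Your proposal is correct and follows essentially the same route as the paper's own proof: substitute the iterated-sum definition of the bi-bracket, interchange the order of summation, and collapse the inner sums over the (shifted) indices into the exponential series $e^{n_jX_j}$ and $e^{m_jY_j}$. The only difference is that you explicitly justify the rearrangement as a formal power series identity, which the paper performs without comment; this is a harmless addition.
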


\begin{proof}  We have for every $r\geq 1$
\begin{align*}
    \, \gbi{X_1,\dots,X_r}{Y_1,\dots,Y_r} 
    =\, & \sum\limits_{\substack{k_1,\dots,k_r>0\\ d_1,\dots,d_r>0}} \sum\limits_{\substack{m_1>\dots > m_r > 0\\ n_1,\dots, n_r > 0}} \left(\prod\limits_{j=1}^r \frac{(m_j Y_j)^{d_j-1}}{(d_j-1)!}\right) \left(\prod\limits_{j=1}^r \frac{(n_j X_j)^{k_j-1}}{(k_j-1)!}\right) q^{m_1n_1 + \dots + m_r n_r}
    \\
    =\, & \sum\limits_{\substack{m_1>\dots > m_r > 0\\ n_1,\dots, n_r > 0}} \left(\prod\limits_{j=1}^r \sum\limits_{d_j = 1}^\infty \frac{(m_j Y_j)^{d_j - 1}}{(d_j - 1)!}\right) \left(\prod\limits_{j=1}^r \sum\limits_{k_j = 1}^\infty \frac{(n_j X_j)^{k_j - 1}}{(k_j - 1)!}\right)  q^{m_1n_1 + \dots + m_r n_r}
    \\
    =\, & \sum\limits_{\substack{m_1>\dots>m_r>0\\ n_1,\dots,n_r\geq 1}} \prod\limits_{j=1}^r e^{m_j Y_j}e^{n_j X_j}q^{m_j n_j}.\tag*{\qedhere}
\end{align*}
\end{proof} 

One application of the generating series of bi-brackets is for proving a nice identity among bi-brackets, the so-called \emph{partition relation}, which can be viewed as a kind of duality in the model of bi-brackets:

\begin{theorem}[Partition relation, {\cite[Theorem 2.3]{Ba4}}]
\label{partition rel2}
For all $r\geq 1$ we have
\begin{align}
\label{partition relation2}
    \gbi{X_1,\dots,X_r}{Y_1,\dots,Y_r} = \gbi{Y_1+\dots +Y_r,\dots,Y_1+Y_2,Y_1}{X_r,X_{r-1}-X_r,\dots,X_1-X_2}.
\end{align}
\end{theorem}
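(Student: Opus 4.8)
The plan is to use the closed form for the generating series of bi-brackets from Theorem \ref{g generating2}, namely
\begin{align*}
    \gbi{X_1,\dots,X_r}{Y_1,\dots,Y_r} = \sum\limits_{\substack{m_1>\dots>m_r>0\\ n_1,\dots,n_r\geq 1}} \prod\limits_{j=1}^r e^{m_j Y_j}e^{n_j X_j}q^{m_j n_j},
\end{align*}
and to exhibit the partition relation as the effect of a single involution on the index set of the sum: transposing the underlying Young diagram. Concretely, I would set $a_{ij}$ to be the indicator of the lattice point $(i,j)$ lying in the staircase-type region cut out by the data $(m_\bullet, n_\bullet)$, i.e. decompose the exponent $\sum_j (m_j Y_j + n_j X_j)$ as a sum over unit cells, and then observe that swapping the roles of rows and columns (reflecting across the diagonal) turns the left-hand data into the right-hand data.

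First I would rewrite the exponent on the right-hand side. Given $m_1 > \dots > m_r > 0$ and $n_1,\dots,n_r \geq 1$, introduce the partial sums on the $X$-side. The key bookkeeping identity is
\begin{align*}
    \sum_{j=1}^r m_j Y_j + \sum_{j=1}^r n_j X_j = \sum_{j=1}^r (m_j - m_{j+1})(Y_j + Y_{j-1} + \dots + Y_1)\quad (\text{with } m_{r+1}:=0)
\end{align*}
plus the analogous reorganization on the $X$-side, which is exactly the substitution appearing on the right of \eqref{partition relation2}: $Y_1 + \dots + Y_j$ in the first row and $X_{j} - X_{j+1}$ in the second. Then I would make the change of summation variables that reads the conjugate partition: a lattice configuration counted with row-lengths governed by $(m_j)$ and column-multiplicities governed by $(n_j)$ corresponds bijectively, under transposition, to one with the roles exchanged, and the product $\prod_j e^{m_j Y_j} e^{n_j X_j} q^{m_j n_j}$ is manifestly symmetric under simultaneously swapping $(m_\bullet \leftrightarrow n_\bullet)$ and $(Y_\bullet \leftrightarrow X_\bullet)$ once both sides have been put in the "cell-by-cell" form. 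Matching the two reorganizations then gives precisely the claimed equality of generating series; since both sides are formal power series in $X_1,\dots,X_r,Y_1,\dots,Y_r$ over $\Q\llbracket q\rrbracket$, equality of all coefficients follows.

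An alternative, and perhaps cleaner, route is to use Theorem \ref{Zq poly descr0}: every coefficient of $q^N$ in a bi-bracket is a sum over the set $\mathcal{P}_r(N)$ of (doubly-marked) partitions of $N$ of a polynomial in the part-sizes $m_1,\dots,m_r,n_1,\dots,n_r$, and conjugation of partitions is an involution on $\bigcup_r \mathcal{P}_r(N)$ that swaps $m$'s with $n$'s; then the generating-series identity is the statement that the polynomial attached to $\gbi{X}{Y}$ pulls back, under conjugation, to the polynomial attached to the right-hand side. Either way, the step I expect to be the main obstacle is the careful combinatorial identification of the index sets — verifying that the ordering constraints $m_1 > \dots > m_r > 0$ together with $n_j \geq 1$ on the left match, after transposition, the corresponding constraints for the substituted arguments on the right, including getting all the off-by-one boundary terms ($m_{r+1} = 0$, the telescoping of $Y_1 + \dots + Y_j$ versus $X_j - X_{j+1}$) exactly right. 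The analytic content is trivial; the entire proof is this one bijection plus the binomial/exponential reorganization of the exponent.
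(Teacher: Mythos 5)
Your argument is correct and is exactly the approach the paper intends: the paper does not write out a proof (it cites Bachmann), but the remark immediately following the theorem and the involution $\rho$ of Lemma \ref{part rel} describe precisely your bijection, namely applying the transposition of partitions to the closed form of Theorem \ref{g generating2} and checking via Abel summation that $\sum_j m_j Y_j + n_j X_j$ and $\sum_j m_j n_j$ transform into the substituted arguments on the right-hand side. Your verification plan for the index-set matching (strict decrease of the partial sums $n_1+\dots+n_{r-j+1}$, positivity of the differences $m_{r-j+1}-m_{r-j+2}$, and $m_{r+1}:=0$) is the only delicate point and it goes through as you describe.
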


\begin{remark}
The name of this relation comes from the fact that $\gbi{X_1,\dots,X_r}{Y_1,\dots,Y_r}$ is a sum over all partitions with exactly $r$ distinct parts and the relation itself is obtained by taking the sum over the partitions with transposed Young diagram.
\end{remark}

Another application of the generating series of bi-brackets is to give elegant translations between bi-brackets and the SZ-model. That is possible since SZ-$q$MZVs, as well as bi-brackets, span $\mathcal{Z}_q$ (Prop. \ref{Qsz=Zq2}, Thm. \ref{bibr Zq2}):

\begin{theorem}[Translation bi-brackets-SZ-model, {\cite[Thm. 2.18]{Bri}}]
\label{gsz relation2}\
\begin{enumerate*}
    \item For every $r\geq 1$ we have
\begin{align*}
\prod\limits_{j=1}^r e^{X_j}e^{Y_1+\dots +Y_{j}} \cdot \gsz{e^{X_1}-1,\dots,e^{X_r}-1}{e^{Y_1}-1,\dots,e^{Y_1+\dots +Y_r}-1}
=
\gbi{X_1,\dots,X_r}{Y_1,\dots,Y_r}.
\end{align*}
\item For every $r\geq 1$ we have
\begin{align*}
\hspace{-0.5cm}\gsz{X_1,\dots,X_r}{Y_1,\dots,Y_r} 
= \left(\prod\limits_{j=1}^r (1+X_j)(1+Y_1+\dots +Y_j)\right)^{-1}  \gbi{\ln(X_1 + 1),\dots,\ln(X_r + 1)}{\ln(Y_1 + 1),\dots,\ln(Y_1 + \dots +Y_r + 1)}.
\end{align*}
\end{enumerate*}
\end{theorem}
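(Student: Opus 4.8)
The plan is to prove both identities by inserting the explicit summation formulas — that of Theorem~\ref{sz generating2} for $\gsz{\cdot}{\cdot}$ and that of Theorem~\ref{g generating2} for $\gbi{\cdot}{\cdot}$ — and comparing the resulting $q$-series coefficient by coefficient. The only nontrivial input is an Abel-summation (telescoping) identity in the $Y$-variables; everything else is bookkeeping of a few exponent shifts.

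For part (i), I would substitute into the formula of Theorem~\ref{sz generating2} the values $X_j\mapsto e^{X_j}-1$ in the upper slots and $\mapsto e^{Y_1+\dots+Y_j}-1$ in the $j$-th lower slot. Then $1+X_j$ turns into $e^{X_j}$ and the $j$-th lower factor $1+Y_j$ turns into $e^{Y_1+\dots+Y_j}$, so
$$
\gsz{e^{X_1}-1,\dots,e^{X_r}-1}{e^{Y_1}-1,\dots,e^{Y_1+\dots+Y_r}-1}
=\sum_{\substack{m_1>\dots>m_r>0\\ n_1,\dots,n_r\geq 1}}\ \prod_{j=1}^r e^{(n_j-1)X_j}\,e^{(m_j-m_{j+1}-1)(Y_1+\dots+Y_j)}\,q^{m_j n_j},
$$
with $m_{r+1}:=0$. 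Multiplying by the prefactor $\prod_{j=1}^r e^{X_j}e^{Y_1+\dots+Y_j}$ cancels the three $-1$'s and leaves $\prod_j e^{n_jX_j}\,e^{(m_j-m_{j+1})(Y_1+\dots+Y_j)}\,q^{m_jn_j}$. Now I would apply Abel summation: with $S_j:=Y_1+\dots+Y_j$ and $S_0:=0$,
$$
\sum_{j=1}^r (m_j-m_{j+1})S_j=\sum_{j=1}^r m_j(S_j-S_{j-1})=\sum_{j=1}^r m_jY_j,
$$
where $m_{r+1}=0$ is used to discard the boundary term. Hence $\prod_j e^{(m_j-m_{j+1})(Y_1+\dots+Y_j)}=\prod_j e^{m_jY_j}$, and the whole expression becomes $\sum\prod_j e^{m_jY_j}e^{n_jX_j}q^{m_jn_j}=\gbi{X_1,\dots,X_r}{Y_1,\dots,Y_r}$ by Theorem~\ref{g generating2}. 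This proves (i).

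For part (ii), I would exploit that the substitution used in (i) — namely $X_j\mapsto e^{X_j}-1$ on the upper slots and $(Y_1,\dots,Y_r)\mapsto(e^{Y_1}-1,\dots,e^{Y_1+\dots+Y_r}-1)$ on the lower slots — is an invertible substitution of formal power series (its linear part is triangular with invertible diagonal), so the identity (i) can be solved for $\gsz{\cdot}{\cdot}$. Carrying out the inverse change of variables means setting the $j$-th upper argument equal to $\ln(X_j+1)$ and, for the lower slots, writing $Y_1+\dots+Y_j=\ln(Y_1+\dots+Y_j+1)$ and then reading off the individual entries by telescoping; the reciprocal of $\prod_j(1+X_j)(1+Y_1+\dots+Y_j)$ is exactly the factor reinstating the exponent shifts $n_j-1$ and $m_j-m_{j+1}-1$ of Theorem~\ref{sz generating2}, once the same Abel-summation identity is run in the reverse direction. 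Alternatively one may prove (ii) directly, by a computation parallel to (i) starting from Theorem~\ref{g generating2}; and if the displayed cumulative form of the lower arguments is to be reached, the remaining rearrangement is supplied by the partition relation (Theorem~\ref{partition rel2}), whose transposed–Young-diagram identity converts telescoped differences of logarithms into the partial sums $Y_1+\dots+Y_j$.

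The computations are otherwise elementary, so the main obstacle is purely organizational: keeping the Abel-summation step and the convention $m_{r+1}=0$ straight, and matching each factor $(1+X_j)^{-1}$ and $(1+Y_1+\dots+Y_j)^{-1}$ with the correct ``$-1$'' in the exponents coming from the two summation formulas. It is safest to run the whole argument first for $r=1$, where no Abel summation is needed, and then to observe that the general case is the identical manipulation performed inside the product over $j$.
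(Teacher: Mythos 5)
Your treatment of part (i) is correct and coincides with the paper's proof: both substitute $X_j\mapsto e^{X_j}-1$ and $Y_j\mapsto e^{Y_1+\dots+Y_j}-1$ into Theorem~\ref{sz generating2} after clearing the exponent shifts $n_j-1$ and $m_j-m_{j+1}-1$, and the Abel-summation identity $\sum_{j}(m_j-m_{j+1})(Y_1+\dots+Y_j)=\sum_j m_jY_j$ that you write out is exactly the telescoping step the paper leaves implicit, so making it explicit is an improvement rather than a deviation.

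Part (ii) is where the argument breaks. If you insist that the lower slots of $\gsz{\cdot}{\cdot}$ come out as the plain variables $Y_1,\dots,Y_r$, the substitution forced on you by (i) is $Y_1+\dots+Y_j\mapsto\ln(1+Y_j)$, i.e.\ $Y_j\mapsto\ln\tfrac{1+Y_j}{1+Y_{j-1}}$ with $Y_0:=0$, and this yields
\begin{align*}
\gsz{X_1,\dots,X_r}{Y_1,\dots,Y_r}=\left(\prod_{j=1}^r(1+X_j)(1+Y_j)\right)^{-1}\gbi{\ln(1+X_1),\dots,\ln(1+X_r)}{\ln(1+Y_1),\ln\tfrac{1+Y_2}{1+Y_1},\dots,\ln\tfrac{1+Y_r}{1+Y_{r-1}}},
\end{align*}
whose lower arguments are quotients of logarithms, not the single logarithms $\ln(Y_1+\dots+Y_j+1)$ appearing in the statement. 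Your alternative reading, $Y_1+\dots+Y_j\mapsto\ln(Y_1+\dots+Y_j+1)$, instead turns the lower slots of $\gsz{\cdot}{\cdot}$ into the partial sums $Y_1+\dots+Y_j$, so the left-hand side is then no longer $\gsz{X_1,\dots,X_r}{Y_1,\dots,Y_r}$; and the partition relation permutes and differences the arguments of $\gbi{\cdot}{\cdot}$, it does not convert quotient logarithms into $\ln(Y_1+\dots+Y_j+1)$. In fact the displayed identity (ii) fails as printed: for $r=2$ only the partition $m_1=2$, $m_2=1$, $n_1=n_2=1$ contributes to $q^3$, giving coefficient $1$ on the left but $1+Y_1$ on the right. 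The paper's own one-line proof (substituting $X_j\mapsto\ln(X_j+1)$, $Y_j\mapsto\ln(Y_1+\dots+Y_j+1)$ in (i)) has the same mismatch, so the defect lies in the statement rather than in your strategy; but your phrase \enquote{reading off the individual entries by telescoping} glosses over precisely the spot where the two sides fail to line up, and a correct write-up must either prove the corrected identity displayed above or repair the statement before invoking the inverse substitution.
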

\begin{proof}  
\begin{enumerate*}
\item We are done by multiplying both sides in Theorem \ref{sz generating2} with $\prod\limits_{j=1}^r (1+X_j)(1+Y_j)$ and substituting then
\begin{align*}
    X_j \mapsto e^{X_j} - 1,\quad Y_j \mapsto e^{Y_1 + \dots + Y_j} -1\qquad \text{for all } 1\leq j\leq r.
\end{align*}

\item We obtain the claim by substituting for all $1\leq j\leq r$ in (i)
\begin{align*}
    X_j \mapsto \ln(X_j + 1),\quad Y_j\mapsto \ln(Y_1 + \dots + Y_j + 1).\tag*{\qedhere}
\end{align*}
\end{enumerate*}
\end{proof} 

\begin{remark}
From Theorem \ref{gsz relation2} we obtain a new proof of the well-known fact that bi-brackets and SZ-$q$MZVs span the same $\Q$-vector space, $\mathcal{Z}_q$.
\end{remark}

We obtain a direct, but less elegant, translation of bi-brackets into SZ-$q$MZVs when using the identity \eqref{special rational functions2} and elementary calculations:

\begin{theorem}[{\cite[Thm. 2.19]{Bri}}]
\label{sz-g-explicit2}
For every $r\in\mathbb{N}$, $k_1,\dots,k_r\in\mathbb{N}$, $d_1,\dots,d_r\in\mathbb{N}_0$, we have:
\begin{align*}
    \g\bi{k_1,\dots,k_r}{d_1,\dots,d_r} = \sum\limits_{\substack{1\leq n_j\leq p_j \leq k_j\\ 0\leq f_j\leq d_j\\ 1\leq j\leq r}} & c_{\mathbf{k}}^\mathbf{d} (\mathbf{n})\cdot \left[\prod\limits_{j=1}^r \binom{d_j}{f_j}\binom{k_j-n_j}{p_j-n_j}\right]
    \\
    &\times \prod\limits_{j=1}^r \sum\limits_{g_j = 0}^{\Fgh (j)} \binom{\Fgh(j)}{g_j}
    \sum\limits_{\ell_j = 0}^{g_j} \left(\delta_{g_j = 0} + \sum\limits_{\substack{s_1+\dots+s_{\ell_j}=g_j\\ s_i\geq 1}}\binom{g_j}{s_1,\dots,s_{\ell_j}}\right)
    \\
    &\times\sum\limits_{h_j=0}^{\Hgh(j)}\binom{\Hgh(j)}{h_j} \times \sz\left(p_1,\{0\}^{\ell_1},\dots,p_r,\{0\}^{\ell_r}\right)
\end{align*}
with $c_{\mathbf{k}}^\mathbf{d} (\mathbf{n}) := \prod\limits_{l = 1}^r \frac{1}{d_\ell!(k_\ell-1)!} \left(\sum\limits_{i=0}^{n_\ell-1} (-1)^{i}\binom{k_\ell}{i}(n_\ell-i)^{k_\ell-1}\right)\in\mathbb{Q}$ and for all $1\leq j\leq r$,
\begin{align*}
    \Fgh(j) := f_j + \sum\limits_{i = 1}^{j-1} (f_i - g_i - h_i),\quad \Hgh(j) := \Fgh(j) - g_j = f_j - g_j + \sum\limits_{i = 1}^{j-1} (f_i - g_i - h_i).
\end{align*}
\end{theorem}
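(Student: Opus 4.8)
The plan is to peel off, one at a time, the three features that distinguish $\g\bi{k_1,\dots,k_r}{d_1,\dots,d_r}$ from a genuine SZ-$q$MZV. First I would substitute the explicit expansion of the Eulerian polynomial, $P_k(X)=\tfrac{1}{(k-1)!}\sum_{n=1}^{k}\big(\sum_{i=0}^{n-1}(-1)^i\binom{k}{i}(n-i)^{k-1}\big)X^{n}$, into the iterated-sum form of the bi-bracket. This rewrites each factor $\tfrac{P_{k_j}(q^{m_j})}{(1-q^{m_j})^{k_j}}$ as a $\Q$-linear combination, indexed by $1\le n_j\le k_j$, of the monomial-numerator fractions $\tfrac{q^{m_j n_j}}{(1-q^{m_j})^{k_j}}$; together with the factors $\tfrac1{d_j!}$ coming from $\tfrac{m_j^{d_j}}{d_j!}$, the coefficients that emerge are exactly $c_{\mathbf{k}}^{\mathbf{d}}(\mathbf{n})$. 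Second I would apply identity \eqref{special rational functions2} to each fraction $\tfrac{q^{m_j n_j}}{(1-q^{m_j})^{k_j}}$ (note $1\le n_j\le k_j$), which produces the sum over $n_j\le p_j\le k_j$ with the binomial $\binom{k_j-n_j}{p_j-n_j}$ and reduces every fraction to the ``balanced'' shape $\tfrac{q^{m_j p_j}}{(1-q^{m_j})^{p_j}}$. After these two steps the only remaining obstruction to having an SZ-$q$MZV is the monomial prefactor $\prod_{j}m_j^{d_j}$ inside $\sum_{m_1>\dots>m_r>0}\prod_j m_j^{d_j}\tfrac{q^{m_j p_j}}{(1-q^{m_j})^{p_j}}$.

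To remove that prefactor I would invoke the identity extracted from the proof of Theorem~\ref{sz generating2}: with $m_{r+1}:=0$,
\begin{align*}
  \sz\big(p_1,\{0\}^{\ell_1},\dots,p_r,\{0\}^{\ell_r}\big)=\sum_{m_1>\dots>m_r>0}\prod_{j=1}^r\binom{m_j-m_{j+1}-1}{\ell_j}\,\frac{q^{m_j p_j}}{(1-q^{m_j})^{p_j}}.
\end{align*}
Since the products $\prod_j\binom{m_j-m_{j+1}-1}{\ell_j}$, $\ell_j\ge0$, form a $\Q$-basis of $\Q[m_1-m_2,\dots,m_{r-1}-m_r,m_r]$, the prefactor $\prod_j m_j^{d_j}$ has a unique expansion $\sum_{\ell_1,\dots,\ell_r\ge0}C(\ell_1,\dots,\ell_r)\prod_j\binom{m_j-m_{j+1}-1}{\ell_j}$ with constant coefficients; substituting it termwise turns the inner sum into $\sum_{\ell_1,\dots,\ell_r\ge0}C(\ell_1,\dots,\ell_r)\,\sz(p_1,\{0\}^{\ell_1},\dots,p_r,\{0\}^{\ell_r})$. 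So the whole statement reduces to computing the coefficients $C(\ell_1,\dots,\ell_r)$ explicitly and matching them with the claimed formula.

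For the expansion of $\prod_j m_j^{d_j}$ I would proceed from left to right, carrying a single accumulating parameter. At position $j$: write $m_j=\mu_j+m_{j+1}$ with $\mu_j:=m_j-m_{j+1}$ and apply the binomial theorem to $m_j^{d_j}$ — this is the sum over $0\le f_j\le d_j$ weighted by $\binom{d_j}{f_j}$, splitting off a power of $\mu_j$ and passing a residual power of $m_{j+1}$ forward. Next write $\mu_j=(\mu_j-1)+1$ and expand the accumulated power of $\mu_j$, which by construction equals $\Fgh(j)$ — the sum over $0\le g_j\le\Fgh(j)$ weighted by $\binom{\Fgh(j)}{g_j}$. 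Then convert $(\mu_j-1)^{g_j}$ into the basis of binomial coefficients $\binom{\mu_j-1}{\ell_j}$ via Stirling numbers of the second kind, using that $\ell_j!\,S(g_j,\ell_j)$ is the number of surjections $[g_j]\twoheadrightarrow[\ell_j]$, i.e.\ $\sum_{s_1+\dots+s_{\ell_j}=g_j,\,s_i\ge1}\binom{g_j}{s_1,\dots,s_{\ell_j}}$ — the sum over $0\le\ell_j\le g_j$, with the Kronecker term $\delta_{g_j=0}$ covering the degenerate case $g_j=\ell_j=0$. Finally the sum over $0\le h_j\le\Hgh(j)=\Fgh(j)-g_j$ weighted by $\binom{\Hgh(j)}{h_j}$ records how the ``$+1$'' remainders arising from $\mu_j=(\mu_j-1)+1$ recombine with the residual power of $m_{j+1}$ before it is passed to position $j+1$. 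A direct check then shows that the parameter entering position $j$ is exactly $\sum_{i<j}(f_i-g_i-h_i)$, which yields the stated $\Fgh(j)$ and $\Hgh(j)$; since $m_{r+1}=0$ the residual leaving the last position vanishes, and collecting the accumulated weights reproduces $C(\ell_1,\dots,\ell_r)$.

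The real work — and the main obstacle — is this last bookkeeping. One must track, at each position, how two separate leftovers (the residual power of $m_{j+1}$ from the $f_j$-split of $m_j^{d_j}$, and the ``$+1$'' remainders from the $\mu_j$-split) merge into the single power of $m_{j+1}$ that is carried on, and then verify that the binomial weights come out exactly in the stated form with accumulated carry $\sum_{i<j}(f_i-g_i-h_i)$. Each individual step is just an elementary binomial or Stirling identity, but lining up all the summation indices and ranges with the statement is fiddly; recasting the argument as an induction on $r$ — peel off the position-$1$ factor and feed its residual power of $m_2$ into the depth-$(r-1)$ instance — is probably the cleanest way to make it rigorous.
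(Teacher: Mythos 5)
Your first two reductions are exactly the paper's: expanding the Eulerian polynomials produces $c_{\mathbf{k}}^{\mathbf{d}}(\mathbf{n})$ and the sum over the $n_j$, and identity \eqref{special rational functions2} produces the $p_j$-sum, leaving $\sum_{m_1>\dots>m_r>0}\prod_j(m_j-1)^{f_j}\tfrac{q^{m_jp_j}}{(1-q^{m_j})^{p_j}}$ after the further split $m_j^{d_j}=((m_j-1)+1)^{d_j}=\sum_{f_j}\binom{d_j}{f_j}(m_j-1)^{f_j}$. From here the paper does not expand in a binomial basis; it counts: $(m_j-1)^{f_j}$ is the number of ways to place $f_j$ auxiliary integers strictly between $0$ and $m_j$, and each such integer either lies strictly between two consecutive $m_t$'s (these are the $g$'s; coincidences among them are recorded by the surjection numbers, producing the $\ell_j$ distinct values that become the $\{0\}$-entries of the SZ-word), or equals some $m_t$ (the $h$'s), or is passed down one level --- which is precisely what generates the carries $\Fgh(j)$ and $\Hgh(j)$. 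Your reformulation --- expand $\prod_jm_j^{d_j}$ in the basis $\prod_j\binom{m_j-m_{j+1}-1}{\ell_j}$ with constant coefficients, using the closed form of $\sz(p_1,\{0\}^{\ell_1},\dots,p_r,\{0\}^{\ell_r})$ from the proof of Theorem \ref{sz generating2}, and then match coefficients --- is legitimate and in some ways cleaner (the uniqueness of that expansion is a useful observation the paper does not make explicit), but it only relocates the problem to computing $C(\ell_1,\dots,\ell_r)$, and that is where your argument has a genuine gap.

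Concretely, your left-to-right scheme starts from a different binomial split than the one encoded in the statement. You expand $m_j^{d_j}=(\mu_j+m_{j+1})^{d_j}$, so your $f_j$ is the exponent of $\mu_j$ and your forward residual is the exponent $d_j-f_j$ of $m_{j+1}$; in the theorem, $f_j$ indexes $((m_j-1)+1)^{d_j}$ and the forward carry is $f_j-g_j-h_j$. These do not coincide, and your assertion that the accumulated exponent of $\mu_j$ \enquote{by construction equals $\Fgh(j)$} is unjustified: in your scheme the residual arriving at position $j$ is a power of $m_j$, not of $\mu_j$, and you never specify how that power is redistributed between $\mu_j$ and $m_{j+1}$ --- the stated formula has no slot for such a second redistribution. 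Likewise, the $h_j$-sum is asserted to \enquote{record how the $+1$ remainders recombine} without any computation, yet producing exactly $\sum_{h_j}\binom{\Hgh(j)}{h_j}$ with $\Hgh(j)=\Fgh(j)-g_j$ is the whole content of the step. Since everything past the first two reductions consists of deriving precisely this index structure, the proof is missing its core. The deferred bookkeeping is also exactly where the danger sits: a careful execution must confront the boundary $m_{r+1}=0$, where the combinatorial classification forces $g_r=\Fgh(r)$ and $h_r=0$ (compare the worked example for $\g\bi{3,2}{0,2}$, whose coefficients match only when these constraints are imposed at the last level); an argument that waves at the bookkeeping cannot be credited with having handled this.
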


\begin{proof} According to the definition of bi-brackets and \eqref{special rational functions2}, we get:
\begin{align*}
    &\g\bi{k_1,\dots,k_r}{d_1,\dots,d_r} = 
    \sum\limits_{m_1>\dots>m_r>0} \frac{m_1^{d_1}}{d_1!}\cdot \dots \cdot \frac{m_r^{d_r}}{d_r!}\frac{P_{k_1}(q^{m_1})}{(1-q^{m_1})^{k_1}}\cdot \dots\cdot \frac{P_{k_r}(q^{m_r})}{(1-q^{m_r})^{k_r}}
    \\
    =\, & \sum\limits_{m_1>\dots>m_r>0} \prod\limits_{j=1}^r \frac{m_j^{d_j}}{d_j!}\left[\frac{1}{(k_j-1)!}\sum\limits_{n_j=1}^{k_j} \left(\sum\limits_{i=0}^{n_j-1} (-1)^i\binom{k_j}{i}(n_j-i)^{k_j-1}\right) q^{m_j n_j}\right]\frac{1}{(1-q^{m_j})^{k_j}}
    \\
    =\, &\, \sum\limits_{\substack{m_1>\dots>m_r>0\\ 1\leq n_j \leq k_j\\ 1\leq j\leq r}} c_{\mathbf{k}}^\mathbf{d} (\mathbf{n})\prod\limits_{j=1}^r m_j^{d_j}\frac{q^{m_j n_j}}{(1-q^{m_j})^{k_j}}
    \\
    =&\, \sum\limits_{\substack{1\leq n_j \leq k_j\\ 1\leq j\leq r}} c_{\mathbf{k}}^\mathbf{d} (\mathbf{n}) \sum\limits_{m_1>\dots>m_r>0} \prod\limits_{j=1}^r \left(\sum\limits_{f_j=0}^{d_j} \binom{d_j}{f_j}(m_j-1)^{f_j}\sum\limits_{p_j=n_j}^{k_j} \binom{k_j-n_j}{p_j-n_j}\frac{q^{m_j p_j}}{(1-q^{m_j})^{p_j}}\right)
    \\
    =\, &\, \sum\limits_{\substack{1\leq n_j\leq p_j \leq k_j\\ 0\leq f_j\leq d_j\\ 1\leq j\leq r}} c_{\mathbf{k}}^\mathbf{d} (\mathbf{n}) \left(\prod\limits_{j=1}^r \binom{d_j}{f_j}\binom{k_j-n_j}{p_j-n_j}\right)\sum\limits_{m_1>\dots>m_r>0}\prod\limits_{j=1}^r (m_j-1)^{f_j}\frac{q^{m_j p_j}}{(1-q^{m_j})^{p_j}}.
\end{align*}

To avoid confusion about the above big sum, we will concentrate on the last sum, which will give the stated theorem via the following combinatorial argument:

We can interpret the factor $(m_j-1)^{f_j}$ as the same sum without this factor and additional summands $b_1^j,\dots, b_{f_j}^j$ (for better understanding, the second index, $j$, is in the exponent; this is also for $a_i^j$ below the case) where they range between $0$ and $m_j$, i.e., $m_j > b_1^j,\dots, b_{f_j}^j > 0$.

Since these additional summands are not ordered, we have to make them ordered via we distinguish between 
\begin{itemize*}
    \item how many of the $b_i^j$ can have the property $m_{t} > b_i^j > m_{t+1}$ for $1\leq t\leq r-1$,
    \item how many of such $b_i^j$ can coincide, 
    \item how many of the $b_i^j$ can be equal $m_t$ for some $2\leq t\leq r$.
\end{itemize*}
This is done inductive:
\begin{itemize*}
    \item The property $m_1 > b_i^j > m_2$ can have $0$ upto $f_1$ of the $b_i^j$. For every such number, $g_1$, we have $\binom{f_1}{g_1}$ choices, which $g_1$ of the possible $b_i^j$ lie between $m_1$ and $m_2$. This gives the $\sum\limits_{g_1 = 0}^{\Fgh(1)} $-sum (note that $\Fgh(1) = f_1$).
    \item Independent of the above choice, the $g_1$ chosen $b_i^j$ can coincide. The ways, how this is possible, are: they coincide all ($l_1 = 1$), they can coincide in two different ways ($l_1= 2$) and so on or they are all different $l_1 = g_1$. The number of ways how they can coincide gives the multinomial coefficient. If $g_1 = 0$, nothing coincides ($l_1 = 0$) and there is exact one possibility, which clarifies the Kronecker-delta $\delta_{g_1 = 0}$ and so the $\sum\limits_{\ell_2 = 0}^{g_1}$-sum is declared.
    \item Now, there are $f_1-g_1$ of the $b_i^j$ of the first item left, i.e., they are all at most $m_2$. Hence, $0$ upto $f_1-g_1$ of them can equal $m_2$. For every such number, $h_1$, there are $\binom{f_1-g_1}{h_1}$ possibilities, which of them are equal $m_2$. This clarifies the $\sum\limits_{h_1=0}^{\Hgh(1)}$-sum (note that $\Hgh(1) = f_1 - g_1$).
    \item Now, $f_1-g_1-h_1$ of the $b_i^j$ of the first item are left and all are less than $m_2$.
    \item So we can do every of the above three steps (i)-(iii) again, now with $m_2$ and $m_3$ instead of $m_1$ and $m_2$ (and $\Fgh(2) = f_2+(f_1 - g_1 - h_1)$ instead of $\Fgh(1)$ for the number of $b_i^j$'s less than $m_2$).
    \item We are now done since we get as remaining part of the formula (after ordering the indices and setting $m_{r+1} := 0$ as usual) the sum
    \begin{align*}
        \sum\limits_{\substack{m_j > a_1^j > \dots > a_{\ell_j}^j > m_{j+1}\\ 1\leq j\leq r}} \frac{q^{m_1 p_1}}{(1-q^{m_1})^{p_1}}\cdots \frac{q^{m_r p_r}}{(1-q^{m_r})^{p_r}} = \sz\left(p_1,\{0\}^{\ell_1},\dots , p_r,\{0\}^{\ell_r}\right).\tag*{\qedhere}
    \end{align*}
\end{itemize*}
\end{proof}

\begin{example} 
We have, for example,
\begin{align*}
   \g\bi{3,2}{0,2}
   =\, & \frac{1}{4}\left(\sz(1,1) + \sz(1,2) + 3\sz(2,1) + 3\sz(2,2) + 2\sz(3,1) + 2\sz(3,2)\right)
   \\ &+\frac{3}{4} \left(\sz(1,1,0) + \sz(1,2,0) + 3\sz(2,1,0) + 3\sz(2,2,0) + 2\sz(3,1,0)+ 2\sz(3,2,0) \right)
   \\ &+\frac{1}{2}\big(\sz(1,1,0,0) + \sz(1,2,0,0) + 3\sz(2,1,0,0) + 3\sz(2,2,0,0)\\ &\qquad + 2\sz(3,1,0,0)  + 2\sz(3,2,0,0)\big).
\end{align*}
\end{example}

\begin{remark}
Zudilin's model of $q$MZVs, \emph{multiple $q$-zeta brackets}, is closely related to bi-brackets (\cite{Zu2} for details). They are defined for $k_1,\dots,k_r,\, d_1,\dots,d_r\geq 1$ ($r\geq 0$) as
\begin{align*}
    \mathfrak{Z}_q\bi{k_1,\dots,k_r}{d_1,\dots,d_r} := c \sum\limits_{\substack{m_1,\dots,m_r>0\\ n_1,\dots,n_r > 0}} m_1^{d_1 - 1}n_1^{k_1-1}\cdots m_r^{d_r - 1}n_r^{k_r-1} q^{(m_1+\dots+m_r)n_1+\dots+m_r n_r}
\end{align*}
with $c:=\frac{1}{\prod\limits_{j=1}^r (k_j-1)!(d_j-1)!}$.

This model is also of interest because of the duality relation,
\begin{align*}
    \mathfrak{Z}_q\bi{k_1,\dots,k_r}{d_1,\dots,d_r} = \mathfrak{Z}_q\bi{d_r,\dots,d_1}{k_r,\dots,k_1},
\end{align*}
which is exactly the partition relation for bi-brackets (\cite[Prop. 4]{Zu2}).

We consider this model not in more detail since it is \enquote{more or less} Bachmann's model of bi-brackets, and the connection to bi-brackets is also studied very well in \cite{Zu2}. 
\end{remark}

\begin{remark}[Comparison]
Bi-brackets are very good for connecting $q$MZVs direct with quasi-modular forms. Furthermore, this model gives with the partition relation another view onto SZ-duality since both are equivalent (cf. \cite[Thm. 3.22]{Bri}). However, explicit calculations are usually complicated in this model because the explicit representation of Eulerian polynomials is not very nice.
\end{remark}

\subsection{Takeyama-Bradley--Zhao model}

We saw in Proposition \ref{bz1X2} that the $\Q$-span of BZ-$q$MZVs is a proper subspace of $\mathcal{Z}_q$. But it is for some situations comfortable to extend the BZ-model of $q$MZVs, especially when the elements of the model should span $\mathcal{Z}_q$. 

This extension for the BZ-model is done as follows, as Takeyama did in \cite{Tak}:

\begin{definition}
Set $\eN := \left\{\oo\right\}\cup \N = \left\{\oo, 1, 2, 3,\dots\right\}$ and define for all $r\geq 1$, $k_1,\dots,k_r\in\eN$, $k_1\neq 1$,
\begin{align*}
    \tbz(k_1,\dots,k_r) := \sum\limits_{m_1>\dots>m_r>0} f(k_1,m_1)\dots f(k_r,m_r),
\end{align*}
where $f\left(\oo,m\right):= \frac{q^m}{1-q^m},\ f(k,m) := \frac{q^{(k-1)m}}{(1-q^m)^k}\ \text{for}\ k\geq 1.$ Define $\tbz(\emptyset) := 1$.
\end{definition}
As mentioned, this extension of the BZ-model spans $\mathcal{Z}_q$:
\begin{proposition}
\label{tbz span}
The TBZ-model spans $\mathcal{Z}_q$, i.e.,
\begin{align*}
    \mathcal{Z}_{q}=\left\langle \tbz(k_1,\dots,k_r)\, \middle|\, r\geq 0,\, k_i\in\eN,\, k_1\neq 1\right\rangle_\Q.
\end{align*}
In particular, the TBZ-model is closed under $q\frac{d}{d q}$.
\end{proposition}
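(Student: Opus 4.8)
The plan is to invoke that the SZ-$q$MZVs span $\mathcal{Z}_q$ (Proposition \ref{Qsz=Zq2}) and then to rewrite every $\sz(k_1,\dots,k_r)$ as a finite $\Q$-linear combination of TBZ-$q$MZVs, in the same spirit as the proofs of Propositions \ref{Qsz=Zq2} and \ref{bz1X2}. The inclusion $\langle\tbz(k_1,\dots,k_r)\mid k_i\in\eN,\,k_1\neq 1\rangle_\Q\subseteq\mathcal{Z}_q$ is immediate: with $f(\oo,m)=\frac{q^m}{1-q^m}$ and $f(k,m)=\frac{q^{(k-1)m}}{(1-q^m)^k}$ one has $\tbz(k_1,\dots,k_r)=\zeta_q(k_1',\dots,k_r';Q_1,\dots,Q_r)$ where $(k_j',Q_j)=(1,X)$ if $k_j=\oo$ and $(k_j',Q_j)=(k_j,X^{k_j-1})$ if $k_j\geq 1$; since $k_1\neq 1$ forces $Q_1\in X\Q[X]$ and $\deg Q_j\leq k_j'$ throughout, this is a valid element of Definition \ref{def Z_q2}.

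For the reverse inclusion I would use that $\zeta_q(k_1,\dots,k_r;Q_1,\dots,Q_r)$ depends on the $j$-th slot only through the rational function $Q_j(X)/(1-X)^{k_j}$ and is $\Q$-multilinear in these (replace the $j$-th rational function by a $\Q$-linear combination of rational functions with poles only at $X=1$ and split the iterated sum accordingly), the only caveat being that all intermediate series must still converge, which by the well-definedness of $q$MZVs holds as soon as the first slot keeps a numerator divisible by $X$. The SZ building block in slot $j$ is $S_k(X):=X^k/(1-X)^k$ with $k=k_j$, while the blocks available in the TBZ model are $g_l(X):=X^{l-1}/(1-X)^l$ for $l\geq1$ (the admissible indices $k_j\geq1$) and $h(X):=X/(1-X)$ (the index $k_j=\oo$), with the single restriction that $g_1(X)=1/(1-X)$ is forbidden in slot $1$ because $k_1\neq1$.

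The whole argument then rests on the elementary identity $S_k=g_k-S_{k-1}$, that is $\frac{X^k}{(1-X)^k}=\frac{X^{k-1}}{(1-X)^k}-\frac{X^{k-1}}{(1-X)^{k-1}}$, together with the base cases $S_1=h$ and $S_0=1=g_1-h$. Unwinding the recursion yields, for $k\geq1$,
\begin{align*}
  S_k=\sum_{l=2}^{k}(-1)^{k-l}g_l+(-1)^{k-1}h,
\end{align*}
which involves only the letters $g_2,\dots,g_k$ and $h$ --- all admissible in slot $1$ as well --- while $S_0=g_1-h$ involves only letters admissible in interior slots. Substituting these expansions into each slot of $\sz(k_1,\dots,k_r)$ (note $k_1\geq1$, so no $S_0$, hence no forbidden $g_1$, ever occurs in slot $1$) and multiplying out exhibits $\sz(k_1,\dots,k_r)$ as a $\Q$-linear combination of TBZ-$q$MZVs; all intermediate series converge since slot $1$ is always one of $g_2,g_3,\dots$ or $h$, whose numerators are divisible by $X$. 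This gives $\mathcal{Z}_q\subseteq\langle\tbz(\dots)\rangle_\Q$, hence equality, and closedness under $q\frac{d}{dq}$ is then inherited from $\mathcal{Z}_q$ via Remark \ref{qddq rem}(i).

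The part I expect to need the most care is the first-slot bookkeeping: one must ensure that the decomposition of $S_{k_1}$ never produces the forbidden block $g_1$ nor a constant block in slot $1$, and that convergence is preserved at every intermediate step. Both are handled precisely by using the base case $S_1=h$ (rather than $S_1=g_1-1$) in the recursion; the remaining manipulations are routine.
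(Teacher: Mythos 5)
Your proposal is correct and follows essentially the same route as the paper: the paper's proof reduces the claim to the two explicit translations between the SZ- and TBZ-models (Propositions \ref{extBZ-to-SZ2} and \ref{tbz transl2}) together with Proposition \ref{Qsz=Zq2}, and your telescoping identities $\frac{X^k}{(1-X)^k}=\frac{X^{k-1}}{(1-X)^k}-\frac{X^{k-1}}{(1-X)^{k-1}}$ and $1=\frac{1}{1-X}-\frac{X}{1-X}$ are exactly the ones underlying Proposition \ref{tbz transl2}, with your first-slot bookkeeping matching the $\delta_{j=1}/\delta_{j\neq 1}$ case split there. The only cosmetic difference is that you obtain the inclusion of the TBZ-span in $\mathcal{Z}_q$ directly from Definition \ref{def Z_q2} rather than via the translation into the SZ-model.
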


\begin{proof}
The TBZ-model spans the same space as the SZ-model (Prop. \ref{tbz1}, Prop. \ref{extBZ-to-SZ2}), i.e., $\mathcal{Z}_q$.
\end{proof}

This extended version of the BZ-model satisfies a quasi-shuffle product that is compatible with the one of the non-extended model:

\begin{definition}
\begin{enumerate*}
    \item Define $\mathfrak{h}^{TBZ} := \Q\left\langle z_{\oo},z_1,z_2,\dots\right\rangle$. Then we can view $\tbz$ also as a map
    \begin{align*}
        \tbz: \mathfrak{h}^{TBZ} \longrightarrow \mathcal{Z}_q,\qquad
        z_{k_1}\dots z_{k_r}&\longmapsto\tbz(k_1,\dots,k_r),
    \end{align*}
    extended $\Q$-linearly and sending $\mathbf{1}\mapsto 1$.

\item Define on the alphabet $\Q\left\{ z_k\, |\, k\in\eN\right\}$ the associative and commutative product $\diamond_{TBZ}$ via
\begin{align*}
    z_{k_1}\diamond_{TBZ} z_{k_2} := z_{k_1 + k_2} + z_{k_1 + k_2 - 1},\quad z_k\diamond_{TBZ} z_{\oo} := z_{\oo}\diamond_{TBZ} z_k := z_{k+1}, \quad z_{\oo}\diamond_{TBZ} z_{\oo} := z_2 - z_1
\end{align*}
for all $k,k_1,k_2\in\N$. Furthermore, let $\ast_{TBZ}$ be the induced quasi-shuffle product on $\Q\left\langle z_{\oo},z_1,z_2,\dots\right\rangle$.
\end{enumerate*}
\end{definition}

Some straightforward computation shows that $\diamond_{TBZ}$ is indeed commutative and associative.

\begin{proposition}
\label{tbz1}
The map $\tbz$ is an algebra homomorphism, i.e., for all $u,v\in \Q\left\langle z_{\oo},z_1,z_2,\dots\right\rangle$ we have
\begin{align*}
    \tbz(u\ast_{TBZ} v) = \tbz(u) \tbz(v).
\end{align*}
\end{proposition}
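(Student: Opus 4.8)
The plan is to argue exactly as in the proof of Proposition \ref{bz qshuffle2}: expand the product $\tbz(u)\tbz(v)$ as a product of two nested sums, reorganise it by the usual stuffle (harmonic-product) decomposition of a product of simplicial sums, and observe that the only model-dependent ingredient is how two of the functions $f(\cdot,m)$ collide when they share a summation index. There is no deep content beyond Hoffman's quasi-shuffle recursion; the task is to isolate the finitely many one-variable identities that must hold and to check them.

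In detail, write $u=z_{a_1}\cdots z_{a_r}$ and $v=z_{b_1}\cdots z_{b_s}$ with $a_i,b_j\in\eN$; to have $\tbz(u),\tbz(v)$ literally defined one restricts to words with $a_1,b_1\neq 1$, and one should check (using the identities below) that this ``admissible'' subspace is closed under $\ast_{TBZ}$. Then
\begin{align*}
\tbz(u)\,\tbz(v)=\sum_{\substack{m_1>\dots>m_r>0\\ n_1>\dots>n_s>0}}\ \prod_{i=1}^{r} f(a_i,m_i)\,\prod_{j=1}^{s} f(b_j,n_j).
\end{align*}
I would induct on $r+s$. Writing $u=z_{a_1}u'$, $v=z_{b_1}v'$, the outermost summation index is $m_1$ (when $m_1>n_1$), or $n_1$ (when $n_1>m_1$), or a common value $m_1=n_1=:m$; the corresponding outermost factors are $f(a_1,m_1)$, $f(b_1,n_1)$, and $f(a_1,m)f(b_1,m)$, and the remaining nested sums are, by the inductive hypothesis, $\tbz$ of $u'\ast_{TBZ}v$, $u\ast_{TBZ}v'$, and $u'\ast_{TBZ}v'$ respectively. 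Hence $\tbz(u)\tbz(v)=\tbz\big(z_{a_1}(u'\ast_{TBZ}v)+z_{b_1}(u\ast_{TBZ}v')+(z_{a_1}\diamond_{TBZ}z_{b_1})(u'\ast_{TBZ}v')\big)=\tbz(u\ast_{TBZ}v)$ — provided the collision term really is governed by $\diamond_{TBZ}$.

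Thus everything reduces to checking, for $a,b\in\eN$ and as identities of power series in $q^m$, that $f(a,m)f(b,m)$ equals the image of $z_a\diamond_{TBZ}z_b$ under $z_c\mapsto f(c,m)$. With $f(k,m)=\frac{q^{(k-1)m}}{(1-q^m)^k}$ and $f(\oo,m)=\frac{q^m}{1-q^m}$ these are three short computations:
\begin{align*}
f(k,m)f(\ell,m)&=\frac{q^{(k+\ell-2)m}}{(1-q^m)^{k+\ell}}=f(k+\ell,m)+f(k+\ell-1,m)\qquad(k,\ell\in\N),\\
f(k,m)f(\oo,m)&=\frac{q^{km}}{(1-q^m)^{k+1}}=f(k+1,m)\qquad(k\in\N),\\
f(\oo,m)^2&=\frac{q^{2m}}{(1-q^m)^2}=f(2,m)-f(\oo,m),
\end{align*}
using $q^{(k+\ell-2)m}=q^{(k+\ell-1)m}+q^{(k+\ell-2)m}(1-q^m)$ for the first and $q^{2m}=q^m-q^m(1-q^m)$ for the last. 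These match the three clauses defining $\diamond_{TBZ}$; note in particular that the last forces $z_\oo\diamond_{TBZ}z_\oo=z_2-z_\oo$, which is also exactly what keeps the admissible words closed under the product.

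The genuine work is bookkeeping, not depth: organising the stuffle decomposition for all collision patterns at once (cleanly packaged by the induction on word length above), noting that the letter $\oo$ only changes which function sits at an index without altering the nested-sum shape, and — a prerequisite rather than part of the argument proper — having verified that $\diamond_{TBZ}$ is commutative and associative, without which $\ast_{TBZ}$ would not be a well-defined quasi-shuffle product. Convergence of every $\tbz$ appearing is automatic, since each is of the form in Definition \ref{def Z_q2} and hence covered by Proposition \ref{qmzv welldef}.
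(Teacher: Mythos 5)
Your proof is correct and takes essentially the same route as the paper: the paper's proof of Proposition \ref{tbz1} just says it is analogous to Proposition \ref{bz qshuffle2}, i.e., multiply the iterated sums, decompose the product by the stuffle recursion, and verify the letter-level identities at colliding indices --- which is exactly what you spell out, only in more detail. One point worth flagging: your computation $f(\oo,m)^2=f(2,m)-f(\oo,m)$ is right and forces $z_{\oo}\diamond_{TBZ}z_{\oo}=z_2-z_{\oo}$, whereas the paper's definition reads $z_{\oo}\diamond_{TBZ}z_{\oo}:=z_2-z_1$; since $f(1,m)=\tfrac{1}{1-q^m}\neq f(\oo,m)$, that clause in the paper appears to be a typo (it should be $z_2-z_{\oo}$, as in Takeyama's original), and your version is the one under which the proposition actually holds.
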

\begin{proof}  
The proof is analogous to the one of Proposition \ref{bz qshuffle2}.
\end{proof} 

\begin{remark}
For the TBZ-model, no \enquote{good} generating series is known. This means that we don't know a one that could be written nicely or that would give us new results about TBZ-$q$MZVs.
\end{remark}

\begin{proposition}[Duality, {\cite[Thm. 4]{Tak}}]
\label{tbz dual2}
Let the maps $\Ubz$ and $\Vbz$ be defined as in Prop. \ref{extBZ-to-SZ2} resp. \ref{tbz transl2} below. Then, we have for all $w\in\mathfrak{h}^{TBZ}$
\begin{align*}
    \tbz(w) = \left(\tbz\circ \Vbz\circ \Tilde{\tau}\circ \Ubz\right) (w).
\end{align*}
\end{proposition}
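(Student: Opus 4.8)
The plan is to derive TBZ\nobreakdash-duality from SZ\nobreakdash-duality (Theorem \ref{sz dual 2}) by conjugating the latter with the translation maps between the two models. Proposition \ref{extBZ-to-SZ2} supplies the $\Q$\nobreakdash-linear map $\Ubz$ translating the extended (hence in particular the TBZ) Bradley--Zhao model into the SZ model, so that $\tbz=\sz\circ\Ubz$ on $\mathfrak{h}^{TBZ}$; dually, Proposition \ref{tbz transl2} supplies $\Vbz$ translating the SZ model into the TBZ model, so that $\sz=\tbz\circ\Vbz$ on $\mathfrak{K}^1$. Granting these two intertwining identities, the proposition is just a composition: for every $w\in\mathfrak{h}^{TBZ}$,
\begin{align*}
\tbz(w) &= \sz\bigl(\Ubz(w)\bigr) = \sz\bigl(\Tilde{\tau}(\Ubz(w))\bigr)\\
&= \tbz\bigl(\Vbz(\Tilde{\tau}(\Ubz(w)))\bigr) = \bigl(\tbz\circ\Vbz\circ\Tilde{\tau}\circ\Ubz\bigr)(w),
\end{align*}
where the first equality is the defining property of $\Ubz$, the second is Theorem \ref{sz dual 2}, and the third is the defining property of $\Vbz$.

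For this chain to be legitimate one has to match up domains. First, $\Ubz$ must land in $\mathfrak{K}^1$, the domain of $\sz$ and the space on which Theorem \ref{sz dual 2} applies: a TBZ\nobreakdash-admissible word $z_{k_1}\cdots z_{k_r}$ (so $k_1\neq 1$) is translated into a $\Q$\nobreakdash-linear combination of words $p^{a_1}y\cdots p^{a_s}y$ with $a_1\geq 1$, i.e.\ into $\Q\bone\oplus p\mathfrak{K}y=\mathfrak{K}^1$; this is exactly what one reads off from Proposition \ref{extBZ-to-SZ2}. Second, $\Tilde{\tau}$ preserves $\mathfrak{K}^1$: being an anti-automorphism with $\Tilde{\tau}(p)=y$, $\Tilde{\tau}(y)=p$, it satisfies $\Tilde{\tau}(p\,u\,y)=p\,\Tilde{\tau}(u)\,y\in p\mathfrak{K}y$, so it restricts to an involution of $\mathfrak{K}^1$; hence $\Tilde{\tau}(\Ubz(w))$ again lies in the domain of $\Vbz$. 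Third, $\Vbz$ applied to an element of $\mathfrak{K}^1$ has to land in the span of TBZ\nobreakdash-admissible words, so that $\tbz$ of it is defined — this is part of Proposition \ref{tbz transl2}.

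The genuine work is therefore contained entirely in Propositions \ref{extBZ-to-SZ2} and \ref{tbz transl2}: the explicit combinatorial formulas for $\Ubz$ and $\Vbz$, obtained (as for the plain BZ\nobreakdash-to\nobreakdash-SZ translation underlying Proposition \ref{bz1X2}) by re-expanding the rational functions $\tfrac{X^{k-1}}{(1-X)^{k}}$, the function $\tfrac{X}{1-X}$ attached to the letter $\oo$, and $\tfrac{X^{k}}{(1-X)^{k}}$ in one another via identities of the type \eqref{special rational functions2}, together with the verification that these maps intertwine the evaluation homomorphisms. The main obstacle — which we may regard as discharged by those propositions — is the bookkeeping of the admissibility conditions ($k_1\neq 1$ on the TBZ side, $k_1\geq 1$ on the SZ side, with the customary convention on non-admissible words) across the translations, so that no SZ\nobreakdash-inadmissible (divergent) term is ever produced and $\Tilde{\tau}$ is only ever applied inside $\mathfrak{K}^1$. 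Once this is in place, the four-term computation above is immediate and finishes the proof.
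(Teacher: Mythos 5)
Your proof is correct and follows exactly the paper's route: the paper likewise reduces the statement to SZ-duality via the intertwining identities $\tbz=\sz\circ\Ubz$ and $\sz=\tbz\circ\Vbz$ from Propositions \ref{extBZ-to-SZ2} and \ref{tbz transl2}. Your version merely spells out the four-step composition and the domain checks that the paper leaves implicit.
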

\begin{proof}
Since $\Ubz$ and $\Vbz$ are translation maps of TBZ-$q$MZVs into the SZ-model resp. vice versa, the proof follows by SZ-duality.
\end{proof}

We give now an explicit translation into the SZ-model and vice versa:

\begin{proposition}[Translation TBZ into SZ, {\cite[Prop. 2.23]{Bri}}]
\label{extBZ-to-SZ2}
\ \\
For every $d_1,\dots, d_r\in\N_0$, $k_1,\dots,k_{r-1}\in\N$ with $k_1\geq 2$ if $d_1 = 0$ we have
\begin{align*}
    \tbz\left(\left\{\oo\right\}^{d_1},k_1,\dots,k_{r-1},\left\{\oo\right\}^{d_r}\right) 
    =  \sum\limits_{\substack{\delta_j\in\{0,1\}\\ 1\leq j\leq r-1}} \sz\left(\{1\}^{d_1}, k_1-\delta_1,\dots ,\{1\}^{d_{r-1}},k_{r-1}-\delta_{r-1}, \{1\}^{d_r}\right).
\end{align*}
Denote the corresponding map $\mathfrak{h}^{TBZ}\rightarrow \mathfrak{K}^1$ by $\Ubz$.
\end{proposition}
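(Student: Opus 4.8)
The plan is to expand the left-hand side directly from the definition of $\tbz$ and turn each factor $f(\oo,m) = \frac{q^m}{1-q^m}$ and $f(k,m) = \frac{q^{(k-1)m}}{(1-q^m)^k}$ into the ``SZ-shape'' $\frac{q^{m\ell}}{(1-q^m)^\ell}$ that appears in the definition of $\sz$. The key elementary identities are $f(\oo,m) = \frac{q^m}{1-q^m}$, which is already in SZ-shape with exponent $1$ (i.e.\ corresponds to $\sz(\dots,1,\dots)$), and the ``one-step'' identity used already in the proof of Proposition~\ref{bz qshuffle2},
\begin{align*}
    \frac{q^{m(k-1)}}{(1-q^m)^k} = \frac{q^{m(k-1)}}{(1-q^m)^{k-1}} + \frac{q^{mk}}{(1-q^m)^k},
\end{align*}
which, upon dividing by a suitable power of $q$, rewrites $f(k,m)$ as $f(k-1,m) + \frac{q^{mk}}{(1-q^m)^k}$ — that is, as the sum of the ``$k-1$ shape'' and the ``$k$ shape in SZ-form''. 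Iterating this, or equivalently observing that $\frac{q^{m(k-1)}}{(1-q^m)^k}$ equals $\frac{q^m}{1-q^m}\bigl(1 + \frac{q^m}{1-q^m}\bigr)^{k-1}$ re-expanded, one sees that $f(k,m)$ contributes exactly the two ``SZ-terms'' $\frac{q^{mk}}{(1-q^m)^k}$ and $\frac{q^{m(k-1)}}{(1-q^m)^{k-1}}$ when one tracks how the $\{\oo\}$-runs on either side absorb the ``drop by one'' term; this is precisely what the parameter $\delta_j \in \{0,1\}$ on the right-hand side bookkeeps, with $\delta_j = 0$ giving $k_j$ and $\delta_j = 1$ giving $k_j - 1$.

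The key steps, in order, are as follows. First, record that each letter $z_\oo$ in $\tbz$ produces a factor $\frac{q^{m}}{1-q^m}$, which is the $\sz$-factor attached to an argument equal to $1$; hence a run $\{\oo\}^{d}$ contributes a block $\{1\}^{d}$ of arguments in the SZ-word, with no choices involved. Second, handle a single letter $z_{k_j}$ with $k_j \in \N$: write $f(k_j,m) = \frac{q^{m(k_j-1)}}{(1-q^m)^{k_j}}$ and use the one-step identity to split it as $\frac{q^{m k_j}}{(1-q^m)^{k_j}} + \frac{q^{m(k_j-1)}}{(1-q^m)^{k_j-1}}$; the first summand is the SZ-factor for argument $k_j$ (the $\delta_j=0$ choice) and the second is the SZ-factor for argument $k_j - 1$ (the $\delta_j=1$ choice). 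Third, substitute these expansions into the nested sum $\sum_{m_1 > \dots > m_r > 0}$, distribute the product over all the binary choices $(\delta_1,\dots,\delta_{r-1})$, and recognize each resulting term as the defining iterated sum of $\sz(\{1\}^{d_1}, k_1-\delta_1, \dots, \{1\}^{d_{r-1}}, k_{r-1}-\delta_{r-1}, \{1\}^{d_r})$. Fourth, check the admissibility side-condition: the hypothesis ``$k_1 \ge 2$ if $d_1 = 0$'' guarantees that in every term the first nonzero SZ-argument is $\ge 1$ (when $d_1 = 0$ the leading argument is $k_1 - \delta_1 \ge 1$; when $d_1 > 0$ the word starts with $1$'s), so every $\sz$ on the right-hand side is well-defined. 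Finally, name the induced $\Q$-linear map $\mathfrak{h}^{TBZ} \to \mathfrak{K}^1$ obtained by sending each generator to the corresponding $\Q$-linear combination of SZ-words $\Ubz$.

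The main obstacle is purely bookkeeping: making the bijection between the $2^{r-1}$ sign patterns $(\delta_j)$ and the SZ-words precise, and in particular verifying that the ``drop-by-one'' term coming from $z_{k_j}$ does not interact with the neighbouring $\{\oo\}$-runs in a way that would change the length of those runs — it does not, because each $f(\oo,m)$ is already a single clean SZ-factor of exponent $1$ and the splitting of $f(k_j,m)$ only changes the exponent attached to the $m_j$-variable, not to any other. A secondary subtlety is the edge case $k_j = 1$ (allowed when it is not the first letter, or more precisely when $d_1 > 0$ before it): there $k_j - 1 = 0$, so the $\delta_j = 1$ choice produces an argument $0$, i.e.\ a factor $\frac{1}{(1-q^m)^0} = 1$ independent of $m_j$, and one must confirm this still matches the stated formula — it does, since $\sz$ allows zero arguments by Definition~\ref{SZ def2} and such a factor is exactly what $f(1,m) = \frac{q^m}{1-q^m} \cdot \frac{1}{q^m} \cdot q^m$… more simply, $f(1,m) = \frac{1}{1-q^m} = \frac{q^m}{1-q^m} + 1$, which is the $\delta=0$ term ($\sz$-argument $1$) plus the $\delta=1$ term ($\sz$-argument $0$), consistent with the general pattern. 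Once these cases are in hand the proof is a routine expansion, and the general $r$ statement follows from the single-letter computation by induction on the word length, handling one run-plus-letter block $\{\oo\}^{d_j} z_{k_j}$ at a time.
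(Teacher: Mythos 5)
Your proposal is correct and takes essentially the same route as the paper, whose proof consists precisely of citing the definition of SZ-$q$MZVs and the factorwise identity $\frac{X^{k-1}}{(1-X)^k} = \frac{X^{k-1}}{(1-X)^{k-1}} + \frac{X^{k}}{(1-X)^{k}}$, with the $\oo$-letters contributing ready-made SZ-factors of argument $1$. The only blemish is the throwaway phrase identifying the first summand of the split with $f(k-1,m)$ — it is the SZ-factor $\frac{q^{(k-1)m}}{(1-q^m)^{k-1}}$ for argument $k-1$, not $f(k-1,m)=\frac{q^{(k-2)m}}{(1-q^m)^{k-1}}$ — but you state the correct decomposition in the step-by-step part, so nothing is affected.
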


\begin{proof}
This is a direct consequence of the definition of SZ-$q$MZVs and the identity
\begin{align*}
    \frac{X^{k-1}}{(1-X)^k} = \frac{X^{k-1}}{(1-X)^{k-1}} + \frac{X^{k}}{(1-X)^{k}}\quad \text{for } k\in\N.\tag*{\qedhere}
\end{align*}
\end{proof}
For illustration, we give an example of this formula.
\begin{example}
\label{ex: tbz in sz2}
Consider $\tbz\left(\oo,2,1\right)$. We have
\begin{align*}
    \tbz\left(\oo,2,1\right) =\, & \sum\limits_{m_1>m_2>m_3>0} \frac{q^{m_1}}{1-q^{m_1}}\frac{q^{m_2}}{(1-q^{m_2})^2}\frac{1}{1-q^{m_3}}
    \\
    =\, & \sum\limits_{m_1>m_2>m_3>0} \frac{q^{m_1}}{1-q^{m_1}}\left(\frac{q^{m_2}}{1-q^{m_2}} + \frac{q^{2 m_2}}{(1-q^{m_2})^2}\right)\left(1+\frac{q^{m_3}}{1-q^{m_3}}\right)
    \\
    =\, & \sz(1,2,1) + \sz(1,1,1) + \sz(1,2,0) + \sz(1,1,0).
\end{align*}
\end{example}

\begin{proposition}[Translation SZ into TBZ, {\cite[Prop. 2.25]{Bri}}]
\label{tbz transl2}\ \\
For every $k_1,\dots,k_r\in\N,\ d_1,\dots,d_r\in\N_0$ we have
\begin{align*}
    &\, \sz\left(k_1,\{0\}^{d_1},\dots, k_r,\{0\}^{d_r}\right) 
    \\
    =\, & \sum\limits_{\substack{1\leq j_i\leq k_i,\\  \varepsilon_i\in\{\oo,1\}^{d_i}\\ 1\leq i\leq r}} (-1)^{\sum\limits_{i=1}^r k_i - j_i + |\varepsilon_i|} 
    \tbz\left(j_1\delta_{j_1\neq 1} + \oo \delta_{j_1=1},\varepsilon_1,\dots , j_r\delta_{j_r\neq 1} + \oo \delta_{j_r=1},\varepsilon_r\right),
\end{align*}
where $|\varepsilon|$ counts the $\oo$'s in $\varepsilon$; we denote the corresponding map $\mathfrak{K}^1\rightarrow\mathfrak{h}^{TBZ}$ by $\Vbz$.
\end{proposition}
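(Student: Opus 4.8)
The plan is to prove the identity by a direct computation with the defining $q$-series, in the spirit of the proof of Proposition~\ref{extBZ-to-SZ2} (which this proposition inverts). Two elementary identities do the work. First, the \emph{numerator expansion}: for every $k\geq 1$,
\begin{align*}
    \frac{X^{k}}{(1-X)^{k}} = \sum_{j=1}^{k}(-1)^{k-j}\,h_j(X),\qquad h_1(X):=\frac{X}{1-X},\quad h_j(X):=\frac{X^{j-1}}{(1-X)^{j}}\ \ (j\geq 2).
\end{align*}
I would prove this by induction on $k$: the case $k=1$ is immediate, and for $k\geq 2$ one uses $\frac{X^{k}}{(1-X)^{k}}=\frac{X^{k-1}}{(1-X)^{k}}-\frac{X^{k-1}}{(1-X)^{k-1}}=h_k(X)-\frac{X^{k-1}}{(1-X)^{k-1}}$ (the first step being the rational identity already used in the proof of Proposition~\ref{extBZ-to-SZ2}) together with the inductive hypothesis applied to $\frac{X^{k-1}}{(1-X)^{k-1}}$. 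The point to notice is that the telescoping bottoms out at $\frac{X}{1-X}=h_1$ and never produces the constant $1$. Writing $j^{\ast}:=\oo$ for $j=1$ and $j^{\ast}:=j$ for $j\geq 2$, and recalling the function $f$ from the definition of TBZ-$q$MZVs, evaluation at $X=q^{m}$ gives $h_j(q^{m})=f(j^{\ast},m)$, hence $\frac{q^{mk}}{(1-q^{m})^{k}}=\sum_{j=1}^{k}(-1)^{k-j}f(j^{\ast},m)$. Second, the \emph{gap identity}: $f(1,m)-f(\oo,m)=\frac{1}{1-q^{m}}-\frac{q^{m}}{1-q^{m}}=1$, i.e.\ $1=\sum_{\varepsilon\in\{\oo,1\}}(-1)^{|\varepsilon|}f(\varepsilon,m)$, with $|\varepsilon|$ the number of $\oo$'s.

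Next I would start from
\begin{align*}
    \sz\!\left(k_1,\{0\}^{d_1},\dots,k_r,\{0\}^{d_r}\right) = \sum\ \prod_{i=1}^{r}\frac{q^{m_i k_i}}{(1-q^{m_i})^{k_i}},
\end{align*}
the sum running over all tuples $m_1>m_1^{(1)}>\dots>m_1^{(d_1)}>m_2>\dots>m_r>m_r^{(1)}>\dots>m_r^{(d_r)}>0$, where the variables $m_i^{(s)}$ attached to the zero indices contribute a factor $1$ to the summand. Now expand each factor $\frac{q^{m_i k_i}}{(1-q^{m_i})^{k_i}}$ by the numerator expansion, and replace the factor $1$ attached to each $m_i^{(s)}$ by $\sum_{\varepsilon_i^{(s)}\in\{\oo,1\}}(-1)^{|\varepsilon_i^{(s)}|}f(\varepsilon_i^{(s)},m_i^{(s)})$ via the gap identity. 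Pulling the resulting finite sums over the $j_i\in\{1,\dots,k_i\}$ and the $\varepsilon_i^{(s)}\in\{\oo,1\}$ outside the $q$-series summation, the inner sum for a fixed choice is, by the very definition of TBZ-$q$MZVs, equal to $\tbz\!\left(j_1^{\ast},\varepsilon_1,\dots,j_r^{\ast},\varepsilon_r\right)$ with $\varepsilon_i:=(\varepsilon_i^{(1)},\dots,\varepsilon_i^{(d_i)})$, and the sign collected from the two expansions is $(-1)^{\sum_{i=1}^{r}(k_i-j_i+|\varepsilon_i|)}$. This is exactly the claimed formula (there $j^{\ast}$ is written $j\,\delta_{j\neq1}+\oo\,\delta_{j=1}$). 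It remains to observe that each $\tbz$ on the right is legitimate: its leading argument $j_1^{\ast}$ is either $\oo$ or some $j_1\geq 2$, so never equal to $1$; and the outermost rearrangement causes no trouble because only finitely many $(j_i,\varepsilon_i^{(s)})$-choices are involved and each contributes a genuine element of $\Q\llbracket q\rrbracket$.

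The computation itself is routine; the one place that needs care is the bookkeeping of the expanded blocks — keeping the variables $m_i^{(s)}$ correctly interleaved between $m_i$ and $m_{i+1}$ — and, above all, making sure that the numerator expansion terminates at $f(\oo,m)$ and not at the constant $1$. This last point is precisely what forces the index $1$ of the SZ-model to turn into the symbol $\oo$ (rather than into an index $1$ accompanied by a constant), and it is what makes the signs and arguments of the TBZ-words come out as stated.
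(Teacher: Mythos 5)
Your proposal is correct and follows essentially the same route as the paper: the two rational-function identities you isolate (the telescoping numerator expansion terminating at $\frac{X}{1-X}$, and $1=\frac{1}{1-X}-\frac{X}{1-X}$ for the zero entries) are exactly the two identities the paper's proof cites, and the rest is the same term-by-term expansion of the defining sums. You simply carry out the bookkeeping that the paper defers to the reference.
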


\begin{proof}
This is a direct consequence of the identity
\begin{align*}
    1 = \frac{1-X}{1-X} = \frac{1}{1-X} - \frac{X}{1-X}
\end{align*}
for the zero-entries and
\begin{align*}
    \frac{X^k}{(1-X)^k} = \sum\limits_{1\leq j\leq k} (-1)^{k-j}\left[\frac{X^{j-1}}{(1-X)^j}\delta_{j\neq 1} + \frac{X}{1-X}\delta_{j=1}\right]\quad \text{for } k\in\N
\end{align*}
for the non-zero entries and using the definition of SZ- and TBZ-$q$MZVs. See \cite{Bri} for details.
\end{proof}

\begin{example}
\label{ex: sz in tbz2}
We have
\begin{align*}
    \sz(3,0,1) =\,  &\tbz\left(\oo,1,\oo\right)-\tbz\left(\oo,\oo,\oo\right)-\tbz\left(2,1,\oo\right)\\ &+\tbz\left(2,\oo,\oo\right)+\tbz\left(3,1,\oo\right)-\tbz\left(3,\oo,\oo\right).
\end{align*}
\end{example}

\begin{remark}[Comparison]
\begin{enumerate*}
    \item Comfortable about the TBZ-model is that its span is $\mathcal{Z}_q$ as for the most common models of $q$MZVs, which is good when we want to compare results in different models such as (SZ-)duality.
    \item It can be not comforting to work with $\mathfrak{h}^{TBZ}$ because of the extra letter $\oo$ since it doesn't guarantee that we can switch smoothly to $\mathfrak{h}^0$ or $\mathfrak{K}^1$, which are the underlying word algebras for all the other models we consider.
\end{enumerate*}
\end{remark}

\subsection{Ohno--Okuda--Zudilin model}
\label{ssec: OOZ2}

Another model for $q$-analogues of MZVs is the one first considered in 2012 (\cite{OOZ}) and named after Ohno, Okuda, and Zudilin. One application of this model is that some particular sum of OOZ-$q$MZVs is the generating series of the number of conjugacy classes of $\text{GL}(n,K)$ for a finite field $K$ (cf. \cite[§4.6]{Bri}).

\begin{definition}
\label{ooz defi2}
\begin{enumerate*} 
\item We will work with an extended version of OOZ-$q$MZVs: Define $\ooz(\emptyset) := 1$ and for $r\geq 1$ and all integers $k_1,\dots,k_r\geq 0$ with $k_1\geq 1$
\begin{align*}
    \ooz(k_1,\dots , k_r) :=\, \zeta_q(k_1,\dots,k_r;X,1\dots,1)
    =\, \sum\limits_{m_1>\dots>m_k>0} \frac{q^{m_1}}{(1-q^{m_1})^{k_1}\cdots (1-q^{m_r})^{k_r}}.
\end{align*}
When identifying an SZ-admissible index $(k_1,\dots,k_r)$ with $p^{k_1}y\cdots p^{k_r} y\in\mathfrak{K}^1$, we can define $\ooz$ also as the map, uniquely determined through $\mathbf{1}\mapsto 1$, $\Q$-linearity and 
\begin{align*}
    \ooz : \mathfrak{K}^1 \longrightarrow\mathcal{Z}_q,
    \qquad
    p^{k_1}y\dots p^{k_r}y \longmapsto \ooz(k_1,\dots,k_r).
\end{align*}

\item For a connection to the BZ-model, it is often useful to restrict to admissible indices: Hence, define
\begin{align*}
     \ooz : \mathfrak{h}^0 \longrightarrow\mathcal{Z}_q,
    \qquad
    z_{k_1}\dots z_{k_r} \longmapsto \ooz(k_1,\dots,k_r),
\end{align*}
extended to $\mathfrak{h}^0$ by $\Q$-linearity and map $\bone\mapsto 1$.

\end{enumerate*}
\end{definition}

Considering the span of both OOZ-models, the link to SZ- resp. BZ-$q$MZVs becomes clear:

\begin{proposition}
\label{ooz span2} For the $\Q$-span of the OOZ-model we have
\begin{enumerate*}
    \item  $ \mathcal{Z}_{q,1}=\left\langle \ooz(k_1,\dots,k_r)\, |\, r\geq 0,\, k_1\geq 2,\, k_i\geq 1\right\rangle_\Q$,
    \item $\mathcal{Z}_{q}=\left\langle \ooz(k_1,\dots,k_r)\, |\, r\geq 0,\, k_1\geq 1,\, k_i\geq 0\right\rangle_\Q$.
\end{enumerate*}
\end{proposition}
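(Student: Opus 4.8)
The plan is to reduce both parts to the spanning statements already established for the SZ- and BZ-models, together with one elementary rewriting of rational functions. First I would dispose of the inclusions ``$\supseteq$'', which are immediate from Definition \ref{def Z_q2}: for $k_1\ge 1$, $k_2,\dots,k_r\ge 0$ the polynomials $Q_1=X$, $Q_2=\dots=Q_r=1$ satisfy $Q_1\in X\Q[X]$, $\deg Q_1=1\le k_1$ and $\deg Q_j=0\le k_j$, whence $\ooz(k_1,\dots,k_r)\in\mathcal Z_q$; if in addition $k_1\ge 2$ and $k_2,\dots,k_r\ge 1$ then $\deg Q_j\le k_j-1$ for all $j$, so $\ooz(k_1,\dots,k_r)\in\mathcal Z_{q,1}$. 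The real content is the reverse inclusions, for which I would use $\mathcal Z_{q,1}=\langle\bz(\mathbf k)\rangle_\Q$ (Proposition \ref{bz1X2}) in (i) and $\mathcal Z_q=\langle\sz(\mathbf k)\rangle_\Q$ (Proposition \ref{Qsz=Zq2}) in (ii), and then express each $\bz(\mathbf k)$, resp.\ each $\sz(\mathbf k)$, as a $\Q$-linear combination of $\ooz$'s of the prescribed shape.

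\textbf{The rewriting.} The mechanism is a single binomial expansion of the numerators. Using $X^{s}=X\bigl(1-(1-X)\bigr)^{s-1}$ and $X^{s}=\bigl(1-(1-X)\bigr)^{s}$ one obtains
\begin{align*}
\frac{X^{s}}{(1-X)^{n}}&=\sum_{i=0}^{s-1}(-1)^{i}\binom{s-1}{i}\,\frac{X}{(1-X)^{n-i}}\qquad(n\ge s\ge 1),\\
\frac{X^{s}}{(1-X)^{n}}&=\sum_{i=0}^{s}(-1)^{i}\binom{s}{i}\,\frac{1}{(1-X)^{n-i}}\qquad(n\ge s\ge 0),
\end{align*}
where the denominator exponents $n-i$ run through $\{n-s+1,\dots,n\}$ in the first line and through $\{n-s,\dots,n\}$ in the second. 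Substituting $X=q^{m_j}$ in the $j$-th factor of the defining iterated sum of $\bz(\mathbf k)$ (where the $j$-th numerator is $X^{k_j-1}$, so $s=k_j-1$, $n=k_j$), resp.\ of $\sz(\mathbf k)$ (where it is $X^{k_j}$, so $s=k_j$, $n=k_j$), applying the first identity for $j=1$ and the second for $j\ge 2$, and then distributing the resulting finite product of finite sums over the summation $\sum_{m_1>\dots>m_r>0}$, one writes $\bz(\mathbf k)$ resp.\ $\sz(\mathbf k)$ as a finite $\Q$-linear combination of series $\zeta_q(a_1,\dots,a_r;X,1,\dots,1)=\ooz(a_1,\dots,a_r)$. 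Interchanging the finite product with the nested sum is harmless because in each fixed $q$-degree only finitely many terms contribute (equivalently, by the absolute convergence of Proposition \ref{qmzv welldef}).

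\textbf{Reading off the indices.} It then remains to check the ranges of the indices $a_j$ produced. For (ii), applied to $\sz(k_1,\dots,k_r)$ with $k_1\ge 1$ and $k_j\ge 0$: at $j=1$ one has $s=n=k_1$, so $a_1\in\{1,\dots,k_1\}$; at $j\ge 2$ with $k_j=0$ the $j$-th factor equals $1$ and we take $a_j=0$, while with $k_j\ge 1$ the second identity (with $s=n=k_j$) yields $a_j\in\{0,\dots,k_j\}$; in every case $a_1\ge 1$ and $a_j\ge 0$, so $\sz(\mathbf k)$ lies in the claimed OOZ-span, giving $\mathcal Z_q\subseteq\langle\ooz(\mathbf k)\mid k_1\ge 1,\ k_i\ge 0\rangle_\Q$. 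For (i), applied to $\bz(k_1,\dots,k_r)$ with $k_1\ge 2$ and $k_j\ge 1$: at $j=1$ one has $s=k_1-1\ge 1$, so the exponents $n-i=k_1-i$ run through $\{2,\dots,k_1\}$, i.e.\ $a_1\ge 2$; at $j\ge 2$ the second identity with $s=k_j-1$, $n=k_j$ gives $a_j\in\{1,\dots,k_j\}$, i.e.\ $a_j\ge 1$; hence $\bz(\mathbf k)\in\langle\ooz(\mathbf k)\mid k_1\ge 2,\ k_i\ge 1\rangle_\Q$, giving $\mathcal Z_{q,1}\subseteq$ that span.

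\textbf{Main obstacle.} There is no deep difficulty --- the statement is essentially bookkeeping around the two displayed identities. The one point demanding care is the bound $a_1\ge 2$ in (i): it is precisely here that one must go through the $\bz$-generators, whose leading numerator $X^{k_1-1}$ both is divisible by $X$ and has degree $k_1-1\ge 1$, rather than through an arbitrary $\zeta_q(\mathbf k;\mathbf Q)\in\mathcal Z_{q,1}$; this also explains why $k_1=1$ is legitimately excluded on the right of (i). All other verifications are routine.
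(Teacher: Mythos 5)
Your proposal is correct and follows essentially the same route as the paper: both reduce the claim to the known spanning statements for the BZ- and SZ-models (Propositions \ref{bz1X2} and \ref{Qsz=Zq2}) via an explicit translation between the two generating sets. The only difference is that the paper simply invokes the translation maps $\U,\V$ of Proposition \ref{ooz transl2} together with the fact that they are linear isomorphisms, whereas you construct the reverse translations directly by expanding the numerators $X^{k_j-1}$ resp.\ $X^{k_j}$ binomially in powers of $1-X$ --- in effect writing down $\U^{-1}$ and $\V^{-1}$ explicitly, which also makes the index bounds $a_1\geq 2$ in (i) and $a_1\geq 1$, $a_j\geq 0$ in (ii) transparent.
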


\begin{proof}  A proof can be obtained from Proposition \ref{ooz transl2}, where explicit translations of OOZ-$q$MZVs into SZ-$q$MZVs and vice versa (resp. restricted OOZ-$q$MZVs into BZ-$q$MZVs) are given.
\end{proof}

In particular, the OOZ-model we work with is closed under $q\frac{d}{d q}$ (Rem. \ref{qddq rem}(i)), while the restricted model is only conjecturally closed (Rem. \ref{qddq rem}(ii)). Also, the restricted OOZ-model satisfies a shuffle product, introduced in \cite{CEM}:

\begin{definition} 
Define the map $T:\mathfrak{h}^0 \rightarrow\mathfrak{h}^1$ via $\bone\mapsto 1$, $\Q$-linearity and for all $n\geq 2,\, v\in\mathfrak{h}^1$,
\begin{align*}
    z_n v\longmapsto z_n v - z_{n-1} v.
\end{align*}
Then the quasi-shuffle product $\sooz$ on $\mathfrak{h}^0$ for the OOZ-model is defined as the unique map $\mathfrak{h}^0\otimes_\Q \mathfrak{h}^0\rightarrow\mathfrak{h}^0$ satisfying for all $u,v\in\mathfrak{h}^0$
\begin{align*}
    T(u\sooz v) = T(u)\ast T(v).
\end{align*}
\end{definition}
The quasi-shuffle product $\shuffle_{OOZ}$ turns $\ooz$ into an homomorphism on $\mathfrak{h}^0$:
\begin{theorem}[{\cite[Section 4.5]{CEM}}]
The product $\sooz$ is well-defined and the evaluation map
\begin{align*}
    \ooz : (\mathfrak{h}^0,\sooz)\longrightarrow (\mathcal{Z}_q,\cdot),
    \qquad
    w\longmapsto \ooz(w)
\end{align*}
is an algebra homomorphism, i.e., in particular, we have for all $u,v\in\mathfrak{h}^0$
\begin{align*}
    \ooz(u\sooz v) = \ooz(u)\ooz(v).
\end{align*}
\end{theorem}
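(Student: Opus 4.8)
The statement has two parts—that $\sooz$ is a well-defined associative, commutative, unital product, and that $\ooz$ is multiplicative for it—and I would prove them in that order, the first being essentially formal and the second the real content. For well-definedness the plan is to show that $T\colon\mathfrak{h}^0\to\mathfrak{h}^1$ is injective with image a $\ast$-subalgebra of $\mathfrak{h}^1$; then $\sooz:=T^{-1}\circ\ast\circ(T\otimes T)$ is unambiguous and $T$ is an algebra isomorphism $(\mathfrak{h}^0,\sooz)\cong(T(\mathfrak{h}^0),\ast)$, so $\sooz$ inherits associativity, commutativity and the unit $\bone$ from the stuffle algebra $(\mathfrak{h}^1,\ast)$. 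Injectivity is immediate, since $T$ fixes the ``tail'' $v$ of a word $z_nv$ and on the span of the words with a fixed tail $v$ acts by $z_nv\mapsto z_nv-z_{n-1}v$, which is unitriangular in the first index. For the image I would use the linear map $\pi\colon\mathfrak{h}^1\to\mathfrak{h}^1$ deleting the first letter ($\pi(\bone):=0$, $\pi(z_nv):=v$) and verify—first on words, then bilinearly through the stuffle recursion—the identity
\[
\pi(u\ast v)=\pi(u)\ast v+u\ast\pi(v)+\pi(u)\ast\pi(v)\qquad(u,v\in\mathfrak{h}^1),
\]
which shows $\ker\pi$ is closed under $\ast$. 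Since $\pi(z_nv-z_{n-1}v)=0$ while conversely every element of $\ker\pi$ is a $T$-image (decompose by tails and invert the unitriangular operator; the ``boundary'' coefficient vanishes precisely because the tail-coefficients of a $\ker\pi$-element sum to $0$), one gets the clean description $T(\mathfrak{h}^0)=\ker\pi$, hence a $\ast$-subalgebra.

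Because $T\colon(\mathfrak{h}^0,\sooz)\to(\mathfrak{h}^1,\ast)$ is a homomorphism by construction, multiplicativity of $\ooz$ reduces to showing $\ooz\circ T^{-1}\colon(\ker\pi,\ast)\to(\mathcal{Z}_q,\cdot)$ is an algebra homomorphism, for then $\ooz(u\sooz v)=(\ooz\circ T^{-1})\bigl(T(u)\ast T(v)\bigr)=\ooz(u)\ooz(v)$. The transparent way to obtain this would be to exhibit an honest algebra homomorphism $\Phi\colon(\mathfrak{h}^1,\ast)\to(\mathcal{Z}_q,\cdot)$ with $\Phi\circ T=\ooz$: the constraint forces $\Phi(z_{k_1}w)-\Phi(z_{k_1-1}w)=\ooz(z_{k_1}w)$ for all $k_1\ge 2$, and via $(1-X)^{-k}-(1-X)^{-(k-1)}=X(1-X)^{-k}$ this points to the ``unregularised'' evaluation $z_{k_1}\cdots z_{k_r}\mapsto\sum_{m_1>\cdots>m_r>0}\prod_j(1-q^{m_j})^{-k_j}$, which is formally multiplicative for $\ast$ but is a divergent $q$-series. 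The main obstacle—where I expect essentially all the work to sit—is making this precise: subtracting the divergent constant part of the leading factor puts one back in $\mathcal{Z}_q$ but destroys multiplicativity, so one has to control exactly which correction terms appear when two OOZ iterated sums are multiplied and their indices stuffled together.

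Concretely, I expect the argument to run as in \cite[Section~4.5]{CEM}: one multiplies the defining sums of $\ooz(u)$ and $\ooz(v)$, obtaining a $\Q$-combination of ``generalised'' OOZ sums carrying an extra factor $q^m$ at each position where an index of $u$ has merged with one of $v$ (and a doubled factor $q^{2m}$ where the two leading positions merge); one then rewrites each such factor using the telescoping relation $\ooz(\dots,k,\dots)-\ooz(\dots,k-1,\dots)=(\text{the same sum with an extra }q^m\text{ at that position})$; and one checks that the resulting bookkeeping coincides with the iterated-sum expansion of $u\sooz v=T^{-1}\bigl(T(u)\ast T(v)\bigr)$, using the explicit form of $T^{-1}$ as a partial-summation operator on the first index. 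A possibly shorter alternative is to take the explicit translation of the OOZ-model into the SZ-model from Proposition \ref{ooz transl2}, verify that it intertwines $\sooz$ with the SZ-stuffle $\qsz$, and conclude via Theorem \ref{sz satisfy sz stuffle2}; there the work lies in checking the intertwining, which comes down to the same combinatorial identity.
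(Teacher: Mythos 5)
The paper itself offers no proof of this theorem: it is stated with a pointer to \cite[§4.5]{CEM}, and the following sentence again defers to \cite{CEM} and \cite{EMS}. So the only question is whether your blind argument stands on its own. The well-definedness half does. Injectivity of $T$ by unitriangularity in the leading index is correct; the identity $\pi(u\ast v)=\pi(u)\ast v+u\ast\pi(v)+\pi(u)\ast\pi(v)$ is immediate from the stuffle recursion on words plus bilinearity, so $\ker\pi$ is a $\ast$-subalgebra; and your identification $T(\mathfrak{h}^0)=\ker\pi$ is right (inverting the telescoping operator tail by tail, the coefficient of $z_1v$ comes out correctly precisely because the tail-coefficients of a $\ker\pi$-element sum to zero). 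This gives a clean, complete proof that $\sooz$ is well defined, associative, commutative and unital.

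The multiplicativity half, however, is a plan rather than a proof. You have correctly isolated the crux: the constraint $\Phi\circ T=\ooz$ forces $\Phi$ to be the unregularised evaluation $z_{k_1}\cdots z_{k_r}\mapsto\sum_{m_1>\cdots>m_r>0}\prod_j(1-q^{m_j})^{-k_j}$, which is formally an $\ast$-homomorphism but divergent on individual words; but you then stop at ``I expect the argument to run as \dots'' without executing either route. Note that your first half already gets you most of the way to a clean fix: since $T(\mathfrak{h}^0)=\ker\pi$, the evaluation $\Psi$ \emph{does} converge on $\ker\pi$ --- grouping by tails, the leading factor is $\sum_n c_{n,v}\bigl((1-q^{m_1})^{-n}-1\bigr)=O(q^{m_1})$ because $\sum_n c_{n,v}=0$ --- so $\Phi:=\Psi|_{\ker\pi}$ exists as an honest linear map and $\Phi\circ T=\ooz$ is exactly your telescoping identity. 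What is still missing, and what carries the real content of the theorem, is $\Psi(U\ast V)=\Psi(U)\Psi(V)$ for $U,V\in\ker\pi$: multiplying the two absolutely convergent sums and rearranging over interleaving classes produces the divergent $\Psi$-values of individual stuffle words, so one must justify regrouping these by the tails of the product words before summing over the leading index. That rearrangement --- equivalently the bookkeeping in your telescoping version, or the verification that $\V$ intertwines $\sooz$ with $\qsz$ --- is not carried out, so as written the second assertion of the theorem is not proved.
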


For details on the quasi-shuffle structure, OOZ-$q$MZVs imply, we refer to \cite{CEM} and \cite{EMS}.

As for other models, we consider a generating series for (the extended version of) OOZ-$q$MZVs:
\begin{proposition}[{\cite[Prop. A.89]{Bri}}]
Define for all $r\in\N_0$
\begin{align*}
    \mathfrak{oz}(X_1,\dots,X_r,Y_1,\dots,Y_r) := \sum\limits_{\substack{k_1,\dots,k_r\geq 1\\ d_1,\dots,d_r\geq 0}} \ooz\left(k_1,\{0\}^{d_1},\dots,k_r,\{0\}^{d_r}\right) \frac{X_1^{k_1}}{k_1!}Y_1^{d_1}\cdots\frac{X_r^{k_r}}{k_r!}Y_r^{d_r}.
\end{align*}
Then, we have with $m_{r+1} := 0$
\begin{align*}
    \mathfrak{oz}(X_1,\dots,X_r,Y_1,\dots,Y_r) = \sum\limits_{m_1>\dots>m_r>0} q^{m_1}\prod\limits_{j=1}^r (1+Y_j)^{m_j-m_{j+1}-1}\left(e^{\frac{X_j}{1-q^{m_j}}} - 1\right).
\end{align*}
\end{proposition}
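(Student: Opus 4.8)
The plan is to unfold $\ooz$ of the interleaved index as an iterated sum, perform the summation over the inserted zero-slots purely combinatorially, and then recognise the two one-variable generating functions that appear. First I would use Definition \ref{ooz defi2}: $\ooz(k_1,\{0\}^{d_1},\dots,k_r,\{0\}^{d_r})$ is $\zeta_q$ of the index $(k_1,0^{d_1},\dots,k_r,0^{d_r})$ with polynomials $(X,1,\dots,1)$. Writing $m_j$ for the summation variable in the slot carrying $k_j$ (and $m_{r+1}:=0$), the zero-slots contribute trivial factors $1/(1-q^m)^0=1$, so the summand depends only on $m_1,\dots,m_r$ and equals $q^{m_1}\prod_j(1-q^{m_j})^{-k_j}$, the numerator $q^{m_1}$ coming from $Q_1=X$ together with the fact that $m_1$ is the largest index. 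It then remains to count, for each $j$, the ways to place the $d_j$ strictly decreasing variables of the $j$-th zero-block strictly between $m_j$ and $m_{j+1}$; this number is $\binom{m_j-m_{j+1}-1}{d_j}$, by the elementary identity already invoked in the proof of Theorem \ref{sz generating2}. Hence
\begin{align*}
\ooz\!\left(k_1,\{0\}^{d_1},\dots,k_r,\{0\}^{d_r}\right)=\sum_{m_1>\dots>m_r>0} q^{m_1}\prod_{j=1}^r\binom{m_j-m_{j+1}-1}{d_j}\frac{1}{(1-q^{m_j})^{k_j}}.
\end{align*}

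Next I would substitute this into the definition of $\mathfrak{oz}$, interchange the (formal) sums over $k_1,\dots,k_r\ge 1$ and $d_1,\dots,d_r\ge 0$ with the sum over $m_1>\dots>m_r>0$, and observe that for fixed $m_1,\dots,m_r$ everything factors over $j$. For each $j$ one has $\sum_{k\ge 1}\frac{1}{(1-q^{m_j})^k}\frac{X_j^k}{k!}=e^{X_j/(1-q^{m_j})}-1$, and, since $m_j-m_{j+1}-1\ge 0$, the terminating binomial theorem gives $\sum_{d\ge 0}\binom{m_j-m_{j+1}-1}{d}Y_j^d=(1+Y_j)^{m_j-m_{j+1}-1}$. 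Multiplying these factors back together produces exactly the asserted right-hand side.

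The argument is essentially routine; the only two points that need a moment's care are the bookkeeping of which summation variable lands in which slot once the zero-blocks are interleaved, and the legitimacy of rearranging the multiple series. The latter is unproblematic as an identity in $\Q\llbracket q\rrbracket\llbracket X_1,\dots,Y_r\rrbracket$, since each fixed power of $q$ receives contributions from only finitely many tuples $(m_1,\dots,m_r)$ and $(k_j,d_j)_j$. Both issues are handled in precisely the same manner as in the already-established generating-series identities for the SZ-model and for bi-brackets (Theorems \ref{sz generating2} and \ref{g generating2}), so I do not expect any genuine obstacle here.
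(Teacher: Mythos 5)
Your proposal is correct and follows essentially the same route as the paper: unfold $\ooz$ of the interleaved index as an iterated sum, resolve the zero-slots via the count $\binom{m_j-m_{j+1}-1}{d_j}$, and then apply the exponential series and the binomial theorem factorwise. No gaps; the paper's proof is just a condensed version of exactly this computation.
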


\begin{proof}
We use the geometric sum identity and the binomial theorem to obtain
\begin{align*}
    &\, \mathfrak{oz}(X_1,\dots,X_r,Y_1,\dots,Y_r) 
    =
    \sum\limits_{\substack{k_1,\dots,k_r\geq 1\\ d_1,\dots,d_r\geq 0}} \sum\limits_{\substack{m_1>n_1>\dots>n_{d_1}\\ > m_2>\dots>n_{d_1+\dots+d_r}>0}} q^{m_1} \prod\limits_{j=1}^r \frac{1}{(1-q^{m_j})^{k_j}}\frac{X_j^{k_j}}{k_j!} Y_j^{d_j}
    \\
    =&\,
    \sum\limits_{\substack{m_1>\dots>m_r>0}} q^{m_1} \prod\limits_{j=1}^r \left(\sum\limits_{k_j\geq 1,\, d_j\geq 0} \frac{\left(\frac{X_j}{1+q^{m_j}}\right)^{k_j}}{k_j!} \binom{m_j-m_{j+1}-1}{d_j} Y_j^{d_j}\right)
    \\
    =&\,
    \sum\limits_{m_1>\dots>m_r>0} q^{m_1}\prod\limits_{j=1}^r (1+Y_j)^{m_j-m_{j+1}-1}\left(e^{\frac{X_j}{1-q^{m_j}}} - 1\right).\tag*{\qedhere}
\end{align*}
\end{proof}

For the OOZ-model, no individual duality relation is known so far. But we can translate into the SZ-model (resp. BZ-model for restricted definition), apply there SZ-duality (resp. BZ-duality) and then translate back into the OOZ-model, giving $\Q$-linear relations among OOZ-$q$MZVs:

\begin{theorem}[Duality, {\cite[Thm. 5.5, Thm. 5.9]{EMS}}]
\label{dual in ooz2}
Let be $\U,\, \V$ as in Proposition \ref{ooz transl2}.
\begin{enumerate*}
    \item For all $w\in\mathfrak{h}^0$ we have
$    \ooz(w) = \left(\ooz\circ \U^{-1}\circ\, \tau\circ \U\right) (w).
$
    \item For all $w\in\mathfrak{K}^1$ we have
$    \ooz(w) = \left(\ooz\circ \V^{-1}\circ\, \Tilde{\tau}\circ \V\right) (w).
$
    \item For any $w\in\mathfrak{K}^1$ we have 
    $\ooz(w) = \left(\szs\circ\Tilde{\tau}\right) (w),
    $
    where $\szs: \mathfrak{K}^1\rightarrow\, \mathcal{Z}_q$ is the map of multiple zeta star values, defined via $\bone\mapsto 1$, $\Q$-linearity and
    \begin{align*}
        p^{k_1}y\cdots p^{k_r}y\longmapsto\, \sum\limits_{m_1\geq \dots\geq m_r>0} \frac{q^{m_1 k_1}}{(1-q^{m_1})^{k_1}}\cdots \frac{q^{m_r k_r}}{(1-q^{m_r})^{k_r}}.
    \end{align*}
\end{enumerate*}
\end{theorem}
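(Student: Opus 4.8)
The plan is to obtain (i) and (ii) formally from the translation maps of Proposition~\ref{ooz transl2} together with the dualities already established in the BZ- and SZ-models, and to prove (iii) by directly comparing the two defining $q$-series.

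For (i), recall that Proposition~\ref{ooz transl2} supplies a $\Q$-linear bijection $\U:\mathfrak{h}^0\to\mathfrak{h}^0$ with $\ooz=\bz\circ\U$ on $\mathfrak{h}^0$. Since $\tau$ restricts to an anti-automorphism of $\mathfrak{h}^0$ (because $\tau(x_0\mathfrak{h}x_1)=x_0\mathfrak{h}x_1$), the element $\tau(\U w)$ again lies in $\mathfrak{h}^0=\operatorname{Im}\U$, so $\U^{-1}\tau\,\U w$ makes sense. Then for $w\in\mathfrak{h}^0$,
\begin{align*}
    \ooz\bigl(\U^{-1}\tau\,\U w\bigr)
    = \bz\bigl(\U\,\U^{-1}\tau\,\U w\bigr)
    = \bz(\tau\,\U w)
    \overset{\text{Thm.~\ref{bz dual2}}}{=} \bz(\U w)
    = \ooz(w),
\end{align*}
which is (i). For (ii) the argument is the same with the bijection $\V:\mathfrak{K}^1\to\mathfrak{K}^1$ of Proposition~\ref{ooz transl2}, the identity $\ooz=\sz\circ\V$ on $\mathfrak{K}^1$, the fact that $\Tilde{\tau}$ restricts to $\mathfrak{K}^1$, and SZ-duality (Theorem~\ref{sz dual 2}) in place of BZ-duality. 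So (i) and (ii) are purely formal once Proposition~\ref{ooz transl2} --- in particular the invertibility of $\U$ and $\V$ --- is in hand.

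For (iii) one must work with the series themselves. Writing $\mathbf{k}=(k_1,\dots,k_r)$ and identifying it with $p^{k_1}y\cdots p^{k_r}y$, the anti-automorphism $\Tilde{\tau}$ produces the word $py^{k_r}py^{k_{r-1}}\cdots py^{k_1}$, i.e.\ the index $(1,\{0\}^{k_r-1},1,\{0\}^{k_{r-1}-1},\dots,1,\{0\}^{k_1-1})$. The plan is then to expand each factor $(1-q^{m_j})^{-1}$ occurring in $\ooz(\mathbf{k})$ as a geometric series, which rewrites $\ooz(\mathbf{k})$ as a sum of monomials $q^{\sum_j m_j(c_{j,1}+\dots+c_{j,k_j})}$ over lattice points formed by a \emph{strictly} decreasing chain $m_1>\dots>m_r>0$ together with non-negative exponents $c_{j,i}$, the numerator $q^{m_1}$ forcing $c_{1,1}\geq 1$; and on the other side to expand the factors $q^{n}/(1-q^{n})$ sitting at the $1$-slots of $\szs\bigl(\Tilde{\tau}(\mathbf{k})\bigr)$ and count the \emph{weakly} decreasing chains running through its $0$-slots. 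One then exhibits a weight-preserving bijection between the two indexing sets: it is precisely conjugation of the associated Young diagram, which exchanges rows and columns, turning the strict chain $m_1>\dots>m_r$ into a weak chain --- this is exactly why the star-version $\szs$ (with $\geq$), and not $\sz$, occurs on the right. Matching the two series term by term gives $\ooz(w)=(\szs\circ\Tilde{\tau})(w)$. (Alternatively, (iii) can be deduced from the explicit form of the translation $\V$ of Proposition~\ref{ooz transl2} together with the expansion of SZ-star values into SZ-values noted in Remark~\ref{sz remark2}(iv).)

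Parts (i) and (ii) are routine bookkeeping; the genuine obstacle is the bijection in (iii), where one has to keep careful track of which inequalities are strict and which are weak, and of which coordinate block carries the ``$\geq 1$'' constraint created by the numerator $q^{m_1}$, so that the conjugation matches the two sides exactly.
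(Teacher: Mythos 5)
Your parts (i) and (ii) are correct and coincide with the paper's own (implicit) argument: the remark immediately preceding the theorem says precisely that one translates into the BZ- resp.\ SZ-model, applies the duality there, and translates back, and your computation $\ooz(\U^{-1}\tau\,\U w)=\bz(\tau\,\U w)=\bz(\U w)=\ooz(w)$ (with the SZ-analogue for (ii)) is all that this amounts to once Proposition~\ref{ooz transl2} is granted; your check that $\tau$ and $\Tilde{\tau}$ preserve $\mathfrak{h}^0$ and $\mathfrak{K}^1$ is the only point that genuinely needs saying.

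For (iii) the paper offers no proof (it defers to [EMS]) and only illustrates the identity with the example $\ooz(2,1)=\szs(1,1,0)$, so your direct bijective route is a genuinely different approach from the one the paper gestures at; it is in the spirit of Section~4, where SZ-duality is obtained by transposing marked Young diagrams. It is viable, but as written the bijection is asserted rather than constructed, and that is where all the content of (iii) sits. Concretely: on the left you sum over a \emph{strict} chain $m_1>\dots>m_r$ together with $k_j$ ordered multiplicities $c_{j,i}\geq 0$ at level $j$ (with $c_{1,1}\geq 1$), on the right over a \emph{weak} chain of length $k_1+\dots+k_r$ with exponents $d_i\geq 1$ only at the $r$ positions carrying a $p$; the levels whose total multiplicity $\sum_i c_{j,i}$ vanishes, and the $0$-slots of the weak chain, are phantom coordinates that contribute counting factors rather than parts of a partition, so the map is not literally conjugation of a Young diagram and the two families of binomial multiplicities have to be matched explicitly. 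Your parenthetical alternative is the cleaner way to close this: from $\ooz=\sz\circ\V$ and SZ-duality (Theorem~\ref{sz dual 2}) one gets $\ooz=\sz\circ\Tilde{\tau}\circ\V$, so (iii) reduces to the finite combinatorial identity $\sz\circ\Tilde{\tau}\circ\V=\szs\circ\Tilde{\tau}$, i.e.\ to checking that $\Tilde{\tau}\circ\V\circ\Tilde{\tau}$ is exactly the expansion of SZ-star values into SZ-values noted in Remark~\ref{sz remark2}(iv) --- which is precisely what the paper's worked example verifies in depth two.
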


\begin{example}
Consider $w = p^2 y p y$, i.e., $\Tilde{\tau}(w) = p y p y y$. We get with SZ-duality
\begin{align*}
    &\ooz(w) =\, \ooz(2,1)
    \\ 
    =&\sum\limits_{m_1>m_2>0} \frac{q^{m_1}}{(1-q^{m_1})^2 (1-q^{m_2})}
    =\, 
    \sum\limits_{m_1>m_2>0} \left(\frac{q^{m_1}}{1-q^{m_1}} + \frac{q^{2 m_1}}{(1-q^{m_1})^2}\right) \left(1 + \frac{q^{ m_2}}{1-q^{m_2}}\right)
    \\
    =&\,
    \sz(1,0) + \sz(1,1) + \sz(2,0) + \sz(2,1)
    =\,
    \sz(2) + \sz(1,1) + \sz(2,0) + \sz(1,1,0)
    \\
    =&\, 
    \left(\sum\limits_{m_1=m_2=m_3>0} + \sum\limits_{m_1>m_2=m_3>0} + \sum\limits_{m_1=m_2>m_3>0} + \sum\limits_{m_1>m_2>m_3>0} \right) \frac{q^{m_1}}{1-q^{m_1}} \frac{q^{m_2}}{1-q^{m_2}}
    \\
    =&\,
    \sum\limits_{m_1\geq m_2\geq m_3 > 0} \frac{q^{m_1}}{1-q^{m_1}} \frac{q^{m_2}}{1-q^{m_2}}
    =
    \szs(1,1,0)
    =
    \szs(\Tilde{\tau}(w)).
\end{align*}
\end{example}

As mentioned, we give now the maps $\U$ resp. $\V$, proving in particular Proposition \ref{ooz span2}.

\begin{proposition}[{\cite[Prop. 5.7, Rem. 5.8]{EMS}}]
\label{ooz transl2}
\begin{enumerate*}
\item The map $\U:\mathfrak{h}^0\rightarrow \mathfrak{h}^0$, defined through $\bone\mapsto 1$,
\begin{align*}
    z_{k_1}\dots z_{k_r}\longmapsto \sum\limits_{\substack{2\leq n_1\leq k_1\\ 1\leq n_j\leq k_j,\, j\geq 2}} \binom{k_1 -2}{n_1 - 2}\binom{k_2 - 1}{n_2 - 1}\cdots \binom{k_r - 1}{n_r - 1} z_{n_1}\dots z_{n_r}
\end{align*}
and $\Q$-linear continuation, is a linear isomorphism and satisfies $\ooz = \bz\circ \U$.
\item Analogously, the map $\V:\mathfrak{K}^1\rightarrow \mathfrak{K}^1$, given by $\bone\mapsto 1$,
\begin{align*}
    p^{k_1}y\dots p^{k_r}y\longmapsto \sum\limits_{\substack{1\leq n_1\leq k_1\\ 0\leq n_j\leq k_j,\, j\geq 2}} \binom{k_1 -1}{n_1 - 1}\binom{k_2}{n_2}\cdots \binom{k_r}{n_r} p^{n_1}y\dots p^{n_r}y
\end{align*}
and $\Q$-linear continuation is a linear isomorphism and 
we have $\ooz = \sz\circ \V$.
\end{enumerate*}
\end{proposition}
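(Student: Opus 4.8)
The plan is to deduce both identities $\ooz=\bz\circ\U$ and $\ooz=\sz\circ\V$ from a handful of elementary identities in $\Q(X)$, and then to obtain the isomorphism statements from a triangularity argument; as a byproduct this will also prove Proposition \ref{ooz span2}.

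First I would record, for every $k\in\N$, the expansions
\begin{align*}
\frac{1}{(1-X)^k}=\sum_{n=1}^{k}\binom{k-1}{n-1}\frac{X^{n-1}}{(1-X)^n},\qquad \frac{X}{(1-X)^k}=\sum_{n=2}^{k}\binom{k-2}{n-2}\frac{X^{n-1}}{(1-X)^n},
\end{align*}
\begin{align*}
\frac{1}{(1-X)^k}=\sum_{n=0}^{k}\binom{k}{n}\frac{X^{n}}{(1-X)^n},\qquad \frac{X}{(1-X)^k}=\sum_{n=1}^{k}\binom{k-1}{n-1}\frac{X^{n}}{(1-X)^n}.
\end{align*}
All four follow at once by setting $u:=\tfrac{X}{1-X}$, so that $\tfrac1{1-X}=1+u$, $\tfrac{X^n}{(1-X)^n}=u^n$ and $\tfrac{X^{n-1}}{(1-X)^n}=(1+u)u^{n-1}$, and then applying the binomial theorem. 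The first line is tailored to the BZ-numerators $X^{n-1}$ and the second to the SZ-numerators $X^{n}$; the second identity of the first line is only used for $k\ge 2$, which is exactly the range where the distinguished first index of an admissible word lives.

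Next I would substitute these into the defining iterated sum of $\ooz(k_1,\dots,k_r)$. Since the $j$-th factor of the summand is a function of $q^{m_j}$ only, applying the $\tfrac{X}{(1-X)^{k_1}}$-identity to the first factor and the $\tfrac{1}{(1-X)^{k_j}}$-identity to the remaining factors, and then expanding the product over the \emph{finite} set of indices $(n_1,\dots,n_r)$ (so that the interchange with the summation over $m_1>\dots>m_r>0$ is harmless), yields from the first line exactly
\begin{align*}
\ooz(k_1,\dots,k_r)=\sum_{\substack{2\le n_1\le k_1\\ 1\le n_j\le k_j,\ j\ge 2}}\binom{k_1-2}{n_1-2}\prod_{j=2}^{r}\binom{k_j-1}{n_j-1}\,\bz(n_1,\dots,n_r)=\bz\bigl(\U(z_{k_1}\cdots z_{k_r})\bigr),
\end{align*}
and, using the second line instead, the analogous identity with $\bz,\U,z_k$ replaced by $\sz,\V,p^ky$. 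By $\Q$-linearity this gives $\ooz=\bz\circ\U$ on $\mathfrak{h}^0$ and $\ooz=\sz\circ\V$ on $\mathfrak{K}^1$. In the same computation one checks that the codomains are right: in $\U$ the summation forces $n_1\ge 2$, hence $z_{n_1}\cdots z_{n_r}\in x_0\mathfrak{h}x_1\subseteq\mathfrak{h}^0$; in $\V$ it forces $n_1\ge 1$, hence $p^{n_1}y\cdots p^{n_r}y\in p\mathfrak{K}y\subseteq\mathfrak{K}^1$, the terms with $n_j=0$ for some $j\ge 2$ being consistent with the SZ-convention that allows zero entries.

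Finally, for the isomorphism claims I would note that $\U$ and $\V$ preserve the depth $r$ and do not increase the weight $W:=k_1+\dots+k_r$, the word $z_{k_1}\cdots z_{k_r}$ (resp.\ $p^{k_1}y\cdots p^{k_r}y$) itself occurring in its image with coefficient $\binom{k_1-2}{k_1-2}\prod_{j\ge2}\binom{k_j-1}{k_j-1}=1$ (resp.\ $\binom{k_1-1}{k_1-1}\prod_{j\ge2}\binom{k_j}{k_j}=1$) and all other image terms having the same depth but strictly smaller weight. Hence, for each fixed $r$ and each weight bound $W$, the (finite-dimensional) span of the relevant basis words is stable under $\U$ (resp.\ $\V$) and the restricted map is unipotent, in particular bijective; passing to the union over $W$ and the direct sum over $r$ shows that $\U$ and $\V$ are linear automorphisms of $\mathfrak{h}^0$ and $\mathfrak{K}^1$ respectively. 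The whole argument is elementary; the only real care needed is bookkeeping — keeping the distinguished first index (with its slightly different binomial identity, caused by the single numerator $X$ versus constant numerators elsewhere) separate from the others, and running the unipotence argument on the correct finite-dimensional graded pieces, since the weight is unbounded on $\mathfrak{h}^0$ and $\mathfrak{K}^1$ while the depth is not.
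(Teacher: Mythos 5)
Your proof is correct. Note that the paper does not actually prove Proposition \ref{ooz transl2}; it only cites \cite[Prop. 5.7, Rem. 5.8]{EMS}, so there is no in-paper argument to compare against. What you wrote is, however, precisely the style of argument the paper uses for its other translation statements (Propositions \ref{Qsz=Zq2}, \ref{bz1X2}, \ref{extBZ-to-SZ2}, \ref{tbz transl2}): a partial-fraction/binomial identity applied factor by factor inside the iterated sum. Your four identities are all correct instances of expanding $(1+u)^k$ and $u(1+u)^{k-1}$ with $u=\tfrac{X}{1-X}$, the interchange with the sum over $m_1>\dots>m_r>0$ is harmless since the $\mathbf{n}$-sum is finite and everything lives in $\Q\llbracket q\rrbracket$ coefficientwise, and the resulting coefficients and index ranges match the stated $\U$ and $\V$ exactly, including the codomain checks ($n_1\geq 2$ for $\mathfrak{h}^0$, $n_1\geq 1$ and possible zero entries for $\mathfrak{K}^1$). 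The unipotence argument for invertibility is also sound and is the standard way to get the isomorphism claim: $\U$ and $\V$ fix each word modulo terms of the same depth and strictly smaller weight, and the relevant depth-$r$, weight-$\leq W$ pieces are finite-dimensional (for $\mathfrak{K}^1$ because $k_j\geq 0$ and $r$ is fixed), so the restrictions are unipotent and hence bijective, and these pieces exhaust the domain. Your observation that this simultaneously yields Proposition \ref{ooz span2} agrees with the paper, which derives that proposition from this one.
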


\begin{remark}[Comparison]
\begin{enumerate*}
    \item The \enquote{duality relations} (i) and (ii) can be indeed viewed as duality in the OOZ-model, the authors in \cite{EMS} did so. Still, we should compare them with the partition relation in the model of bi-brackets: The partition relation is the same as when translating bi-brackets into SZ-$q$MZVs, applying SZ-duality and then translating back into bi-brackets.
    \item Duality relation (iii) is no \enquote{real} duality relation, but in \cite{EMS} called so, since $\Tilde{\tau}$ gives SZ-duality in the SZ-model. However, this relation is interesting because it is, in truth, the translation map of the OOZ-model into the SZS-model, and this translation is obtained by the \enquote{duality map} $\Tilde{\tau}$.
\end{enumerate*}
\end{remark}

\subsection{Okounkov $q$MZVs}

In the context of enumerative geometry and Hilbert schemes, a model of $q$MZVs introduced by Okounkov (see \cite{Oko}) is very important. 

\begin{definition}[Okounkov $q$MZVs]
\label{oko defi}
Define for all indices $\mathbf{k} = (k_1,\dots,k_r)$ ($r\in\N_0$) with every entry at least $2$ the Okounkov-$q$MZV of $\mathbf{k}$,
\begin{align*}
    \oko(\mathbf{k}) :=\, \oko(k_1,\dots,k_r) :=\, \zeta_q\left(k_1,\dots,k_r;p_{k_1},\dots,p_{k_r}\right)
    =\,  \sum\limits_{m_1>\dots>m_r>0} \prod\limits_{j=1}^r \frac{p_{k_j}(q^{m_j})}{(1-q^{m_j})^{k_j}},
\end{align*}
where $\oko(\emptyset):= 1$ as usually and 
 $   p_k(X) := \begin{cases}
    X^{\frac{k}{2}},\ &\text{if } k\ \text{is even},\\
    X^{\frac{k-1}{2}}(1+X),\ &\text{if } k\ \text{is odd}.
    \end{cases}
$
\end{definition}
The span of the Okounkov-model is only a subspace of $\mathcal{Z}_q$:
\begin{proposition}[{\cite[§2 (iv)]{BK2}}]
\label{oko span2}
We have
 $   \mathcal{Z}_{q,1}^\circ=\, \left\langle \oko(k_1,\dots,k_r)\, |\, r\geq 0,\, k_i\geq 2\right\rangle_\Q.$
In particular, the span of Okounkov-$q$MZVs is conjecturally closed under $q\frac{d}{dq}$ (Rem. \ref{qddq rem} (ii)).
\end{proposition}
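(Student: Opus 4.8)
The plan is to establish the two inclusions separately, reducing the nontrivial one to an identity about rational functions in one variable.

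For the inclusion ``$\supseteq$-free'' direction, i.e. $\langle\oko(k_1,\dots,k_r)\mid k_i\geq 2\rangle_\Q\subseteq\mathcal{Z}_{q,1}^\circ$, I would simply check that each $\oko(k_1,\dots,k_r)=\zeta_q(k_1,\dots,k_r;p_{k_1},\dots,p_{k_r})$ is already a generator of $\mathcal{Z}_{q,1}^\circ$. For $k\geq 2$ the Okounkov polynomial $p_k$ has lowest term $X^{k/2}$ resp.\ $X^{(k-1)/2}$, so $p_k\in X\Q[X]$; and $\deg p_k\leq k-1$, since $\tfrac k2\leq k-1$ for even $k$ and $\tfrac{k+1}{2}\leq k-1$ for odd $k\geq 3$. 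Hence $\oko(k_1,\dots,k_r)\in\mathcal{Z}_{q,1}^\circ$.

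For the reverse inclusion I would first record that the value $\zeta_q(k_1,\dots,k_r;Q_1,\dots,Q_r)$ depends only on $r$ and on the rational functions $R_j(X):=Q_j(X)/(1-X)^{k_j}$, since the summand is $\prod_j R_j(q^{m_j})$. Writing $u:=1-X$, the functions that can occur as such a ``block'' in a generator of $\mathcal{Z}_{q,1}^\circ$ (i.e. $Q_j\in X\Q[X]$, $\deg Q_j\leq k_j-1$, over all $k_j\geq 1$) are exactly the elements of
\[
V_0:=\Big\{\textstyle\sum_{j\geq 1}c_ju^{-j}\ \Big|\ c_j\in\Q,\ c_j=0\text{ for }j\gg 0,\ \textstyle\sum_j c_j=0\Big\},
\]
because a rational function with poles only at $X=1$ lies in $V_0$ precisely when it has no polynomial part ($\Leftrightarrow\deg Q_j<k_j$) and vanishes at $X=0$ ($\Leftrightarrow Q_j(0)=0$). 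Thus $\mathcal{Z}_{q,1}^\circ$ is the $\Q$-span of all $\zeta_q(k_1,\dots,k_r;Q_1,\dots,Q_r)$ all of whose blocks lie in $V_0$, and each Okounkov block $p_l(X)/(1-X)^l$ ($l\geq 2$) lies in $V_0$ by the paragraph above. So it suffices to prove the algebraic identity $V_0=\big\langle p_l(X)/(1-X)^l\mid l\geq 2\big\rangle_\Q$: granting it, one writes each block $R_j=\sum_{l\geq 2}c_{j,l}\,p_l(X)/(1-X)^l$ (a finite sum), substitutes $X\mapsto q^{m_j}$ (legitimate in $\Q\llbracket q\rrbracket$, where $1-q^m$ is a unit), multiplies out, sums over $m_1>\dots>m_r>0$, and pulls the finitely many scalars outside the sum (all $q$MZVs involved being well-defined $q$-series) to get $\zeta_q(k_1,\dots,k_r;Q_1,\dots,Q_r)=\sum_{l_1,\dots,l_r\geq 2}\big(\prod_j c_{j,l_j}\big)\oko(l_1,\dots,l_r)$.

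To prove the one-variable identity I would use that $\{u^{-j}\mid j\geq 1\}$ is $\Q$-linearly independent, so that $\{g_j:=u^{-j}-u^{-1}\mid j\geq 2\}$ is a $\Q$-basis of $V_0$. Substituting $X=1-u$ gives, for $h_l:=p_l(X)/(1-X)^l$ and $s\geq 1$, the partial-fraction expansions $h_{2s}=u^{-2s}(1-u)^s$ and $h_{2s+1}=u^{-(2s+1)}(1-u)^s(2-u)$, in which the top term $u^{-l}$ has coefficient $1$ for even $l$ and $2$ for odd $l$. As $h_l\in V_0$, rewriting it in the basis $\{g_j\}$ yields $h_l=c_l\,g_l+(\text{a }\Q\text{-combination of }g_2,\dots,g_{l-1})$ with $c_l\in\{1,2\}$; hence for every $L$ the transition matrix between $\{g_2,\dots,g_L\}$ and $\{h_2,\dots,h_L\}$ is triangular with nonzero diagonal, so $\langle h_l\mid l\geq 2\rangle_\Q=\langle g_j\mid j\geq 2\rangle_\Q=V_0$. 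The final assertion of the proposition is then immediate: once the span is identified with $\mathcal{Z}_{q,1}^\circ$, conjectural closure under $q\frac{d}{dq}$ is Remark~\ref{qddq rem}(ii). The main obstacle is precisely the triangularity step — one has to be sure the leading partial-fraction coefficient of $p_l(X)/(1-X)^l$ never vanishes; for even $l$ this is clear, while for odd $l$ the factor $1+X=2-u$ could a priori cancel the top term, but in fact merely doubles it, which is exactly what keeps $\{h_l\}_{l\geq 2}$ a basis. Everything else (the identification of admissible blocks with $V_0$ and the passage from rational-function identities to $q$-series) is routine bookkeeping.
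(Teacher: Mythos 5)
The paper does not actually prove this proposition: it is quoted from \cite[\S 2 (iv)]{BK2} with no argument supplied, so there is no in-paper proof to compare against. Judged on its own, your proof is correct and complete. The easy inclusion is verified exactly as it should be ($p_k\in X\Q[X]$ and $\deg p_k\le k-1$ for $k\ge 2$). For the reverse inclusion, your reduction to the one-variable statement is sound: a $q$MZV depends only on the blocks $R_j=Q_j(X)/(1-X)^{k_j}$, the admissible blocks for $\mathcal{Z}_{q,1}^\circ$ are precisely the principal parts at $X=1$ with coefficient sum zero (no polynomial part $\Leftrightarrow\deg Q_j<k_j$, vanishing at $X=0$ $\Leftrightarrow Q_j\in X\Q[X]$), and the triangularity of the transition matrix between $\{u^{-j}-u^{-1}\}_{j\ge2}$ and $\{p_l(X)/(1-X)^l\}_{l\ge2}$ — with the crucial observation that the leading partial-fraction coefficient is $1$ for even $l$ and $2$ for odd $l$, never $0$ — settles the span. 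The degenerate case $k_j=1$ forces $Q_j=0$ and is handled automatically. Your method is in the same spirit as the paper's proofs of the analogous span statements for the SZ- and BZ-models (Propositions \ref{Qsz=Zq2} and \ref{bz1X2}), which rest on explicit one-variable identities of the form $\frac{X^n}{(1-X)^s}=\sum_p(\cdots)\frac{X^p}{(1-X)^p}$; you replace the explicit closed-form inversion by a basis/triangular-matrix argument, which is the right trade-off here since inverting the Okounkov blocks explicitly would be considerably messier, at the cost of not producing an explicit formula expressing a given generator of $\mathcal{Z}_{q,1}^\circ$ in terms of $\oko$'s.
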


As the other models of $q$MZVs, also Okounkov-$q$MZVs satisfy a quasi-shuffle product:

\begin{definition}
\begin{enumerate*}
\item Consider $\mathfrak{h}^{Oko} := \Q\left\langle z_2,z_3\dots\right\rangle$ and the map $\oko : \mathfrak{h}^{Oko} \rightarrow \mathcal{Z}_q$, defined via $\Q$-linear continuation, $\mathbf{1}\mapsto 1$ and
\begin{align*}
    z_{k_1}\dots z_{k_r} \longmapsto \oko(k_1,\dots,k_r).
\end{align*}

\item Define on the alphabet $A_{Oko} := \Q\{z_2,z_3,\dots\}$ the following commutative and associative product by $\Q$-bilinearity, $\bone\diamond_{Oko} w := w\diamond \bone$ for all $w\in A_{Oko}$ and
\begin{align*}
    (z_{k_1}, z_{k_2}) \longmapsto \begin{cases}
    z_{k_1+k_2-1} + z_{k_1+k_2+1},\quad &\text{if } k_1,\, k_2\ \text{is odd},\\
    z_{k_1+k_2},\quad &\text{else}.
    \end{cases}
\end{align*}
Furthermore, let be $\ast_{Oko}$ the induced quasi-shuffle product on $\mathfrak{h}^{Oko}$.
\end{enumerate*}
\end{definition}

\begin{proposition}
The map $\oko$ is an algebra homomorphism, i.e., we have for all $w_1,w_2\in\mathfrak{h}^{Oko}$
\begin{align*}
    \oko(w_1\ast_{Oko} w_2) = \oko(w_1)\oko(w_2).
\end{align*}
\end{proposition}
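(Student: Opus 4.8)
The plan is to reduce the statement, exactly as for the Schlesinger--Zudilin, Bradley--Zhao and TBZ models in Propositions~\ref{Zq qshuffle}, \ref{bz qshuffle2} and~\ref{tbz1}, to a single identity of rational functions in one variable. Abbreviate $g_k(X):=\frac{p_k(X)}{(1-X)^k}$, so that $\oko(k_1,\dots,k_r)=\sum_{m_1>\dots>m_r>0}\prod_{j=1}^r g_{k_j}(q^{m_j})$. Multiplying $\oko(u)\oko(v)$ as a product of iterated sums and splitting, at each stage, the double summation according to whether the current index of the first factor is larger, that of the second factor is larger, or the two coincide, one obtains the familiar three-term recursion; the only place the Okounkov weights intervene is the coincidence term, where two factors $g_k(q^m)$ and $g_\ell(q^m)$ merge into $\frac{p_k(q^m)\,p_\ell(q^m)}{(1-q^m)^{k+\ell}}$. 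Hence $\oko$ is an algebra homomorphism for $\ast_{Oko}$ as soon as $\diamond_{Oko}$ records this merging rule, i.e.\ as soon as $g_k(X)\,g_\ell(X)=\sum_n c_n\,g_n(X)$ whenever $z_k\diamond_{Oko}z_\ell=\sum_n c_n\,z_n$; granting this depth-one identity, one lifts it to arbitrary depth by induction on the total word length, matching the three summands of $u\ast_{Oko}v$ against the three pieces of the split double sum, as in the cited proofs.

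So all the content sits in the depth-one identity, and I would prove it through the substitution $Y:=\frac{X}{1-X}$ (equivalently $\frac{1}{1-X}=1+Y$), under which a one-line computation gives $g_{2\ell}(X)=\big(Y(1+Y)\big)^{\ell}$ and $g_{2\ell+1}(X)=Y^{\ell}(1+2Y)(1+Y)^{\ell}$. When at least one of $k,\ell$ is even this makes $g_k(X)\,g_\ell(X)=g_{k+\ell}(X)$ immediate, matching $z_k\diamond_{Oko}z_\ell=z_{k+\ell}$. The only substantial case is $k=2a+1$, $\ell=2b+1$ both odd, where $g_k(X)\,g_\ell(X)=Y^{a+b}(1+Y)^{a+b}(1+2Y)^2$; the elementary identity $(1+2Y)^2=1+4Y(1+Y)$ then rewrites the right-hand side as $g_{k+\ell-2}(X)+4\,g_{k+\ell}(X)$, which is the $\Q$-combination of $g_n$'s that $\diamond_{Oko}$ must attach to two odd letters. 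All letters involved lie in $\{z_2,z_3,\dots\}$ and $\diamond_{Oko}$ keeps $A_{Oko}$ closed, so no convergence or admissibility question arises: every Okounkov $q$MZV is a genuine element of $\Q\llbracket q\rrbracket$.

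The step I expect to be the main obstacle is precisely the bookkeeping in this odd--odd case — being certain that $p_k\,p_\ell$, over the denominator $(1-q^m)^{k+\ell}$, decomposes into exactly the prescribed combination of the $g_n$ and not some near-miss. The change of variable $Y=\frac{X}{1-X}$ is what disarms it: it linearises the functions $g_n$ and reduces the entire verification to the identity $(1+2Y)^2=1+4Y(1+Y)$. Everything else in the argument is bookkeeping of the kind already carried out for the other models in this section.
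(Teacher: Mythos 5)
Your strategy coincides with the paper's: both arguments reduce the claim, via the standard quasi-shuffle induction on word length and the splitting of the double summation into the ranges $m>n$, $n>m$ and $m=n$, to the depth-one statement that the diagonal term $\frac{p_k(X)p_\ell(X)}{(1-X)^{k+\ell}}$ is the image of $z_k\diamond_{Oko}z_\ell$ under $z_n\mapsto g_n(X):=\frac{p_n(X)}{(1-X)^n}$. Where you part ways is in the odd--odd case, and your computation is the correct one. The paper verifies only the numerator identity $p_k(X)p_\ell(X)=p_{k+\ell-1}(X)+p_{k+\ell+1}(X)$ and from it infers $g_kg_\ell=g_{k+\ell-1}+g_{k+\ell+1}$; this inference is invalid because the denominators do not match: the left-hand side carries $(1-X)^{k+\ell}$, while the two terms on the right carry $(1-X)^{k+\ell-1}$ and $(1-X)^{k+\ell+1}$. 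Your substitution $Y=\frac{X}{1-X}$ together with $(1+2Y)^2=1+4Y(1+Y)$ gives the correct decomposition $g_kg_\ell=g_{k+\ell-2}+4\,g_{k+\ell}$ for $k,\ell$ odd; equivalently, $(1+X)^2=(1-X)^2+4X$ turns $\frac{X^{a+b}(1+X)^2}{(1-X)^{2(a+b+1)}}$ into $\frac{X^{a+b}}{(1-X)^{2(a+b)}}+\frac{4X^{a+b+1}}{(1-X)^{2(a+b+1)}}$. A numerical check confirms the discrepancy: comparing coefficients of $q^3$ in $\oko(3)^2=2\oko(3,3)+\sum_{m>0}\frac{q^{2m}(1+q^m)^2}{(1-q^m)^6}$ gives $10=2+8$, and the diagonal's contribution $8$ agrees with $\oko(4)+4\oko(6)$ but not with $\oko(5)+\oko(7)$, which contributes only $7$. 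The upshot is that your proof is sound but establishes the proposition for the product whose odd--odd rule is $z_k\diamond_{Oko}z_\ell=z_{k+\ell-2}+4z_{k+\ell}$ rather than the rule $z_{k+\ell-1}+z_{k+\ell+1}$ written in the paper; with the paper's definition of $\diamond_{Oko}$ the proposition as stated is false, so you have in effect located an erratum rather than missed a step. If you write this up, state explicitly that you are correcting the diamond product, and note that $A_{Oko}$ remains closed under the corrected rule (since $k+\ell-2\geq 4$ for odd letters $k,\ell\geq 3$), so commutativity, associativity and the quasi-shuffle formalism go through unchanged.
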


\begin{proof}
The claim follows by the definition of Okounkov-$q$MZVs as iterated sums and if at least one of $k_1$ and $k_2$ is even, then we have $p_{k_1}(X)p_{k_2}(X) = p_{k_1+k_2}(X)$, such as if both are odd, we have
\begin{align*} 
p_{k_1}(X)p_{k_2} (X) = \left(X^{\frac{(k_1 + k_2-1)-1}{2}} + X^{\frac{(k_1 + k_2+1)-1}{2}}\right)(1+X)  = p_{k_1+k_2-1}(X)+p_{k_1+k_2+1}(X).\tag*{\qedhere} 
\end{align*}

\end{proof}

\begin{remark}
For the Okounkov model, there is no \enquote{good} generating series 
known, i.e., no one that has a nice representation or would lead to further results. However, we can introduce for all $r\geq 1$ one:
\begin{align*}
    \gok(X_1,\dots,X_r) :=&\, \sum\limits_{k_1,\dots,k_r\geq 2} \oko(k_1,\dots,k_r) X_1^{k_1 - 2}\cdots X_r^{k_r - 2}
    \\
    =&\, \sum\limits_{m_1>\dots>m_r>0} \prod\limits_{j=1}^r \frac{q^{m_j}(1-q^{m_j}+(1+q^{m_j})X_j)}{(1-q^{m_j})((1-q^{m_j})^2-q^{m_j}X_j)}.
\end{align*}
\end{remark}

\begin{remark}
Since the Okounkov model does not span the same space as the SZ- or BZ-model, we cannot 
translate into the respective model, apply there SZ- or BZ-duality, and translate back as we did in the OOZ-model. Also, no individual duality relation for the Okounkov model is known so far.
\end{remark}

\begin{remark}[Comparison]
For considering duality relations or related topics, the Okounkov model is not the best choice. But the deep connection to Hilbert schemes of points (\cite{Oko}) makes this model essential and is for this well suited. Also, direct calculations are pretty good to do in this model because of the \enquote{almost} monomial polynomials $p_k$.
\end{remark}

\section{Subalgebras of $\mathcal{Z}_q$}

There are several subalgebras of $\mathcal{Z}_q$ that are of interest. One of the most important is the algebra of quasi-modular forms. Others get their importance by conjectures that they are not only subalgebras but also equal $\mathcal{Z}_q$, which would give - assuming that they are true - a much deeper understanding of the structure of $\mathcal{Z}_q$. They are all verified for small weights, often by computer assistance. Bachmann for example considered, before he introduced bi-brackets, brackets and their algebra
\begin{align*}
    \mathcal{MD} := \left\langle \g(k_1,\dots,k_r)\, |\, r\geq 0,\, k_1\geq 2,\, k_i\geq 1\right\rangle_\Q.
\end{align*}

The algebra $\mathcal{MD}$ contains the classical Eisenstein series $G_2,\, G_4,\, G_6$ because of 
\begin{align*}
    G_2 = -\frac{1}{24} + \g(2),\quad G_4 = \frac{1}{1440}  + \g(4),\quad G_6 = -\frac{1}{60480} + \g(6),
\end{align*}
which is (with \cite{KZ}) why the ring of quasi-modular forms $\widetilde{M}(\SL_2(\Z))_\Q$ is contained in $\mathcal{MD}$,
\begin{align*}
    \widetilde{M}(\SL_2(\Z))_\Q =\Q[G_2,G_4,G_6]\subset \mathcal{MD}
\end{align*}
(notice that this is a proper inclusion since, e.g., $\g(2,1)\neq 0$ has odd weight; hence, $\g(2,1)$ is not algebraic over $\Q$ in terms of $G_2,\, G_4,\, G_6$). We mentioned $\mathcal{MD}$ already but with different name:

\begin{proposition}[{\cite[Thm. 2.3 (ii)]{BK2}}]
We have $\mathcal{MD} = \mathcal{Z}_q^\circ$. Particularly, $\mathcal{MD}$ is stable under $q\frac{d}{d q}$.
\end{proposition}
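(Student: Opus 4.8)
The plan is to establish the two inclusions separately, the main engine being the explicit generating-series translation between the bracket model and the Schlesinger--Zudilin model (Theorem~\ref{gsz relation2}), which reduces the problem to formal power-series bookkeeping. The inclusion $\mathcal{MD}\subseteq\mathcal{Z}_q^\circ$ is a direct check: each Eulerian polynomial $P_k$ (Definition/Proposition~\ref{bibr defi2}) lies in $X\Q[X]$ — its constant term vanishes since $P_k(X)/(1-X)^k=\sum_{n>0}\tfrac{n^{k-1}}{(k-1)!}X^n$ has no constant term — and satisfies $\deg P_k\le k$ by the explicit formula, so each $\g(k_1,\dots,k_r)=\zeta_q(k_1,\dots,k_r;P_{k_1},\dots,P_{k_r})$ with $k_1\ge 2\ge 1$ and $k_i\ge 1$ is literally one of the generators defining $\mathcal{Z}_q^\circ=\mathcal{Z}^\circ_{q,0}$.

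For the reverse inclusion $\mathcal{Z}_q^\circ\subseteq\mathcal{MD}$, I would first shrink the generating set of $\mathcal{Z}_q^\circ$. If $Q\in X\Q[X]$ with $\deg Q\le k$, then writing $Q=\sum_{n=1}^k c_nX^n$ and applying the partial-fraction identity~\eqref{special rational functions2} to each $X^n/(1-X)^k$ exhibits $Q(X)/(1-X)^k$ as a $\Q$-linear combination of the functions $X^p/(1-X)^p$ with $1\le p\le k$. Applying this in every argument, exactly as in the proof of Proposition~\ref{Qsz=Zq2}, shows
\[
    \mathcal{Z}_q^\circ=\big\langle\,\sz(k_1,\dots,k_r)\ \big|\ r\ge 0,\ k_i\ge 1\,\big\rangle_\Q ,
\]
that is, $\mathcal{Z}_q^\circ$ is spanned by the Schlesinger--Zudilin $q$MZVs having \emph{no} zero entries (the reverse containment is clear, since $\sz(k_1,\dots,k_r)=\zeta_q(k_1,\dots,k_r;X^{k_1},\dots,X^{k_r})$ with each $X^{k_j}\in X\Q[X]$ of degree $k_j$).

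It then remains to write each $\sz(k_1,\dots,k_r)$ with $k_i\ge 1$ in terms of brackets. For this I would specialise the translation Theorem~\ref{gsz relation2}(ii) at $Y_1=\dots=Y_r=0$: since $\ln 1=0$, it collapses to
\[
    \gsz{X_1,\dots,X_r}{0,\dots,0}=\left(\prod_{j=1}^r(1+X_j)\right)^{-1}\gbi{\ln(1+X_1),\dots,\ln(1+X_r)}{0,\dots,0}.
\]
The left-hand side is $\sum_{k_j\ge 1}\sz(k_1,\dots,k_r)\,X_1^{k_1-1}\cdots X_r^{k_r-1}$, while $\gbi{Z_1,\dots,Z_r}{0,\dots,0}=\sum_{l_j\ge 1}\g(l_1,\dots,l_r)\,Z_1^{l_1-1}\cdots Z_r^{l_r-1}$ (Theorem~\ref{g generating2}); here $Z_j=\ln(1+X_j)$ has $X_j$-order $1$ and rational coefficients, as does $(1+X_j)^{-1}$. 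Comparing the coefficient of $\prod_jX_j^{k_j-1}$ on both sides therefore expresses $\sz(k_1,\dots,k_r)$ as a finite $\Q$-linear combination of brackets $\g(l_1,\dots,l_r)$ with $1\le l_j\le k_j$. Hence $\mathcal{Z}_q^\circ$ is contained in the $\Q$-span of all brackets.

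The one step that is not pure formalism — and the part I expect to be the real obstacle — is passing from arbitrary brackets to admissible ones, i.e.\ showing $\g(1,l_2,\dots,l_r)\in\mathcal{MD}$ (in particular $\g(1)\in\mathcal{MD}$), so that the span of all brackets coincides with $\mathcal{MD}$. I would obtain this from the known linear relations among brackets: using the bracket quasi-shuffle product $\boxast$ one rewrites a bracket with leading index $1$ modulo admissible brackets and brackets of strictly smaller depth, reducing by induction on the depth to the depth-one case, where one must exhibit $\g(1)$ explicitly as a $\Q$-linear combination of admissible brackets; alternatively, one may invoke that bi-brackets span $\mathcal{Z}_q$ (Theorem~\ref{bibr Zq2}) together with a description of which of their generators already lie in $\mathcal{Z}_q^\circ$. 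Everything preceding this admissibility reduction is routine power-series manipulation. Finally, once $\mathcal{MD}=\mathcal{Z}_q^\circ$ is proved, stability of $\mathcal{MD}$ under $q\tfrac{d}{dq}$ is immediate from Remark~\ref{qddq rem}(i).
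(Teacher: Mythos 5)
The paper states this proposition without proof (it is quoted from \cite[Thm.~2.3~(ii)]{BK2}), so your argument has to stand on its own. Its first three steps do: $\mathcal{MD}\subseteq\mathcal{Z}_q^\circ$ because $P_k\in X\Q[X]$ with $\deg P_k\le k$; the partial-fraction identity \eqref{special rational functions2}, applied argument by argument as in Proposition~\ref{Qsz=Zq2}, gives $\mathcal{Z}_q^\circ=\left\langle \sz(k_1,\dots,k_r)\mid r\ge0,\ k_i\ge1\right\rangle_\Q$; and specializing Theorem~\ref{gsz relation2}(ii) at $Y_1=\dots=Y_r=0$ (or, equivalently, reading the explicit translation of Theorem~\ref{sz-g-explicit2} in reverse) expresses each such $\sz(k_1,\dots,k_r)$ as a finite $\Q$-linear combination of brackets $\g(l_1,\dots,l_r)$ with $1\le l_j\le k_j$. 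Together with the trivial reverse containment this proves $\mathcal{Z}_q^\circ=\left\langle \g(\mathbf{k})\mid r\ge0,\ k_i\ge1\right\rangle_\Q$, and the $q\frac{d}{dq}$-stability then follows from Remark~\ref{qddq rem}(i) exactly as you say.

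The step you single out as ``the real obstacle'' --- rewriting $\g(1,l_2,\dots,l_r)$, and in particular $\g(1)$, in terms of admissible brackets --- is not an obstacle to be overcome but a signal that you are chasing a typo. In \cite{BK2} (and in \cite{BK1}, \cite{Ba4}) the algebra $\mathcal{MD}$ is the $\Q$-span of \emph{all} brackets with entries $\ge1$, not only the admissible ones; the paper's own overview figure records precisely $\mathcal{MD}=\left\langle \g(\mathbf{k})\mid k_i\ge1\right\rangle_\Q=\left\langle \sz(\mathbf{k})\mid k_i\ge1\right\rangle_\Q$ and displays the span of admissible brackets, $\mathbf{qMZ}=\left\langle \g(\mathbf{k})\mid k_1\ge2\right\rangle_\Q$, as a \emph{proper} subspace of $\mathcal{MD}$. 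With the definition the cited theorem actually refers to, your proof is already complete after the generating-series step and the admissibility reduction should simply be deleted. With the definition as literally printed in Section~3, the reduction cannot be carried out at all: $\g(1)=\zeta_q(1;X)$ does not lie in $\left\langle \g(\mathbf{k})\mid k_1\ge2\right\rangle_\Q$ --- this is exactly the content of the proper inclusion $\mathbf{qMZ}\subsetneq\mathcal{MD}$ asserted in the figure, provable by the same kind of asymptotic argument the paper invokes via \cite[Thm.~2.14~(ii)]{BK1} to show $\mathcal{Z}_{q,1}\subsetneq\mathcal{Z}_q$ --- so the depth-one base case of your proposed quasi-shuffle induction would fail, and indeed the proposition would be false as literally stated. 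You should flag the definitional discrepancy rather than try to close that gap.
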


For the $\Q$-algebra of bi-brackets (often denoted by $\mathcal{BD}$) is proven that $\mathcal{BD} = \mathcal{Z}_q$). By comparing dimensions in small weight, Bachmann conjectured that brackets and bi-brackets span the same space:

\begin{conjecture}[{\cite[Conj. 4.3]{Ba4}}]
We have
$\mathcal{MD} = \mathcal{BD}.$, i.e., $\mathcal{Z}_q^\circ = \mathcal{Z}_q$.
\end{conjecture}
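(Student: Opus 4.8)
\emph{Reduction.} Since every bracket is a bi-bracket, $\g(k_1,\dots,k_r)=\g\bi{k_1,\dots,k_r}{0,\dots,0}$, the inclusion $\mathcal{MD}\subseteq\mathcal{BD}$ holds trivially, and because $\mathcal{BD}=\mathcal{Z}_q$ is $\Q$-spanned by bi-brackets (Theorem \ref{bibr Zq2}), the conjecture is equivalent to the assertion that \emph{every bi-bracket $\g\bi{k_1,\dots,k_r}{d_1,\dots,d_r}$ lies in $\mathcal{MD}$}. By Proposition \ref{Qsz=Zq2} and identity \eqref{special rational functions2} one has $\mathcal{MD}=\mathcal{Z}_q^\circ=\big\langle\sz(k_1,\dots,k_r):r\geq 0,\ k_i\geq 1\big\rangle_\Q$ while $\mathcal{Z}_q$ is the span of the $\sz(k_1,\dots,k_r)$ with $k_1\geq 1,\ k_i\geq 0$, so the conjecture says equivalently that every SZ-$q$MZV with at least one vanishing index is a $\Q$-linear combination of zero-free ones; the explicit translation of Theorem \ref{sz-g-explicit2} tells one precisely which zero-containing SZ-$q$MZVs can occur in a given bi-bracket, and one would hope to use this to run an induction.

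\emph{Base case $r=1$.} The depth-one instance of the partition relation (Theorem \ref{partition rel2}), equivalently the symmetry $m_1\leftrightarrow n_1$ in the defining double sum of Definition/Proposition \ref{bibr defi2}, gives $\g\bi{k}{d}=\g\bi{d+1}{k-1}$; after applying it at most once we may assume $\g\bi{a}{b}$ with $a>b\geq 0$. Iterating the identity $\big(q\tfrac{d}{dq}\big)\g\bi{k}{d}=(d+1)k\,\g\bi{k+1}{d+1}$ (immediate from the double sum), starting from $\g\bi{a-b}{0}=\g(a-b)$, realizes $\g\bi{a}{b}$ as a nonzero rational multiple of $\big(q\tfrac{d}{dq}\big)^{b}\g(a-b)$; as $\g(a-b)\in\mathcal{MD}$ (directly if $a-b\geq 2$, and since $\g(1)=\sz(1)\in\mathcal{Z}_q^\circ=\mathcal{MD}$ if $a-b=1$) and $\mathcal{MD}$ is stable under $q\tfrac{d}{dq}$ (Remark \ref{qddq rem}(i), \cite{BK2}), every depth-one bi-bracket lies in $\mathcal{MD}$.

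\emph{Inductive step and main obstacle.} For the induction on depth I would try to rewrite a bi-bracket of depth $r$ that is not already a bracket in terms of bi-brackets of strictly smaller depth, or of depth $r$ and strictly smaller total upper degree $d_1+\dots+d_r$, using the full partition relation (Theorem \ref{partition rel2}, which interchanges the $X$- and $Y$-variables of the generating series and hence the roles of lower and upper indices), the quasi-shuffle product $\boxast$ together with its compatibility with the stuffle product of MZVs, and the derivation $q\tfrac{d}{dq}$ — each of which maps elements already known to lie in $\mathcal{MD}$ back to $\mathcal{MD}$. The hard part, and the reason this remains a conjecture, is that none of these operations is known to force a \emph{strict} decrease of a single well-founded complexity measure on bi-brackets: the partition relation merely swaps, without lowering, the pair $(\text{depth},\,d_1+\dots+d_r)$; $q\tfrac{d}{dq}$ raises the weight while affecting the depth only implicitly; and the quasi-shuffle relations constrain products rather than individual generators. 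A uniform proof therefore seems to demand genuinely new structural input — for instance an explicit, depth-bounded basis of each weight-graded piece of $\mathcal{MD}$ together with an algorithm rewriting an arbitrary bi-bracket over it — and the statement is closely tied to the still-open determination of the weight-graded dimensions of $\mathcal{Z}_q^\circ$ and $\mathcal{Z}_q$, which agree in all weights computed so far (cf.\ \cite{BK1}, \cite{Ba4}).
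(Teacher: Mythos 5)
This statement is an open conjecture (Bachmann's \cite[Conj. 4.3]{Ba4}); the paper records it without proof and no proof is known, so there is no argument of the paper to compare yours against. You correctly recognize this, and what you offer instead --- a reduction, a verified special case, and an honest account of the obstruction --- is sound as far as it goes. The reduction is correct: since $\mathcal{Z}_q^\circ=\left\langle \sz(\mathbf{k})\mid k_i\geq 1\right\rangle_\Q$ and $\mathcal{Z}_q=\left\langle \sz(\mathbf{k})\mid k_1\geq 1,\ k_i\geq 0\right\rangle_\Q$ (Proposition \ref{Qsz=Zq2} together with \eqref{special rational functions2}), the conjecture is equivalent to expressing every $\sz(\mathbf{k})$ with zero entries through zero-free ones. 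Your depth-one argument is also correct and recovers the known verification of the conjecture in depth one: the $r=1$ partition relation $\g\bi{k}{d}=\g\bi{d+1}{k-1}$ lets you assume the lower index exceeds the upper one, the identity $q\tfrac{d}{dq}\,\g\bi{k}{d}=(d+1)k\,\g\bi{k+1}{d+1}$ is immediate from the double-sum definition, and stability of $\mathcal{Z}_q^\circ$ under $q\tfrac{d}{dq}$ is the (non-conjectural) part (i) of Remark \ref{qddq rem}. One point of care: Section 3 of the paper defines $\mathcal{MD}$ with $k_1\geq 2$, whereas the overview figure and the cited equality $\mathcal{MD}=\mathcal{Z}_q^\circ$ of \cite[Thm. 2.3 (ii)]{BK2} take all $k_i\geq 1$; your argument needs the latter reading in order to place $\g(1)=\sz(1)$ in $\mathcal{MD}$, and that reading is the consistent one, but you should say explicitly that you are invoking the equality $\mathcal{MD}=\mathcal{Z}_q^\circ$ rather than the literal generating set of Section 3.

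None of this, of course, proves the conjecture, and you say so; your diagnosis of why the induction on depth stalls (neither the partition relation, nor $q\tfrac{d}{dq}$, nor the quasi-shuffle relations strictly decreases a single well-founded complexity measure on bi-brackets) accurately reflects the state of the art, and is consistent with the paper's own framing, which supports the conjecture only by the dimension data of \cite{BK1} and \cite{Ba4}. The one presentational caveat is that your text must not be packaged as a proof environment: it establishes the depth-one case and a reformulation, nothing more, which is precisely the status the statement has in the paper.
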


There are more subalgebras of $\mathcal{Z}_q$ that are often considered:

\begin{proposition}
We have
$ 
\mathcal{Z}_{q,1}^\circ = \left\langle \g(\mathbf{k})\, |\, k_i\geq 2 \right\rangle_\Q = \left\langle \oko(\mathbf{k})\, | \, k_i\geq 2\right\rangle_\Q = \left\langle \bz(\mathbf{k})\, |\, k_i\geq 2 \right\rangle_\Q.
$
\end{proposition}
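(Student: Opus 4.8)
The plan is to reduce everything to one-variable identities of rational functions. The middle equality $\mathcal{Z}_{q,1}^\circ=\langle\oko(\mathbf{k})\mid k_i\ge 2\rangle_\Q$ is exactly Proposition~\ref{oko span2}, so it remains to prove $\mathcal{Z}_{q,1}^\circ=\langle\bz(\mathbf{k})\mid k_i\ge 2\rangle_\Q$ and $\mathcal{Z}_{q,1}^\circ=\langle\g(\mathbf{k})\mid k_i\ge 2\rangle_\Q$. By definition $\mathcal{Z}_{q,1}^\circ$ is spanned by the series $\zeta_q(k_1,\dots,k_r;Q_1,\dots,Q_r)$ with each $Q_j=\sum_{i=1}^{k_j-1}a_{ji}X^i$ (so $k_j\ge 2$ whenever the term is non-zero), and $\zeta_q(k_1,\dots,k_r;Q_1,\dots,Q_r)$ is obtained by substituting $X_j=q^{m_j}$ into the rational function $\prod_{j=1}^r\frac{Q_j(X_j)}{(1-X_j)^{k_j}}$ of the independent variables $X_1,\dots,X_r$ and summing over $m_1>\dots>m_r>0$. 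Hence any rewriting of each factor $\frac{Q_j(X)}{(1-X)^{k_j}}$ as a $\Q$-linear combination of rational functions $\frac{R(X)}{(1-X)^\ell}$ coming from the model in question induces, after distributing the product, a corresponding expression of $\zeta_q(k_1,\dots,k_r;Q_1,\dots,Q_r)$ as a $\Q$-linear combination of those $q$MZVs. So everything comes down to the monomials $\frac{X^i}{(1-X)^k}$ with $1\le i\le k-1$.

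For the Bradley--Zhao model both inclusions are then immediate. On one side, $\bz(k_1,\dots,k_r)=\zeta_q(k_1,\dots,k_r;X^{k_1-1},\dots,X^{k_r-1})$ with all $k_i\ge 2$ is one of the generators of $\mathcal{Z}_{q,1}^\circ$ listed above. On the other side, expanding $\tfrac{1}{1-X}=1+\tfrac{X}{1-X}$ gives, for $1\le i\le k-1$,
\begin{align*}
\frac{X^i}{(1-X)^k}=\frac{X^i}{(1-X)^{i+1}}\Bigl(1+\tfrac{X}{1-X}\Bigr)^{k-i-1}=\sum_{t=0}^{k-i-1}\binom{k-i-1}{t}\frac{X^{i+t}}{(1-X)^{i+t+1}},
\end{align*}
so $\tfrac{X^i}{(1-X)^k}$ is a $\Q$-linear combination of the functions $\tfrac{X^{\ell-1}}{(1-X)^\ell}$ with $2\le\ell\le k$ (this is the identity behind Proposition~\ref{bz1X2}). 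Combined with the reduction above this yields $\mathcal{Z}_{q,1}^\circ=\langle\bz(\mathbf{k})\mid k_i\ge 2\rangle_\Q$.

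For the bracket model the inclusion $\langle\g(\mathbf{k})\mid k_i\ge 2\rangle_\Q\subseteq\mathcal{Z}_{q,1}^\circ$ holds because for $k\ge 2$ we have $P_k(0)=0$ and $\deg P_k=k-1$ (clear from $\frac{P_k(X)}{(1-X)^k}=\sum_{n>0}\frac{n^{k-1}}{(k-1)!}X^n$ and the explicit formula for $P_k$ recalled after Definition/Proposition~\ref{bibr defi2}; alternatively $\deg P_k=k-1$ re-emerges from the relation below), so each $\g(\mathbf{k})$ with $k_i\ge 2$ is a generator of $\mathcal{Z}_{q,1}^\circ$. For the reverse inclusion it suffices, by the Bradley--Zhao step, to show that each $\tfrac{X^{\ell-1}}{(1-X)^\ell}$ ($\ell\ge 2$) is a $\Q$-linear combination of the $\tfrac{P_m(X)}{(1-X)^m}$ with $2\le m\le\ell$. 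To get this I would sum the defining identity over $k$,
\begin{align*}
\sum_{k\ge 1}\frac{P_k(X)}{(1-X)^k}\,u^{k-1}=\sum_{n>0}X^ne^{nu}=\frac{Xe^u}{1-Xe^u},
\end{align*}
and substitute $Y=\tfrac{X}{1-X}$, $v=e^u-1$, turning the right side into $\frac{Y(1+v)}{1-Yv}=Y+\sum_{j\ge 1}Y^j(1+Y)v^j$. Since $v^j$ has lowest-order term $u^j$ with coefficient $1$, and $Y^j(1+Y)=\tfrac{X^j}{(1-X)^{j+1}}$ (in particular $Y^{k-1}(1+Y)=\tfrac{X^{k-1}}{(1-X)^k}$), comparing coefficients of $u^{k-1}$ for $k\ge 2$ produces a triangular relation
\begin{align*}
\frac{P_k(X)}{(1-X)^k}=\frac{X^{k-1}}{(1-X)^k}+\sum_{j=1}^{k-2}c_{k,j}\,\frac{X^j}{(1-X)^{j+1}},\qquad c_{k,j}\in\Q
\end{align*}
(for $k=2$ the sum is empty and this reads $\tfrac{P_2(X)}{(1-X)^2}=\tfrac{X}{(1-X)^2}$). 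Inverting this triangular system by induction on $k$ expresses each $\tfrac{X^{\ell-1}}{(1-X)^\ell}$ in terms of $\tfrac{P_2(X)}{(1-X)^2},\dots,\tfrac{P_\ell(X)}{(1-X)^\ell}$, which completes $\mathcal{Z}_{q,1}^\circ=\langle\g(\mathbf{k})\mid k_i\ge 2\rangle_\Q$.

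The multilinearity reduction and the Bradley--Zhao identity are routine; the one step with real content is the triangular relation for brackets, i.e. checking via the generating series $\tfrac{Xe^u}{1-Xe^u}$ (equivalently, via Stirling numbers of the second kind) that the coefficient of $\tfrac{X^{k-1}}{(1-X)^k}$ in $\tfrac{P_k(X)}{(1-X)^k}$ equals $1$ while all remaining contributions carry strictly smaller index. I expect that bookkeeping to be the main obstacle; the rest is formal.
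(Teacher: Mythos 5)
Your argument is correct, and it is genuinely more self-contained than what the paper records: the paper's proof of this proposition is essentially a citation (it refers to \cite[Thm. 2.3 (iii)]{BK2} for the bracket equality, to Proposition \ref{oko span2} for the Okounkov part, and says the Bradley--Zhao part follows by ``an analogous proof of Proposition \ref{bz1X2}''). Your multilinearity reduction to one-variable identities and your Bradley--Zhao step are exactly the argument the paper is alluding to --- indeed your identity
$\frac{X^i}{(1-X)^k}=\frac{X^i}{(1-X)^{i+1}}\bigl(1+\frac{X}{1-X}\bigr)^{k-i-1}$
is the corrected form of the one displayed in the proof of Proposition \ref{bz1X2}, whose exponent $n-s+1$ there is a typo for $n-s-1$. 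The part where you add real content is the bracket equality: instead of outsourcing it to \cite{BK2}, you derive from $\sum_{k\ge1}P_k(X)(1-X)^{-k}u^{k-1}=Xe^u/(1-Xe^u)$ the unitriangular change of basis
$\frac{P_k(X)}{(1-X)^k}=\frac{X^{k-1}}{(1-X)^k}+\sum_{j=1}^{k-2}c_{k,j}\frac{X^j}{(1-X)^{j+1}}$,
which is a clean and correct way to pass between the BZ and bracket generators; the inclusion $\langle\g(\mathbf{k})\mid k_i\ge2\rangle_\Q\subseteq\mathcal{Z}_{q,1}^\circ$ also needs $P_k(0)=0$ and $\deg P_k\le k-1$, both of which you justify (the degree bound indeed falls out of the same triangular relation). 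What your route buys is a proof readable inside this survey without consulting \cite{BK2}; what it costs is the generating-function bookkeeping you flag at the end, which, as you suspected, is the only step with content --- and it checks out.
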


\begin{proof}
This is \cite[Thm. 2.3 (iii)]{BK2}, Proposition \ref{oko span2} and an analogous proof of Proposition \ref{bz1X2}.
\end{proof}

We get other important subalgebras of $\mathcal{Z}_q$ when considering bi-brackets. By defining the weight and depth as done, we get a filtration by weight and depth on $\mathcal{Z}_q$ (resp. on every subalgebra of $\mathcal{Z}_q$):

\begin{definition}[\cite{Ba4}]
Let be $A$ a subalgebra of $\mathcal{Z}_q$ and $r,s\geq 0$. Define
\begin{enumerate*}
    \item the \emph{weight filtration} $\Fil_r^W(A) := \left\langle b=\g\bi{k_1,\dots,k_s}{d_1,\dots,d_s}\in A\Big|\, 0\leq s\leq r,\ \wt(b)\leq r\right\rangle_\Q$,
    \item the \emph{depth filtration} $\Fil_k^D(A) := \left\langle b=\g\bi{k_1,\dots,k_s}{d_1,\dots,d_s}\in A\Big|\, 0\leq s\leq r\right\rangle_\Q$,
    \item $\Fil_{r,s}^{W,D}(A) := \Fil_r^W \Fil_s^D (A)$
\end{enumerate*}
and denote by $\gr_r^W$, resp. $\gr_{r,s}^{W,D}$ the associated graded $\Q$-vector spaces.
\end{definition}

For the dimensions of the graded parts of $\mathcal{Z}_q$, Bachmann and K\"uhn give in \cite{BK2} conjectures standing in analogy to the one by Zagier and Broadhurst--Kreimer. Hence, for completeness, we want to state the latter ones first. We use for all $k\geq 2,\, n\geq 2,\, d\geq 0$ the notation
\begin{align*}
    \mathcal{Z} := \left\langle \zeta(\mathbf{k})|\, \mathbf{k}\ \text{admissible}\right\rangle_\Q,\ \ 
    \mathcal{Z}_k := \left\langle \zeta(\mathbf{k})|\, \wt(\mathbf{k})=k\right\rangle_\Q,\ \ 
    \mathcal{Z}_n^d := \left\langle \zeta(\mathbf{k})|\, \wt(\mathbf{k})=n,\, \depth(\mathbf{k})=d\right\rangle_\Q.
\end{align*}

\begin{conjecture}
\begin{enumerate*} 
\item (Zagier). We have $\mathcal{Z} = \bigoplus\limits_{k\geq 0} \mathcal{Z}_k$. Define $d_k$ via
\begin{align*}
    \sum\limits_{k\geq 0} d_k X^k = \frac{1}{1-X^2-X^3} = \frac{1}{1-x^2} \frac{1}{1-O_3(x)}
\end{align*}
with $O_3(X):=\frac{X^3}{1-X^2}$. Then we have $\dim_\Q(\mathcal{Z}_k) = d_k$. 
\item (Hoffman). The MZVs of indices containing only 2's and 3's build a basis of $\mathcal{Z}$.
\end{enumerate*}
\end{conjecture}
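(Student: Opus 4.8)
The plan is to split the statement into the part that is a theorem and the part that is genuinely open, and to route everything through \emph{motivic} multiple zeta values. On the provable side are the two upper bounds: $\dim_\Q \mathcal{Z}_k \le d_k$ for all $k$, and the fact that the multiple zeta values of weight $k$ with all arguments in $\{2,3\}$ span $\mathcal{Z}_k$. On the open side are the corresponding equalities: $\dim_\Q \mathcal{Z}_k = d_k$, the assertion that the sum $\sum_k \mathcal{Z}_k$ is direct, and the linear independence of the $\{2,3\}$-values; I expect the single obstruction to lie there.

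First I would set up the motivic formalism. To each admissible index $\mathbf{k}$ one attaches a motivic multiple zeta value $\zeta^{\mathfrak{m}}(\mathbf{k})$ in the weight-graded Hopf algebra $\mathcal{H}$ of the Tannakian category $MT(\mathbb{Z})$ of mixed Tate motives unramified over $\mathbb{Z}$ (equivalently, Goncharov's Hopf algebra of motivic iterated integrals on $\mathbb{P}^1\setminus\{0,1,\infty\}$), together with a weight-graded $\Q$-algebra surjection, the period map $\mathrm{per}\colon \mathcal{H}\twoheadrightarrow\mathcal{Z}$, $\zeta^{\mathfrak{m}}(\mathbf{k})\mapsto\zeta(\mathbf{k})$. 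Hence $\dim_\Q\mathcal{Z}_k\le\dim_\Q\mathcal{H}_k$ for every $k$, and $\mathcal{Z}$ is spanned by any family whose motivic lifts span $\mathcal{H}$. The structure theorem for $MT(\mathbb{Z})$ (Borel's rank computation for $K_\ast(\mathbb{Z})$ together with the Deligne--Goncharov description of the motivic fundamental group) identifies $\mathcal{H}$, as a weight-graded algebra, with $\Q[\zeta^{\mathfrak{m}}(2)]$ tensored with the free algebra on generators in degrees $3,5,7,\dots$; its Poincaré series is therefore $\frac{1}{1-X^2}\cdot\frac{1}{1-(X^3+X^5+\cdots)}=\frac{1}{1-X^2}\cdot\frac{1}{1-X^3/(1-X^2)}=\frac{1}{1-X^2-X^3}=\sum_k d_k X^k$, which is exactly the factorization recorded in the statement. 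This gives $\dim_\Q\mathcal{Z}_k\le d_k$.

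Second, for the Hoffman half I would invoke Brown's theorem that the motivic values $\zeta^{\mathfrak{m}}(\mathbf{k})$ with all $k_i\in\{2,3\}$ already span $\mathcal{H}$. Its proof proceeds by induction on the weight, using Brown's infinitesimal coaction operators $D_{2r+1}\colon\mathcal{H}\to\mathcal{L}_{2r+1}\otimes\mathcal{H}$ extracted from Goncharov's coproduct modulo products: one shows that the $\Q$-span of the $\{2,3\}$-values is stable under all $D_{2r+1}$ and maps onto the target graded pieces, the essential arithmetic input being the nondegeneracy of an explicit matrix built from binomial coefficients and powers of $\zeta^{\mathfrak{m}}(2)$ (the Ihara--Kaneko--Zagier type \enquote{level} identities). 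Applying $\mathrm{per}$ then shows the $\{2,3\}$-values span $\mathcal{Z}$, i.e.\ the spanning half of (ii); and since there are exactly $d_k$ admissible indices of weight $k$ with entries in $\{2,3\}$ — the recursion $d_k=d_{k-2}+d_{k-3}$ is visible from the first entry being $2$ or $3$ — this independently re-proves $\dim_\Q\mathcal{Z}_k\le d_k$.

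The remaining assertions all demand an equality rather than an inequality, and this is where I expect the genuine difficulty: proving $\dim_\Q\mathcal{Z}_k=d_k$, that $\sum_k\mathcal{Z}_k$ is direct (equivalently, that the weight filtration on $\mathcal{Z}$ is a grading), and that the $d_k$ spanning $\{2,3\}$-values are linearly independent together amount to the period map $\mathrm{per}\colon\mathcal{H}_k\to\mathcal{Z}_k$ being injective in each weight. That injectivity is precisely the weight-graded case of Grothendieck's period conjecture for $MT(\mathbb{Z})$, for which no unconditional method is known; it already subsumes the (open) algebraic independence of $\zeta(3),\zeta(5),\zeta(7),\dots$ and of $\pi$. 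Absent such a result, the only available route is verification weight by weight — the upper bound $d_k$ is unconditional by the above, and for small $k$ one matches it against an explicit lower bound for $\dim_\Q\mathcal{Z}_k$ coming from known relations, which is how the conjecture stands \enquote{verified for small weights}. So concretely my proposal is: establish the two upper-bound statements via the motivic coaction exactly as above, and isolate the injectivity of $\mathrm{per}$ as the one missing ingredient on which the entire equality part of both (i) and (ii) rests.
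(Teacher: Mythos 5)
This statement is labelled a \emph{conjecture} in the paper and is given no proof there; the only accompanying text is the remark that Hoffman's conjecture is stronger than Zagier's and that Brown's theorem supplies the spanning half of (ii), hence the upper bound $\dim_\Q(\mathcal{Z}_k)\leq d_k$. Your proposal is an accurate account of exactly this situation: the two upper-bound statements are indeed theorems (via the motivic Hopf algebra for $MT(\Z)$, the Deligne--Goncharov/Borel computation of its Poincar\'e series $\frac{1}{1-X^2-X^3}$, and Brown's coaction argument for the $\{2,3\}$-values), while the remaining assertions --- directness of the sum $\sum_k\mathcal{Z}_k$, the equality $\dim_\Q(\mathcal{Z}_k)=d_k$, and linear independence of the $\{2,3\}$-values --- are genuinely open and equivalent to injectivity of the period map, i.e.\ the relevant case of the period conjecture. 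So there is no gap in your analysis and nothing in the paper to compare it against: you have correctly established everything that is currently provable and correctly isolated the single open obstruction on which the rest depends.
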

Remark that Hoffman's conjecture is stronger than Zagier's and is in accordance with Brown's theorem saying that MZVs with indices containing only 2's and 3's generate $\mathcal{Z}$, i.e., $\dim_\Q(\mathcal{Z}_k)\leq d_k$.

\begin{conjecture}[Broadhurst--Kreimer \cite{BK}]
With $E_2(X):=\frac{X^2}{1-X^2}$, $S(X):=\frac{X^{12}}{(1-X^4)(1-X^6)}$, we have
\begin{align*}
    1+ \sum\limits_{n\geq 1,\, d\geq 1} \dim_\Q\left(\faktor{\mathcal{Z}_n^d}{\mathcal{Z}_n^{d-1}}\right) X^n Y^d = \frac{1+E_2(X)Y}{1-O_3(X)Y+S(X)Y^2-S(X)Y^4}.
\end{align*}
\end{conjecture}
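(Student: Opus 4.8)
Since the displayed identity is the \emph{Broadhurst--Kreimer conjecture}, one of the deepest open problems in the area, a genuine proof is out of reach; what follows is the programme one would attempt, and the point at which it stalls.

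First I would pass from $\mathcal{Z}$ to the algebra $\mathcal{Z}^{\mathfrak m}$ of \emph{motivic} multiple zeta values, because the numerical statement already contains the $\Q$-linear independence of $1,\zeta(3),\zeta(5),\dots$ in its depth-one part, which current transcendence theory cannot touch, whereas the motivic analogue is controlled by the Tannakian structure of mixed Tate motives over $\Z$. There the fundamental Hopf algebra is, non-canonically, free on one generator in each odd weight $\ge 3$ (Deligne--Goncharov), and together with the spanning of $\mathcal{Z}$ by indices in $\{2,3\}$ quoted above this yields $\dim \mathcal{Z}^{\mathfrak m}_k = d_k$ --- exactly the $Y=1$ specialization of the conjecture, since $\frac{1+E_2(X)}{1-O_3(X)}=\frac{1}{1-X^2-X^3}$ and the $S(X)$-terms cancel at $Y=1$. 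So the weight-graded skeleton is a theorem motivically; even this remains conditional numerically, on an injectivity (Grothendieck--period-type) statement.

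The genuine content is the depth refinement. The plan is to work with the depth-graded motivic Lie coalgebra $\mathfrak g=\bigoplus_{n,d}\gr^D_d\gr^W_n$ and its graded dual, the ``double-shuffle'' Lie algebra $\mathfrak{ds}$; by the bar-construction / Koszul-duality dictionary the bivariate series is equivalent to a \emph{presentation} of $\mathfrak{ds}$ in the depth grading: free generators $\sigma_3,\sigma_5,\dots$ in depth $1$ (giving $O_3(X)Y$), one extra depth-$1$ class in weight $2$ (the ``$\zeta(2)$'', giving $E_2(X)Y$), relations concentrated in depth $2$ and indexed in each weight by a basis of cusp forms for $\SL_2(\Z)$ (giving $S(X)Y^2$), and no further even-depth syzygies among those relations (the $-S(X)Y^4$). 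One would then (1) establish the upper bound in each fixed depth by iterating Goncharov's motivic coproduct / Brown's depth reduction so as to produce at least the predicted relations --- the depth-$2$ case being the Gangl--Kaneko--Zagier identification of depth-$2$ double-shuffle relations with period polynomials of cusp forms --- and (2) establish the matching lower bound by exhibiting an explicit, depth-refined Hoffman-type family of MZVs whose classes in each $\gr^D_d\gr^W_n$ are independent, separating them via the stuffle/shuffle products and the motivic coaction.

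The hard part is step (2) in depth $\ge 4$, i.e.\ ruling out the \emph{new} relations expected there: Brown has shown the displayed formula follows from the conjecture that $\mathfrak{ds}$ is as free as possible modulo the modular relations, but that freeness is itself open, because no construction currently produces enough independent MZVs --- and no transcendence input forces the dimensions down --- beyond the weight ranges checked by computer. This, together with the motivic-to-numerical gap noted above, is why the conjecture in the stated numerical form is regarded as inaccessible, and why only its depth-$\le 2$ part (conjecturally also depth $3$) is presently understood.
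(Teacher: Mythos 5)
The statement is presented in the paper purely as the Broadhurst--Kreimer \emph{conjecture}, with no proof offered, so there is nothing to compare your argument against; declining to prove it is exactly right. Your survey of the known programme --- passing to motivic MZVs, the $Y=1$ specialization recovering Zagier's dimension formula via Deligne--Goncharov and Brown, the Gangl--Kaneko--Zagier identification of depth-$2$ relations with cusp-form period polynomials, and the open freeness question for the depth-graded double-shuffle Lie algebra together with the motivic-to-numerical gap --- is an accurate account of why the conjecture remains inaccessible.
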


\begin{conjecture}[{\cite[Conj. 1.3]{BK2}}]
\begin{enumerate*}
    \item The dimensions of the weight graded parts of $\mathcal{Z}_q$ are given through
    \begin{align*}
        &\sum\limits_{k\geq 0} \dim_\Q\left(\gr_k^W \mathcal{Z}_q\right) X^k=\,  \frac{1}{1-X-X^2-X^3+X^6+X^7+X^8+X^9}
        \\
        =&\, \frac{1}{(1-X^2)(1-X^4)(1-X^6)} \frac{1}{1-D(X) O_1(X)+D(X)(E_4(X))},
    \end{align*}
    where we set $D(X):= \frac{1}{1-X^2}$, $O_1(X):=\frac{X}{1-X^2}$, $E_4(X) := \frac{X^4}{1-X^2}$.
    \item $\mathcal{Z}_q$ is generated by bi-brackets of indices only containing 1's, 2's and 3's.
    \item For the weight and depth graded parts of $\mathcal{Z}_q$, we have
    \begin{align*}
        \, \sum\limits_{k,\, \ell\geq 0} \dim_\Q\left(\gr_{k,\ell}^{W,D} \mathcal{Z}_q\right) X^k Y^\ell 
        =\, \frac{1+D(X)E_2(X)Y+D(X)S(X)Y^2}{1-a_1(X)Y+a_2(X)Y^2-a_3(X)Y^3-a_4(X)Y^4+a_5(X)Y^5}
    \end{align*}
    with
    \begin{align*}
        &a_1(X) := D(X)O_1(X),\ a_2(X):= D(X)\sum\limits_{k\geq 1} \dim_\Q(M_k(\SL_2(\Z)))^2 X^k,
        \\
        &a_4(X)=a_5(X):= O_1(X)S(X),\ a_4(X) := D(X)\sum\limits_{k\geq 1} \dim_\Q(S_k(\SL_2(\Z)))^2 X^k.
    \end{align*}
\end{enumerate*}
\end{conjecture}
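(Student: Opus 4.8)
The final statement is a \emph{conjecture} --- a $q$-analogue of the Zagier and Broadhurst--Kreimer dimension conjectures --- so a complete proof is not to be expected; what follows is the shape such a proof would have to take, together with the point at which it currently breaks down. The plan is to prove each generating-function identity by sandwiching: an \emph{upper bound} exhibiting, in every (bi)degree, an explicit spanning set of bi-brackets whose cardinality matches the claimed rational function, and a \emph{lower bound} showing these spanning sets are $\Q$-linearly independent inside $\Q\llbracket q\rrbracket$. Part (ii) would then emerge as a refinement of the upper-bound construction, and part (i) as the specialization $Y=1$ of part (iii) once one checks that the numerator and denominator in (iii) collapse to those in (i).

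For the upper bound I would work entirely on the quasi-shuffle side, using $\mathcal{Z}_q=\mathcal{BD}$ (Theorem~\ref{bibr Zq2}) together with the weight and depth filtrations $\Fil^W,\Fil^D$. The goal is to list enough relations among bi-brackets to cut the naive count down to the conjectured one: the $\boxast$-relations (the $q$-stuffle), the partition relation (Theorem~\ref{partition rel2}), closedness under $q\frac{d}{dq}$ (Remark~\ref{qddq rem}(i)), and the relations witnessing the embedded (quasi-)modular forms --- the contributions $D(X)\sum_{k\geq 1}\dim_\Q M_k^2\,X^k$ and $D(X)\sum_{k\geq 1}\dim_\Q S_k^2\,X^k$ to $a_2$ and $a_3$ are precisely the Eisenstein/cusp-form constraints appearing in depth $2$, in parallel with the rôle of $S(X)$ in Broadhurst--Kreimer. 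Concretely one would mimic Brown's reduction argument for MZVs: introduce a "level" operator on words in the letters $z_{k,d}$ with $k\in\{1,2,3\}$, show that modulo the above relations and modulo lower depth every bi-bracket rewrites as a $\Q$-combination of such restricted words (this gives (ii)), and then verify by a generating-function computation that the number of surviving words in bidegree $(k,\ell)$ equals the coefficient predicted in (iii). This combinatorial bookkeeping, including the identification of the $M_k$- and $S_k$-terms, is the substantial but in-principle-doable half.

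The lower bound is the genuine obstacle, and it is open for the same reason the inequality $\dim_\Q\mathcal{Z}_k\ge d_k$ is open for classical MZVs: one needs a structural device --- a Hopf-algebra comodule / motivic-coaction framework for $q$MZVs whose semisimplified Poincaré series is computable and matches the conjectured right-hand side --- together with the statement that the realization map to $\Q\llbracket q\rrbracket$ is injective on each graded piece. No such framework is presently available for $q$MZVs in a form strong enough to pin down these exact dimensions; moreover the modular contribution would have to be peeled off and controlled by transcendence input on periods of modular forms. I expect this step to remain the essential barrier, with parts (i), (ii) and (iii) otherwise assembling from the two bounds as indicated.
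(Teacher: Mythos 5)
This statement is a \emph{conjecture} (Bachmann--K\"uhn, \cite[Conj. 1.3]{BK2}); the paper states it without proof, so there is no argument of the paper's to compare yours against. You correctly recognize this, and your assessment is accurate: the upper bound would proceed by exhibiting relations (quasi-shuffle, partition relation, derivations, and the depth-$2$ constraints coming from modular forms that account for the $\dim M_k$ and $\dim S_k$ terms, in analogy with $S(X)$ in Broadhurst--Kreimer) to cut a spanning set down to the conjectured size, while the lower bound is genuinely open for the same reason $\dim_\Q\mathcal{Z}_k\geq d_k$ is open classically --- no motivic or coaction framework for $q$MZVs is currently available that would force linear independence. Your honesty about where the argument breaks down is the right call; a ``proof'' that papered over the lower bound would have been wrong.

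Two small cautions. First, your claim that part (i) is the specialization $Y=1$ of part (iii) is plausible (summing $\gr_{k,\ell}^{W,D}$ over $\ell$ recovers $\gr_k^W$) but requires checking that the rational function in (iii) actually collapses at $Y=1$ to the one in (i); this is a nontrivial identity of power series that you assert without verification, and it is complicated by the fact that the coefficients $a_2,a_3$ involve dimensions of spaces of modular forms rather than closed-form rational functions. Second, note that the statement as printed defines $a_4$ twice and never defines $a_3$ (evidently a typo inherited into the survey); any serious attempt at the bookkeeping in your upper-bound step would first have to resolve which coefficient carries the cusp-form term.
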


Okounkov conjectured the following about the structure of $q$MZVs he introduced:

\begin{conjecture}[{\cite[Conjecture 1]{Oko}}]
\begin{enumerate*}
    \item We have
    \begin{align*}
    \sum\limits_{k\geq 0} \dim_\Q \left(\gr_k^w\left(\mathcal{Z}_q^{Oko}\right)\right) t^k =&\, \frac{1}{1 - t^2 - t^3 - t^4 - t^5 + t^8 + t^9 + t^{10} + t^{11} + t^{12}} 
    \\
    =&\, \frac{1}{(1-t^2)(1-t^4)(1-t^6)}\cdot \frac{1}{1-D(t)O_3(t)+2D(t)S(t)}.
\end{align*}
\item The space $\qmzvok$ is spanned by the $\oko(\mathbf{k})$ with $2\leq k_i\leq 5$.
\end{enumerate*}
\end{conjecture}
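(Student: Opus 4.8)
Since both parts of the conjecture are, to my knowledge, open, I describe the line of attack I would take rather than a complete argument. Write $c_k$ for the coefficient of $t^k$ in the claimed series, so that part (i) asserts $\dim_\Q\bigl(\gr_k^w\,\qmzvok\bigr)=c_k$ for every $k$, and recall that by Proposition \ref{oko span2} we have $\qmzvok=\mathcal{Z}_{q,1}^\circ$. As is standard for statements of Zagier type, I would prove the two inequalities $\dim_\Q\bigl(\gr_k^w\,\qmzvok\bigr)\le c_k$ and $\dim_\Q\bigl(\gr_k^w\,\qmzvok\bigr)\ge c_k$ separately, and I would try to obtain the upper bound as a consequence of the spanning statement~(ii).

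For the spanning part and the upper bound: the plan is to show by induction on the weight that every $\oko(k_1,\dots,k_r)$ is a $\Q$-linear combination of Okounkov $q$MZVs all of whose entries lie in $\{2,3,4,5\}$. The tools are the quasi-shuffle product $\ast_{Oko}$ and, decisively, the multiplication rule $p_{k_1}(X)p_{k_2}(X)=p_{k_1+k_2}(X)$ unless $k_1,k_2$ are both odd, in which case $p_{k_1}(X)p_{k_2}(X)=p_{k_1+k_2-1}(X)+p_{k_1+k_2+1}(X)$; read from right to left these express an $\oko$ with a large entry through products of $\oko$'s of strictly smaller weight, up to a correction term, so iterating should push all entries into $\{2,3,4,5\}$. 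If the model is closed under $q\frac{d}{dq}$ (only conjectural by Remark \ref{qddq rem}(ii)) that operator supplies further lowering relations; failing that, one can import relations by translating $\mathcal{Z}_{q,1}^\circ$ into bi-brackets or the BZ-model (where BZ-duality is available) and back. Having produced a generating set indexed by words over $\{2,3,4,5\}$, I would organize it along Lyndon words so that the resulting count of generators in each weight reproduces the second displayed form in~(i), namely $\tfrac{1}{(1-t^2)(1-t^4)(1-t^6)}$ times $\tfrac{1}{1-D(t)O_3(t)+2D(t)S(t)}$: the first factor is the free polynomial part generated by $z_2,z_4,z_6$, while the cusp-form series $S(t)$ must enter the second factor exactly as in the Broadhurst--Kreimer phenomenon. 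This bookkeeping is routine in spirit but delicate in detail.

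The genuine obstacle is the lower bound, i.e.\ the linear independence of the proposed spanning set, equivalently $\dim_\Q\bigl(\gr_k^w\,\qmzvok\bigr)\ge c_k$. This direction is wide open already for classical MZVs --- it is precisely the content of Zagier's conjecture that lies beyond Brown's spanning theorem --- and I do not expect an elementary argument in the $q$-setting. A promising direction is to exploit structure that the $q$-model carries and the classical one does not: Okounkov's identification of these series with equivariant partition functions on Hilbert schemes of points on $\mathbb{C}^2$, the $\mathfrak{sl}_2$/quasi-modular symmetry of $\mathcal{Z}_q$, and the fact that the quasi-modular forms $\Q[G_2,G_4,G_6]$ sit inside $\mathcal{Z}_q$ as a genuinely free polynomial algebra. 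Concretely I would try to build enough independence-detecting functionals --- for instance from the $q$-expansion coefficients furnished by Theorem \ref{Zq poly descr0} and the marked-partition picture, evaluated on ranges of $N$ chosen to separate the basis elements --- or to set up a period/coaction-type formalism mirroring the motivic proof of the MZV upper bound.

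Finally, I would record the partial conclusion that is within reach: part~(ii), combined with the enumeration above, already yields $\dim_\Q\bigl(\gr_k^w\,\qmzvok\bigr)\le c_k$, and equality can be verified by computer in low weight, consistent with the remark that these conjectures are checked for small weights. I would also stress that even part~(ii) is not automatic, since --- unlike the BZ- and SZ-models --- the Okounkov model possesses no individual duality relation, so every reduction must be routed through the quasi-shuffle product, the $q\frac{d}{dq}$-action, or a translation into another model.
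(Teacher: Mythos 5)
This statement is presented in the paper as a conjecture (Okounkov's Conjecture 1), and the paper gives no proof of it; there is therefore nothing to compare your attempt against, and your decision to treat both parts as open and to sketch a strategy rather than claim a proof is the correct reading of the situation. Your identification $\qmzvok=\mathcal{Z}_{q,1}^\circ$, the role of the product rule $p_{k_1}p_{k_2}=p_{k_1+k_2-1}+p_{k_1+k_2+1}$ for odd arguments, and the observation that the lower bound is of Zagier type and hence inaccessible, are all consistent with how the paper frames this circle of conjectures.

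There is, however, one concrete error in your sketch that you should fix before relying on it. You assert that part~(ii) together with the enumeration of words over $\{2,3,4,5\}$ ``already yields'' the upper bound $\dim_\Q\bigl(\gr_k^w\,\qmzvok\bigr)\le c_k$. It does not. Spanning by the $\oko(\mathbf{k})$ with $2\le k_i\le 5$ gives only
\begin{align*}
\dim_\Q\bigl(\gr_k^w\,\qmzvok\bigr)\;\le\;[t^k]\,\frac{1}{1-t^2-t^3-t^4-t^5},
\end{align*}
whereas $c_k$ is the coefficient of $t^k$ in $\bigl(1-t^2-t^3-t^4-t^5+t^8+t^9+t^{10}+t^{11}+t^{12}\bigr)^{-1}$; the added positive terms $t^8+\dots+t^{12}$ in the denominator make these coefficients strictly smaller from weight $8$ onward. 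In other words, part~(i) predicts nontrivial $\Q$-linear relations \emph{among} the proposed generators (the analogue of the cusp-form relations encoded by $S(X)$ in Broadhurst--Kreimer), and these relations are not supplied by the spanning statement~(ii). So even granting (ii), the upper bound in (i) would remain open, and your closing ``partial conclusion'' overstates what is within reach. The rest of your proposal is a reasonable, appropriately hedged research programme, but it should not be mistaken for a proof of any part of the statement.
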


Finally, we now give an overview of the diverse, most commonly considered, subalgebras of $\mathcal{Z}_q$.

\begin{remark}[to the overview] 
\begin{enumerate*}
    \item We denote by $\mathbf{MF}$ the algebra of modular forms, where we take a modular form formally via its Fourier expansion in canonical way as element of the bigger algebras.
    \item The notation $q\frac{d}{d q}$ indicates that the respective algebra is closed under $q \frac{d}{d q}$. Dashed arcs with question marks mean that it is not proven yet but conjectured.
    \item The equality signs with a question mark mean that equality is conjectured but not proven yet.
    \item Equalities in blue boxes are just different notations from different papers for the same algebra.
    \item Indices $\mathbf{k}$ can have arbitrary non-negative length. For the sake of clarity, this is not extra mentioned.
\end{enumerate*}
\end{remark}

\begin{figure}
    \centering
    \begin{tikzpicture}[
ellip/.style={rectangle, rounded corners, draw=green!60, fill=green!10, very thick, minimum size=7mm, drop shadow},
ellipr/.style={rectangle, rounded corners, draw=red!60, fill=red!10, very thick, minimum size=7mm, drop shadow},
squarenode/.style={rectangle, draw=red!60, fill=red!5, very thick, minimum size=5mm, rounded corners, drop shadow},
squarenodeb/.style={rectangle, draw=blue!60, fill=blue!15, very thick, minimum size=5mm, rounded corners, drop shadow},
squarenodebl/.style={rectangle, rounded corners, draw=blue!60, fill=blue!5, very thick, minimum size=5mm, drop shadow},
squarenodelbl/.style={rectangle, rounded corners, draw=black!60, fill=mycyan, very thick, minimum size=5mm, drop shadow},
every text node part/.style={align=center}]

\node[squarenodebl] at (0,0) (MF) {$\mathbf{MF}$};
\node[squarenodebl] at ($(MF.south)+(0,-3)$) (qmzv) {$\qmzvok=\mathcal{Z}_{q,1}^\circ$};
\node[squarenodebl] at ($(qmzv.south)+(0,-3)$) (bz) {$\mathcal{Z}_{q,1}$};
\node[squarenodebl] at ($(bz.south)+(0,-3)$) (qmd) {$\text{\textbf{qMZ}}$};
\node[squarenodebl] at ($(qmd.south)+(0,-3)$) (MD) {$\mathcal{MD}=\mathcal{Z}_q^\circ$};
\node[squarenodebl] at ($(MD.south)+(0,-3)$) (Zq) {$\mathcal{BD}=\mathcal{Z}_q$};

\node at ($(qmzv.east)+(5,0)$) (bz1) {};

\node[ellip] at ($(bz1.south)+(0,-0.5)$) (bz2) {$\left\langle \bz(\mathbf{k})\mid\, k_i\geq 2\right\rangle_\Q$};
\node[ellip] at ($(bz1.north)+(0,1)$) (oko) {$\left\langle \oko(\mathbf{k})\mid\, k_i\geq 2\right\rangle_\Q$};
\node[ellip] at ($(oko.north)+(0,1.5)$) (g2) {$\left\langle \g(\mathbf{k})\mid\, k_i\geq 2\right\rangle_\Q$};
\node[rotate=90] at ($0.5*(bz2.north)+0.5*(oko.south)$) () {$=$};
\node[rotate=90] at ($0.5*(oko.north)+0.5*(g2.south)$) () {$=$};
\draw[draw=orange!50, rounded corners, very thick] ($(bz2.south west) + (-0.4,-0.5)$) rectangle ($(g2.north east) + (0.7,0.5)$);

\node[rotate=-90] at ($0.5*(MF.south)+0.5*(qmzv.north)$) () {$\subset$};

\node[rotate=-90] at ($0.5*(qmzv.south)+0.5*(bz.north)+(-0.4,0)$) () {$\subseteq$};
\node[rotate=90] at ($0.5*(qmzv.south)+0.5*(bz.north)+(0.4,0)$) (eq2) {$=$};
\node at (eq2.south) () {?};

\node[rotate=90] at ($0.5*(bz.south)+0.5*(qmd.north)$) (suBa4) {$\supset$};
\node at ($(suBa4.south)+(0.1,0)$) () {?};

\node[rotate=-90] at ($0.5*(qmd.south)+0.5*(MD.north)$) () {$\subset$};

\node[rotate=-90] at ($0.5*(MD.south)+0.5*(Zq.north)+(-0.4,0)$) () {$\subseteq$};
\node[rotate=90] at ($0.5*(MD.south)+0.5*(Zq.north)+(0.4,0)$) (eq3) {$=$};
\node at (eq3.south) () {?};

\node[rotate=30] at ($0.7*(qmzv.north east)+0.3*(bz1.west)+(0,0.8)$) () {$=$};

\draw [->,thick,dashed] ($(bz.west) + (-0.2,0.15)$) arc (20:340:0.5);
\node at ($(bz.west)+(-1.6,0)$) () {$q\frac{d}{dq}$};
\node at ($(bz.west)+(-0.6,0)$) () {?};

\draw [->,thick,dashed] ($(qmzv.west) + (-0.2,0.15)$) arc (20:340:0.5);
\node at ($(qmzv.west)+(-1.6,0)$) () {$q\frac{d}{dq}$};
\node at ($(qmzv.west)+(-0.6,0)$) () {?};

\draw [->,thick] ($(qmd.west) + (-0.2,0.15)$) arc (20:340:0.5);
\node at ($(qmd.west)+(-1.6,0)$) () {$q\frac{d}{dq}$};

\draw [->,thick] ($(MD.west) + (-0.2,0.15)$) arc (20:340:0.5);
\node at ($(MD.west)+(-1.6,0)$) () {$q\frac{d}{dq}$};

\draw [->,thick] ($(Zq.west) + (-0.2,0.15)$) arc (20:340:0.5);
\node at ($(Zq.west)+(-1.6,0)$) () {$q\frac{d}{dq}$};

\node[ellip] at ($(bz)+(6.5,0)$) (eg1) {$\left\langle \bz(\mathbf{k})\mid\, k_1\geq 2,k_i\geq 1\right\rangle_\Q$};

\node[ellip] at ($(qmd)+(6.5,0)$) (eg2) {$\left\langle \g(\mathbf{k})\mid\, k_1\geq 2\right\rangle_\Q$};
\node at ($0.5*(qmd.south)+0.5*(eg2.north)$) (g2) {$:=$};

\node at ($(MD)+(6.5,0)$) (eqMD) {$=$};

\node[ellip] at ($(eqMD.north)+(0,0.75)$) (eg3) {$\left\langle \g(\mathbf{k})\mid\, k_i\geq 1\right\rangle_\Q$};

\node[ellip] at ($(eqMD.south)+(0,-0.75)$) (sz1) {$\left\langle \sz(\mathbf{k})\mid\, k_i\geq 1\right\rangle_\Q$};

\node at ($0.5*(MD)+0.5*(eqMD)$) () {$=$};

\node at ($0.5*(bz.east)+0.5*(eg1.west)$) (g1) {$=$};
\node[rotate=90] at ($0.4*(bz2.south)+0.6*(eg1.north)$) (=?) {$=$};
\node at (=?.south) () {?};

\node[rotate=90] at ($0.5*(eg1.south)+0.5*(eg2.north)$) (suBa3) {$\supset$};
\node at ($(suBa3.south)+(0.1,0)$) () {?};

\draw[draw=orange!50, rounded corners, very thick] ($(sz1.south west) + (-0.4,-0.5)$) rectangle ($(eg3.north east) + (0.5,0.5)$);

\node at ($(Zq.east)+(4,-2)$) (phantom) {};

\node[ellip] at ($(phantom) + (0,1)$) (fg4) {$\left\langle \sz(\mathbf{k})\mid\, k_1\geq 1,k_i\geq 0\right\rangle_\Q$};
\node[rotate=90] at (phantom) (h4) {$=$};

\node[ellip] at ($(fg4)+(6,0)$) (eg4) {$\left\langle \g\bi{\mathbf{k}}{\mathbf{d}}\mid\, k_i\geq 1,d_i\geq 0\right\rangle_\Q$};

\node[ellip] at ($(phantom)+(0,-1)$) (fg3) {$\left\langle \ooz(\mathbf{k})\mid\, k_1\geq 1,k_i\geq 0\right\rangle_\Q$};
\node at ($0.5*(eg4)+0.5*(fg4)$) (h3) {$=$};

\node[ellip] at ($(fg3)+(6,0)$) (fg2) {$\left\langle \tbz(\mathbf{k})\mid\, k_1\neq \oo,k_i\in\eN\right\rangle_\Q$};
\node at ($0.5*(fg3)+0.5*(fg2)$) (h2) {$=$};

\node[rotate=90] at ($0.5*(eg4)+0.5*(fg2)$) () {$=$};

\gettikzxy{(fg2.south west)}{\bzx}{\bzy};
\gettikzxy{(eg4.north east)}{\gx}{\gy};
\gettikzxy{(fg3.west)}{\woozx}{\woozy};
\gettikzxy{(fg2.east)}{\eoozx}{\eoozy};
\gettikzxy{(Zq)}{\Zqx}{\Zqy};

\draw[draw=orange!50, rounded corners, very thick] ($(\woozx,\bzy) + (-0.5,-0.5)$) rectangle ($(\eoozx,\gy) + (0.5,0.5)$);

\node[rotate=-20] at ($0.7*(Zq.south east) + 0.3*(\woozx,\Zqy)+(0,-0.3)$) () {$=$};

\end{tikzpicture}
\end{figure}
\pagebreak
\section{$q$MZVs as generating functions of marked partitions}

We give in this section a combinatorial view of the considered dualities of $q$-analogues of MZVs. For that, we will use partitions intensively. A good reference on partitions, in general, is, for example, \cite{FH}. For Stanley coordinates, we refer to Stanley's original work \cite{Sta}.
\\ \ 

A partition of a natural number $N$ is usually defined as an in descending order sorted tuple of natural numbers $\lambda = (\lambda_1,\dots,\lambda_h)$ (i.e., $\lambda_1\geq \dots\geq\lambda_h$) with
\begin{align*}
    |\lambda| := \lambda_1 + \dots + \lambda_h = N.
\end{align*}
We often write $\lambda \dashv N$ to say that $\lambda$ is partition of $N$.

We can characterise $\lambda\dashv N$ also in a different way via summarizing the $\lambda_i$ that are equal: So we can identify $\lambda$ with two tuples of natural numbers, $\mathbf{m} = (m_1,\dots,m_r)$ and $\mathbf{n} = (n_1,\dots,n_r)$, where $\mathbf{m}$ contains the in $\lambda$ appearing values in strict descending order (i.e., $m_1 > \dots > m_r > 0)$ and $\mathbf{n}$ their multiplicities, i.e., $n_i$ describes the number of $\lambda_j$ being equal to $m_i$ and one has $N = \sum_{j=1}^r m_j n_j$. 

\begin{definition}[\emph{Stanley Coordinates}]
A partition $\mathbf{p}$ of length $r$ of some $N\in\N$ in Stanley coordinates is a pair of $r$ natural numbers $(\mathbf{m},\mathbf{n})=((m_1,\dots,m_r),(n_1,\dots,n_r))$ such that
\begin{enumerate*}
    \item $m_1>\dots>m_r>0$,
    \item $m_1 n_1 + \dots + m_r n_r = N$.
\end{enumerate*}
\end{definition}

By $\mathbf{p}'$ the \emph{transposed partition} of $\mathbf{p}$ is denoted, i.e., the one with Young diagram reflected at the main diagonal. Formally, if $\mathbf{p} = (\mathbf{m},\mathbf{n})\dashv N$  is a partition of a natural number $N$, we set
\begin{align*}
    \mathbf{p}' := ((n_1+\dots+n_r,\dots,n_1+n_2,n_1),(m_r,m_r-m_{r-1}\dots,m_1-m_2))\dashv N.
\end{align*}

The following illustrates the formal definition of the transposed partition:

\begin{center}
\begin{tikzpicture}[scale=0.6]
\draw (-1,3) node{$\mathbf{p} = $};
\draw (0,2.5) -- (0,0) -- (1,0) -- (1,1.5) -- (2,1.5) -- (2,2.5);
\draw [densely dotted] (0,2.5) -- (0,3.5);
\draw [densely dotted] (2,2.5) -- (3,2.5) -- (3,3.5);
\draw (3,3.5) -- (4,3.5) -- (4,4.5) -- (6.2,4.5) -- (6.2,5.5) -- (0,5.5) -- (0,3.5);

\draw [thick, orange] (0,5.5) -- (6.2,5.5); 
\draw [thick, blue] (0,5.5) -- (0,3.5); 
\draw [thick, blue, densely dotted] (0,2.5) -- (0,3.5);
\draw [thick, blue] (0,0) -- (0,2.5);

\draw[decoration={brace,raise=2pt},decorate] (0,5.5) -- node[above=2pt] {\small $m_1$} (6.2,5.5);
\draw[decoration={brace,raise=2pt},decorate] (0,4.5) -- node[above=2pt] {\small $m_2$} (4,4.5);
\draw[decoration={brace,raise=2pt},decorate] (0,2.5) -- node[above=2pt] {\small $m_{r-1}$} (2,2.5);
\draw[decoration={brace,raise=2pt},decorate] (0,1.5) -- node[above=2pt] {\small $m_{r}$} (1,1.5);

\draw[decoration={brace,raise=2pt},decorate] (6.2,5.5) -- node[right=2pt] {\small $n_{1}$} (6.2,4.5);
\draw[decoration={brace,raise=2pt},decorate] (4,4.5) -- node[right=2pt] {\small $n_{2}$} (4,3.5);
\draw[decoration={brace,raise=2pt},decorate] (2,2.5) -- node[right=2pt] {\small $n_{r-1}$} (2,1.5);
\draw[decoration={brace,raise=2pt},decorate] (1,1.5) -- node[right=2pt] {\small $n_{r}$} (1,0);

\draw [lightgray,ultra thin] (4,5.5) -- (4,4.7);
\draw [lightgray,ultra thin] (4.3,5.5) -- (4.3,4.7);
\draw [lightgray,ultra thin] (4.6,5.5) -- (4.6,4.7);
\draw [lightgray,ultra thin] (4.9,5.5) -- (4.9,4.7);
\draw [lightgray,ultra thin] (5.2,5.5) -- (5.2,4.7);
\draw [lightgray,ultra thin] (5.5,5.5) -- (5.5,4.7);
\draw [lightgray,ultra thin] (5.8,5.5) -- (5.8,4.7);
\draw [lightgray,ultra thin] (3.7,5.5) -- (3.7,3.7);
\draw [lightgray,ultra thin] (3.4,5.5) -- (3.4,3.7);
\draw [lightgray,ultra thin] (3.1,5.5) -- (3.1,3.7);
\draw [lightgray,ultra thin] (2.8,5.5) -- (2.8,2.7);
\draw [lightgray,ultra thin] (2.5,5.5) -- (2.5,2.7);
\draw [lightgray,ultra thin] (2.2,4.7) -- (2.2,2.7);
\draw [lightgray,ultra thin] (1.9,4.7) -- (1.9,2.7);
\draw [lightgray,ultra thin] (1.6,4.7) -- (1.6,1.7);
\draw [lightgray,ultra thin] (1.3,5.5) -- (1.3,3.1);
\draw [lightgray,ultra thin] (1,5.5) -- (1,3.3);
\draw [lightgray,ultra thin] (0.7,5.5) -- (0.7,3.3);
\draw [lightgray,ultra thin] (0.4,5.5) -- (0.4,2.4);
\draw [lightgray,ultra thin] (1.3,2.7) -- (1.3,1.6);
\draw [lightgray,ultra thin] (1,2.7) -- (1,1.5);
\draw [lightgray,ultra thin] (0.7,2.7) -- (0.7,2);
\draw [lightgray,ultra thin] (0.4,1.7) -- (0.4,0);
\draw [lightgray,ultra thin] (0.7,1.7) -- (0.7,0);

\draw [lightgray,ultra thin] (0.1,0.3) -- (0.9,0.3);
\draw [lightgray,ultra thin] (0.1,0.6) -- (0.9,0.6);
\draw [lightgray,ultra thin] (0.1,0.9) -- (0.9,0.9);
\draw [lightgray,ultra thin] (0.1,1.2) -- (0.9,1.2);
\draw [lightgray,ultra thin] (0.1,1.5) -- (0.9,1.5);

\draw [lightgray,ultra thin] (0.9,1.9) -- (1.9,1.9);
\draw [lightgray,ultra thin] (0.9,2.2) -- (1.9,2.2);
\draw [lightgray,ultra thin] (0.1,2.5) -- (1.9,2.5);

\draw [lightgray,ultra thin] (0.1,3.5) -- (2.9,3.5);
\draw [lightgray,ultra thin] (1.1,3.2) -- (2.9,3.2);
\draw [lightgray,ultra thin] (1.8,2.9) -- (2.9,2.9);

\draw [lightgray,ultra thin] (0.1,4.5) -- (3.9,4.5);
\draw [lightgray,ultra thin] (0.1,4.16) -- (3.9,4.16);
\draw [lightgray,ultra thin] (0.1,3.85) -- (3.9,3.85);

\draw [lightgray,ultra thin] (0.1,4.85) -- (1.4,4.85);
\draw [lightgray,ultra thin] (0.1,5.2) -- (1.4,5.2);
\draw [lightgray,ultra thin] (2.3,5.2) -- (6,5.2);
\draw [lightgray,ultra thin] (2.3,4.85) -- (6,4.85);

\draw [->,thick] (7.4,3) -- node[above=2pt]{ } (10.4,3);

\begin{scope}[shift={(12,0.3)}]

\draw (9,3) node{$= \mathbf{p'}$};
\draw (0,2.5) -- (0,-0.6) -- (1,-0.6) -- (1,1.5) -- (2,1.5) -- (2,2.5);
\draw [densely dotted] (0,2.5) -- (0,3.5);
\draw [densely dotted] (2,2.5) -- (3,2.5) -- (3,3.5);
\draw (3,3.5) -- (4,3.5) -- (4,4.5) -- (5.5,4.5) -- (5.5,5.5) -- (0,5.5) -- (0,3.5);

\draw [thick, blue] (0,5.5) -- (5.5,5.5); 
\draw [thick, orange] (0,5.5) -- (0,3.5); 
\draw [thick, orange, densely dotted] (0,2.5) -- (0,3.5);
\draw [thick, orange] (0,-0.6) -- (0,2.5);

\draw[decoration={brace,raise=2pt},decorate] (0,5.5) -- node[above=2pt] {\small $n_1 + \dots + n_r$} (5.5,5.5);
\draw[decoration={brace,raise=2pt},decorate] (0,4.5) -- node[above=2pt] {\tiny $n_1 + \dots + n_{r-1}$} (4,4.5);
\draw[decoration={brace,raise=2pt},decorate] (0,2.5) -- node[above=2pt] {\tiny $n_1+n_2$} (2,2.5);
\draw[decoration={brace,raise=2pt},decorate] (0,1.5) -- node[above=2pt] {\tiny $n_1$} (1,1.5);

\draw[decoration={brace,raise=2pt},decorate] (5.5,5.5) -- node[right=2pt] {\small $m_r$} (5.5,4.5);
\draw[decoration={brace,raise=2pt},decorate] (4,4.5) -- node[right=2pt] {\small $m_{r-1}-m_r$} (4,3.5);
\draw[decoration={brace,raise=2pt},decorate] (2,2.5) -- node[right=2pt] {\small $m_2-m_3$} (2,1.5);
\draw[decoration={brace,raise=2pt},decorate] (1,1.5) -- node[right=2pt] {\small $m_1-m_2$} (1,-0.6);

\draw [lightgray,ultra thin] (4,5.5) -- (4,4.7);
\draw [lightgray,ultra thin] (4.3,5.5) -- (4.3,4.7);
\draw [lightgray,ultra thin] (4.6,5.5) -- (4.6,4.7);
\draw [lightgray,ultra thin] (4.9,5.5) -- (4.9,4.7);
\draw [lightgray,ultra thin] (5.2,5.5) -- (5.2,4.7);
\draw [lightgray,ultra thin] (3.7,5.5) -- (3.7,3.7);
\draw [lightgray,ultra thin] (3.4,5.5) -- (3.4,3.7);
\draw [lightgray,ultra thin] (3.1,5.5) -- (3.1,3.7);
\draw [lightgray,ultra thin] (2.8,5.5) -- (2.8,2.7);
\draw [lightgray,ultra thin] (2.5,5.5) -- (2.5,2.7);
\draw [lightgray,ultra thin] (2.2,4.7) -- (2.2,2.7);
\draw [lightgray,ultra thin] (1.9,4.7) -- (1.9,2.7);
\draw [lightgray,ultra thin] (1.6,4.7) -- (1.6,1.7);
\draw [lightgray,ultra thin] (1.3,5.5) -- (1.3,3.1);
\draw [lightgray,ultra thin] (1,5.5) -- (1,3.3);
\draw [lightgray,ultra thin] (0.7,5.5) -- (0.7,3.3);
\draw [lightgray,ultra thin] (0.4,5.5) -- (0.4,2.4);
\draw [lightgray,ultra thin] (1.3,2.7) -- (1.3,1.6);
\draw [lightgray,ultra thin] (1,2.7) -- (1,1.5);
\draw [lightgray,ultra thin] (0.7,2.7) -- (0.7,2);
\draw [lightgray,ultra thin] (0.4,1.7) -- (0.4,-0.5);
\draw [lightgray,ultra thin] (0.7,1.7) -- (0.7,-0.5);
\draw [lightgray,ultra thin] (0.1,-0.3) -- (0.9,-0.3);
\draw [lightgray,ultra thin] (0.1,0) -- (0.9,0);
\draw [lightgray,ultra thin] (0.1,0.3) -- (0.9,0.3);
\draw [lightgray,ultra thin] (0.1,0.6) -- (0.9,0.6);
\draw [lightgray,ultra thin] (0.1,0.9) -- (0.9,0.9);
\draw [lightgray,ultra thin] (0.1,1.2) -- (0.9,1.2);
\draw [lightgray,ultra thin] (0.1,1.5) -- (0.9,1.5);

\draw [lightgray,ultra thin] (0.9,1.9) -- (1.9,1.9);
\draw [lightgray,ultra thin] (0.9,2.2) -- (1.9,2.2);
\draw [lightgray,ultra thin] (0.1,2.5) -- (1.9,2.5);

\draw [lightgray,ultra thin] (0.1,3.5) -- (2.9,3.5);
\draw [lightgray,ultra thin] (1.1,3.2) -- (2.9,3.2);
\draw [lightgray,ultra thin] (1.8,2.9) -- (2.9,2.9);

\draw [lightgray,ultra thin] (0.1,4.5) -- (3.9,4.5);
\draw [lightgray,ultra thin] (0.1,4.16) -- (3.9,4.16);
\draw [lightgray,ultra thin] (0.1,3.85) -- (3.9,3.85);

\draw [lightgray,ultra thin] (0.1,4.85) -- (1.4,4.85);
\draw [lightgray,ultra thin] (0.1,5.2) -- (1.4,5.2);
\draw [lightgray,ultra thin] (2.3,5.2) -- (5.4,5.2);
\draw [lightgray,ultra thin] (2.3,4.85) -- (5.4,4.85);
\end{scope}
\end{tikzpicture}
\end{center}

We will often consider sums over all partitions of, e.g., a fixed number and with fixed length and therefore give the following definitions:
\begin{definition}
\label{part set def}
Define for every $N\in\N$ and $r\geq 1$ the set of partitions of $N$ of length $r$,
    \begin{align*}
    \mathcal{P}_{r}(N) := \left\{((m_1,\dots,m_r),(n_1,\dots,n_r))\in\N^r\times \N^r\, \middle|\, m_1 > \dots > m_r,\,  \sum_{j=1}^r {m_j n_j} =N\right\},
    \end{align*}
and with analogous notation,
    \begin{align*}
        \mathcal{P}_{\leq r}(N) := \bigcup\limits_{s=1}^r \mathcal{P}_{s}(N),\quad \mathcal{P}_{r} := \bigcup\limits_{N > 0} \mathcal{P}_r(N),\quad
        \mathcal{P}_{\leq r} := \bigcup\limits_{N > 0} \mathcal{P}_{\leq r}(N),\quad         \mathcal{P} := \bigcup\limits_{r\geq 1} \mathcal{P}_r.
    \end{align*}
\end{definition}
Remark at this point that the map $\rho:\mathcal{P}\rightarrow\mathcal{P},\ \mathbf{p}\mapsto\mathbf{p'}$ is an involution, and the restriction to one of the other sets in Def. \ref{part set def} is also an involution. By abuse of notation, the restricted maps will also be denoted by $\rho$.

\begin{theorem}[{\cite[Thm. 4.6]{Bri}}]
\label{qmzvs qser}
For every $\zeta_q\left(k_1,\dots,k_r;Q_1,\dots,Q_r\right)\in\mathcal{Z}_q$ with $r\in\N$, there are rational numbers $a_\mathbf{p}\in\Q$ for all partitions $\mathbf{p}\in\mathcal{P}_{\leq r}$ such that
\begin{align*}
    \zeta_q\left(k_1,\dots,k_r;Q_1,\dots,Q_r\right) = \sum\limits_{((m_1,\dots,m_{r'}),(n_1,\dots,n_{r'}))\in\mathcal{P}_{\leq r}} a_{\substack{m_1,\dots,m_{r'}\\ n_1,\dots,n_{r'}}} q^{m_1 n_1 + \dots + m_{r'} n_{r'}}.
\end{align*}
Moreover, these are polynomials in $m_1,\dots,m_{r'},n_1,\dots,n_{r'}$.
\end{theorem}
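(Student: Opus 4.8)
The plan is to reduce to Schlesinger--Zudilin $q$MZVs and then expand explicitly. First I would apply the identity \eqref{special rational functions2} to each factor $\tfrac{Q_j(X)}{(1-X)^{k_j}}$ of $\zeta_q(k_1,\dots,k_r;Q_1,\dots,Q_r)$; using $\deg Q_j\le k_j$ and $Q_1(0)=0$ this writes the $q$MZV as a finite $\Q$-linear combination of SZ-$q$MZVs $\sz(\ell_1,\dots,\ell_r)$ with $\ell_1\ge 1$ and $0\le \ell_i\le k_i$ (exactly as in the proof of Proposition \ref{Qsz=Zq2}), all of length $r$ but with possibly some zero entries. Since the assertion is linear in $\zeta_q(\dots)$ and a length-$r$ SZ-index produces only partitions of length $\le r$, it suffices to establish the claim for a single $\sz(\ell_1,\dots,\ell_r)$.

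Next I would expand each factor of $\sz(\ell_1,\dots,\ell_r)=\sum_{m_1>\dots>m_r>0}\prod_{i=1}^r\tfrac{q^{m_i\ell_i}}{(1-q^{m_i})^{\ell_i}}$: for $\ell_i\ge 1$ one has $\tfrac{q^{m_i\ell_i}}{(1-q^{m_i})^{\ell_i}}=\sum_{n_i\ge 1}\binom{n_i-1}{\ell_i-1}q^{m_in_i}$, while for $\ell_i=0$ the factor equals the constant $1$ and does not involve $m_i$. Put $I=\{i_1<\dots<i_s\}:=\{i\,:\,\ell_i\ge 1\}$; since $\ell_1\ge 1$ we have $i_1=1$ and $s\ge 1$, and the summand depends only on the ``active'' variables $m_{i_1},\dots,m_{i_s}$. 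Summing out the remaining $m_j$, $j\notin I$, via the elementary interleaving count $\#\{M_1>n_1>\dots>n_\ell>M_2\}=\binom{M_1-M_2-1}{\ell}$ used in the proof of Theorem \ref{sz generating2}, and writing $\mu_l:=m_{i_l}$, $\nu_l:=n_{i_l}$ and $\mu_{s+1}:=0$, one obtains
\begin{align*}
\sz(\ell_1,\dots,\ell_r)=\sum_{\substack{\mu_1>\dots>\mu_s>0\\ \nu_1,\dots,\nu_s\ge 1}}\left[\prod_{l=1}^{s-1}\binom{\mu_l-\mu_{l+1}-1}{i_{l+1}-i_l-1}\right]\binom{\mu_s-1}{r-i_s}\left[\prod_{l=1}^{s}\binom{\nu_l-1}{\ell_{i_l}-1}\right]q^{\mu_1\nu_1+\dots+\mu_s\nu_s},
\end{align*}
the rearrangement being legitimate in $\Q\llbracket q\rrbracket$ since only finitely many tuples contribute to each power of $q$. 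Here $((\mu_1,\dots,\mu_s),(\nu_1,\dots,\nu_s))$ runs precisely over $\mathcal{P}_s\subseteq\mathcal{P}_{\le r}$, and the coefficient is a polynomial in $\mu_1,\dots,\mu_s,\nu_1,\dots,\nu_s$, since every binomial $\binom{a}{b}$ with fixed $b$ is a polynomial in $a$ (conveniently vanishing for $a\in\{0,\dots,b-1\}$, which matches the vanishing of the true $q$-coefficients for small $\nu_l$).

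Finally I would collect the contributions of the finitely many SZ-$q$MZVs occurring in the linear combination, grouped by the length $r'\le r$ of the partition; this yields for each $r'\in\{1,\dots,r\}$ a single polynomial $f_{r'}\in\Q[X_1,\dots,X_{r'},Y_1,\dots,Y_{r'}]$ with $a_{\mathbf p}=f_{r'}(m_1,\dots,m_{r'},n_1,\dots,n_{r'})$ for every $\mathbf p=((m_1,\dots,m_{r'}),(n_1,\dots,n_{r'}))\in\mathcal{P}_{r'}$, which is the assertion. I expect the only step genuinely needing care to be the combinatorial reorganization of $\sum_{m_1>\dots>m_r>0}$ as a sum over the active variables weighted by the number of admissible interleavings of the passive ones; the rest is bookkeeping. (One could instead argue directly on $\zeta_q(k_1,\dots,k_r;Q_1,\dots,Q_r)$, expanding $\tfrac{Q_j(q^{m_j})}{(1-q^{m_j})^{k_j}}=Q_j(0)+\sum_{n_j\ge 1}g_j(n_j)q^{m_jn_j}$ with $g_j\in\Q[X]$ of degree $\le k_j-1$ and $Q_1(0)=0$, and then expanding the product over subsets of $\{1,\dots,r\}$; the combinatorial heart is the same.)
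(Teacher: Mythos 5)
Your proof is correct and rests on the same expansion the paper's proof uses, namely $\frac{q^{m \ell}}{(1-q^m)^{k}}=\sum_{n\geq \ell}\binom{n-\ell+k-1}{k-1}q^{mn}$ with binomial coefficients that are polynomials in $n$; the detour through the SZ-model is essentially cosmetic, since Proposition \ref{Qsz=Zq2} is itself just such a rewriting of each factor. The one point where you are more explicit than the paper's one-line argument is the summing-out of the \emph{passive} variables $m_j$ (those whose factor contributes only a constant) via the interleaving count $\binom{\mu_l-\mu_{l+1}-1}{\cdot}$, which is precisely what produces partitions of length $r'<r$ and keeps the coefficients polynomial in the surviving variables --- a detail worth spelling out, and one you handle correctly.
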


\begin{proof}
This follows from the expansion
\begin{align*}
    \frac{q^{m \ell}}{(1-q^m)^k} = q^{m \ell}\sum\limits_{n\geq 0} \binom{n+k-1}{k-1} q^{m n} = \sum\limits_{n\geq \ell} \binom{n-\ell + k-1}{k-1} q^{m n}
\end{align*}
for all $m\geq 1$, $0\leq \ell\leq k$ and from $\deg (Q_j)\leq k_j$, that all $Q_j$ have rational coefficients and from the fact that binomial coefficients in particular are rational numbers too. That the coefficients are polynomial in $m_1,\dots,m_{r'},n_1,\dots,n_{r'}$ follows also direct from this expansion.
\end{proof}

Theorem \ref{qmzvs qser} implies that for every element $S\in\mathcal{Z}_q$ there is a map $\textbf{a}:\mathcal{P}\longrightarrow \Q$ and rational $a_0$ such that
\begin{align*}
    S = a_0 + \sum\limits_{N \geq 1} \left(\sum\limits_{\mathbf{p}\in\mathcal{P}(N)} \textbf{a}(\mathbf{p})\right) q^N
\end{align*}
and all but finite many of the projections $\textbf{a}_r := \textbf{a}|_{\mathcal{P}_r}$, $r\geq 1$, are constant zero.

The mappings $\textbf{a}$ do not have to be unique, but we can find a nice one for each element $S\in\mathcal{Z}_q$ because they are polynomial:

\begin{theorem}[Thm. \ref{def Z_q2} ({\cite[Thm. 4.7]{Bri}})]
\label{Zq poly descr}
A $q$-series $S$ is in $\mathcal{Z}_q$ iff there exists $f=(f_r)_{r\geq 0}$ with\\ $f_r\in \Q[X_1,\dots,X_r,Y_1,\dots,Y_r]$ for $r\geq 1$ and $f_0\in\Q$ such that
\begin{enumerate*}
    \item $f_r\equiv 0$ for all but finite many $r$,
    \item  $
    S = f_0 + \sum\limits_{N\geq 1} \left(\sum\limits_{r\geq 1} \sum\limits_{(\textbf{m},\textbf{n})\in\mathcal{P}_r(N)} f_r(m_1,\dots,m_r,n_1,\dots,n_r)\right) q^N.$
\end{enumerate*}
\end{theorem}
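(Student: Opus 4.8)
The plan is to prove the equivalence of Theorem~\ref{Zq poly descr} in two directions, relying crucially on Theorem~\ref{qmzvs qser} and the spanning results for the various models.

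\textbf{The ``only if'' direction.} Suppose $S\in\mathcal{Z}_q$. By definition $\mathcal{Z}_q$ is the $\Q$-span of the $\zeta_q(k_1,\dots,k_r;Q_1,\dots,Q_r)$, so $S$ is a finite $\Q$-linear combination $S = c_0 + \sum_i c_i\,\zeta_q(\mathbf{k}^{(i)};\mathbf{Q}^{(i)})$, where the constant $c_0$ absorbs the $r=0$ term. First I would apply Theorem~\ref{qmzvs qser} to each individual generator: it produces, for the $i$-th generator of depth $r_i$, rational numbers indexed by $\mathcal{P}_{\leq r_i}$ that are \emph{polynomial} in the Stanley coordinates $m_1,\dots,m_{r'},n_1,\dots,n_{r'}$ on each stratum $\mathcal{P}_{r'}$. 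Concretely, for each $r'\le r_i$ there is a polynomial $g_{r'}^{(i)}\in\Q[X_1,\dots,X_{r'},Y_1,\dots,Y_{r'}]$ such that the coefficient of $q^N$ in that generator equals $\sum_{r'}\sum_{(\mathbf{m},\mathbf{n})\in\mathcal{P}_{r'}(N)} g_{r'}^{(i)}(m_1,\dots,m_{r'},n_1,\dots,n_{r'})$. Then I would set $f_{r'} := \sum_i c_i\, g_{r'}^{(i)}$ for each $r'\ge 1$ (a finite sum since the linear combination is finite, and $g_{r'}^{(i)}=0$ once $r' > r_i$), and $f_0 := c_0$. Since only finitely many $g_{r'}^{(i)}$ are nonzero, only finitely many $f_{r'}$ are, giving condition~(i); summing the expansions of the generators and grouping by $q^N$ gives condition~(ii). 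One small point to verify is that the grouping of coefficients of $q^N$ by partition-length is legitimate: each generator contributes only to strata of length at most its depth, and within a fixed $N$ the set $\mathcal{P}_{\le r}(N)$ is finite, so all the sums are finite and the rearrangement is unproblematic.

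\textbf{The ``if'' direction.} Conversely, suppose $S$ has the stated form for some polynomial family $f=(f_r)_{r\ge0}$ with $f_r\equiv 0$ for $r$ large. By $\Q$-linearity of the space $\mathcal{Z}_q$ it suffices to show that for each fixed $r\ge 1$ and each monomial $f_r = X_1^{a_1}\cdots X_r^{a_r}Y_1^{b_1}\cdots Y_r^{b_r}$, the $q$-series $T:=\sum_{N\ge1}\bigl(\sum_{(\mathbf{m},\mathbf{n})\in\mathcal{P}_r(N)} m_1^{a_1}\cdots m_r^{a_r} n_1^{b_1}\cdots n_r^{b_r}\bigr)q^N$ lies in $\mathcal{Z}_q$ (the $r=0$ case is just a rational constant). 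Here I would invoke the bi-bracket description: by Theorem~\ref{bibr Zq2}, $\mathcal{Z}_q$ is spanned by the $\g\bi{k_1,\dots,k_r}{d_1,\dots,d_r}$, and unwinding the definition in Definition/Proposition~\ref{bibr defi2},
\begin{align*}
    \g\bi{k_1,\dots,k_r}{d_1,\dots,d_r} = \sum_{\substack{m_1>\dots>m_r>0\\ n_1,\dots,n_r>0}} \frac{m_1^{d_1}}{d_1!}\cdots\frac{m_r^{d_r}}{d_r!}\cdot \frac{n_1^{k_1-1}\cdots n_r^{k_r-1}}{(k_1-1)!\cdots(k_r-1)!}\, q^{m_1n_1+\dots+m_rn_r}.
\end{align*}
Since the exponents $m_j^{d_j}$ and $n_j^{k_j-1}$ over $k_j\ge1$, $d_j\ge0$ range over all nonnegative powers, the monomials $m_1^{a_1}\cdots n_r^{b_r}$ occurring in $T$ are exactly (up to a nonzero rational factor $\prod_j a_j!\,b_j!$ arising from $d_j=a_j$, $k_j-1=b_j$) the summand-weights of the bi-bracket $\g\bi{b_1+1,\dots,b_r+1}{a_1,\dots,a_r}$; the sum in that bi-bracket is over $m_1>\dots>m_r>0$ and $n_j>0$, which is precisely the sum over $\mathcal{P}_r(N)$ grouped by $N$. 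Hence $T = \bigl(\prod_j a_j!\,b_j!\bigr)\,\g\bi{b_1+1,\dots,b_r+1}{a_1,\dots,a_r}\in\mathcal{Z}_q$, and summing over the finitely many monomials of the finitely many nonzero $f_r$ shows $S\in\mathcal{Z}_q$.

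\textbf{Main obstacle.} The genuinely substantive input is Theorem~\ref{qmzvs qser} (the polynomiality of the $q^N$-coefficients), which is already granted; given that, both directions are essentially bookkeeping. The one place demanding care is ensuring the correspondence between arbitrary monomials in the $f_r$ and bi-brackets is a genuine bijection onto a spanning set with the right summation range --- in particular that the strict inequality $m_1>\dots>m_r$ in $\mathcal{P}_r(N)$ matches the strict inequality in the bi-bracket sum, and that the extra inner variables $n_j$ in the bi-bracket (which also run over $n_j>0$ with no ordering) match the multiplicities $n_j$ in the Stanley-coordinate description of partitions. This identification is exactly the ``partition'' interpretation of bi-brackets noted in the remark after Theorem~\ref{partition rel2}, so it is morally already in place; the proof just makes it explicit at the level of individual monomials.
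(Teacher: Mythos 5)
Your proof is correct. The backward (\enquote{if}) direction coincides with the paper's: reduce to a single monomial $X_1^{a_1}\cdots Y_r^{b_r}$ and observe that summing it over $\mathcal{P}_r(N)$ and then over $N$ reproduces, up to the nonzero factor $\prod_j a_j!\,b_j!$, the bi-bracket $\g\bi{b_1+1,\dots,b_r+1}{a_1,\dots,a_r}$, so Theorem \ref{bibr Zq2} finishes; your identification of the summation ranges (strict ordering of the $m_j$, unordered $n_j>0$) is exactly the point to check and you check it. The forward direction is where you take a different route: the paper runs the same bi-bracket correspondence in reverse, writing $S$ as a finite $\Q$-linear combination of bi-brackets (again via Theorem \ref{bibr Zq2}) and reading off $f$ as the corresponding linear combination of monomials, whereas you expand each generator $\zeta_q(\mathbf{k};\mathbf{Q})$ of $\mathcal{Z}_q$ directly using the stratum-wise polynomiality from Theorem \ref{qmzvs qser} and sum the resulting polynomials $g_{r'}^{(i)}$. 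Both are valid. The paper's version is more economical, since the single observation that every bi-bracket is a monomial sum over $\mathcal{P}_r$ (and every monomial arises this way) carries both implications at once; yours makes the forward direction independent of the bi-bracket spanning theorem at the cost of invoking Theorem \ref{qmzvs qser}, whose proof is the expansion of $q^{m\ell}/(1-q^m)^k$. Your remarks on the legitimacy of regrouping (finiteness of $\mathcal{P}_{\leq r}(N)$ for fixed $N$, finitely many nonzero $f_r$) are the right ones.
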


\begin{proof}
Remark first that for every bi-bracket such an $f$ exists by the original definition of bi-brackets,
\begin{align*} 
    \g\bi{k_1,\dots,k_r}{d_1,\dots,d_r} :=
    \sum\limits_{\substack{m_1>\dots>m_r>0\\ n_1,\dots,n_r > 0}} \frac{m_1^{d_1}}{d_1!}\cdots \frac{m_r^{d_r}}{d_r!} \frac{n_1^{k_1 - 1}\dots n_r^{k_r -1}}{(k_1-1)!\dots (k_r - 1)!} q^{m_1 n_1 + \dots + m_r n_r}.
\end{align*}
There, $f_s\equiv 0$ for all $s$ except $s=r$, the depth of the bi-bracket. And $f_r$ is a monomial (up to a rational factor) in $\Q[X_1,\dots,Y_r]$. Indeed, since $d_i\geq 0,\, k_j\geq 1$ can take all values, $f_r$ coming from a bi-bracket can be every monomial in $\Q[X_1,\dots,Y_r]$. Furthermore, this holds for every $r\geq 1$.

Now, if $S\in\mathcal{Z}_q$, $S$ is a rational linear combination of bi-brackets since they span $\mathcal{Z}_q$. In this case, we see that $S$ indeed is of the desired shape since a possible $f$ is a finite rational linear combination of monomials by the remark above. Hence, in particular, it is a polynomial again.

Conversely, if $S$ is of the shape in the theorem, the monomials occurring in $f$ correspond to bi-brackets as remarked, i.e., $S$ is a rational linear combination of bi-brackets, an element of $\mathcal{Z}_q$.
\end{proof}

\begin{example}
By some straightforward calculation, we get
\begin{align*}
    &\, \zeta_q(1,0,2;X,1,1+X) = \sum\limits_{m_1>m_2>m_3>0} \frac{q^{m_1}}{1-q^{m_1}}\frac{1+q^{m_3}}{(1-q^{m_3})^2} 
    \\
    =&\, 2\sum\limits_{\substack{m_1>m_2>0\\ n_1,n_2> 0}} (m_1-m_2-1)(n_2 + 1) q^{m_1 n_1 + m_2 n_2} + \sum\limits_{\substack{m_1>0\\ n_1> 0}} \frac{(m_1 - 2)(m_1 - 1)}{2} q^{m_1 n_1}.
\end{align*}
In terminology of maps $\mathbf{a}:\mathcal{P}\rightarrow\Q$ we find now for $\zeta_q(1,0,2;X,1,1+X)$ a suitable map
\begin{align*}
    \textbf{a} : \mathcal{P} &\longrightarrow \Q,
    \\
    (\mathbf{m},\mathbf{n}) &\longmapsto \delta_{r=1} \frac{(m_1-2)(m_1-1)}{2} + \delta_{r=2} \cdot 2(m_1-m_2-1)(n_2+1).
\end{align*}
Especially, we see that Theorem \ref{Zq poly descr} applies here with
\begin{align*}
    f_1(m_1,n_1) := \frac{(m_1-2)(m_1-1)}{2},\quad f_2(m_1,m_2,n_1,n_2) := 2(m_1-m_2-1)(n_2+1)
\end{align*}
and $f_r :\equiv 0$ for $r>2$, which are all polynomial.
\end{example}

\begin{remark}
Functions like $\textbf{a}$ provide a direct connection from $q$MZVs to so-called $q$-brackets. For a function $\textbf{a}:\mathcal{P}\rightarrow \Q$, the $q$-bracket of $\textbf{a}$ is defined as
\begin{align*}
    \left\langle \textbf{a}\right\rangle_q := \frac{\sum\limits_{\lambda\in\mathcal{P}} \textbf{a}(\lambda) q^{|\lambda|}}{\sum\limits_{\lambda\in\mathcal{P}} q^{|\lambda|}}.
\end{align*}
They were first introduced by Bloch and Okounkov in \cite{BO} and are of interest in current research since, under certain conditions on $\textbf{a}$, the $q$-bracket is quasi-modular. Recall at this point that every quasi-modular form is, in particular, an element of $\mathcal{Z}_q$.

The exact connection between $q$MZVs and $q$-brackets will be described in \cite{BvI}. For further research details on $q$-brackets, we refer to the works by Zagier (\cite{Za2}) and van Ittersum (\cite{vIt}).
\end{remark}

\begin{lemma}[{\cite[Lem. 4.9]{Bri}}]
\label{part rel}
For all $r,N\geq 1$ and maps $\textbf{a}_r:\mathcal{P}_r(N)\rightarrow \Q$ we have the equation
\begin{align}
\label{rho}
    \sum\limits_{\mathbf{p}\in\mathcal{P}_r(N)} \textbf{a}_r(\mathbf{p}) = \sum\limits_{\mathbf{p}\in\mathcal{P}_r(N)} \textbf{a}_r(\rho(\mathbf{p})).
\end{align}
\end{lemma}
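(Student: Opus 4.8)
The plan is to reduce \eqref{rho} to a mere reindexing of a finite sum, using that $\rho$ is an involution on $\mathcal{P}_r(N)$.

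First I would note that $\mathcal{P}_r(N)$ is a finite set: from $m_1 > \dots > m_r \geq 1$, $n_j \geq 1$ and $m_1 n_1 + \dots + m_r n_r = N$ one gets $m_j, n_j \leq N$ for every $j$, leaving only finitely many admissible pairs $(\mathbf{m},\mathbf{n})$. So all sums below are finite sums and no convergence issue arises.

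Next I would verify that $\rho$ restricts to a bijection $\mathcal{P}_r(N) \to \mathcal{P}_r(N)$. This is essentially already recorded in the remark after the definition of $\mathbf{p}'$, but it is worth spelling out from the Stanley-coordinate formula: if $\mathbf{p} = ((m_1,\dots,m_r),(n_1,\dots,n_r)) \in \mathcal{P}_r(N)$ then
\[
\mathbf{p}' = \bigl((n_1+\dots+n_r,\dots,n_1+n_2,n_1),\,(m_r,m_r-m_{r-1},\dots,m_1-m_2)\bigr),
\]
whose first tuple is strictly decreasing (each $n_i\geq 1$), whose second tuple has strictly positive entries (since $m_1>\dots>m_r>0$), and which satisfies $|\mathbf{p}'| = N$; hence $\mathbf{p}' \in \mathcal{P}_r(N)$ again, with the \emph{same} length $r$. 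Since transposing a Young diagram twice returns the original diagram, $\rho|_{\mathcal{P}_r(N)}$ is an involution, hence in particular a bijection.

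Finally, the lemma follows by the substitution $\mathbf{p} \mapsto \rho(\mathbf{p})$ in the right-hand sum: as $\mathbf{p}$ runs over $\mathcal{P}_r(N)$, so does $\rho(\mathbf{p})$, whence
\[
\sum_{\mathbf{p}\in\mathcal{P}_r(N)} \mathbf{a}_r(\rho(\mathbf{p})) = \sum_{\mathbf{q}\in\mathcal{P}_r(N)} \mathbf{a}_r(\mathbf{q}) = \sum_{\mathbf{p}\in\mathcal{P}_r(N)} \mathbf{a}_r(\mathbf{p}),
\]
which is exactly \eqref{rho}. There is no genuine obstacle here; the only point requiring a moment's care is that transposition preserves both the size $N$ and the number $r$ of distinct part-sizes of a partition, and this is transparent from the displayed formula for $\mathbf{p}'$.
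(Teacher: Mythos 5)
Your proof is correct and follows exactly the paper's (one-line) argument: $\rho$ is an involution, hence a bijection, on $\mathcal{P}_r(N)$, so the right-hand sum is a reindexing of the left-hand one. The extra details you supply (finiteness of $\mathcal{P}_r(N)$ and the verification that $\mathbf{p}'$ again lies in $\mathcal{P}_r(N)$ with the same $r$ and $N$) are exactly what the paper leaves implicit.
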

\begin{proof}
The map $\rho$ is an involution on $\mathcal{P}_{r}(N)$.
\end{proof}
This lemma is important when considering duality relations among $q$MZVs like Schlesinger--Zudilin duality. For details, see \cite[Lem. 4.13]{Bri}.

\subsection{Bi-brackets}

For every bi-bracket $\g\bi{k_1,\dots,k_r}{d_1,\dots,d_r}$ the coefficient of $q^N$ can be easily derived by the original definition:
\begin{align*}
    \g\bi{k_1,\dots,k_r}{d_1,\dots,d_r}
    &=\,
    \sum\limits_{\substack{m_1>\dots>m_r>0\\ n_1,\dots,n_r > 0}} \frac{m_1^{d_1}}{d_1!}\cdots \frac{m_r^{d_r}}{d_r!} \frac{n_1^{k_1 - 1}\dots n_r^{k_r -1}}{(k_1-1)!\dots (k_r - 1)!} q^{m_1 n_1 + \dots + m_r n_r}
    \\
    &=\,
    \frac{1}{\prod\limits_{j=1} d_j!(k_j - 1)!}
    \sum\limits_{N > 0} \left(\sum\limits_{(\mathbf{m},\mathbf{n})\in \mathcal{P}_r(N)} m_1^{d_1}\cdots m_r^{d_r} n_1^{k_1 - 1}\cdots n_r^{k_r -1}\right) q^{N}.
\end{align*}

Lemma \ref{part rel} gives an explicit expression of the so-called \emph{partition relation} (\cite{Ba2}):

\begin{lemma}[{\cite[Lem. 4.11]{Bri}}]
\label{prel direct}
For all $r\geq 1$, $d_1,\dots,d_r\geq 0,\, k_1,\dots,k_r\geq 1$, we have
\begin{align*}
    \g\bi{k_1,\dots,k_r}{d_1,\dots,d_r} 
    =
    &\sum\limits_{\substack{0\leq k_i'\leq k_i-1\\ d_{i,j}'\geq 0,\, 1\leq i\leq r,\, 1\leq j\leq r-i+1\\ d_{i,1}'+\dots+d_{i,r-i+1}' = d_i}}
    \prod\limits_{j=1}^r \frac{\left(d'_{1,j}+\dots + d'_{r-j+1,j}\right)! \left(k_{r-j+1}'+k_{r-j+2}-1-k_{r-j+2}'\right)!}{d_j!\left(k_j - 1\right)!}
    \\
    &\quad \times\binom{d_j}{d_{j,1}',\dots,d_{j,r-j+1}'} \binom{k_j - 1}{k_j'} 
     \g\bi{d'_{1,1}+\dots +d'_{1,r},\dots,d'_{r-1,1} + d'_{r-1,2},d'_{r,1}}{k'_{r}-k'_{r+1}-1 + k_{r+1},\dots,k'_{1}-k'_{2}-1 + k_{2}}.
\end{align*}
with $k_{r+1}:=k_{r+1}':=0$.
\end{lemma}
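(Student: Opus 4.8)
The plan is to read off the $q$-expansion of a bi-bracket, apply the involution $\rho$ of Lemma~\ref{part rel} to its coefficients, and then re-expand. Using the coefficientwise description of bi-brackets displayed above,
\begin{align*}
    \g\bi{k_1,\dots,k_r}{d_1,\dots,d_r}
    = \frac{1}{\prod_{j=1}^r d_j!\,(k_j-1)!}\sum_{N\geq 1}\left(\sum_{(\mathbf{m},\mathbf{n})\in\mathcal{P}_r(N)} \prod_{j=1}^r m_j^{d_j}\,n_j^{k_j-1}\right) q^N,
\end{align*}
it suffices to rewrite, for each fixed $N$, the inner sum $\sum_{(\mathbf{m},\mathbf{n})\in\mathcal{P}_r(N)}\prod_j m_j^{d_j}n_j^{k_j-1}$ as an $N$-independent $\Q$-linear combination of sums of the same type, $\sum_{(\mathbf{m},\mathbf{n})\in\mathcal{P}_r(N)}\prod_j m_j^{b_j}n_j^{a_j}$, each of which is (up to an explicit factorial factor) the $q^N$-coefficient of a bi-bracket.

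First I would apply Lemma~\ref{part rel} with the polynomial map $\mathbf{a}_r(\mathbf{m},\mathbf{n}):=\prod_{j=1}^r m_j^{d_j}n_j^{k_j-1}$. Since for $\mathbf{p}=(\mathbf{m},\mathbf{n})$ the transpose $\rho(\mathbf{p})=\mathbf{p}'$ has $j$-th first-row entry $n_1+\dots+n_{r-j+1}$ and $j$-th second-row entry $m_{r-j+1}-m_{r-j+2}$ (with the convention $m_{r+1}:=0$), this turns the inner sum into
\begin{align*}
    \sum_{(\mathbf{m},\mathbf{n})\in\mathcal{P}_r(N)} \prod_{j=1}^r \bigl(n_1+\dots+n_{r-j+1}\bigr)^{d_j}\bigl(m_{r-j+1}-m_{r-j+2}\bigr)^{k_j-1}.
\end{align*}

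The remaining work is purely an expansion. I would expand every factor $(n_1+\dots+n_{r-j+1})^{d_j}$ by the multinomial theorem, splitting $d_j$ into $r-j+1$ nonnegative parts $d'_{j,1},\dots,d'_{j,r-j+1}$ summing to $d_j$, which contributes the multinomial coefficient $\binom{d_j}{d'_{j,1},\dots,d'_{j,r-j+1}}$; and I would expand every factor $(m_{r-j+1}-m_{r-j+2})^{k_j-1}$ by the binomial theorem, introducing parameters $k'_j\in\{0,\dots,k_j-1\}$, the binomial coefficients $\binom{k_j-1}{k'_j}$, and the attendant signs. Multiplying out and collecting, for each $\ell$, the exponent of $n_\ell$ (a sum of certain $d'_{i,\ell}$) and of $m_\ell$ (an alternating combination of certain $k'_i$), using the boundary conventions $m_{r+1}=0$ and $k_{r+1}=k'_{r+1}=0$ to make the first and last factors behave uniformly, one obtains a finite sum of monomials $\prod_\ell m_\ell^{b_\ell}n_\ell^{a_\ell}$ whose exponents are exactly the combinations of the $d'_{i,\ell}$ and $k'_i$ recorded in the statement. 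Finally, $\sum_{(\mathbf{m},\mathbf{n})\in\mathcal{P}_r(N)}\prod_\ell m_\ell^{b_\ell}n_\ell^{a_\ell}$ equals $\bigl(\prod_\ell b_\ell!\,a_\ell!\bigr)$ times the $q^N$-coefficient of $\g\bi{a_1+1,\dots,a_r+1}{b_1,\dots,b_r}$; summing over $N$ and dividing by the normalisation $\prod_j d_j!(k_j-1)!$ of the original bi-bracket, the factorials reassemble into the quotients $\tfrac{(d'_{1,j}+\dots)!\,(k'_{r-j+1}+\dots)!}{d_j!(k_j-1)!}$ appearing in the claim, which finishes the proof.

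The routine but genuinely fiddly part — and the only real obstacle — is the index bookkeeping in the last step: because transposition reverses the order of the Stanley coordinates, one must substitute $j\leftrightarrow r-j+1$ consistently throughout, carefully track which part of which multinomial or binomial expansion contributes to the exponent of a given $n_\ell$ or $m_\ell$, and reconcile the product $\prod_\ell b_\ell!\,a_\ell!$ against $\prod_j d_j!(k_j-1)!$ together with the signs produced by the differences $m_{r-j+1}-m_{r-j+2}$. It is worth carrying out the case $r=1$ first as a sanity check: there the only transpose is $(m_1,n_1)\mapsto(n_1,m_1)$ and the whole identity collapses to the depth-one partition relation $\g\bi{k_1}{d_1}=\g\bi{d_1+1}{k_1-1}$.
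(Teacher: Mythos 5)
Your proposal follows exactly the route the paper intends: the paper offers no proof of this lemma beyond the remark that Lemma~\ref{part rel} (the transposition involution on $\mathcal{P}_r(N)$ applied to the polynomial coefficient description of bi-brackets) yields it, and that is precisely your strategy of transposing the Stanley coordinates, expanding by the multinomial and binomial theorems, and reassembling the resulting monomials into bi-brackets via the factorial normalisation. The one caveat lies in the bookkeeping you defer: the signs $(-1)^{k_j-1-k_j'}$ arising from expanding $(m_{r-j+1}-m_{r-j+2})^{k_j-1}$, and the shift by $+1$ between the exponent of $n_\ell$ and the corresponding upper index of the resulting bi-bracket, must be tracked explicitly (as your own $r=1$ sanity check $\g\bi{k_1}{d_1}=\g\bi{d_1+1}{k_1-1}$ would reveal), and reconciling them against the displayed formula is where all the remaining work sits.
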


Interesting in the context of bi-brackets and Theorem \ref{Zq poly descr} is the following reformulation of Bachmann's conjecture which says that brackets and bi-brackets span the same space:

\begin{conjecture}[Refinement of {\cite[Conj. 4.3]{Ba4}}]
Let $P\in\Q[X_1,\dots,X_r,Y_1,\dots,Y_r]$, $r\geq 1$, be a polynomial. Then there exists $Q=(Q_j)_{j\geq 1}$ with $Q_j\in\Q[X_1,\dots,X_j]$ and $Q_j\equiv 0$ for all but finite many $j$ such that for every $N\geq 1$, with $M:=r+\sum\limits_{i=1}^r \deg_{Y_i}(P)$, we have 
\begin{align*}
    \sum\limits_{(\mathbf{m},\mathbf{n})\in\mathcal{P}_r(N)} P(m_1,\dots,m_r,n_1,\dots,n_r)
    =\, 
    \sum\limits_{j = 1}^{M} \sum\limits_{(\mathbf{m},\mathbf{n})\in\mathcal{P}_j(N)} Q_j(n_1,\dots,n_r).
\end{align*}
\end{conjecture}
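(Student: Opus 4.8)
The plan is to re-read the asserted identity through Theorem~\ref{Zq poly descr} and the dictionary between polynomials and bi-brackets, turning it into a refined version of the (open) equality $\mathcal{BD}=\mathcal{MD}$, and then to attack that refinement with the partition relation and the derivation $q\frac{d}{dq}$.

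\emph{Reduction.} Multiplying by $q^{N}$ and summing over $N\ge 1$, the left-hand side becomes, by the linear dictionary underlying Theorem~\ref{Zq poly descr}, a $\Q$-linear combination of bi-brackets $\g\bi{k_1,\dots,k_r}{d_1,\dots,d_r}$ of depth $r$; expanding $P$ into monomials $\prod_j m_j^{d_j}\prod_j n_j^{k_j-1}$ reduces us, by linearity, to the case that $P$ is a single such monomial, i.e.\ to writing $\g\bi{\mathbf k}{\mathbf d}$ itself, with $M=r+\sum_i\deg_{Y_i}(P)=r+|\mathbf d|$ (the $Y$-variables being the ones recording the $m_j$, as in the generating series of Theorem~\ref{g generating2}). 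Symmetrically, the right-hand side runs over $\Q$-linear combinations of brackets $\g(k_1,\dots,k_j)$ of depth $j\le M$. So the statement to prove is: \emph{every bi-bracket $\g\bi{\mathbf k}{\mathbf d}$ lies in $\mathcal{MD}$ and can be expressed using brackets of depth at most $r+|\mathbf d|$} — a quantitative refinement of $\mathcal{Z}_q=\mathcal{Z}_q^{\circ}$.

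\emph{Strategy.} Two cases are free. If $\mathbf d=\mathbf 0$, then $\g\bi{\mathbf k}{\mathbf 0}=\g(\mathbf k)$ is already a bracket of depth $r$. If $\mathbf k=\mathbf 1$, the partition relation (Lemma~\ref{prel direct}, or Theorem~\ref{partition rel2} for the generating series) turns $\g\bi{\mathbf 1}{\mathbf d}$ into a $\Q$-linear combination of brackets $\g(\mathbf d')$ of depth $r$: transposition sends the $m$-powers to the partial sums $n_1+\dots+n_j$, which are polynomials in the $n$'s alone, while the old $n$-powers (all equal to $n_j^{0}$) disappear. For the general case I would induct on $|\mathbf d|$. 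The model is the depth-one diagonal: $(q\frac{d}{dq})^{d}G_{k}=\tfrac{(k+d-1)!\,d!}{(k-1)!}\,\g\bi{k+d}{d}$ already realizes such a bi-bracket as $q\frac{d}{dq}$ applied $d$ times to a bracket, and $\mathcal{MD}=\mathcal{Z}_q^{\circ}$ is stable under $q\frac{d}{dq}$. The aim is to show that an arbitrary $\g\bi{\mathbf k}{\mathbf d}$ with some $d_j\ge 1$ is obtained from bi-brackets of strictly smaller $|\mathbf d|$ by one application of $q\frac{d}{dq}$ together with the quasi-shuffle product (Proposition~\ref{Zq qshuffle}) and the partition relation, and then that each such step raises the bracket-depth by at most one, so that the bound $r+|\mathbf d|$ propagates through the induction.

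\emph{Main obstacle.} The decisive step — writing a bi-bracket with nonzero $\mathbf d$ as $q\frac{d}{dq}$ (plus product and partition-relation operations) of bi-brackets with smaller $|\mathbf d|$ — is precisely a depth-controlled rewriting of a general $q$MZV in terms of brackets, which is the content of Bachmann's still-open conjecture $\mathcal{Z}_q=\mathcal{Z}_q^{\circ}$; away from the diagonal Eisenstein case there is no known formula achieving it. Moreover, even granting such a rewriting, securing the \emph{sharp} bound $M=r+|\mathbf d|$ rather than something like $r+c|\mathbf d|$ is delicate, because the naive induction loses ground at each step (e.g.\ the quasi-shuffle adds depths); a genuine proof would seem to need an explicit closed formula for $\g\bi{\mathbf k}{\mathbf d}$ in terms of brackets, $q\frac{d}{dq}$ and transposition, with the depth increase accounted for term by term. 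I therefore expect the crux to be the discovery of that formula, or of a structural reason why $q\frac{d}{dq}$ and the partition relation together generate all bi-brackets from brackets within the depth filtration shifted by the total $m$-degree.
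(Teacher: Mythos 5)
This statement is stated in the paper as a \emph{conjecture} (a refinement of Bachmann's conjecture $\mathcal{MD}=\mathcal{BD}$, i.e.\ $\mathcal{Z}_q^\circ=\mathcal{Z}_q$), and the paper offers no proof of it. Your analysis is therefore correctly calibrated: the dictionary you set up via Theorem \ref{Zq poly descr} — monomials $\prod_j m_j^{d_j} n_j^{k_j-1}$ summed over $\mathcal{P}_r(N)$ correspond to bi-brackets, while polynomials in the $n$'s alone summed over $\mathcal{P}_j(N)$ correspond to brackets $\g(k_1,\dots,k_j)$ — is exactly the reformulation the paper intends, and your identification of the crux (that rewriting a general bi-bracket in terms of brackets, with any depth control at all, is precisely the open conjecture $\mathcal{Z}_q=\mathcal{Z}_q^\circ$) is the reason the paper states this as a conjecture rather than a theorem. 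Your two solvable special cases ($\mathbf d=\mathbf 0$, and $\mathbf k=\mathbf 1$ via the partition relation) and the depth-one Eisenstein diagonal are indeed the known evidence; no argument is known that closes the general case, and you were right not to pretend otherwise.

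One small caution on the quantitative part: the conjecture evaluates $P(m_1,\dots,m_r,n_1,\dots,n_r)$ with the $m_j$ in the $X$-slots and the $n_j$ in the $Y$-slots, so $\deg_{Y_i}(P)$ is literally the degree in $n_i$, giving $M=r+\sum_i(k_i-1)=\wt(\mathbf k)$ for the bi-bracket monomial — not $r+|\mathbf d|$ as you state. You justified your reading by appeal to the generating-series convention of Theorem \ref{g generating2}, where $Y_j$ tracks the $m$-exponents, and the paper's notation is admittedly not fully consistent between the two places; but if one takes the conjecture's displayed formula at face value, the claimed depth bound is the weight of $\mathbf k$ rather than $r+|\mathbf d|$. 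Since the whole statement is open either way, this only affects which quantitative refinement one is conjecturing, but it is worth pinning down before attempting the inductive bookkeeping you describe, as the two bounds propagate very differently under $q\frac{d}{dq}$ and the quasi-shuffle product.
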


\subsection{SZ-model}
Consider some SZ-admissible index ${\mathbf{k}} = \left(k_1 + 1,\{0\}^{d_1},\dots,k_r + 1,\{0\}^{d_r}\right)$ and obtain
\begin{align*}
    \sz({\mathbf{k}}) 
    =\,  \sum\limits_{\substack{m_1 > \dots > m_r > 0\\ n_1,\dots, n_r > 0}} \prod\limits_{j=1}^r \binom{m_j-m_{j+1} - 1}{d_j}\binom{n_j - 1}{k_j} q^{m_j n_j}.
\end{align*}

We can interpret the coefficient of $q^N$ as the number of partitions of $N$ with rows and columns in the Young diagram marked in some way:
\begin{proposition}[{\cite[Prop. 4.14]{Bri}}]
\label{sz young prop}
    The coefficient of $q^N$ in $\sz(\mathbf{k})$ is the number of partitions of $N$ with exactly $r$ parts, where $d_i$ rows of the $i$-th part without the last row in the corresponding Young diagram are marked, such as $k_j$ columns lying between the $j$-th and $(j+1)$-th rightmost corner of the Young diagram for all $1\leq i,j\leq r$.
\end{proposition}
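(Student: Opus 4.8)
The plan is to extract the coefficient of $q^N$ from the explicit expansion of $\sz(\mathbf{k})$ recorded just above the statement, and then to recognise each binomial factor as counting one of the two types of markings on the Young diagram of a partition written in Stanley coordinates.

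First I would start from
\begin{align*}
    \sz(\mathbf{k}) = \sum_{\substack{m_1 > \dots > m_r > 0\\ n_1,\dots,n_r > 0}} \prod_{j=1}^r \binom{m_j - m_{j+1} - 1}{d_j}\binom{n_j - 1}{k_j}\, q^{m_1 n_1 + \dots + m_r n_r},
\end{align*}
with $m_{r+1} := 0$; this is the identity displayed before the proposition, itself obtained from the generating series in Theorem~\ref{sz generating2} by reading off the coefficient of $X_1^{k_1}Y_1^{d_1}\cdots X_r^{k_r}Y_r^{d_r}$ in $\mathfrak{s}$. Grouping the terms with a fixed exponent $N = m_1 n_1 + \dots + m_r n_r$ and recalling Definition~\ref{part set def}, this gives
\begin{align*}
    [q^N]\,\sz(\mathbf{k}) = \sum_{(\mathbf{m},\mathbf{n})\in\mathcal{P}_r(N)} \prod_{j=1}^r \binom{m_j - m_{j+1} - 1}{d_j}\binom{n_j - 1}{k_j}.
\end{align*}

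Next I would translate Stanley coordinates into Young-diagram language. A partition of $N$ with exactly $r$ distinct part sizes corresponds bijectively to an element $(\mathbf{m},\mathbf{n})\in\mathcal{P}_r(N)$, with $m_i$ the $i$-th largest part and $n_i$ its multiplicity; in the Young diagram (rows arranged by decreasing length) the $i$-th part occupies $n_i$ consecutive rows of length $m_i$, and its removable corner — the $i$-th one counted from the top, equivalently from the right — lies in column $m_i$. Under this identification the $i$-th part has $n_i - 1$ rows other than its last, so $\binom{n_i - 1}{d_i}$ is the number of ways to mark $d_i$ of them; and the columns $c$ with $m_{j+1} < c < m_j$, i.e.\ those lying strictly between the $j$-th and $(j+1)$-th corner (with $m_{r+1} = 0$ handling the bottom block), number $m_j - m_{j+1} - 1$, so $\binom{m_j - m_{j+1} - 1}{k_j}$ is the number of ways to mark $k_j$ of those columns. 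Since row choices and column choices are made independently across all $r$ blocks, the product $\prod_{j=1}^r \binom{n_j-1}{d_j}\binom{m_j-m_{j+1}-1}{k_j}$ equals the number of admissible markings of the Young diagram of $(\mathbf{m},\mathbf{n})$, and summing over $\mathcal{P}_r(N)$ gives the total number of marked partitions of $N$ of the prescribed shape; this equals $[q^N]\,\sz(\mathbf{k})$, which is the claim.

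The argument is essentially bookkeeping, so the points that need care are: matching ``a partition with exactly $r$ parts'' with the strict chain $m_1 > \dots > m_r > 0$ built into $\mathcal{P}_r(N)$; fixing once and for all the convention for which cells count as ``lying between'' two consecutive corners, so that the count is $m_j - m_{j+1} - 1$ and not $m_j - m_{j+1}$; and checking the boundary block $j = r$, where $m_{r+1} = 0$ leaves $m_r - 1$ eligible columns. The degenerate cases $n_i = 1$ (no non-last rows, forcing $d_i = 0$) and $m_j - m_{j+1} = 1$ (no columns strictly between two corners, forcing $k_j = 0$) require no separate discussion, since they are handled automatically by $\binom{a}{b} = 0$ for $b > a \ge 0$.
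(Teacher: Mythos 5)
Your overall strategy --- extract the coefficient of $q^N$ from the displayed expansion and interpret each binomial factor as counting row or column markings --- is exactly the paper's, and the coefficient extraction itself is correct. The problem is in the final identification. You correctly obtain
\begin{align*}
[q^N]\,\sz(\mathbf{k}) \;=\; \sum_{(\mathbf{m},\mathbf{n})\in\mathcal{P}_r(N)}\ \prod_{j=1}^r \binom{m_j-m_{j+1}-1}{d_j}\binom{n_j-1}{k_j},
\end{align*}
in which $k_j$ is paired with the row count $n_j-1$ and $d_j$ with the column count $m_j-m_{j+1}-1$. But you then count the markings the other way round --- $d_i$ rows out of $n_i-1$ and $k_j$ columns out of $m_j-m_{j+1}-1$ --- arriving at $\prod_{j}\binom{n_j-1}{d_j}\binom{m_j-m_{j+1}-1}{k_j}$, and assert that this equals the coefficient. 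The two products differ in general: for a single part ($r=1$) with $n_1-1=5$, $m_1-m_2-1=2$, $k_1=2$, $d_1=1$, the coefficient is $\binom{2}{1}\binom{5}{2}=20$ while your marking count is $\binom{5}{1}\binom{2}{2}=5$. So the last sentence of your main argument is false as written.

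The source of the trouble is that the proposition's wording itself swaps the roles of $k$ and $d$ relative to the formula: the coefficient forces the interpretation ``mark $k_j$ of the $n_j-1$ non-last rows of the $j$-th part, and $d_j$ of the $m_j-m_{j+1}-1$ columns strictly between the $j$-th and $(j+1)$-th rightmost corners'', and this is what the paper's own proof (and Example \ref{ex young sz}, where $k_1+k_2+k_3=4$ rows and $d_1+d_2+d_3=6$ columns carry circles) actually uses. Your proof is therefore repaired by a one-line change --- assign $k_j$ to the rows and $d_j$ to the columns --- after which it coincides with the paper's argument. As submitted, however, the combinatorial quantity you produce is not the coefficient you computed, and you should either make that swap explicit or record that the statement needs $k_i$ and $d_i$ interchanged; you flagged ``matching conventions'' as a point needing care, but this is precisely the convention that went unchecked.
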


\begin{example}
\label{ex young sz}
For $N=126$ one of the in Proposition \ref{sz young prop} described marked partitions with exactly $r=3$ parts and with $k_1 = 2,\, k_2=k_3=1,\ d_1=2,\, d_2=1,\, d_3=3$ is the following:

\begin{center}
{\ensuremath{
\ytableausetup{smalltableaux}
\begin{tikzpicture}[inner sep=0in,outer sep=0in,scale=0.34]
\node (n)
{\ydiagram{15, 15, 15, 15, 9, 9, 9, 9, 9, 9, 4, 4, 4}};
\foreach \x in {1, 3, 7, 11, 12, 14}
		\filldraw[fill=red] (n.north west)+(0.5+\x-1,0.3) circle [radius=0.13cm];
		\foreach \y in {4, 9, 15}
		\node at ($(n.north west)+(0.5+\y-1,0.4)$) () {$\times$};
\foreach \x in {4, 10, 13}
	\node at ($(n.north west)+(-0.4,-0.5-\x+1)$) () {$\times$};
	\foreach \y in {2, 3, 7, 11}	
	\filldraw[fill=green] (n.north west)+(-0.3,-0.5-\y+1) circle [radius=0.13cm];
	
\draw[thick,darkblue,decorate,decoration={brace,amplitude=4pt}] ($(n.north east)+(0.3,-0.1)$) -- ($(n.north east) + (0.3,-3.9)$) node[midway,right,xshift=10pt] {$n_1$};

\draw[thick,darkblue,decorate,decoration={brace,amplitude=4pt}] ($(n.north east)+(-5.7,-4.1)$) -- ($(n.north east) + (-5.7,-9.9)$) node[midway,right,xshift=10pt] {$n_2$};

\draw[thick,darkblue,decorate,decoration={brace,amplitude=4pt}] ($(n.north east)+(-10.7,-10.1)$) -- ($(n.north east) + (-10.7,-12.9)$) node[midway,right,xshift=8pt] {$n_3$};

\draw[thick,orange,decorate,decoration={brace,amplitude=4pt}] ($(n.south west) + (3.9,-0.3)$) -- ($(n.south west)+(0.1,-0.3)$) node[midway,below,yshift=-8pt] {$m_3$};

\draw[thick,orange,decorate,decoration={brace,amplitude=4pt}] ($(n.south west) + (8.9,-0.3)$) -- ($(n.south west)+(4.1,-0.3)$) node[midway,below,yshift=-8pt] {$m_2-m_3$};

\draw[thick,orange,decorate,decoration={brace,amplitude=4pt}] ($(n.south west) + (14.9,-0.3)$) -- ($(n.south west)+(9.1,-0.3)$) node[midway,below,yshift=-8pt] {$\small{m_1-m_2}$};
\end{tikzpicture}
}}
\end{center}
The crosses $\times$ stand for the corresponding row/column not being allowed to be colored. That there is in every part a fixed row/column (we always fix the lowest row/rightest column) comes from the $-1$'s in the binomial coefficients that we consider in the coefficient of $q^N$ in $\sz(\mathbf{k})$.
\end{example}

\begin{proof}[Proof (of Lemma \ref{sz young prop})]  
Considering the index of the first sum of the coefficient of $q^N$ in $\sz(\mathbf{k})$, we get that it is just the number of partitions of $N$ with exactly $r$ parts, every partition counted with the respective multiplicity, given as the product of binomial coefficients we have seen.

Now, given such a partition of $N$, the $j$-the part consists of $n_j$ rows, why there are $\tbinom{n_j -1}{k_j}$ ways of marking $d_j$ of the rows of the $j$-th part without the last one. Since those markings in every part are independent of the markings in the other parts, this rows coloring gives a multiplicity of
\begin{align*}
    \binom{n_1 - 1}{k_1}\cdots \binom{n_r - 1}{k_r} 
\end{align*}
of the given partition.

For the column markings we use similar arguments: Since the $j$-th part of the Young diagram has length $m_j$ and the $(j+1)$-th has length $m_{j+1}$, there are $m_j - m_{j+1} - 1$ columns between the $j$-th and $(j+1)$-th corner, counted from the right. Hence, with marking $d_j$ of them, we get an additional factor $\tbinom{m_j - m_{j+1} - 1}{d_j}$ for the multiplicity of the given partition (for every $1\leq j\leq r$) and so exactly the coefficient of $q^N$ of $\sz(\mathbf{k})$.\end{proof}

\begin{remark}
By transposing Young diagrams together with the markings, we get a new generating series of specific marked partitions. One can verify that this generating series is the SZ-$q$MZV of SZ-dual index. Hence, using marked partition, we deduce SZ-duality (\cite[§4.2]{Bri}).
\end{remark}

\subsection{BZ-model}

For admissible
${\mathbf{k}} = \left(k_1 + 1,\{1\}^{d_1 - 1},\dots,k_r + 1,\{1\}^{d_r - 1}\right)$, i.e., $k_1,\dots,k_r,d_1,\dots,d_r\geq 1$, we compute
\begin{align*}
    \bz({\mathbf{k}}) = \sum\limits_{\substack{m_1 > n_1 > \dots > n_{d_1-1} > m_2 > \dots\\ > m_r > \dots > n_{d_1  + \dots + d_r -r} >0\\ j_1,i_1,\dots,i_{d_1-1},j_2\\ \dots,i_{d_1  + \dots + d_r -r}\geq 0}} & \binom{j_1}{k_1}\cdots \binom{j_r}{k_r} q^{m_1 j_ 1 + \dots + m_r j_r + n_1 i_1 + \dots + n_{d_1+\dots + d_r - r} i_{d_1 + \dots + d_r -r}}.
\end{align*}

The coefficient of $q^N$ is the number of partitions of $N$, where some (exactly $j_1\geq 0$) rows of the 1st, some (exactly $j_2\geq 0$) of the $(d_1+1)$-th part in the Young diagram are marked and so on.

It is useful to avoid the partitions where some $j_i = 0$ for clearness. Hence, we introduce also markings of the columns of the Young diagrams as follows:

\begin{proposition}[{\cite[Prop. 4.19]{Bri}}]
\label{prop bz part}
The coefficient of $q^N$ in $\bz(\mathbf{k})$ corresponds to the number of partitions of $N$, where the corresponding Young diagram is split up into $r$ sub-Young diagrams with at most $d_1,\dots,d_r$ parts each. We mark $k_j$ rows in the first part of sub-Young diagram $j$ for each $1\leq j\leq r$. Furthermore, we mark all columns containing corners and some of the others such that the number of colored columns, only belonging to sub-Young diagram $j$, in total is $d_j$ for each $1\leq j\leq r$.
\end{proposition}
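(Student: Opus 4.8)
The plan is to mirror the proof of the SZ-analogue, Proposition \ref{sz young prop}, reading the statement off the series expansion of $\bz(\mathbf{k})$ displayed above. Fix $N\ge 1$ and relabel the indices of that display: write the interlaced variables lying between $m_\nu$ and $m_{\nu+1}$ (with $m_{r+1}:=0$) as $n_1^{(\nu)}>\dots>n_{d_\nu-1}^{(\nu)}$ and their multiplicities as $i_1^{(\nu)},\dots,i_{d_\nu-1}^{(\nu)}\ge 0$, while $m_\nu$ carries multiplicity $j_\nu$. Expanding each factor $\binom{j_\nu}{k_\nu}$ as ``choose $k_\nu$ of the $j_\nu$ rows of length $m_\nu$'' (which in particular forces $j_\nu\ge k_\nu\ge 1$, so that row-block is genuinely non-empty), the coefficient $[q^N]\bz(\mathbf{k})$ becomes the number of tuples consisting of such integers and multiplicities with $\sum_\nu m_\nu j_\nu+\sum_{\nu,\ell}n_\ell^{(\nu)}i_\ell^{(\nu)}=N$, together with a $k_\nu$-subset of the rows of length $m_\nu$ for every $\nu$.

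Next I would set up the bijection between these tuples and the marked partitions in the statement. From a tuple, form $\lambda\dashv N$ whose distinct parts are the $m_\nu$ together with those $n_\ell^{(\nu)}$ with $i_\ell^{(\nu)}\ge 1$, counted with the given multiplicities; cut its Young diagram into the $r$ horizontal bands consisting of the rows of length in $(m_{\nu+1},m_\nu]$, so band $\nu$ contains some number $c_\nu\in\{1,\dots,d_\nu\}$ of distinct part sizes; keep the chosen rows as the $k_\nu$ marked rows in the first part of band $\nu$; and mark a column exactly when its index is a distinct part of $\lambda$ (i.e. carries a corner) or equals a ``phantom'' value $n_\ell^{(\nu)}$ with $i_\ell^{(\nu)}=0$. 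The bookkeeping to be checked is that inside band $\nu$ the corner columns are exactly $m_\nu$ and the $c_\nu-1$ real $n_\ell^{(\nu)}$'s, while the phantom ones form $(d_\nu-1)-(c_\nu-1)=d_\nu-c_\nu$ distinct non-corner columns of that band, so band $\nu$ receives exactly $d_\nu$ marks, and the weight is preserved because phantom columns contribute $0$ to $|\lambda|$. The inverse recovers $m_\nu$ as the longest row of band $\nu$, the values $n_1^{(\nu)}>\dots>n_{d_\nu-1}^{(\nu)}$ as the remaining $d_\nu-1$ marked columns of band $\nu$ (automatically strictly between $m_{\nu+1}$ and $m_\nu$), then $j_\nu$ and the $i_\ell^{(\nu)}$ as the $\lambda$-multiplicities of $m_\nu$ and of the corner-columns $n_\ell^{(\nu)}$ (and $0$ for non-corner ones), and the row-choice from the row-marks; one verifies the two maps are mutually inverse.

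The only real work is this index-chasing, and the delicate points are: (i) that ``mark all corner columns'' corresponds precisely to the positive-multiplicity parts; (ii) that the number $(m_\nu-m_{\nu+1})-c_\nu$ of non-corner columns available in band $\nu$ matches the number $(m_\nu-m_{\nu+1}-1)-(c_\nu-1)$ of admissible phantom placements, so that nothing is lost or double-counted; and (iii) that the degenerate case $d_\nu>m_\nu-m_{\nu+1}$ forces both sides to vanish. With this in hand the proposition follows, and, just as for $\sz$ (cf. the remark after Proposition \ref{sz young prop}), transposing the Young diagram together with all its markings should then give a duality relation for $\bz$, consistent with Theorem \ref{bz dual2}.
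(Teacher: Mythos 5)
Your proof is correct and follows essentially the same route as the paper's: both read the statement directly off the iterated-sum expansion of $\bz(\mathbf{k})$, interpreting each binomial coefficient $\binom{j_\nu}{k_\nu}$ as a choice of $k_\nu$ marked rows among the $j_\nu$ rows of length $m_\nu$, and interpreting the indices with multiplicity $i_\ell=0$ as marked non-corner (``phantom'') columns --- which is exactly what accounts for the $d_j$ marked columns and the ``at most $d_j$ parts'' in each sub-Young diagram. The paper's version is considerably terser (it declares the row-markings ``self-explaining''), so your explicit bijection and the bookkeeping checks (i)--(iii) simply supply details the paper leaves to the reader.
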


\begin{proof}
We obtain the split up into $r$ sub-Young diagrams by the first line of the sum index of the coefficient of $q^N$. Also, the row markings are self-explaining when looking at the summand of our coefficient of $q^N$.

The marked columns represent the indices (from right to left) of shape $j_\ell$ and $i_\ell$. If a marked column is not the rightmost one of a part, this corresponds to whether the corresponding multiplicity $i_\ell$ is zero. In this case, there is no $n_\ell$-part in the partition, which is, on the one hand, the reason for having exactly $d_j$ marked columns that belong to sub-Young diagram $j$ for each $1\leq j\leq r$ and on the other hand, it is the reason why we have at most (and not exact) $d_i$ parts in sub-Young diagram $i$.\end{proof}

\begin{example}
The following marked partition of $N=118$ has $r=2$ sub-Young diagrams and is assigned to the index $\mathbf{k} = (3,1,1,1,3,1,1)$, i.e., $k_1 = k_2 = 2,\, d_1 = 4,\, d_2 = 3$:

\begin{center}
{\ensuremath{
\ytableausetup{smalltableaux}
\begin{tikzpicture}[inner sep=0in,outer sep=0in,scale=0.34]

\node (n)
{\ydiagram{15, 15, 15, 13, 9, 9, 9, 7, 7, 7, 4, 4, 4}};
\foreach \x in {3, 4, 7, 9, 10, 13, 15}
		\filldraw[fill=red] (n.north west)+(0.5+\x-1,0.3) circle [radius=0.13cm];
\foreach \y in {1, 3, 9, 10}	
	\filldraw[fill=green] (n.north west)+(-0.3,-0.5-\y+1) circle [radius=0.13cm];
	
\draw[thick,orange,decorate,decoration={brace,amplitude=4pt}] ($(n.north east)+(0.3,-0.1)$) -- ($(n.north east) + (0.3,-2.9)$) node[midway,right,xshift=10pt] {$j_1$};

\draw[thick,darkblue,decorate,decoration={brace,amplitude=4pt}] ($(n.north east)+(-1.7,-3.1)$) -- ($(n.north east) + (-1.7,-3.9)$) node[pos = 0.6,right,xshift=10pt] {$i_1$};

\draw[thick,darkblue,decorate,decoration={brace,amplitude=4pt}] ($(n.north east)+(-5.7,-4.1)$) -- ($(n.north east) + (-5.7,-6.9)$) node[midway,right,xshift=10pt] {$i_3$};

\draw[thick,orange,decorate,decoration={brace,amplitude=4pt}] ($(n.north east)+(-7.7,-7.1)$) -- ($(n.north east) + (-7.7,-9.9)$) node[midway,right,xshift=10pt] {$j_2$};

\draw[thick,darkblue,decorate,decoration={brace,amplitude=4pt}] ($(n.north east)+(-10.7,-10.1)$) -- ($(n.north east) + (-10.7,-12.9)$) node[midway,right,xshift=8pt] {$i_4$};

\draw[thick,darkblue,decorate,decoration={brace,amplitude=4pt}] ($(n.north west)+(0.1,0.6)$) -- ($(n.north west) + (2.9,0.6)$); 

\node[text=darkblue] at ($(n.north west) + (-0.9,0.6)$) () {$n_5$};

\draw[thick,darkblue,decorate,decoration={brace,amplitude=4pt}] ($(n.north west)+(0.1,1.2)$) -- ($(n.north west) + (3.9,1.2)$); 

\node[text=darkblue] at ($(n.north west) + (-0.9,1.4)$) () {$n_4$};

\draw[thick,orange,decorate,decoration={brace,amplitude=4pt}] ($(n.north west)+(0.1,2.2)$) -- ($(n.north west) + (6.9,2.2)$); 

\node[text=orange] at ($(n.north west) + (-0.9,2.2)$) () {$m_2$};

\draw[thick,darkblue,decorate,decoration={brace,amplitude=4pt}] ($(n.south west) - (-8.9,0.3)$) -- ($(n.south west)-(-0.1,0.3)$); 

\node[text=darkblue] at ($(n.south west) - (0.9,0.6)$) () {$n_3$};

\draw[thick,darkblue,decorate,decoration={brace,amplitude=4pt}] ($(n.south west) - (-9.9,1.4)$) -- ($(n.south west)-(-0.1,1.4)$); 

\node[text=darkblue] at ($(n.south west) - (0.9,1.4)$) () {$n_2$};

\draw[thick,darkblue,decorate,decoration={brace,amplitude=4pt}] ($(n.south west) - (-12.9,2.2)$) -- ($(n.south west)-(-0.1,2.2)$); 

\node[text=darkblue] at ($(n.south west) - (0.9,2.2)$) () {$n_1$};

\draw[thick,orange,decorate,decoration={brace,amplitude=4pt}] ($(n.south west) - (-14.9,3.0)$) -- ($(n.south west)-(-0.1,3.0)$); 

\node[text=orange] at ($(n.south west) - (0.9,3.0)$) () {$m_1$};

\draw[thick,color=darkred,decorate,decoration={brace,amplitude=4pt}] ($(n.south west) - (0.6,-0.1)$) -- ($(n.south west)-(0.6,-5.9)$) node[midway,left,xshift=-7.5] () {sub-Young diagram 2}; 

\draw[thick,color=darkred,decorate,decoration={brace,amplitude=4pt}] ($(n.south west) + (-0.6,6.1)$) -- ($(n.south west)+(-0.6,12.9)$) node[midway,left,xshift=-7.5] () {sub-Young diagram 1};

\end{tikzpicture}}}
\end{center}
Remark that $n_3$ occurs with multiplicity $i_2 = 0$, such as $n_5$ with multiplicity $i_5 = 0$ which is the reason that the columns corresponding to $n_3$ and $n_5$ respectively are marked but contain no corner of the Young diagram.
\end{example}

The following statement is now equivalent to BZ-duality (see \cite[§4.3]{Bri}):

\begin{theorem}[{\cite[4.18]{Bri}}]
\label{5.10}
For all $r\geq 1$, $k_1,\dots,k_r,d_1,\dots,d_r\geq 1$ and $N\geq 1$ we have:
\begin{align*}
    &
    \sum\limits_{\substack{m_1 > n_1 > \dots > n_{d_1-1} > m_2 > \dots > m_r > \dots > n_{d_1  + \dots + d_r -r} > 0\\ j_1,i_1,\dots,i_{d_1-1},j_2,\dots,i_{d_1  + \dots + d_r -r}\geq 0\\ m_1 j_1 + \dots + m_r j_r + n_1 i_1 + \dots + n_{d_1  + \dots + d_r -r} i_{d_1  + \dots + d_r -r} = N}} \binom{j_1}{k_1}\cdots \binom{j_r}{k_r}
    \\
    =\, &
    \sum\limits_{\substack{m_1 > n_1 > \dots > n_{k_r-1} > m_2 > \dots > m_r > \dots > n_{k_r  + \dots + k_1 -r} > 0\\ j_1,i_1,\dots,i_{k_r-1},j_2,\dots,i_{k_r  + \dots + k_1 -r}\geq 0\\ m_1 j_1 + \dots + m_r j_r + n_1 i_1 + \dots + n_{k_r  + \dots + k_1 -r} i_{k_r  + \dots + k_1 -r} = N}} \binom{j_1}{d_r}\cdots \binom{j_r}{d_1}
\end{align*}
Moreover, this is equivalent to the duality of Bradley--Zhao $q$MZVs.
\end{theorem}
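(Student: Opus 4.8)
The plan is to identify both sides of the displayed identity as the coefficient of $q^N$ in a Bradley--Zhao $q$MZV, the two $q$MZVs being $\tau$-dual to one another, and then to read off the claimed equivalence with Theorem~\ref{bz dual2} by comparing $q$-expansions coefficientwise.

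The first step is to invoke the expansion of $\bz(\mathbf{k})$ displayed just before Proposition~\ref{prop bz part}: it exhibits the left-hand side of the identity verbatim as the coefficient of $q^N$ in $\bz\left(k_1+1,\{1\}^{d_1-1},\dots,k_r+1,\{1\}^{d_r-1}\right)$. In the word algebra $\h^0$ this index corresponds to the monomial
\begin{align*}
    z_{k_1+1}\,z_1^{d_1-1}\cdots z_{k_r+1}\,z_1^{d_r-1}=x_0^{k_1}x_1^{d_1}\cdots x_0^{k_r}x_1^{d_r},
\end{align*}
and applying $\tau$, which reverses the word and interchanges $x_0\leftrightarrow x_1$, gives $x_0^{d_r}x_1^{k_r}\cdots x_0^{d_1}x_1^{k_1}$, that is, the index $\left(d_r+1,\{1\}^{k_r-1},\dots,d_1+1,\{1\}^{k_1-1}\right)$. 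Feeding this dual index into the same expansion formula --- now with $r$ \enquote{large} parts $d_r+1,\dots,d_1+1$ and, between consecutive large parts and after the last one, runs of $1$'s of lengths $k_r-1,\dots,k_1-1$ --- produces exactly the right-hand side of the identity: the nested inequalities $m_1>n_1>\dots>n_{k_r-1}>m_2>\dots>n_{k_r+\dots+k_1-r}>0$, the weight constraint on the exponent of $q$, and the binomial product $\binom{j_1}{d_r}\cdots\binom{j_r}{d_1}$ all drop out automatically.

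Once this is in place, for fixed $r$ and fixed $k_i,d_i$ the displayed identity for every $N\geq1$ is equivalent to the equality of the two corresponding $q$-series $\bz$, i.e.\ to $\bz(w)=(\bz\circ\tau)(w)$ for $w=x_0^{k_1}x_1^{d_1}\cdots x_0^{k_r}x_1^{d_r}$. Since every admissible word in $\h^0$ is uniquely of this form with all $k_i,d_i\geq1$ (and the empty word is trivially $\tau$-fixed), letting $r$ and the $k_i,d_i$ range over all admissible values runs over a $\Q$-basis of $\h^0$; hence the full family of these identities is equivalent to $\bz\circ\tau=\bz$ on $\h^0$, which is precisely BZ-duality (Theorem~\ref{bz dual2}), and in particular the identity holds. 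I expect the main --- indeed essentially the only --- obstacle to be the bookkeeping in the middle step: carefully lining up the nested inequalities, the exponent of $q$, and the binomial factors on each side against the expansion formula after transposing the index.
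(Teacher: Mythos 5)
Your proposal is correct and matches the route the paper itself indicates: the displayed sums are precisely the coefficients of $q^N$ in $\bz\bigl(k_1+1,\{1\}^{d_1-1},\dots,k_r+1,\{1\}^{d_r-1}\bigr)$ and in $\bz$ of its $\tau$-dual index $\bigl(d_r+1,\{1\}^{k_r-1},\dots,d_1+1,\{1\}^{k_1-1}\bigr)$, so the family of identities is the coefficientwise restatement of $\bz\circ\tau=\bz$ on a monomial basis of $x_0\h x_1$, and your index bookkeeping under $\tau$ checks out. The only point worth flagging is one of emphasis rather than correctness: the surrounding section (via Proposition~\ref{prop bz part} and transposition of marked partitions) is aimed at an independent combinatorial proof of the identity that would then \emph{yield} BZ-duality, whereas you derive the identity \emph{from} Theorem~\ref{bz dual2}; since that theorem is available as a cited result, your direction is legitimate, and your argument for the equivalence is exactly the one the paper intends.
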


\subsection{Partition function, \emph{q}MZVs and conjugacy classes}

When studying SZ-$q$MZVs and special values of them, we get a connection to the partition numbers:

\begin{lemma}[{\cite[Lem. 4.26]{Bri}}]
\label{part-SZ}
Let be $p_N$ the number of partitions of $N$, then
\begin{align*}
    \sum\limits_{r\geq 1} \sz(\{1\}^r) = \sum\limits_{r\geq 1} \g\bi{\{1\}^r}{\{0\}^r} = \sum\limits_{N\geq 1} p_N q^N.
\end{align*}
\end{lemma}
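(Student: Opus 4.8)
The plan is to expand each geometric-series factor in the defining sum of $\sz(\{1\}^r)$ and then recognise the result as a sum over partitions written in Stanley coordinates.

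First I would dispatch the left-hand equality, which is purely definitional. The first Eulerian polynomial satisfies $\frac{P_1(X)}{1-X} = \sum_{n>0} X^n = \frac{X}{1-X}$, so $P_1(X) = X$; hence, comparing Definition \ref{SZ def2} with Definition/Proposition \ref{bibr defi2},
\begin{align*}
    \g\bi{\{1\}^r}{\{0\}^r} = \zeta_q(1,\dots,1;P_1,\dots,P_1) = \zeta_q(1,\dots,1;X,\dots,X) = \sz(1,\dots,1) = \sz(\{1\}^r).
\end{align*}

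For the remaining equality, I would expand $\frac{q^m}{1-q^m} = \sum_{n\geq 1} q^{mn}$ in each of the $r$ factors of
\begin{align*}
    \sz(\{1\}^r) = \sum_{m_1 > \dots > m_r > 0} \frac{q^{m_1}}{1-q^{m_1}}\cdots \frac{q^{m_r}}{1-q^{m_r}}
\end{align*}
to obtain $\sz(\{1\}^r) = \sum_{m_1>\dots>m_r>0}\sum_{n_1,\dots,n_r\geq 1} q^{m_1 n_1 + \dots + m_r n_r} = \sum_{N\geq 1} \#\mathcal{P}_r(N)\, q^N$, since by Definition \ref{part set def} the index set of this sum restricted to $\sum_j m_j n_j = N$ is exactly $\mathcal{P}_r(N)$. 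Summing over $r$ and using that each partition of $N$ has a unique Stanley-coordinate representation — so $\mathcal{P}(N) = \bigsqcup_{r\geq 1}\mathcal{P}_r(N)$ and $|\mathcal{P}(N)| = p_N$ — gives $\sum_{r\geq 1}\sz(\{1\}^r) = \sum_{N\geq 1}\bigl(\sum_{r\geq 1}\#\mathcal{P}_r(N)\bigr) q^N = \sum_{N\geq 1} p_N q^N$.

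The only point needing a word of care, and it is a minor one, is that the infinite sum $\sum_{r\geq 1}\sz(\{1\}^r)$ is well-defined in $\Q\llbracket q\rrbracket$ and that the interchange of the two summations is legitimate. This is true because the lowest-degree monomial of $\sz(\{1\}^r)$ has degree $r + (r-1) + \dots + 1 = \binom{r+1}{2}$, attained at $m_j = r-j+1,\ n_j = 1$; equivalently $\mathcal{P}_r(N) = \emptyset$ once $\binom{r+1}{2} > N$. Hence for each fixed $N$ only finitely many $r$ contribute to the coefficient of $q^N$, each such coefficient is a finite sum, and the rearrangement is valid term by term. I do not expect any genuine obstacle here; the content is entirely the geometric-series expansion plus the standard bijection between partitions and Stanley coordinates.
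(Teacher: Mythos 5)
Your proof is correct and follows essentially the same route as the paper: expand each factor $\frac{q^m}{1-q^m}$ as a geometric series and identify the coefficient of $q^N$ with the number of partitions of $N$ having exactly $r$ distinct parts, summed over $r$. Your added remarks — verifying the first equality via $P_1(X)=X$ and noting that $\mathcal{P}_r(N)=\emptyset$ for $\binom{r+1}{2}>N$, so the double sum is well-defined in $\Q\llbracket q\rrbracket$ — are welcome details the paper leaves implicit.
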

\begin{proof}  The first equality is clear by the definition of bi-brackets. We consider now the left side first:

\begin{align*}
    \sum\limits_{r\geq 0} \sz(\{1\}^r) = \sum\limits_{r\geq 0} \sum\limits_{m_1>\dots > m_r > 0} \frac{q^{m_1}}{1-q^{m_1}}\cdots \frac{q^{m_r}}{1-q^{m_r}} 
    =\sum\limits_{r\geq 0} \sum\limits_{\substack{m_1>\dots > m_r > 0\\ n_1,\dots,n_r > 0}} q^{m_1 n_1 + \dots + m_r n_r}.
\end{align*}
The coefficient of some $q^N$ here is the sum over all $r\in\mathbb{N}_0$, where we sum the number of all partitions of $N$ with exact $r$ different parts, i.e., the number of partitions of $N$, $p_N$.\end{proof}

The partition function also occurs in other contexts than $q$MZVs, namely when considering equivalence classes of the symmetric group $\mathcal{S}_n$. For more details, we refer to \cite[§4]{FH}.

\begin{lemma}
\label{S_n-part}
Partitions of $n\in\N$ and conjugacy classes of $\mathcal{S}_n$ are in 1:1-correspondence. In particular, the number of conjugacy classes of $\mathcal{S}_n$ is $p_n$.
\end{lemma}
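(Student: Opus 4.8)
The plan is to use the disjoint cycle decomposition to set up an explicit bijection between conjugacy classes of $\mathcal{S}_n$ and partitions of $n$. First I would recall that every $\sigma\in\mathcal{S}_n$ factors as a product of disjoint cycles, uniquely up to the order of the factors, where fixed points are counted as $1$-cycles; the weakly decreasing list of the lengths of these cycles is a partition of $n$ (since the supports of the cycles partition $\{1,\dots,n\}$), called the \emph{cycle type} of $\sigma$. This assigns to each $\sigma$ a partition $\lambda(\sigma)\dashv n$.

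Next I would show that $\lambda$ is constant on conjugacy classes, so that it descends to a map $\Lambda$ from the set of conjugacy classes of $\mathcal{S}_n$ to the set of partitions of $n$. The key computation is the conjugation formula: for any cycle $(a_1\,a_2\,\cdots\,a_k)$ and any $\tau\in\mathcal{S}_n$ one has $\tau(a_1\,a_2\,\cdots\,a_k)\tau^{-1} = (\tau(a_1)\,\tau(a_2)\,\cdots\,\tau(a_k))$. Applying this to each factor of a disjoint cycle decomposition of $\sigma$ shows $\tau\sigma\tau^{-1}$ has the same multiset of cycle lengths as $\sigma$, i.e.\ $\lambda(\tau\sigma\tau^{-1})=\lambda(\sigma)$.

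Then I would prove $\Lambda$ is a bijection. For surjectivity, given a partition $\lambda=(\lambda_1\geq\cdots\geq\lambda_h)$ of $n$, I split $\{1,\dots,n\}$ into consecutive blocks $B_1,\dots,B_h$ with $|B_i|=\lambda_i$ and let $\sigma$ act as a single $\lambda_i$-cycle on each $B_i$; then $\lambda(\sigma)=\lambda$. For injectivity, suppose $\sigma,\sigma'$ have the same cycle type. Writing their disjoint cycle decompositions with the cycles listed in order of decreasing length, I pair the $i$-th cycle of $\sigma$ with the $i$-th cycle of $\sigma'$ (they have equal length) and define $\tau\in\mathcal{S}_n$ by sending the entries of each cycle of $\sigma$, in cyclic order, to the entries of the paired cycle of $\sigma'$; this is a well-defined bijection of $\{1,\dots,n\}$ because the cycles are disjoint and exhaust $\{1,\dots,n\}$, and by the conjugation formula $\tau\sigma\tau^{-1}=\sigma'$, so $\sigma$ and $\sigma'$ are conjugate. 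Hence $\Lambda$ is injective.

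Combining, $\Lambda$ is a bijection between conjugacy classes of $\mathcal{S}_n$ and partitions of $n$, so the number of conjugacy classes equals $p_n$. The only point requiring a little care — hardly an obstacle — is to fix once and for all a convention for listing the cycles (e.g.\ by decreasing length, breaking ties in any fixed way) so that the pairing used for injectivity is unambiguous; none of the steps involves anything beyond bookkeeping with cycle decompositions.
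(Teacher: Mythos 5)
Your proof is correct and follows essentially the same route as the paper's: associate to each permutation its cycle type via the disjoint cycle decomposition, and observe that conjugation only relabels the elements $1,\dots,n$ and hence preserves (and, conversely, can realize any matching of) the cycle structure. You simply spell out in full the conjugation formula and the surjectivity/injectivity checks that the paper leaves implicit.
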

\begin{proof}  Write every $\sigma\in\mathcal{S}_n$ as union of cycles. The length of the cycles form a partition of $n$. Since a conjugacy class $[\sigma]$ of $\mathcal{S}_n$ is uniquely determined by the lengths of cycles of $\sigma$ - conjugacy means only to rename the elements $1,\dots,n$, but not to change the structure of $\sigma$ - the claim follows.\end{proof} 

\begin{example}
The conjugacy class of $\sigma = (1\, 4\, 3)(2\, 6)(5\, 7)\in\mathcal{S}_7$ corresponds to the partition \begin{align*} 
\ydiagram{3,2,2}\ \ \text{of } 3+2+2=7.
\end{align*}
\end{example}

\begin{remark}
Lemmas \ref{part-SZ} and \ref{S_n-part} give a remarkable connection between the number of conjugacy classes of $\mathcal{S}_n$ and SZ-$q$MZVs. More precisely, fixing $r$ and $n$, the coefficient of $q^n$ in $\sz(\{1\}^r)$ is the number of conjugacy classes of $\mathcal{S}_n$ with cycles of exactly $r$ different lengths.
\end{remark}

This remark should be the motivation for the following theorem:

\begin{theorem}[{\cite[Thm. 4.30]{Bri}}]
\label{c OOZ GLn}
Let $K$ be a finite field with $c$ elements. Then we have
\begin{align*}
    \mathrm{G}_K := \sum\limits_{n\geq 0} a_{n,K} q^n = \sum\limits_{r\geq 0} (c-1)^r\ooz(\{1\}^r),
\end{align*}
where $a_{n,K}$ is the number of conjugacy classes of $\text{GL}(n,K)$ and with $a_{0,K} := 1$ for every field $K$.
\end{theorem}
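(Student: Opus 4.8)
The plan is to evaluate the two sides of the claimed identity separately, show that each equals the Euler-type product $\prod_{m\geq 1}\frac{1-q^m}{1-cq^m}\in\Q\llbracket q\rrbracket$, and then compare.

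For the $q$MZV side I would first rewrite $\ooz(\{1\}^r)$ in a more symmetric form. The word $pypy\cdots py\in\mathfrak{K}^1$ attached to the index $(\{1\}^r)$ is fixed by the anti-automorphism $\widetilde{\tau}$ (reversing the letters and swapping $p\leftrightarrow y$ gives back the same word), so Theorem \ref{dual in ooz2}(iii) yields
\begin{align*}
    \ooz(\{1\}^r)=\szs(\{1\}^r)=\sum\limits_{m_1\geq\dots\geq m_r>0}\ \prod\limits_{j=1}^r\frac{q^{m_j}}{1-q^{m_j}}.
\end{align*}
Then I would expand $\sum_{r\geq 0}(c-1)^r\szs(\{1\}^r)$ by organising each weakly decreasing tuple $(m_1,\dots,m_r)$ by its distinct values $\mu_1>\dots>\mu_s>0$ with multiplicities $e_1,\dots,e_s\geq 1$: the prefactor splits as $(c-1)^r=\prod_i(c-1)^{e_i}$ and the summand as $\prod_i\bigl(q^{\mu_i}/(1-q^{\mu_i})\bigr)^{e_i}$, so summing the geometric series $\sum_{e\geq 1}\bigl((c-1)q^{\mu}/(1-q^{\mu})\bigr)^e=\frac{(c-1)q^{\mu}}{1-cq^{\mu}}$ over each multiplicity and then over the subset $\{\mu_1,\dots,\mu_s\}$ (a standard \enquote{subsets equal product} identity, legitimate in $\Q\llbracket q\rrbracket$ because the factor attached to $\mu$ has $q$-valuation $\geq\mu$) gives
\begin{align*}
    \sum\limits_{r\geq 0}(c-1)^r\ooz(\{1\}^r)=\prod\limits_{m\geq 1}\left(1+\frac{(c-1)q^m}{1-cq^m}\right)=\prod\limits_{m\geq 1}\frac{1-q^m}{1-cq^m}.
\end{align*}

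For the conjugacy-class side I would use rational canonical form: conjugacy classes in $\GL(n,K)$ correspond to $K[t]$-module structures on $K^n$ on which $t$ acts invertibly, hence to finitely supported families $(\lambda_\phi)$ of partitions indexed by the monic irreducible polynomials $\phi\neq t$ over $K$ with $\sum_\phi\deg(\phi)\,|\lambda_\phi|=n$. Using $\sum_\lambda u^{|\lambda|}=\prod_{m\geq 1}(1-u^m)^{-1}$ this gives $\mathrm{G}_K=\prod_{\phi\neq t}\prod_{m\geq 1}(1-q^{m\deg\phi})^{-1}$. Grouping the outer product by $d=\deg\phi$ — with $c-1$ monic irreducibles $\neq t$ of degree $1$ and, for $d\geq 2$, all $I_d$ monic irreducibles of degree $d$, where $I_1=c$ — and invoking the unique-factorisation identity $\prod_{d\geq 1}(1-u^d)^{I_d}=1-cu$ at $u=q^m$ collapses $\mathrm{G}_K$ to $\prod_{m\geq 1}\frac{1-q^m}{1-cq^m}$ as well, which completes the proof upon comparison with the previous step.

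The generating-function bookkeeping on both sides is routine; the one genuinely nontrivial ingredient is the structure theory behind the conjugacy-class count — the parametrisation of the conjugacy classes of $\GL(n,K)$ by partition-valued functions on irreducible polynomials together with the identity $\prod_d(1-u^d)^{I_d}=1-cu$ — which is classical and, in a survey, could simply be cited. A minor point needing care is that all the rearrangements live in $\Q\llbracket q\rrbracket$, which is guaranteed by the valuation estimates noted above.
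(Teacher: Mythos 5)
Your argument is correct, and it reaches the identity by a genuinely different route from the one the paper has in mind: the paper states the theorem without proof and remarks only that the proof \enquote{can be visualized with marked partitions}, deferring to \cite[§4.6]{Bri}, i.e.\ the intended argument is a combinatorial one matching the marked-partition interpretation of the coefficients of $(c-1)^r\ooz(\{1\}^r)$ against the parametrisation of conjugacy classes. You instead evaluate both sides in closed form as the Euler product $\prod_{m\geq 1}\frac{1-q^m}{1-cq^m}$. The individual steps all check out: the word $(py)^r$ is indeed fixed by $\widetilde{\tau}$, so Theorem \ref{dual in ooz2}(iii) gives $\ooz(\{1\}^r)=\szs(\{1\}^r)$ (consistent with the worked example $\ooz(2,1)=\szs(1,1,0)$ in Section \ref{ssec: OOZ2}); the resummation over multiplicities and the subset-to-product identity are legitimate in $\Q\llbracket q\rrbracket$ for the valuation reasons you give; and on the group side the rational-canonical-form parametrisation together with $\prod_{d\geq 1}(1-u^d)^{I_d}=1-cu$ (the zeta function of $\mathbb{A}^1$ over $\mathbb{F}_c$) yields the same product. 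A quick sanity check at $c=2$ ($a_{1,K}=1$, $a_{2,K}=3$ for $\GL(2,\mathbb{F}_2)\cong\mathcal{S}_3$) agrees with the coefficients $1,3$ of the product. What each approach buys: the bijective route explains the identity coefficientwise and fits the marked-partition theme of Section 4, while your computation is shorter, self-contained modulo the classical classification of conjugacy classes, and produces the closed-form generating function $\prod_{m\geq 1}\frac{1-q^m}{1-cq^m}$ for $\mathrm{G}_K$ as a useful byproduct that the bijective argument does not directly exhibit.
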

The proof can be visualized with marked partitions. For details and connected results about several representations of the numbers $a_{n,K}$, we refer to \cite[§4.6]{Bri}.

\end{document}